\documentclass[11pt]{article}
\usepackage{amsmath,amssymb,mathtools}
\usepackage[margin=1in]{geometry}
\usepackage{amsthm}
\usepackage{tikz}
\usetikzlibrary{arrows.meta}
\usetikzlibrary{positioning}

\theoremstyle{definition}

\theoremstyle{remark}
\usepackage{tikz}
\usepackage{amssymb, graphicx, caption, verbatim,  amsfonts}
\usepackage{longtable}
\usepackage{hyperref}
\usepackage{cleveref}
\usepackage{wasysym}
\newcommand{\De}{\Delta}
\newcommand{\g}{\gamma}

\newtheorem{theorem}{Theorem}
\newtheorem{lemma}[theorem]{Lemma}

\newcommand{\E}{\ensuremath{\mathbb E}}
\newcommand{\R}{\ensuremath{\mathbb R}}

\newcommand{\lab}{\label}  \newcommand{\ra}{\ensuremath{\rightarrow}}  \def\a{{\mathbf{\alpha}}} \def\de{{\mathbf{\delta}}} \def\De{{{\Delta}}}  
 \def\var{{\mathrm{var}}} \def\beq{\begin{eqnarray}} \def\eeq{\end{eqnarray}} \def\ben{\begin{enumerate}}
\def\een{\end{enumerate}} \def\bit{\begin{itemize}}
\def\bel{\begin{lemma}}
\def\eel{\end{lemma}}
\def\eit{\end{itemize}} \def\beqs{\begin{eqnarray*}} \def\eeqs{\end{eqnarray*}} \def\bel{\begin{lemma}} \def\eel{\end{lemma}}
 \newcommand{\Z}{\mathbb{Z}}   
\newcommand{\T}{\mathbb{T}}      \renewcommand{\b}{\mathbf{b}} 

 \newcommand{\I}{I}   \newcommand{\p}{\mathbb{P}}
    \newcommand{\one}{\mathrm{1}}
  \newcommand{\la}{\lambda} \newcommand{\Tr}{\text{Trace}} 
  \def\eps{{\epsilon}}   \def\g{G}

\renewcommand{\g}{\mathcal{G}}

\newcommand{\ent}{\mathrm{ent}}
\newcommand{\hess}{{\mathop{\triangledown^2}}}
\renewcommand{\hess}{{\nabla^2}}

\newtheorem{lemma}{Lemma}
\newtheorem{proposition}{Proposition}

\newtheorem{definition}{Definition}
\newtheorem{notation}{Notation}
\newtheorem{remark}{Remark}
\newtheorem{claim}{Claim}

\numberwithin{equation}{section}
\numberwithin{figure}{section}
\renewcommand{\a}{\alpha}
\renewcommand{\b}{\beta}
\renewcommand{\g}{\gamma}
\newcommand{\diag}{\mathrm{diag}}

\newcommand{\spec}{\mathrm{spec}}
\newcommand{\Spec}{\mathrm{Spec}}
\newcommand{\Leb}{\mathrm{Leb}}

\newcommand{\tilh}{\tilde{h}}

\renewcommand{\Tr}{\mathbf{tr}}

\newcommand{\rel}{\to}

\newcommand{\HIVE}{\mathtt{HIVE}}
\newcommand{\AHIVE}{\mathtt{AUGHIVE}}

\newcommand{\oct}{\mathbf{oct}}
\newcommand{\GT}{\mathtt{GT}}

\newcommand{\Var}{\operatorname{var}}

\renewcommand{\ent}{\operatorname{ent}}
\newcommand{\diamondplus}{|\mathtt{tet}|}

\renewcommand{\T}{\mathbb T}
\newcommand{\volBH}{\hat{\operatorname{vol}}_{BH}}
\numberwithin{equation}{section}
\newcommand{\Herm}{\mathrm{Herm}}

\begin{document}
\title{Hives from deformed GUE minor processes}

\author{Hariharan Narayanan\\
School of Technology and Computer Science, TIFR Mumbai\\
\texttt{hariharan.narayanan@tifr.res.in}}
\maketitle 

\begin{abstract}
We construct random hives from deformed GUE minor processes.  Starting from
two independent diagonally deformed GUE matrices
\[
X=\sqrt{n}(wG+uD),\qquad Y= \sqrt{n}(w'G'+u'D'),
\]
where \(D,D'\) are diagonal and have GUE spectra, we use their minor processes to form a
double hive and then apply the octahedron recurrence.  Under the matching
condition
\[
\frac{u}{w^2}=\frac{u'}{(w')^2},
\]
we prove that the resulting hive law is close, in relative entropy, to a GUE
hive law.  More precisely, if
\[
a^2=w^2+u^2,\qquad b^2=(w')^2+(u')^2,
\]
then the produced hive density \(q_n\) satisfies
\[
D_{\mathrm{KL}}\!\left(
q_n\,
\middle\|\,
\operatorname{Density}\bigl(\mathcal H_n(a\sqrt n,b\sqrt n,c_{**}\sqrt n)\bigr)
\right)
=
O(n\log n).
\]
The third scale \(c_{**}\) is determined by a limiting tetrahedral
optimization problem; equivalently, writing \(\delta=u+u'\),
\[
\delta^2
=
\frac{
2c_{**}^4(c_{**}^2-a^2-b^2)
}{
(c_{**}^2-a^2+b^2)(c_{**}^2+a^2-b^2)
}.
\]
Thus the construction realizes GUE hive laws, up to subleading relative
entropy, throughout the right-angled and obtuse regime.  The appendix records
two explicit surface-tension approximations and numerical comparisons which
motivated the construction.
\end{abstract}
\newpage
\tableofcontents
\section{Introduction}

Hives were introduced by Knutson and Tao in their proof of the saturation
conjecture and have since become a central object in the study of the Horn
problem, Littlewood--Richardson coefficients, and spectra of sums of
Hermitian matrices \cite{KT1,KT2,Horn,Kly,FultonYoungTableaux}.  A hive is a
discrete concave function on a triangular
lattice satisfying three families of rhombus inequalities.  When the three
boundary increments are fixed, the set of all such hives forms a convex
polytope.  Its volume is closely related to the density of the eigenvalue
distribution of a sum of two Hermitian matrices with prescribed spectra.
More precisely, the Coquereaux--Zuber formula expresses the density of
$\operatorname{spec}(X+Y)$, for independent Haar conjugates
$X=U\operatorname{diag}(\lambda)U^*$ and
$Y=V\operatorname{diag}(\mu)V^*$, in terms of Vandermonde determinants and
the volume of the hive polytope with boundary data $(\lambda,\mu;\nu)$
\cite{CZ,Zuberhorn}.

In the large $n$ limit, these hive volumes are governed by a variational
principle.  The local contribution to the entropy is encoded by a convex
surface tension function
\[
    \sigma : \mathbb R_+^3 \to \mathbb R,
\]
defined as the negative logarithm of the limiting normalized volume of a
periodic polytope of rhombus-concave functions with prescribed average
Hessian.  If $h$ is a surface tension minimizing continuum hive with boundary
data determined by limiting spectral profiles $\lambda,\mu,\nu$, then the
leading-order entropy of the corresponding hive polytope is described by the
integral
\[
    2\int_T \sigma\bigl((-1)(\hess h)_{\mathrm{ac}}\bigr)\,dx,
\]
where \((\hess h)_{\mathrm{ac}}\) denotes the absolutely continuous part of
the Hessian measure.
Together with the continuum Vandermonde terms, this gives the rate function
appearing in the large deviation principle for the spectrum of a sum of two
random Hermitian matrices \cite{NarSheff}.

The existence of the surface tension and its role in the large deviation
principle were established in \cite{NarSheff}.  However, the
function $\sigma$ is not known explicitly.  This lack of a closed formula is
one of the main obstacles to obtaining a more concrete description of the
limit shape of random hives and of the associated randomized Horn problem.
A special class of boundary conditions is provided by GUE spectral data
\cite{AndersonGuionnetZeitouni,TaoBook,Gustavsson}.  In
that case, previous work \cite{GangNar} gives a sharp
identity for the total entropy in terms of the Euclidean area of a triangle
with side lengths determined by the $L^2$ norms of the three boundary
profiles.  This identity gives the exact value of the surface-tension integral
along a surface tension minimizing GUE continuum hive, but it does not by
itself determine the pointwise value of $\sigma$.  The same GUE boundary
regime was studied in \cite{NarSheffTao}, where correlation decay estimates
for GUE eigengaps were used to prove concentration of the associated random
hives; in particular, after scaling, the variance tends to zero and compactness
gives subsequential convergence to deterministic continuum hives.  The
subsequent work \cite{Nar} proves that this subsequential limit  can be
replaced with a sequential limit in the GUE boundary setting: the normalized random hives converge in
probability to a single continuum hive, and the value of this limiting hive at
a point is described by a variational problem over asymptotic height functions
for lozenge tilings.  These results identify and characterize the GUE limit
shape, while the present paper focuses on extracting information about the
surface tension by constructing GUE hive laws in additional geometric regimes.

The main purpose of this paper is to give a probabilistic construction of
GUE hive laws in the (right-angled and) obtuse regimes, up to subleading
relative entropy.  The construction is developed in
\cref{sec:deformed-gue-hives}.  Let
\[
    X = \sqrt n\,(wG + uD),
    \qquad
    Y = \sqrt n\,(w'G' + u'D'),
\]
where $G,G',\widetilde G,\widetilde G'$ are independent GUE matrices and
\[
D=\diag(\spec(\widetilde G)),\qquad
D'=\diag(\spec(\widetilde G')).
\]
The minor processes of
$X$ and $Y$ give two Gelfand-Tsetlin patterns
\cite{FultonYoungTableaux}.  Using the large-gap GT-to-hive realization
recalled in Proposition~\ref{gt-rem}(iv), these may be placed as the upper
panels of a double hive, after which the octahedron recurrence produces a hive
\cite{NarSheffTao,KTW,Speyer}.  We prove in \cref{thm:main} that, under
the matching condition
\[
    \frac{u}{w^2}=\frac{u'}{(w')^2},
\]
the density $q_n$ of the resulting hive satisfies
\[
D_{\mathrm{KL}}\!\left(
q_n\,
\middle\|\,
\operatorname{Density}\bigl(\mathcal H_n(a\sqrt n,b\sqrt n,c_{**}\sqrt n)\bigr)
\right)
=
O(n\log n),
\]
where
\[
    a^2=w^2+u^2,
    \qquad
    b^2=(w')^2+(u')^2.
\]
Since the hive has order $n^2$ degrees of freedom, this is a subleading
relative-entropy error.  The remaining parameter $c_{**}$ is determined by a
limiting tetrahedral optimization problem.  Equivalently, if
$\delta=u+u'$, then $c_{**}$ is characterized by
\[
    \delta^2
    =
    \frac{
        2c_{**}^4(c_{**}^2-a^2-b^2)
    }{
        (c_{**}^2-a^2+b^2)(c_{**}^2+a^2-b^2)
    }.
\]
Thus, for a target right-angled or obtuse triple $(a,b,c)$ with
$c^2\geq a^2+b^2$, the required diagonal displacement is obtained by setting
$c_{**}=c$ in this formula.  This gives a geometric interpretation of the
deformed minor-process construction in terms of tetrahedral volume.

The construction was motivated in part by questions about the hive surface
tension.  In the appendix we record two explicit approximations,
$\widetilde\sigma$ and $\sigma_{\mathrm{BH}}$, together with numerical
comparisons.  These approximations are not used in the proof of the main
theorem.

The paper is organized as follows.  \Cref{sec:prelim} recalls the definitions of
hives, augmented hives, Gelfand-Tsetlin patterns, the Coquereaux--Zuber
density formula, and the GUE hive maximum entropy measures.
\Cref{sec:deformed-gue-hives} describes the construction of hives from
deformed GUE minor processes.  The appendices discuss the approximations
$\widetilde\sigma$ and $\sigma_{\mathrm{BH}}$, and record the numerical
experiments.
\section{Preliminaries}
\lab{sec:prelim}




For a $n \times n$ Hermitian matrix $W$, let $\spec(W)$ denote the vector in $\R^n$ whose coordinates are the eigenvalues of $W$ listed in non-increasing order. We denote by $\Spec_n$ the set of all vectors in $\R^n$ whose coordinates are non-increasing and sum to $0$.  Thus \(\Spec_n+\mathbb R\one\) is the full ordered Weyl chamber, and ordinary GUE spectra lie in \(\Spec_n+\mathbb R\one\), not necessarily in \(\Spec_n\).

We use the following normalization of the GUE.  Let \(\Herm_n\) be the real
vector space of \(n\times n\) complex Hermitian matrices, equipped with
Lebesgue measure
\[
dH=\prod_{i=1}^n dH_{ii}
\prod_{1\leq i<j\leq n}
d(\operatorname{Re}H_{ij})\,d(\operatorname{Im}H_{ij}).
\]
A random Hermitian matrix \(G\) has law \(GUE_n\) if its density with respect
to \(dH\) is
\[
\frac{1}{(2\pi)^{n/2}\pi^{n(n-1)/2}}
\exp\left(-\frac12\operatorname{Tr}(H^2)\right).
\]
Equivalently, the diagonal entries are independent \(N(0,1)\) random
variables, and the real and imaginary parts of the off-diagonal entries are
independent \(N(0,1/2)\) random variables, subject to Hermitian symmetry.  By
the Weyl integration formula, the ordered spectrum \(\lambda=\spec(G)\) has
density on the full Weyl chamber
\[
\mathbb R^n_{\geq}:=\{\lambda\in\mathbb R^n:\lambda_1\geq\cdots\geq\lambda_n\}
\]
given by
\[
\frac{1}{(2\pi)^{n/2}V_n(\tau_n)}
V_n(\lambda)^2
\exp\left(-\frac12|\lambda|^2\right)\,d\lambda.
\]
Here, for \(x\in\mathbb R^n\), we write
\[
x^\downarrow_1\geq x^\downarrow_2\geq\cdots\geq x^\downarrow_n
\]
for the decreasing rearrangement of the entries of \(x\), and set
\begin{equation}\label{eq:sorted-vandermonde}
V_n(x)=\prod_{1\leq i<j\leq n}(x^\downarrow_i-x^\downarrow_j).
\end{equation}
Thus \(V_n\) is the nonnegative Vandermonde obtained after sorting the entries
in decreasing order.  Also
\[
\tau_n=\left(\frac{n-1}{2},\frac{n-3}{2},\ldots,-\frac{n-3}{2},-\frac{n-1}{2}\right),
\]
so that \(V_n(\tau_n)=\prod_{k=1}^{n-1}k!\).
More generally, if \(s>0\), then \(\spec(sG)\) has density on
\(\mathbb R^n_{\geq}\) given by
\[
\frac{1}{(2\pi)^{n/2}s^{n^2}V_n(\tau_n)}
V_n(\lambda)^2
\exp\left(-\frac1{2s^2}|\lambda|^2\right)\,d\lambda.
\]

\begin{definition}[Entropy, relative entropy, and mutual information]
All entropies in this paper are differential entropies with respect to the
Lebesgue measure on the affine span of the relevant polytope or cone.  If
\(p\) is a probability density, we write
\[
\ent(p):=-\int p(x)\log p(x)\,dx.
\]
If \(X\) has density \(p_X\), we also write \(\ent(X)=\ent(p_X)\).  For two
random variables \(X,Y\) with joint density \(p_{X,Y}\) and conditional
density \(p_{X\mid Y=y}\), the conditional entropy is
\[
\ent(X\mid Y)
:=
\mathbb E\bigl[\ent(p_{X\mid Y})\bigr]
=
-\int p_{X,Y}(x,y)\log p_{X\mid Y=y}(x)\,dx\,dy,
\]
whenever the conditional densities exist and the integral is well-defined.  For two
probability densities \(q,p\) on the same space, the relative entropy is
\[
D_{\mathrm{KL}}(q\|p)
:=
\int q(x)\log\frac{q(x)}{p(x)}\,dx,
\]
with the usual convention that it is \(+\infty\) unless \(q\) is absolutely
continuous with respect to \(p\).  For random variables \(X,Y\), their mutual
information is
\[
I(X;Y):=
D_{\mathrm{KL}}\bigl(\operatorname{Law}(X,Y)\|
\operatorname{Law}(X)\otimes\operatorname{Law}(Y)\bigr).
\]
When the relevant conditional densities exist, this is equivalently
\[
I(X;Y)=\ent(X)-\ent(X\mid Y).
\]
\end{definition}

\begin{lemma}[Data-processing inequality; see {\cite[Theorem~2.8.1]{CoverThomas}}]\lab{lem:data-processing}
Let \(P\) and \(Q\) be probability measures on a measurable space, and let
\(\Phi\) be a measurable map.  Then
\[
D_{\mathrm{KL}}(\Phi_\#P\|\Phi_\#Q)
\leq
D_{\mathrm{KL}}(P\|Q),
\]
with the convention that the right-hand side may be \(+\infty\).  In
particular, if \(X,Y\) are random variables and \(\Phi\) is measurable, then
\[
I(X;\Phi(Y))\leq I(X;Y).
\]
\end{lemma}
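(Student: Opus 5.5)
The plan is to prove the inequality for measures first, using the chain rule for relative entropy, and then read off the mutual-information statement as a special case. Write \(P\) and \(Q\) for the laws of a random element \(Z\), and set \(W=\Phi(Z)\). Assume \(D_{\mathrm{KL}}(P\|Q)<\infty\), since otherwise there is nothing to prove. Then \(P\ll Q\). It follows that \(\Phi_\#P\ll\Phi_\#Q\), because a set that is \(\Phi_\#Q\)-null pulls back to a \(Q\)-null set.

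\textbf{Joint law.} Let \(f=dP/dQ\). Consider the joint law of \((Z,W)\) on the product space under \(P\) and under \(Q\); in both cases it is concentrated on the graph of \(\Phi\). The map \(z\mapsto(z,\Phi(z))\) is injective and measurable, so
\[
D_{\mathrm{KL}}(P\|Q)=D_{\mathrm{KL}}\bigl(\operatorname{Law}_P(Z,W)\|\operatorname{Law}_Q(Z,W)\bigr).
\]

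\textbf{Conditional expectation and Jensen.} The Radon--Nikodym derivative of \(\Phi_\#P\) with respect to \(\Phi_\#Q\), evaluated at \(W\), is \(g(W)=\E_Q[f(Z)\mid W]\). This is the standard identification of the density of a pushforward with a conditional expectation. Hence
\[
D_{\mathrm{KL}}(P\|Q)-D_{\mathrm{KL}}(\Phi_\#P\|\Phi_\#Q)=\E_Q\Bigl[g(W)\,\E_Q\Bigl[\tfrac{f}{g(W)}\log\tfrac{f}{g(W)}\,\Big|\,W\Bigr]\Bigr].
\]
On \(\{g>0\}\), the ratio \(f/g(W)\) has conditional \(Q\)-mean one. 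Jensen's inequality for the convex function \(t\mapsto t\log t\) therefore shows that the inner conditional expectation is nonnegative, which gives the claimed inequality.

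\textbf{Main obstacle.} The delicate step is integrability. To justify splitting \(\E_Q[f\log f]\) into the two pieces above, I would work with the negative parts, using \(t\log t\ge -1/e\) so that the negative parts are bounded. Alternatively, I would first truncate \(f\) and pass to the limit by monotone convergence. If this measure-theoretic step becomes heavy, I would instead simply cite \cite[Theorem~2.8.1]{CoverThomas}, as the lemma does.

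\textbf{Mutual information.} Apply the inequality with
\[
P=\operatorname{Law}(X,Y),\qquad Q=\operatorname{Law}(X)\otimes\operatorname{Law}(Y),
\]
and with the map \((x,y)\mapsto(x,\Phi(y))\). Pushing \(P\) forward gives \(\operatorname{Law}(X,\Phi(Y))\). Pushing \(Q\) forward gives \(\operatorname{Law}(X)\otimes\operatorname{Law}(\Phi(Y))\), because a product measure pushes forward to a product measure under a product map. By the definition of mutual information, the measure inequality becomes
\[
I(X;\Phi(Y))\le I(X;Y).
\]
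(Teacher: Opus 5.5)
Your proof is correct, but it takes a different route from the paper. The paper gives no argument of its own and only cites \cite[Theorem~2.8.1]{CoverThomas}. That theorem is the mutual-information statement for a Markov chain $X\to Y\to Z$. It is proved there by expanding $I(X;Y,Z)$ with the chain rule in two ways and using $I(X;Z\mid Y)=0$ and $I(X;Y\mid Z)\ge 0$.

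You go in the opposite direction. You first prove the relative-entropy inequality for an arbitrary measurable $\Phi$: you identify $\frac{d\Phi_\#P}{d\Phi_\#Q}(W)=\E_Q[f\mid W]$ and apply conditional Jensen to $t\log t$. You then obtain the mutual-information statement by pushing forward under the product map $(x,y)\mapsto(x,\Phi(y))$.

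What your route buys is generality and exactly the form the paper needs. It uses neither densities nor regular conditional distributions. It also gives the pushforward inequality $D_{\mathrm{KL}}(\Phi_\#P\|\Phi_\#Q)\le D_{\mathrm{KL}}(P\|Q)$ that the proof of Theorem~\ref{thm:main} actually invokes, when it marginalizes the image of the octahedron recurrence to its first hive. The cited theorem does not literally state this. What the Cover--Thomas route buys is that it treats a general, possibly randomized, Markov chain $X\to Y\to Z$ in one stroke.

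Two simplifications are available. First, your ``joint law'' paragraph is never used. It is the first step of a chain-rule proof you do not carry out, so you can delete it. Second, the integrability worry disappears if you apply conditional Jensen directly as $\varphi(\E_Q[f\mid W])\le \E_Q[\varphi(f)\mid W]$, with $\varphi(t)=t\log t\ge -1/e$, and then take $\E_Q$. Every quantity then lies in $[-1/e,\infty]$, so no difference of possibly infinite terms is ever formed.
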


\label{subsec:Hives}

\subsection{Gelfand--Tsetlin patterns,  hives and the octahedron recurrence}

\begin{figure}[!h]
\begin{center}
\begin{minipage}{0.45\linewidth}
\centering
\includegraphics[width=\linewidth]{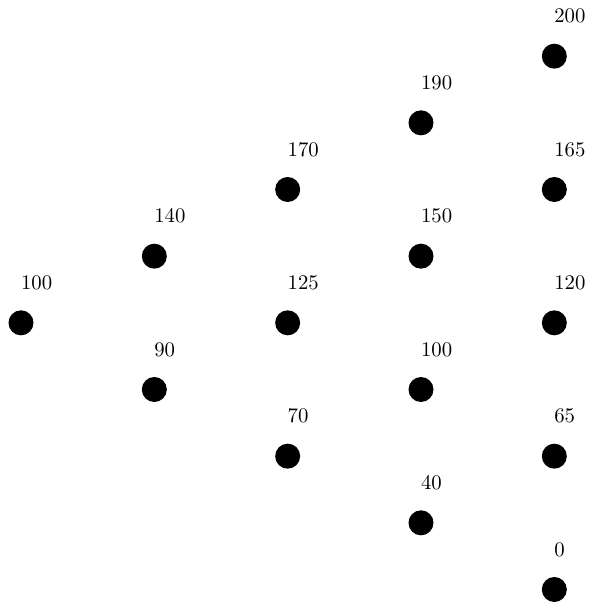}
\caption{A hive for \(n=5\).}
\label{fig:hive-example}
\end{minipage}
\hfill
\begin{minipage}{0.45\linewidth}
\centering
\includegraphics[width=\linewidth]{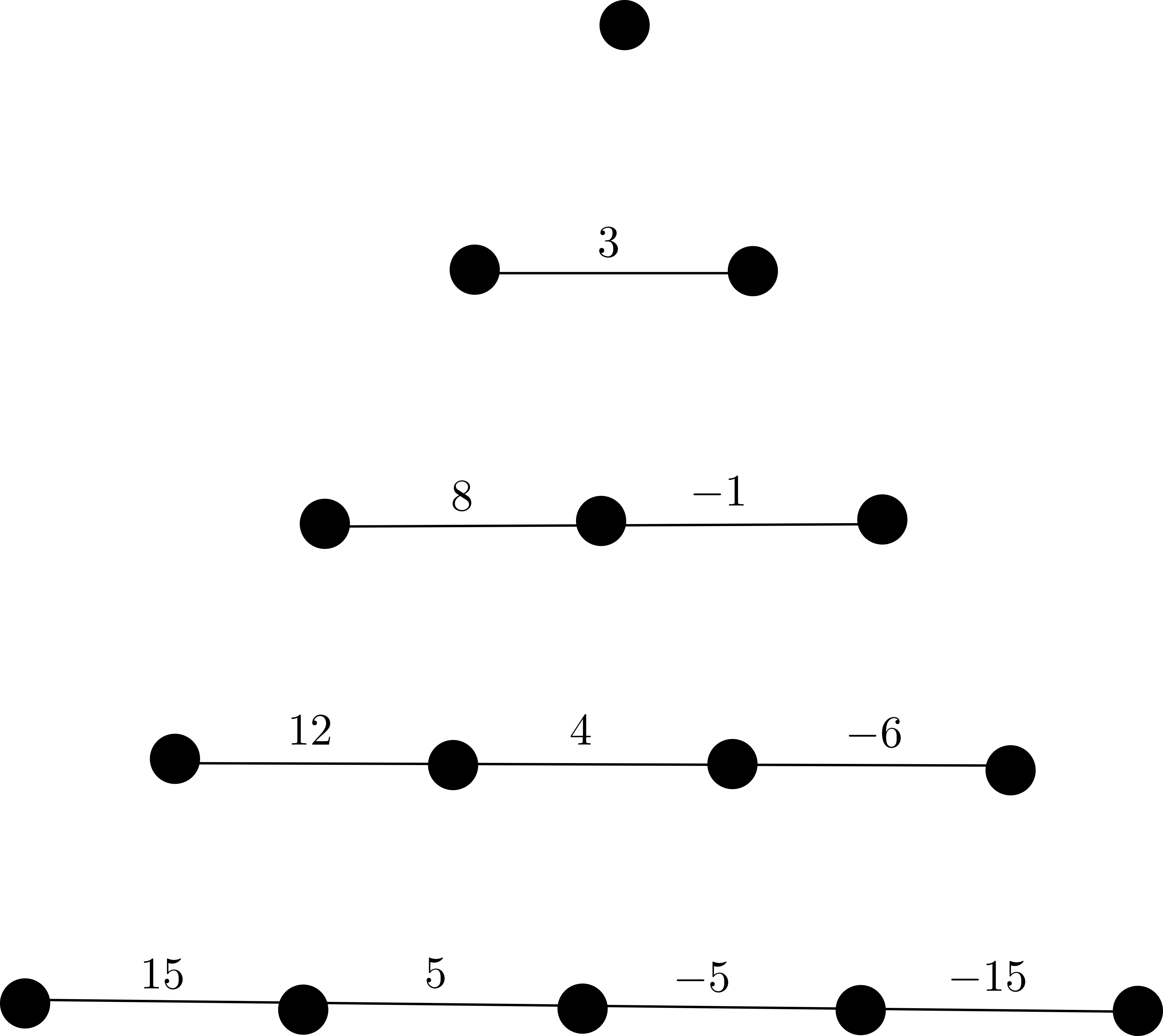}
\caption{A Gelfand--Tsetlin pattern for \(n=5\).}
\label{fig:gt-example}
\end{minipage}
\end{center}
\end{figure}

Let $T$ be the triangle $\{(x, y) \in [0, 1]^2| x \leq y\}.$ Let $T_n$ denote the set $nT \cap \Z^2.$
\begin{definition}[Discrete Hessian and the $\De_i$ on $T_n$]
Let $f: T_n \ra \R$ be a function.
\bit
\item Let $E_0(T_n)$ be the set of all parallelograms $e_0\subseteq T_n$ whose vertices are $\{(v_1, v_2),  (v_1 + 1, v_2),  (v_1 + 1, v_2 + 1),  (v_1 +2, v_2 + 1)\}.$

\item Let $E_1(T_n)$ be the set of all parallelograms $e_1\subseteq T_n$ whose vertices are $\{(v_1, v_2),  (v_1 + 1, v_2),  (v_1, v_2 + 1),  (v_1 +1, v_2 + 1)\}.$ 
\item Let $E_2(T_n)$ be the set of all parallelograms $e_2\subseteq T_n$ whose vertices are $\{(v_1, v_2),  (v_1 + 1, v_2+1),  (v_1, v_2 + 1),  (v_1 +1, v_2 + 2)\}.$ 
\eit

We define the discrete Hessian $\hess(f):E(T_n) \ra \R$ to be a  real-valued function on the set $E(T_n) = E_0(T_n) \cup E_1(T_n) \cup E_2(T_n)$ and the $\De_i$ from $\R^{T_n}$ to $\R^{E_i(T_n)}$ by 
\beq\lab{eq:A}
\hess f(e_0) := \De_0 f(e_0) :=  f(v_1, v_2) - f(v_1 + 1, v_2) - f(v_1 + 1, v_2 + 1) + f(v_1 +2, v_2 + 1).\nonumber\\
\hess f(e_1) := \De_1 f(e_1) :=  - f(v_1, v_2) + f(v_1 + 1, v_2) + f(v_1, v_2 + 1) - f(v_1 + 1,  v_2 + 1).\nonumber\\
\hess f(e_2) := \De_2 f(e_2) :=  f(v_1, v_2) - f(v_1+1, v_2+1) - f(v_1, v_2 + 1) + f(v_1 + 1, v_2 + 2).\nonumber\\
\eeq
\end{definition}

\begin{definition}[Rhombus concavity]\lab{def:rhomb}
Given a function $h:T \ra \R$, and a positive integer $n$,  let $h_n$ denote the function from $T_n$ to $\R$ such that for $(nx, ny) \in T_n$, $h_n(nx, ny) = n^2 h(x, y).$
A function $h:T \ra \R$ is called {\bf rhombus concave} if for any positive integer $n$, 
and any $i$, $\De_i h_n$ is nonpositive on $E_i(T_n),$ and $h$ is continuous on $T$.
The corresponding function $h_n$ is called {\bf discrete (rhombus) concave}.  
Note that a necessary and sufficient condition for a function $h_n$ from $T_n$ to $\R$ to be discrete concave,  is that the piecewise linear extension (which we denote $\tilh_n$) of $h_n$ to $nT$ is  concave.
Here each piece is an isosceles right triangle with a $\sqrt{2}-$length  hypotenuse parallel to the vector $(1, 1).$
\end{definition}

\begin{definition}[Hive]Let $ H_n(\lambda_n, \mu_n;  \nu_n)$ denote the set of all discrete concave functions $h_n:T_n \ra \R,$ (which,  following Knutson and Tao \cite{KT1},  we call hives)
such that \ben \item $\forall i \in [n]\cup\{0\},\quad h_n(0,  i) = \sum_{j = 1}^i \la_n(j).$
\item $\forall i \in [n]\cup\{0\},\quad h_n(i,  n) = \sum_{j = 1}^n \la_n(j) + \sum_{j = 1}^i \mu_n(j).$
\item $\forall i \in [n]\cup\{0\},\quad h_n(i,  i) = \sum_{j = 1}^i \nu_n(j).$
\een
Let $|H_n(\lambda_n, \mu_n;  \nu_n)|$ denote the ${n-1 \choose 2}-$dimensional Lebesgue measure of this hive polytope. 
\end{definition}

   Denoting probability densities with respect to the $n-1$ dimensional Lebesgue measure $$\Leb_{n-1, 0}(d\nu) = (d\nu(1)) \dots (d\nu(n-1))$$ on the hyperplane  in $\R^n$ consisting of vectors whose coordinates sum to $0$  by $\rho_n$, it is known through the work of Coquereaux and Zuber (see  Proposition 4 in \cite{CZ} and Equation (4) in \cite{Zuberhorn}, and also Knutson and Tao \cite{KT2} for a less explicit form of the result) that the following theorem holds.
   \begin{thm}[Coquereaux-Zuber]\lab{thm:1}
    Let $X_n = U_n \mathrm{diag}(\la_n)U_n^*$ and $Y_n = V_n \mathrm{diag}(\mu_n)V_n^*$ where $U_n$ and $V_n$ are independent random unitary matrices sampled from the Haar measure on the unitary group $\mathbb{U}_n.$ Then, 
   \begin{eqnarray}\lab{eq:2.4new} \rho_n\left[\spec(X_n + Y_n) = \nu_n\right] =  \frac{V_n(\nu_n)V_n(\tau_n)}{V_n(\la_n)V_n(\mu_n)} |H_n(\la_n, \mu_n; \nu_n)|.\end{eqnarray}
   \end{thm}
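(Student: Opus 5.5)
The plan is to derive the Coquereaux--Zuber formula from the Gelfand--Tsetlin description of the minor process together with the Harish-Chandra--Itzykson--Zuber (HCIZ) integral. Work on the level of Fourier transforms, or equivalently compute the density of $\spec(X_n+Y_n)$ by conditioning. First recall that for $A=U\diag(\la)U^*$ with Haar $U$, the spectra of the successive principal minors form a Gelfand--Tsetlin pattern with top row $\la$. This pattern is uniformly distributed on the GT polytope $\GT(\la)$, with density $V_n(\tau_n)/V_n(\la)$ relative to Lebesgue measure on the pattern coordinates (Baryshnikov's theorem). Its volume is $V_n(\la)/V_n(\tau_n)$, and this normalization is what produces the constant $V_n(\tau_n)$ in the formula.

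Second, compute the characteristic function. For a Hermitian test matrix $\diag(s)$, HCIZ gives
\[
\E\, e^{i\Tr(\diag(s)X_n)}=\frac{V_n(\tau_n)\det[e^{is_j\la_k}]}{V_n(is)V_n(\la)},
\]
up to sign conventions. Since $X_n$ and $Y_n$ are independent, the characteristic function of $X_n+Y_n$ factors as a product of two such expressions. On the other hand, if $\nu=\spec(X_n+Y_n)$ has density $\rho_n$, the same quantity equals
\[
\int \rho_n(\nu)\,\frac{V_n(\tau_n)\det[e^{is_j\nu_k}]}{V_n(is)V_n(\nu)}\,d\nu .
\]
Multiply through by $V_n(is)$ and antisymmetrize. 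Then $\rho_n(\nu)/V_n(\nu)$ is identified, as an antisymmetric function, with the function whose Fourier transform is
\[
\frac{V_n(\tau_n)\det[e^{is_j\la_k}]\det[e^{is_j\mu_k}]}{V_n(\la)V_n(\mu)V_n(is)}.
\]
Dividing by $V_n(is)$ corresponds to integrating over a GT pattern: $\det[e^{is_j\la_k}]/V_n(is)$ is, up to constants, the Fourier transform of the pushforward of Lebesgue measure on $\GT(\la)$ under the weight map. The weight map sends a pattern to the vector of differences of consecutive row sums.

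Third, convert this to hives. The product of the two determinant factors with a single $1/V_n(is)$ corresponds to convolving Lebesgue measure on $\GT(\la)$, pushed to weights, with the discrete atoms $\det[e^{is_j\mu_k}]$. The result is the pushforward of Lebesgue measure on the set of pairs (a GT pattern with top $\la$, a shift by a permutation of $\mu$) with signs. The Knutson--Tao hive description identifies the signed total with the volume $|H_n(\la,\mu;\nu)|$ as a function of $\nu$, after antisymmetrization. Equivalently, a hive with boundary $(\la,\mu;\nu)$ is a GT pattern for $\la$ with ``$\mu$-shifted'' rows, subject to interlacing with $\nu$. I would carry this out by induction on $n$, peeling off one row of the hive, which is one row of the GT pattern, and checking the rhombus inequalities against the interlacing.

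Combining the three steps gives
\[
\rho_n(\nu)=\frac{V_n(\nu)V_n(\tau_n)}{V_n(\la)V_n(\mu)}\,|H_n(\la,\mu;\nu)|.
\]
The main obstacle is the third step. The cancellation of the signed alternating sum into exactly the hive volume must be shown, not just its Fourier transform, and the normalization with respect to $\Leb_{n-1,0}$ on the trace-zero hyperplane must be tracked. If needed, I would cite Coquereaux--Zuber (Proposition 4) for this combinatorial identity and verify only the constants, using the case $n=2$ as a check.
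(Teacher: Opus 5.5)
The paper gives no proof of this statement; it quotes it from Coquereaux--Zuber (Proposition~4 of \cite{CZ} and Equation~(4) of \cite{Zuberhorn}). Your Steps~1--2 are correct and form the standard analytic reduction. HCIZ together with Baryshnikov's theorem shows that $\det[e^{is_j\lambda_k}]/V_n(is)$ is exactly the Fourier transform of the pushforward of Lebesgue measure on $\GT_{\diag(\lambda)\rel *}$ under the weight map. Your final Fourier expression is symmetric in $\lambda$ and $\mu$. Hence, for $\nu$ in the open chamber and with consistent Lebesgue normalizations on the trace hyperplanes, you get with exactly the right constants
\[
\rho_n(\nu)=\frac{V_n(\nu)V_n(\tau_n)}{V_n(\lambda)V_n(\mu)}\sum_{\sigma\in S_n}\operatorname{sgn}(\sigma)\,\bigl|\GT_{\diag(\mu)\rel \nu-\sigma\lambda}\bigr| .
\]
So no calibration of constants from $n=2$ is needed. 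What remains is the identity
\[
\sum_{\sigma\in S_n}\operatorname{sgn}(\sigma)\,\bigl|\GT_{\diag(\mu)\rel \nu-\sigma\lambda}\bigr|=|H_n(\lambda,\mu;\nu)|,
\]
a continuum Littlewood--Richardson rule, and it is the entire content of the theorem. Step~3 only asserts it, and this is a genuine gap.

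The row-peeling induction cannot supply this identity, and the description of hives behind it is inaccurate. Take $h\in H_n(\lambda,\mu;\nu)$ and set $\gamma^{(j)}_i=h(i,j)-h(i-1,j)$. Then:
\begin{itemize}
\item $\gamma$ has top row $\mu$ and weight $\nu-\lambda$.
\item The $E_0$ and $E_1$ rhombus inequalities are exactly Gelfand--Tsetlin interlacing for $\gamma$.
\item The $E_2$ inequalities are extra constraints bounding linear combinations of partial row sums of $\gamma$ by the gaps $\lambda_j-\lambda_{j+1}$. Nothing ``interlaces with $\nu$''.
\end{itemize}
So $H_n(\lambda,\mu;\nu)$ is identified with a subpolytope of the single fiber $\GT_{\diag(\mu)\rel\nu-\lambda}$. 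The terms with $\sigma\neq\mathrm{id}$ are volumes of fibers over other Weyl translates, and they must cancel exactly the volume of the patterns violating the $E_2$ constraints. That is a global sign-reversing cancellation over $S_n$, not something you can check one row at a time.

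It trivializes only when the gaps of $\lambda$ exceed $\mu_1-\mu_n$. Then only $\sigma=\mathrm{id}$ survives, the $E_2$ constraints are automatic, and the identity reduces to Proposition~\ref{gt-rem}(iv). In general you need a real argument. One option is a continuum involution. Another is the discrete route:
\begin{itemize}
\item the Brauer--Klimyk formula $c^{\nu}_{\lambda\mu}=\sum_{w\in S_n}\operatorname{sgn}(w)\,K_{\mu,\,w(\nu+\tau_n)-\lambda-\tau_n}$, where $K_{\mu,\beta}$ counts integral points of $\GT_{\diag(\mu)\rel\beta}$;
\item Knutson--Tao's theorem that $c^\nu_{\lambda\mu}$ counts integral hives;
\item dilation $(k\lambda,k\mu,k\nu)$ with $k\to\infty$, under which the map $h\mapsto\gamma$ is unimodular, lattice counts become volumes, and the $\tau_n$-shifts disappear;
\item continuity in the boundary data.
\end{itemize}
Citing Proposition~4 of \cite{CZ} for Step~3 amounts to citing the theorem itself. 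As written, your proposal establishes the HCIZ reduction and otherwise ends where the paper does, at the citation.
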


\begin{proposition}[Gelfand-Tsetlin facts, {\cite[Proposition 2]{NarSheffTao}}]\label{gt-rem}
Let \(\lambda=(\lambda_1\geq \cdots \geq \lambda_n)\in\Spec_n\).  For
\(a\in\mathbb R^n\), let \(\GT_{\diag(\lambda)\rel a}\) denote the real
Gelfand-Tsetlin polytope of triangular arrays
\[
        \Gamma=\{\lambda^{(k)}_i:1\leq i\leq k\leq n\}
\]
satisfying
\[
        \lambda^{(k+1)}_i
        \geq
        \lambda^{(k)}_i
        \geq
        \lambda^{(k+1)}_{i+1},
        \qquad
        1\leq i\leq k<n,
\]
with top row \(\lambda^{(n)}=\lambda\), and diagonal boundary condition
\[
        \sum_{i=1}^k \lambda^{(k)}_i
        =
        \sum_{i=1}^k a_i,
        \qquad 1\leq k\leq n.
\]
Let \(\GT_{\diag(\lambda)\rel *}:=\bigcup_a\GT_{\diag(\lambda)\rel a}\).
Then:
\begin{enumerate}
\item[(i)] If \(a\in\mathbb R^n\), then the Schur--Horn relation
\(\diag(\lambda)\rel a\) holds if and only if
\(\GT_{\diag(\lambda)\rel a}\) is nonempty.
\item[(ii)] The \(\binom n2\)-dimensional volume of
\(\GT_{\diag(\lambda)\rel *}\) is \(V_n(\lambda)/V_n(\tau_n)\).
\item[(iii)] If \(A\) is Haar-uniform on the unitary orbit with spectrum
\(\lambda\), then the eigenvalues of the northwest principal minors of
\(A\) form a uniformly distributed point of
\(\GT_{\diag(\lambda)\rel *}\). Its boundary vector \(a\) is the diagonal of
\(A\).
\item[(iv)] If \(\Lambda\in\Spec_n\) has large gaps,
\[
        \min_{1\leq i<n}(\Lambda_i-\Lambda_{i+1})
        >
        \lambda_1-\lambda_n,
\]
then for every \(a\in\mathbb R^n\) there is a volume-preserving linear
bijection
\[
        \GT_{\diag(\lambda)\rel a}
        \longleftrightarrow
        H_n(\Lambda,\lambda;\Lambda+a).
\]
Under this bijection, a Gelfand-Tsetlin pattern
\((\lambda^{(k)}_i)_{1\leq i\leq k\leq n}\) is sent to the hive \(h:T_n\to
\mathbb R\) given by
\[
        h(i,j)
        =
        \Lambda_1+\cdots+\Lambda_j
        +
        \lambda^{(j)}_1+\cdots+\lambda^{(j)}_i .
\]
\end{enumerate}
\end{proposition}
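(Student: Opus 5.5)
The statement collects four classical facts, and I would prove them in the order (i), (ii), (iii), (iv). Part (iii) uses the integration identity from (ii), and (iv) is a direct verification.

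For (i), the forward direction is Cauchy interlacing. If \(A\) is Hermitian with spectrum \(\lambda\) and diagonal \(a\), the spectra \(\lambda^{(k)}\) of its northwest \(k\times k\) minors interlace. Moreover, \(\sum_i\lambda^{(k)}_i=\operatorname{Tr}(A_{[k]})=\sum_{i\leq k}a_i\), so the array lies in \(\GT_{\diag(\lambda)\rel a}\). For the converse I would build \(A\) inductively by bordering. Given a \(k\times k\) Hermitian matrix with spectrum \(\lambda^{(k)}\), conjugate it to \(\diag(\lambda^{(k)})\) and form the arrowhead matrix
\[
\begin{pmatrix}\diag(\lambda^{(k)}) & v\\ v^* & d\end{pmatrix}.
\]
Its characteristic polynomial is \((t-d)\prod_i(t-\lambda^{(k)}_i)-\sum_i|v_i|^2\prod_{j\neq i}(t-\lambda^{(k)}_j)\). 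When \(\lambda^{(k+1)}\) interlaces \(\lambda^{(k)}\), one can choose \(|v_i|^2\geq 0\) (by partial fractions) and \(d=\sum\lambda^{(k+1)}-\sum\lambda^{(k)}\) so that the spectrum is exactly \(\lambda^{(k+1)}\). The trace condition then forces the new diagonal entry to equal \(a_{k+1}\), and undoing the conjugation keeps the earlier minors intact.

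For (ii), the free coordinates are the rows \(\lambda^{(1)},\dots,\lambda^{(n-1)}\), which give \(\binom n2\) coordinates in total. I would integrate out one row at a time, starting from the bottom, using the identity
\[
\int_{\mu\text{ interlacing }\lambda} V_{n-1}(\mu)\,d\mu=\frac{V_n(\lambda)}{(n-1)!}.
\]
To prove it, write \(V_{n-1}(\mu)=\det(\mu_i^{\,j-1})\) and integrate each row \(\mu_i\) over \([\lambda_{i+1},\lambda_i]\). This gives \(\det\bigl((\lambda_i^{\,j}-\lambda_{i+1}^{\,j})/j\bigr)\), which equals \(V_n(\lambda)/(n-1)!\) after adding rows and comparing with the \(n\times n\) Vandermonde. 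Iterating the identity gives \(V_n(\lambda)/\prod_{k=1}^{n-1}k!=V_n(\lambda)/V_n(\tau_n)\).

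For (iii), the first ingredient is invariance. For \(A\) Haar on the orbit, the conditional law of the \((n-1)\)-minor given its spectrum is again Haar on that orbit, by invariance under \(\mathbb U_{n-1}\hookrightarrow\mathbb U_n\). So it suffices to show that the spectrum \(\mu\) of the \((n-1)\)-minor has density \((n-1)!\,V_{n-1}(\mu)/V_n(\lambda)\) on the interlacing set. Given this, the conditional densities multiply and telescope to the constant \(V_n(\tau_n)/V_n(\lambda)\), which is the uniform density on \(\GT_{\diag(\lambda)\rel *}\) by (ii). I expect this one-step law to be the main obstacle. I would derive it from the arrowhead description. The vector \(U^*e_n\) is uniform on the sphere, so \(w_i:=|(U^*e_n)_i|^2\) is uniform on the simplex. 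The minor's eigenvalues are the roots of \(\sum_i w_i/(t-\lambda_i)=0\). Computing the Jacobian of the map from \(w\) to those roots, via partial fractions, gives exactly the Vandermonde ratio. The boundary statement follows from \(a_k=\sum\lambda^{(k)}-\sum\lambda^{(k-1)}\).

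For (iv), I would check that the stated \(h\) satisfies the boundary conditions:
\[
h(0,j)=\textstyle\sum_{i\leq j}\Lambda_i,\qquad h(i,n)=\sum\Lambda+\sum_{l\leq i}\lambda_l,\qquad h(j,j)=\sum_{i\leq j}(\Lambda_i+a_i).
\]
Next I would expand \(\Delta_0,\Delta_1,\Delta_2\) of \(h\) in terms of the pattern. Two of the rhombus families become exactly the two interlacing inequalities \(\lambda^{(j+1)}_{i+1}\leq\lambda^{(j)}_i\leq\lambda^{(j+1)}_i\). The third family has the form \(\Lambda_{j+1}-\Lambda_{j+2}+(\text{a difference of pattern entries})\geq 0\). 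All pattern entries lie in \([\lambda_n,\lambda_1]\), so the gap hypothesis makes the third family automatic. The map from patterns to interior hive values is affine, and it is triangular with unit diagonal in a suitable ordering, since \(h(i,j)-h(i-1,j)\) recovers \(\lambda^{(j)}_i\). Hence it preserves \(\binom n2\)-dimensional volume, and its inverse is given by these same differences, so it is a bijection.
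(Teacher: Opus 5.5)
Your arguments for (i)--(iii) are the standard ones and are sound. The paper gives no proof of its own here; it quotes the proposition from the cited work of Narayanan--Sheffield--Tao. Your tools are arrowhead bordering for (i), the box integral of the Vandermonde determinant for (ii), and the Dirichlet/resolvent description of a corank-one minor for (iii). In (iii), the Jacobian you defer reduces to a Cauchy determinant and does yield the density $(n-1)!\,V_{n-1}(\mu)/V_n(\lambda)$.

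The gap is in (iv), in the third rhombus family, which is the only place the large-gap hypothesis enters. Write $P_j=\Lambda_1+\cdots+\Lambda_j$ and $S^{(k)}_m=\lambda^{(k)}_1+\cdots+\lambda^{(k)}_m$, so that $h(i,j)=P_j+S^{(j)}_i$. For the rhombi based at $(i,j)$ one gets $\Delta_0h=\lambda^{(j+1)}_{i+2}-\lambda^{(j)}_{i+1}$ and $\Delta_1h=\lambda^{(j)}_{i+1}-\lambda^{(j+1)}_{i+1}$, as you say. However,
\[
\Delta_2 h=-(\Lambda_{j+1}-\Lambda_{j+2})+\bigl(S^{(j+2)}_{i+1}-S^{(j+1)}_{i+1}\bigr)-\bigl(S^{(j+1)}_{i}-S^{(j)}_{i}\bigr).
\]
For $i\geq 1$ this is not $\Lambda_{j+1}-\Lambda_{j+2}$ plus a difference of two pattern entries: the bracket is a signed sum of $4i+2$ entries. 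Knowing only that every entry lies in $[\lambda_n,\lambda_1]$ bounds the bracket by $(2i+1)(\lambda_1-\lambda_n)$. That would require gaps of order $n(\lambda_1-\lambda_n)$, not the stated threshold. So your justification of the third family does not prove (iv) as stated.

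The missing ingredient is a telescoping estimate that uses interlacing, not just the range of the entries. For $m\leq k$, the inequalities $\lambda^{(k+1)}_l\geq\lambda^{(k)}_l\geq\lambda^{(k+1)}_{l+1}$ give
\[
0\leq S^{(k+1)}_m-S^{(k)}_m=\sum_{l\leq m}\bigl(\lambda^{(k+1)}_l-\lambda^{(k)}_l\bigr)\leq\sum_{l\leq m}\bigl(\lambda^{(k+1)}_l-\lambda^{(k+1)}_{l+1}\bigr)=\lambda^{(k+1)}_1-\lambda^{(k+1)}_{m+1}\leq\lambda_1-\lambda_n .
\]
Apply the upper bound with $(k,m)=(j+1,i+1)$ and the lower bound with $(k,m)=(j,i)$; both are admissible because $i\leq j$ on every $e_2$ rhombus in $T_n$. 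This shows the bracket is at most $\lambda_1-\lambda_n$, so $\Delta_2h<0$ exactly under $\min_i(\Lambda_i-\Lambda_{i+1})>\lambda_1-\lambda_n$.

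With this inserted, the rest of your (iv) is fine. Since $\Delta_0,\Delta_1$ are exactly the interlacing inequalities and $\Delta_2$ is implied by them, the difference map $\lambda^{(j)}_i=h(i,j)-h(i-1,j)$ is an inverse on all of $H_n(\Lambda,\lambda;\Lambda+a)$. The unit-triangular structure then gives volume preservation.
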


\begin{figure}[h]
\begin{center}
\begin{minipage}{0.45\linewidth}
\centering
\includegraphics[width=\linewidth]{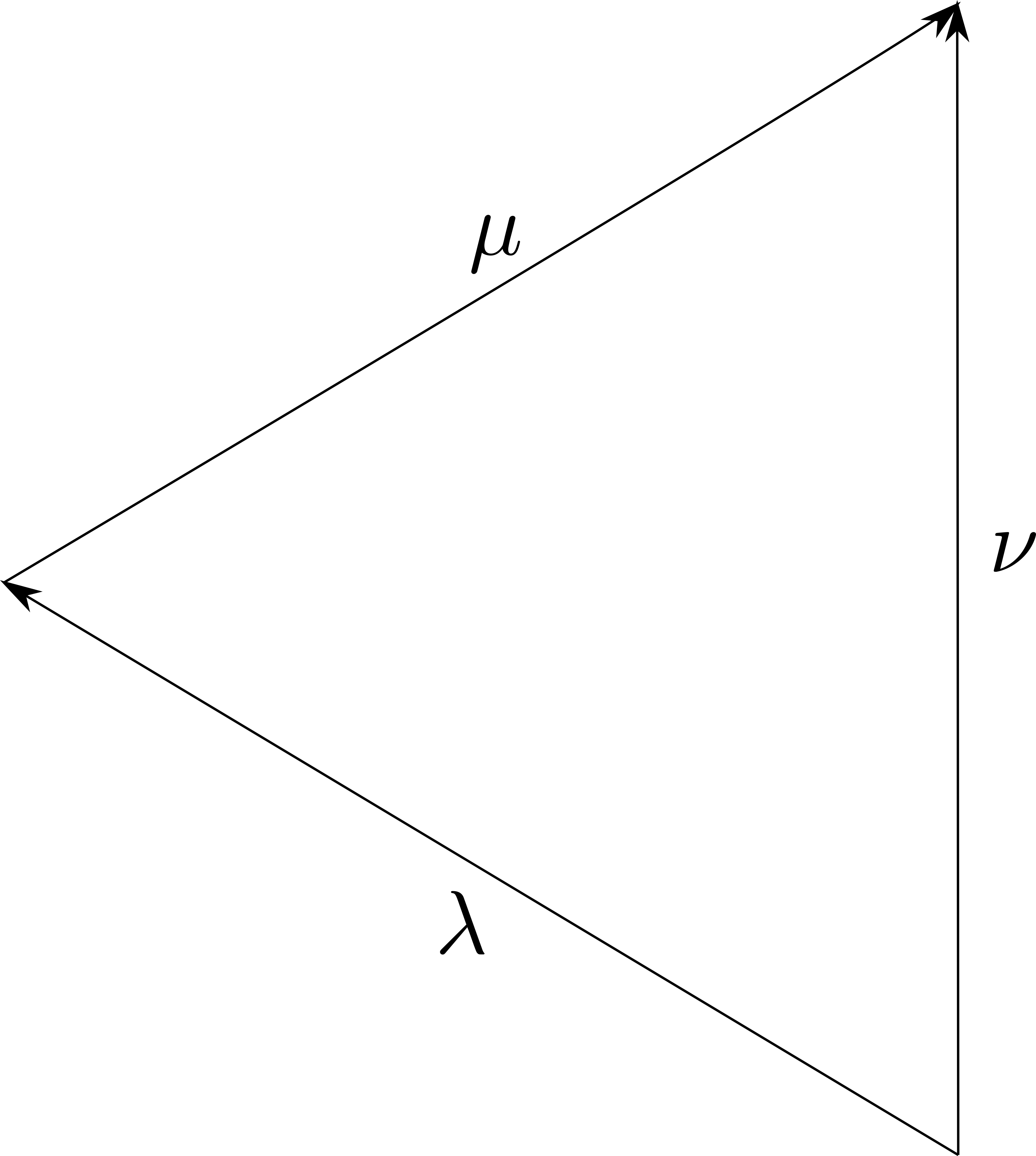}
\end{minipage}
\hfill
\begin{minipage}{0.45\linewidth}
\centering
\includegraphics[width=\linewidth]{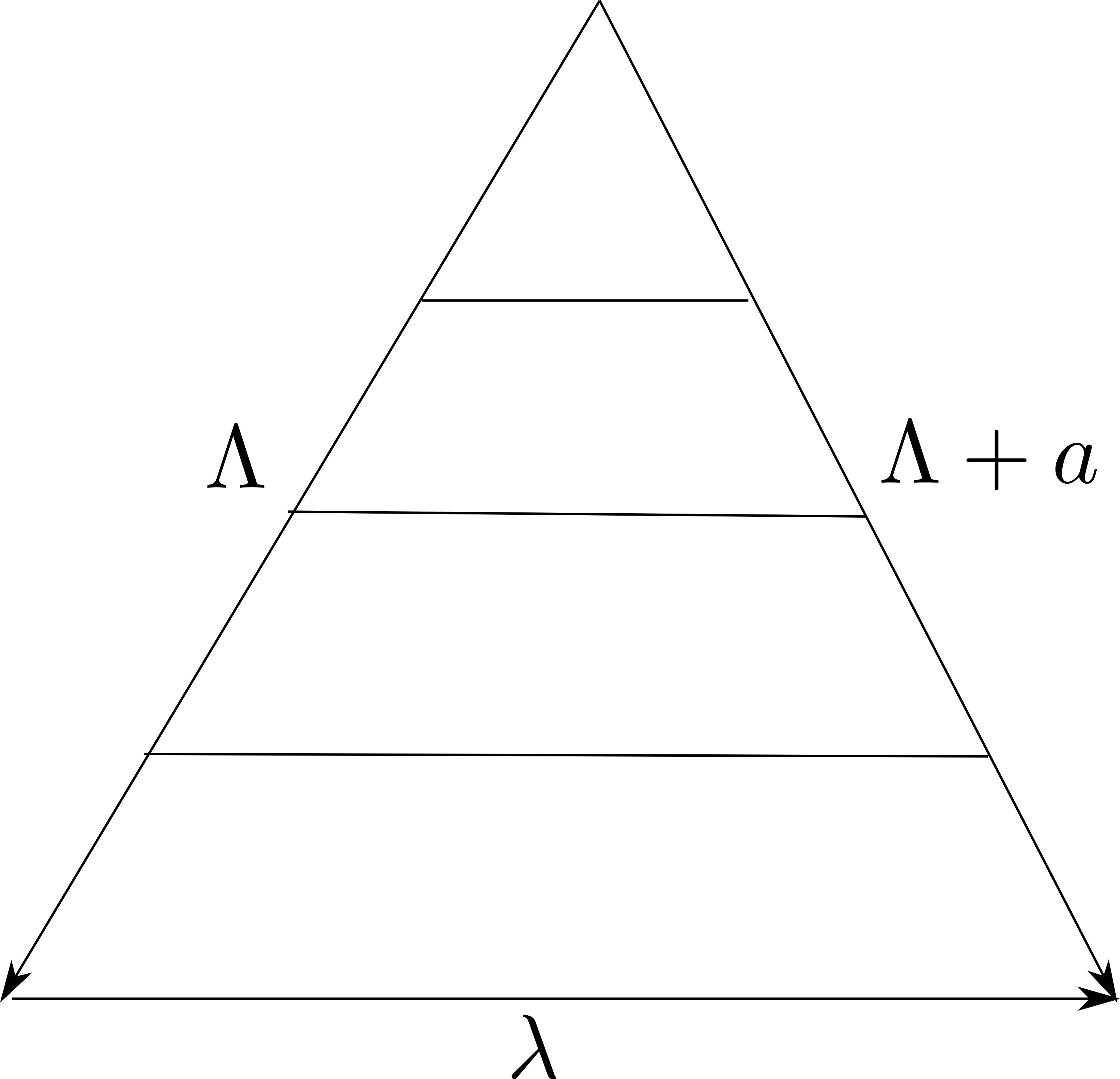}
\end{minipage}
\caption{A hive schematic and the corresponding Gelfand--Tsetlin-to-hive
schematic used in the large-gap realization of
Proposition~\ref{gt-rem}(iv).}
\label{fig:hive-gt-schematics}
\end{center}
\end{figure}

\begin{definition}[Octahedron recurrence]\lab{def:oct}
We use the max-plus form of the octahedron recurrence, in the sense of
\cite{KTW,Speyer,NarSheffTao}.  One starts with real values assigned to a
stepped surface in the parity sublattice of \(\mathbb Z^3\).  A local move
replaces one face of an elementary octahedron by the opposite face.  If the
six vertices of this octahedron are
\[
(i,j,k\pm 1),\qquad (i\pm 1,j,k),\qquad (i,j\pm 1,k),
\]
and all values except \(F(i,j,k+1)\) are already known, then the missing value
is defined by
\[
F(i,j,k+1)
=
\max\{F(i+1,j,k)+F(i-1,j,k),\,
F(i,j+1,k)+F(i,j-1,k)\}
-F(i,j,k-1).
\]
Equivalently,
\[
F(i,j,k+1)+F(i,j,k-1)
=
\max\{F(i+1,j,k)+F(i-1,j,k),\,
F(i,j+1,k)+F(i,j-1,k)\}.
\]
Iterating these local moves transports the data from one stepped surface to
another.  In the hive setting this transport is the
Knutson--Tao--Woodward/Speyer piecewise-linear, volume-preserving bijection
between the two hive decompositions corresponding to associativity
\cite{KTW,Speyer,NarSheffTao}; we denote it by \(\oct\).

For the applications below, Proposition~\ref{gt-rem}(iv) first lets us regard
Gelfand--Tsetlin patterns as hives with one large-gap side.  With this
identification, \(\oct\) sends a pair of Gelfand--Tsetlin patterns, equivalently
a double hive, to an augmented hive, or to a pair of hives glued along their
common boundary.  Throughout the paper \(h_n\) denotes the hive component of
this image when the input is the deformed GUE double hive.
\end{definition}

\begin{figure}[h]
\begin{center}
\begin{minipage}{0.45\linewidth}
\centering
\includegraphics[width=\linewidth]{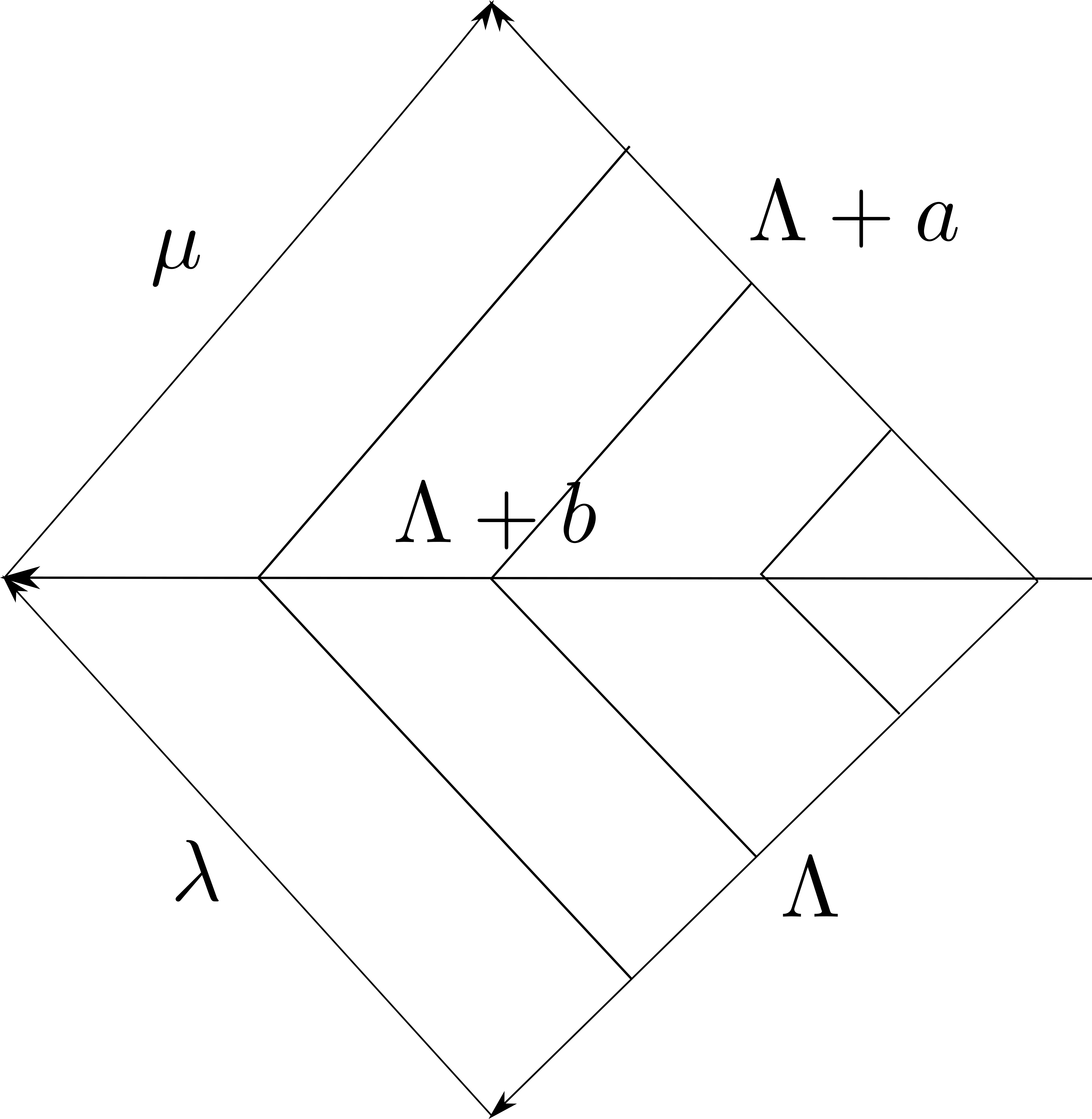}
\caption{The large-gap realization turns the two Gelfand--Tsetlin inputs into
a pair of hives with a common edge before applying the octahedron recurrence.}
\label{fig:excavated}
\end{minipage}
\hfill
\begin{minipage}{0.45\linewidth}
\centering
\includegraphics[width=\linewidth]{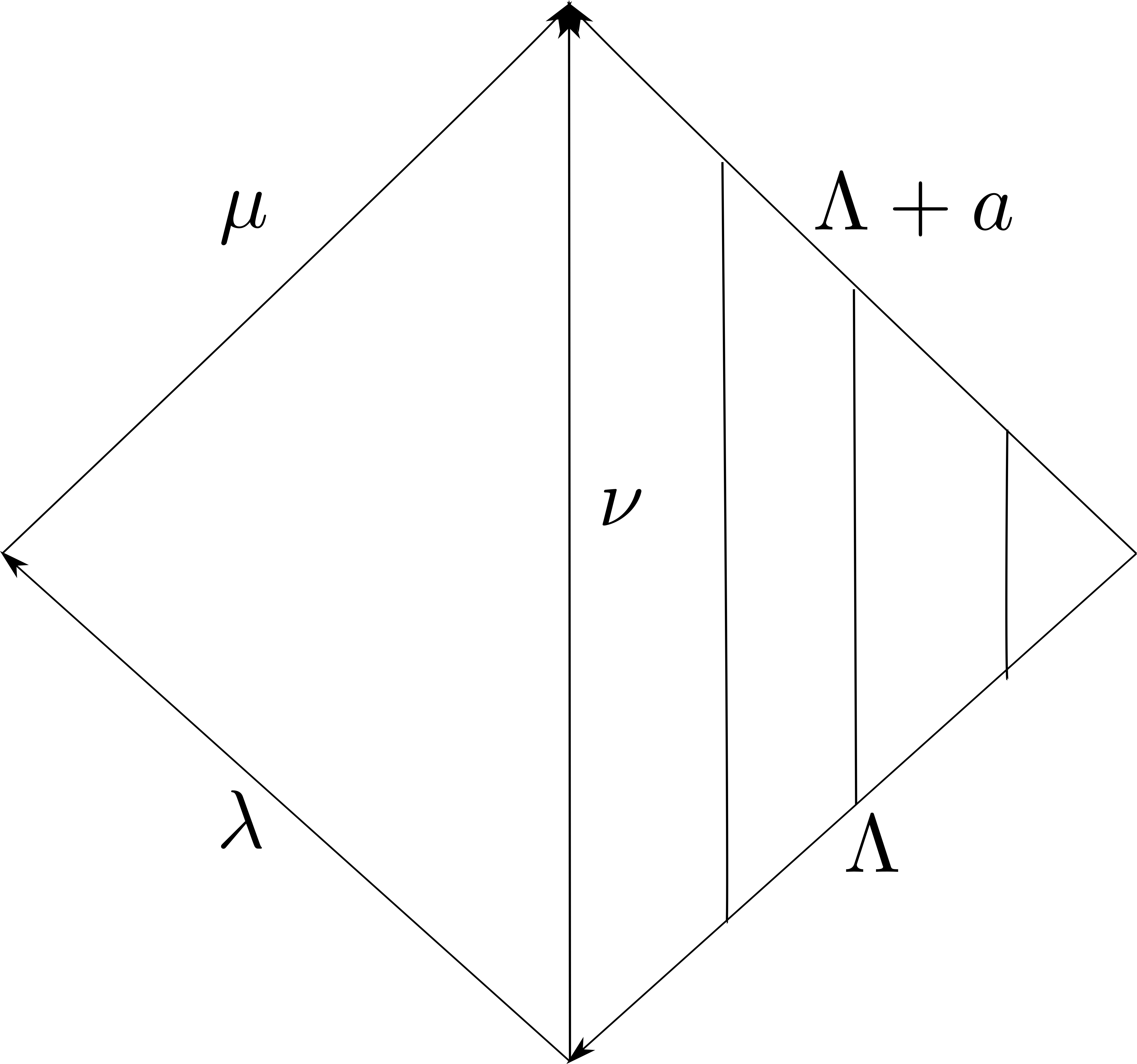}
\caption{An augmented hive, viewed as a hive together with a
Gelfand--Tsetlin pattern.}
\label{fig:augment}
\end{minipage}
\end{center}
\end{figure}

\subsection{GUE hive densities and maximum entropy}
\[
Z(\bar a,\bar b,\bar c)
=
(2\pi)^n
\left(
\frac{\bar a^2\bar b^2\bar c^2}
     {\bar a^2+\bar b^2+\bar c^2}
\right)^{n^2/2}
V_n(\tau_n)^{-2}.
\]

The following appears as Theorem 7 of \cite{GangNar}.
\begin{theorem}[Maximum entropy triply augmented hive]\lab{thm:gaussian1}
Let \(\bar a,\bar b,\bar c\) be parameters in one of the following two regimes:
\[
\bar a,\bar b,\bar c>0,
\]
or
\[
\bar a,\bar b>0,
\qquad
-\bar c^2>\bar a^2+\bar b^2.
\]
Let \(\mathbb A^3\) denote the cone of triply augmented hives, where
\[
\mathbb A^3(\lambda,\mu;\nu)
:=
GT(\lambda)\times GT(\mu)\times GT(\nu)\times H_n(\lambda,\mu;\nu),
\]
for
\[
\lambda,\mu,\nu\in \Spec_n+\mathbb R\one,
\qquad
\sum_i\lambda_i+\sum_i\mu_i=\sum_i\nu_i.
\]
Define a probability density \(p\) on \(\mathbb A^3\), with respect to
Lebesgue measure, by
\[
p(g_\lambda,g_\mu,g_\nu,h)
=
Z^{-1}(\bar a,\bar b,\bar c)
\exp\left[
-\frac12
\left(
\frac{|\lambda|^2}{\bar a^2}
+
\frac{|\mu|^2}{\bar b^2}
+
\frac{|\nu|^2}{\bar c^2}
\right)
\right],
\]
for
\[
(g_\lambda,g_\mu,g_\nu,h)
\in
\mathbb A^3(\lambda,\mu;\nu).
\]
Then \(p\) is the unique probability density on \(\mathbb A^3\) that
maximizes differential entropy among all densities \(q\) satisfying
\[
\mathbb E_q |\lambda|^2
=
\frac{\bar a^2(\bar b^2+\bar c^2)n^2}
     {\bar a^2+\bar b^2+\bar c^2},
\]
\[
\mathbb E_q |\mu|^2
=
\frac{\bar b^2(\bar c^2+\bar a^2)n^2}
     {\bar a^2+\bar b^2+\bar c^2},
\]
and
\[
\mathbb E_q |\nu|^2
=
\frac{\bar c^2(\bar a^2+\bar b^2)n^2}
     {\bar a^2+\bar b^2+\bar c^2}.
\]
\end{theorem}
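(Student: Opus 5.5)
The plan is the standard Gibbs variational argument. The point is that \(\log p\) is an affine function of the three statistics \(|\lambda|^2,|\mu|^2,|\nu|^2\). So the whole content lies in checking that \(p\) is a genuine probability density on \(\mathbb A^3\) and that it satisfies the three moment constraints. Once this is done, take any density \(q\) on \(\mathbb A^3\) with the prescribed second moments. Then
\[
\mathbb E_q[-\log p]=\log Z+\tfrac12\Bigl(\tfrac{\mathbb E_q|\lambda|^2}{\bar a^2}+\tfrac{\mathbb E_q|\mu|^2}{\bar b^2}+\tfrac{\mathbb E_q|\nu|^2}{\bar c^2}\Bigr)=\mathbb E_p[-\log p]=\ent(p),
\]
and hence
\[
\ent(q)=\ent(p)-D_{\mathrm{KL}}(q\|p)\leq \ent(p).
\]
Equality holds iff \(D_{\mathrm{KL}}(q\|p)=0\), that is, iff \(q=p\) a.e. This gives both maximality and uniqueness.

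\textbf{Normalization.} We integrate the density fiberwise, first over the polytope fibers with \((\lambda,\mu,\nu)\) fixed. By Proposition~\ref{gt-rem}(ii), the fibers \(GT(\lambda),GT(\mu),GT(\nu)\) have volumes \(V_n(\lambda)/V_n(\tau_n)\), \(V_n(\mu)/V_n(\tau_n)\) and \(V_n(\nu)/V_n(\tau_n)\). By Theorem~\ref{thm:1}, applied after removing the traces, the hive fiber has volume
\[
|H_n(\lambda,\mu;\nu)|=\frac{V_n(\lambda)V_n(\mu)}{V_n(\nu)V_n(\tau_n)}\,\rho_{\lambda,\mu}(\nu).
\]
Here \(\rho_{\lambda,\mu}\) is the density of \(\spec(U\diag(\lambda)U^*+V\diag(\mu)V^*)\). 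The factor \(V_n(\nu)\) cancels, and the fiber integral becomes \(V_n(\lambda)^2V_n(\mu)^2\rho_{\lambda,\mu}(\nu)/V_n(\tau_n)^4\).

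Multiplying by the Gaussian weights in \(\lambda\) and \(\mu\) reproduces, up to the explicit constants recorded after the GUE normalization, the spectral densities of \(\bar aG\) and \(\bar bG'\) for independent GUE matrices \(G,G'\). Integrating out \(\lambda,\mu,\nu\) therefore reduces \(Z\) to
\[
\mathbb E\exp\bigl(-\operatorname{Tr}(C^2)/2\bar c^2\bigr),\qquad C=\bar aG+\bar bG'.
\]
Since \(C\) is GUE with scale \(s=\sqrt{\bar a^2+\bar b^2}\), this is an \(n^2\)-dimensional Gaussian integral equal to \((1+s^2/\bar c^2)^{-n^2/2}\). Collecting the powers \(\bar a^{n^2}\bar b^{n^2}\), the \((2\pi)^{n}\) factors and \(V_n(\tau_n)^{-2}\), we should obtain the stated \(Z(\bar a,\bar b,\bar c)\), since
\[
\bar a^2\bar b^2\Bigl(1+\tfrac{\bar a^2+\bar b^2}{\bar c^2}\Bigr)^{-1}=\tfrac{\bar a^2\bar b^2\bar c^2}{\bar a^2+\bar b^2+\bar c^2}.
\]
This step also handles the second regime. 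When \(\bar c^2<0\), the weight \(e^{+|\nu|^2/2|\bar c|^2}\) grows, but the Gaussian integral is finite exactly when \(1+s^2/\bar c^2>0\), i.e. \(-\bar c^2>\bar a^2+\bar b^2\). The same computation, with Fubini/Tonelli justified by positivity, goes through.

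\textbf{Moments.} Writing \(\alpha=\bar a^{-2}\), \(\beta=\bar b^{-2}\), \(\gamma=\bar c^{-2}\), we have
\[
\mathbb E_p|\lambda|^2=-2\,\partial_\alpha\log Z,
\]
and similarly for \(\mu\) and \(\nu\). Now \(\log Z=\tfrac{n^2}{2}\log\frac{1}{\alpha\beta+\beta\gamma+\gamma\alpha}+\text{const}\), so
\[
\mathbb E_p|\lambda|^2=\frac{n^2(\beta+\gamma)}{\alpha\beta+\beta\gamma+\gamma\alpha}=\frac{\bar a^2(\bar b^2+\bar c^2)n^2}{\bar a^2+\bar b^2+\bar c^2},
\]
as required, and symmetrically for the other two.

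\textbf{Main obstacle.} I expect the main difficulty to be measure-theoretic bookkeeping rather than any conceptual step. The Lebesgue measure on the cone \(\mathbb A^3\) must be made compatible with the fiber decomposition, including the trace directions: \(\rho_n\) lives on the trace-zero hyperplane, and the linear constraint \(\sum\lambda_i+\sum\mu_i=\sum\nu_i\) ties the traces together. These conventions must be chosen so that the \((2\pi)^n\) and \(V_n(\tau_n)\) powers come out exactly as in \(Z\). I would first carry out the computation for the trace-zero parts, then treat the trace coordinates as a separate one-dimensional Gaussian computation that is consistent with the GUE trace (which is \(N(0,n s^2)\)). Finally I would check that the constants agree.
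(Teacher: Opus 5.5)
Your proposal is correct and takes essentially the same approach as the paper. The paper cites this result from \cite{GangNar} and uses the same structure. The normalization is the Coquereaux--Zuber plus GUE-convolution computation behind Theorem~\ref{thm:gaussian}, carried out explicitly for the double hive in the proof of Lemma~\ref{lem:double-partition}. Maximality and uniqueness come from the exponential-family KL identity of Remark~\ref{rem:maxent-kl} and the proof of Lemma~\ref{lem:double-maxent}; as you note, this needs only normalizability and moment matching, so it also covers the $\bar c^2<0$ regime. The trace bookkeeping you flag is harmless: shifting $\lambda,\mu$ by $t\one,s\one$ adds the affine function $(x,y)\mapsto ty+sx$ to every hive, so in the coordinates of $d\mathfrak m_n$ the Coquereaux--Zuber formula already gives a density in $(\nu_1,\ldots,\nu_{n-1})$ for arbitrary traces.
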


\begin{remark}[Use of the maximum-entropy statement]\lab{rem:maxent-kl}
In the imaginary-parameter case, we use the maximum-entropy assertion in
Theorem~\ref{thm:gaussian1} only in its exponential-family, or equivalently
KL-duality, form.  Namely, once the displayed exponential density is
normalizable and the three quadratic boundary moments agree, the identity
\[
-\mathbb E_q\log p=-\mathbb E_p\log p=\ent(p)
\]
implies
\[
D_{\mathrm{KL}}(q\Vert p)=\ent(p)-\ent(q)\geq 0.
\]
This argument does not require log-concavity of the density; in particular,
it remains valid when the coefficient of \(|\nu|^2\) has the sign
corresponding to \(\bar c^2<0\).
\end{remark}

When \(s<0\) and \(c^2=s\), let \(GUE_-(s)\) denote the non-probability
Gaussian measure on the real vector space of \(n\times n\) Hermitian matrices
with density
\[
2^{-n/2}\pi^{-n^2/2}|c|^{-n^2}
\exp\left[-\frac{\operatorname{Tr}(X^2)}{2s}\right]\,dX
=
2^{-n/2}\pi^{-n^2/2}|c|^{-n^2}
\exp\left[\frac{\operatorname{Tr}(X^2)}{2|c|^2}\right]\,dX .
\]
Following  \cite[Lemma~1]{GangNar}, its pushforward under
\(X\mapsto\lambda=\spec(X)\) has spectral density
\[
GUE_-(\lambda;c)
:=
(2\pi)^{-n/2}|c|^{-n^2}
\frac{V_n(\lambda)^2}{V_n(\tau_n)}
\exp\left[\frac{|\lambda|^2}{2|c|^2}\right],
\qquad
\lambda\in \Spec_n+\mathbb R\one,
\]
and is zero otherwise.  These objects are used only inside
completion-of-squares identities, as the formal continuation of the usual GUE
convolution formula to a negative variance parameter.

 Following Theorem 6 of \cite{GangNar}  we identify two cases, which however are handled in a unified fashion: \begin{enumerate} \item  $\bar{a}, \bar{b}, \bar{c} > 0$, \item $\bar{a}, \bar{b} > 0$ and $-\bar{c}^2 > \bar{a}^2 + \bar{b}^2$.\end{enumerate}
  Thus, in the second case, $\bar{c}$ is imaginary.
  
  Let
\[
\mathcal A_n
:=
\left\{
(\lambda,\mu,\nu)\in \mathbb R^n\times \mathbb R^n\times \mathbb R^n
:
\sum_{i=1}^n \lambda_i+\sum_{i=1}^n \mu_i
=
\sum_{i=1}^n \nu_i
\right\}.
\]
We equip \(\mathcal A_n\) with the Lebesgue measure \(d\mathfrak m_n\) obtained from the
coordinates
\[
(\lambda_1,\ldots,\lambda_n,\mu_1,\ldots,\mu_n,\nu_1,\ldots,\nu_{n-1}),
\]
with
\[
\nu_n
=
\sum_{i=1}^n \lambda_i+\sum_{i=1}^n \mu_i
-
\sum_{i=1}^{n-1}\nu_i .
\]
That is,
\[
d\mathfrak m_n(\lambda,\mu,\nu)
=
d\lambda_1\cdots d\lambda_n\,
d\mu_1\cdots d\mu_n\,
d\nu_1\cdots d\nu_{n-1}.
\]

\begin{theorem}[Theorem 6, \cite{GangNar}]\lab{thm:gaussian} 
For $\la, \mu, \nu \in \Spec_n + \R\one$, (where $\one$ denotes the vector of all ones), let $$F(\la, \mu, \nu) := \frac{V_n(\lambda)V_n(\mu)V_n(\nu)}{V_n(\tau_n)} |H_n(\lambda, \mu; \nu)| \exp\left(-\frac{1}{2}\left(\frac{|\lambda|^2}{\bar{a}^2} + \frac{|\mu|^2}{\bar{b}^2} + \frac{|\nu|^2}{\bar{c}^2}\right)\right),$$ and otherwise let $F(\la, \mu, \nu) = 0.$ Then,
\[
\int_{\mathcal A_n} F(\lambda,\mu,\nu)\,
d\mathfrak m_n(\lambda,\mu,\nu)
=
(2\pi)^n
\left(
\frac{a^2b^2c^2}{a^2+b^2+c^2}
\right)^{n^2/2}.
\]

\end{theorem}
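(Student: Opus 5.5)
The plan is to reduce the integral to a Gaussian integral over pairs of Hermitian matrices, using the Coquereaux--Zuber formula (Theorem~\ref{thm:1}) and the Weyl integration formula, and then to evaluate that Gaussian integral in closed form.

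\textbf{Step 1: rewrite $F$ via Coquereaux--Zuber.} Fix $\lambda,\mu\in\Spec_n+\mathbb R\one$ and consider the $\nu$-integral first. By Theorem~\ref{thm:1}, after translating $\lambda,\mu$ into $\Spec_n$ (the trace parts only shift $\nu$ by a multiple of $\one$), we have
\[
\frac{V_n(\nu)V_n(\tau_n)}{V_n(\lambda)V_n(\mu)}\,|H_n(\lambda,\mu;\nu)| = \rho_n[\spec(X+Y)=\nu],
\]
where $X=U\diag(\lambda)U^*$ and $Y=V\diag(\mu)V^*$ are independent Haar conjugates. The coordinates $(\nu_1,\dots,\nu_{n-1})$ with $\nu_n$ determined give exactly the measure with respect to which $\rho_n$ is a density. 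Hence
\[
F = \frac{V_n(\lambda)^2V_n(\mu)^2}{V_n(\tau_n)^2}\,e^{-|\lambda|^2/2\bar a^2-|\mu|^2/2\bar b^2}\,\rho_n(\nu)\,e^{-|\nu|^2/2\bar c^2},
\]
and integrating over $\nu$ gives
\[
\mathbb E_{U,V}\exp\bigl(-\operatorname{Tr}(X+Y)^2/2\bar c^2\bigr).
\]
Along the way I will check that the trace constraint in $\mathcal A_n$ matches the one-dimensional degeneracy of $\spec(X+Y)$, so that no Jacobian factor is lost.

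\textbf{Step 2: pass to matrix integrals.} Use the Weyl integration formula in the normalization fixed by the GUE density at the start of \cref{sec:prelim}. Comparing
\[
\int e^{-\operatorname{Tr}H^2/2}\,dH=(2\pi)^{n/2}\pi^{n(n-1)/2}
\quad\text{with}\quad
\int V_n^2 e^{-|\lambda|^2/2}\,d\lambda=(2\pi)^{n/2}V_n(\tau_n)
\]
gives
\[
dH \;\leftrightarrow\; \pi^{n(n-1)/2}\,\frac{V_n(\lambda)^2}{V_n(\tau_n)}\,d\lambda\,d\mathrm{Haar}(U).
\]
Applying this to both $X$ and $Y$ turns the $\lambda,\mu$ integrals into
\[
\pi^{-n(n-1)}\int_{\Herm_n\times\Herm_n}\exp\Bigl[-\tfrac{\operatorname{Tr}X^2}{2\bar a^2}-\tfrac{\operatorname{Tr}Y^2}{2\bar b^2}-\tfrac{\operatorname{Tr}(X+Y)^2}{2\bar c^2}\Bigr]\,dX\,dY.
\]

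\textbf{Step 3: evaluate the Gaussian integral.} The integral factorizes over the $n^2$ real coordinates of $\Herm_n$. Each coordinate carries the $2\times 2$ quadratic form
\[
Q=\begin{pmatrix}\bar a^{-2}+\bar c^{-2} & \bar c^{-2}\\ \bar c^{-2} & \bar b^{-2}+\bar c^{-2}\end{pmatrix},
\qquad
\det Q=\frac{\bar a^2+\bar b^2+\bar c^2}{\bar a^2\bar b^2\bar c^2},
\]
with the off-diagonal coordinates weighted by the factor $2$ from $\operatorname{Tr}$. Diagonal coordinates therefore contribute $2\pi(\det Q)^{-1/2}$ each. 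Each pair of off-diagonal real coordinates contributes $\pi(\det Q)^{-1/2}$ per real part and per imaginary part. The factors $\pi^{n(n-1)}$ cancel, leaving
\[
(2\pi)^n(\det Q)^{-n^2/2}=(2\pi)^n\left(\frac{\bar a^2\bar b^2\bar c^2}{\bar a^2+\bar b^2+\bar c^2}\right)^{n^2/2},
\]
as claimed.

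\textbf{Main obstacle: the imaginary case.} The delicate point is the regime $-\bar c^2>\bar a^2+\bar b^2$. There I must verify that $Q$ is still positive definite, so that the integral converges and Fubini applies. The determinant has numerator $\bar a^2+\bar b^2+\bar c^2<0$ and denominator $\bar a^2\bar b^2\bar c^2<0$, so $\det Q>0$. For the diagonal entries, $-\bar c^2>\bar a^2$ gives $\bar a^{-2}>|\bar c|^{-2}$, hence $\bar a^{-2}+\bar c^{-2}>0$, and similarly for $\bar b$. This also confirms that the positive quantity $(\bar a^2\bar b^2\bar c^2)/(\bar a^2+\bar b^2+\bar c^2)$ is raised to the power $n^2/2$.

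One more justification is needed in this regime: $\nu$-integration against the growing weight $e^{|\nu|^2/2|\bar c|^2}$ must be legitimate. I will get this by Tonelli, since all integrands are nonnegative and the final matrix integral is finite. The remaining risk is pure bookkeeping of constants, namely $V_n(\tau_n)$ powers and the trace hyperplane, which I will double-check at $n=1$.
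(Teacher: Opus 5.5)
Your argument is correct, and the constants check out: the Weyl normalization, $\det Q=(\bar a^2+\bar b^2+\bar c^2)/(\bar a^2\bar b^2\bar c^2)$, and the cancellation of $\pi^{n(n-1)}$ that leaves $(2\pi)^n$ from the $n$ diagonal coordinates. In the imaginary regime, positive definiteness of $Q$ is exactly the condition $-\bar c^2>\bar a^2+\bar b^2$, so your Tonelli argument closes that case.

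The paper does not reprove this result; it quotes it from \cite{GangNar}. However, the proof of Lemma~\ref{lem:double-partition} runs the same Coquereaux--Zuber/Weyl/Gaussian-convolution mechanism, in the opposite order.
\begin{itemize}
\item It fixes $\nu$ and integrates out $\lambda,\mu$, using that a sum of independent GUE matrices with variances $A,B$ is GUE with variance $A+B$. This gives the kernel $K_{A,B}(\nu)\propto V_n(\nu)\exp\bigl(-|\nu|^2/2(A+B)\bigr)$.
\item It then finishes with a one-matrix Mehta integral in $\nu$. That integral converges exactly when $1/(\bar a^2+\bar b^2)+1/\bar c^2>0$, the same condition as yours.
\end{itemize}
Your order (integrate $\nu$ first, then one $2n^2$-dimensional Gaussian integral) makes convergence and the exchanges of integration transparent. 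Every integrand is nonnegative and no negative-variance objects appear. The paper's order instead produces the reusable kernel $K_{A,B}$, which is what is needed when two hives are glued along $\nu$.

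Two small points should be made explicit.
\begin{itemize}
\item The reduction from $\Spec_n+\R\one$ to $\Spec_n$ needs that $|H_n|$ is unchanged under the shift $(\lambda,\mu,\nu)\mapsto(\lambda+s\one,\mu+t\one,\nu+(s+t)\one)$. This holds because adding the linear function $(i,j)\mapsto ti+sj$ to a hive realizes that shift and has zero rhombus Hessians.
\item Your final expression is in the raw parameters $\bar a,\bar b,\bar c$. That is the correct reading of the displayed right-hand side, but the unbarred letters printed there cannot be the geometric side lengths. For example, at $\bar a=\bar b=\bar c=1$ one gets $a^2b^2c^2/(a^2+b^2+c^2)=4/27\neq 1/3$. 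So state which version you prove rather than writing ``as claimed''.
\end{itemize}
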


\begin{definition}
Let the corresponding probability density on hives (sampled uniformly from the normalized Lebesgue measure once the boundary is fixed in the above manner) be denoted $\mathcal{H}_n(a, b, c)$, where $(a, b, c)$ is related to $(\bar a, \bar b, \bar c)$ in the following way as noted on page 17 of \cite{GangNar}:  $a, b, c,$ satisfy  $$a^2  :=  \frac{\bar{a}^2(\bar{b}^2 + \bar{c}^2)}{(\bar{a}^2 + \bar{b}^2 + \bar{c}^2)}, \quad b^2 := \frac{\bar{b}^2(\bar{c}^2 + \bar{a}^2)}{(\bar{a}^2 + \bar{b}^2 + \bar{c}^2)},$$  and
$$c^2 := \frac{\bar{c}^2(\bar{a}^2 + \bar{b}^2)}{(\bar{a}^2 + \bar{b}^2 + \bar{c}^2)}.$$  
If $c^2  > a^2  + b^2,$ then we are in the obtuse case, and $a, b > 0$ and $-\bar{c}^2 > \bar{a}^2 + \bar{b}^2.$ However the same formulae hold in this case as well.
\end{definition}

\section{Double hives}\lab{sec:double-hives}

\begin{definition}[Double hive]\lab{def:double-hive}
For \(\nu\in\Spec_n\), write
\[
\nu^\vee=(-\nu_n,\ldots,-\nu_1).
\]
A double hive with exterior boundary
\((\alpha,\beta,\eta,\phi)\) and middle side \(\nu\) is a pair of hives
\[
(h_1,h_2)\in
H_n(\alpha,\beta;\nu)\times H_n(\eta,\phi^\vee;\nu^\vee),
\qquad
\phi^\vee=(-\phi_n,\ldots,-\phi_1),
\]
viewed as glued along the common middle side, with opposite orientations on
the two copies of that side.  The corresponding double hive polytope is
\[
\mathcal D(\alpha,\beta,\eta,\phi^\vee)
:=
\bigcup_{\nu\in\Spec_n + \R \one}
H_n(\alpha,\beta;\nu)
\times
H_n(\eta,\phi^\vee;\nu^\vee).
\]
A quadruply augmented double hive is obtained by additionally choosing
Gelfand--Tsetlin patterns on the four exterior boundary components with matching spectra.
\end{definition}

\begin{figure}[h!]
  \centering
  \begin{tikzpicture}[scale=1.0, every node/.style={font=\large}]
    \coordinate (L) at (0,0);
    \coordinate (T) at (2,2);
    \coordinate (R) at (4,0);
    \coordinate (B) at (2,-2);
    \draw[thick] (L) -- (T) -- (R) -- (B) -- cycle;
    \draw[thick] (L) -- (R);
    \draw[thick,dashed] (T) -- (B);
    \node[left] at (L) {\(K\)};
    \node[below left] at (1,-1) {\(a\)};
    \node[above left] at (1,1) {\(b\)};
    \node[above right] at (3,1) {\(d\)};
    \node[below right] at (3,-1) {\(f\)};
    \node[left] at (2,0) {\(c\)};
    \node[above] at (3,0) {\(e\)};
  \end{tikzpicture}
  \caption{The labelling of the tetrahedron. The vertical dashed line is labeled $c$ and  the horizontal solid line is labeled $e$.}
  \label{fig:example}
\end{figure}

Let $\perp_{K, c}$ denote the length of the perpendicular from $K$ to the side $c$.
 Let $\diamondplus_{abfd}$ denote the maximum volume that a tetrahedron can have, with the sides $a, b, f, d$ as prescribed in Figure~\ref{fig:example}.
\begin{equation}\label{eq:tetrahedral-identity}
\max_c
\frac{\Delta_{abc}^2}{abc}
\frac{\Delta_{cdf}^2}{cdf}
=
\max_c
\frac{\Delta_{abc}^2\Delta_{cdf}^2}{abfd\,c^2}
=
\max_c
\frac{|\perp_{K,c}|^2\Delta_{cdf}^2}{4abfd}
=
\frac{9}{4abfd}\diamondplus_{abfd}^2.
\end{equation}

The construction in the following lemma is not general enough to give rise to all \(a,b,d,f\) that can be the side lengths of a tetrahedron as in Figure~\ref{fig:example}, but provides some intuition on where the tetrahedral identity \eqref{eq:tetrahedral-identity} arises from in the context of Theorem~\ref{thm:main}.
\begin{lemma}\lab{lem:doublehive}
Let
\[
\alpha\sim N(0,a_{\mathrm{raw}}^2),\qquad
\beta\sim N(0,b_{\mathrm{raw}}^2),\qquad
\eta \sim N(0,d_{\mathrm{raw}}^2),\qquad
\phi\sim N(0,f_{\mathrm{raw}}^2)
\]
be independent, and condition on
\[
\alpha+\beta+\eta-\phi=0.
\]
Set
\[
t=a_{\mathrm{raw}}^2+b_{\mathrm{raw}}^2+d_{\mathrm{raw}}^2+f_{\mathrm{raw}}^2.
\]
Define \(a,b,d,f\) by
\[
a^2=\operatorname{Var}(\alpha\mid \alpha+\beta+\eta-\phi=0),
\]
\[
b^2=\operatorname{Var}(\beta\mid \alpha+\beta+\eta-\phi=0),
\]
\[
d^2=\operatorname{Var}(\eta\mid \alpha+\beta+\eta-\phi=0),
\]
and
\[
f^2=\operatorname{Var}(\phi\mid \alpha+\beta+\eta-\phi=0).
\]
Also set
\[
c_*^2=\operatorname{Var}(\alpha+\beta\mid \alpha+\beta+\eta-\phi=0).
\]
Then
\[
\frac{\Delta_{abc_*}^2}{abc_*}
\frac{\Delta_{c_*df}^2}{c_*df}
=
\frac{
9|\mathtt{tet}|_{abfd}^2
}{
4abfd
}.
\]
\end{lemma}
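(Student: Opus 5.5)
The plan is to realize the conditioned Gaussians as vectors in \(\mathbb R^3\). Then the tetrahedron in Figure~\ref{fig:example} is an actual geometric tetrahedron built from the conditional covariance, and the identity reduces to Gram determinant computations together with a check that this tetrahedron has maximal volume.

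\emph{Step 1 (Gram realization).} Write \(\alpha=\langle a_{\mathrm{raw}}e_1,Z\rangle\), \(\beta=\langle b_{\mathrm{raw}}e_2,Z\rangle\), \(\eta=\langle d_{\mathrm{raw}}e_3,Z\rangle\), \(\phi=\langle f_{\mathrm{raw}}e_4,Z\rangle\), where \(Z\) is standard Gaussian in \(\mathbb R^4\). Conditioning on \(\langle w,Z\rangle=0\), with \(w=(a_{\mathrm{raw}},b_{\mathrm{raw}},d_{\mathrm{raw}},-f_{\mathrm{raw}})\) and \(|w|^2=t\), makes the conditional covariance the Gram matrix of the projections \(A,B,D_v,F\) of these four vectors onto \(w^\perp\cong\mathbb R^3\). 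These projections satisfy \(A+B+D_v-F=0\), so they close up into a skew quadrilateral. Its side lengths are \(a,b,d,f\) and its diagonal \(A+B\) has length \(c_*\). The points \(0,A,A+B,A+B+D_v\) are then the vertices of a tetrahedron labelled as in Figure~\ref{fig:example}, whose two faces meeting along the edge \(c\) are \(\triangle(A,B,A+B)\) and \(\triangle(D_v,F,A+B)\). Explicitly, \(\langle A,B\rangle=-a_{\mathrm{raw}}^2b_{\mathrm{raw}}^2/t\) and \(|A|^2=a_{\mathrm{raw}}^2(t-a_{\mathrm{raw}}^2)/t\), and similarly for the other vectors. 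Also \(c_*^2=(a_{\mathrm{raw}}^2+b_{\mathrm{raw}}^2)(d_{\mathrm{raw}}^2+f_{\mathrm{raw}}^2)/t\).

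\emph{Step 2 (compute both sides).} By the matrix determinant lemma applied to \(\diag(a_{\mathrm{raw}}^2,b_{\mathrm{raw}}^2,d_{\mathrm{raw}}^2)-uu^T/t\), the Gram determinant of \((A,B,D_v)\) is \(a_{\mathrm{raw}}^2b_{\mathrm{raw}}^2d_{\mathrm{raw}}^2f_{\mathrm{raw}}^2/t\). Hence this tetrahedron has \(V^2=a_{\mathrm{raw}}^2b_{\mathrm{raw}}^2d_{\mathrm{raw}}^2f_{\mathrm{raw}}^2/(36t)\). In the same way,
\[
16\Delta_{abc_*}^2=4\bigl(|A|^2|B|^2-\langle A,B\rangle^2\bigr)=\frac{4a_{\mathrm{raw}}^2b_{\mathrm{raw}}^2(d_{\mathrm{raw}}^2+f_{\mathrm{raw}}^2)}{t},
\]
and symmetrically for \(\Delta_{c_*df}\). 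Substituting these into the left-hand side gives \(a_{\mathrm{raw}}^2b_{\mathrm{raw}}^2d_{\mathrm{raw}}^2f_{\mathrm{raw}}^2/(16\,t\,abfd)\), which equals \(9V^2/(4abfd)\). So the lemma follows once we show \(V=|\mathtt{tet}|_{abfd}\), that is, that this tetrahedron has maximal volume among tetrahedra with edges \(a,b,f,d\).

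\emph{Step 3 (maximality; the main obstacle).} For any tetrahedron with edges \(a,b,d,f\) and diagonal \(c\), write the volume using the two face heights onto the edge \(c\) and the dihedral angle \(\theta\) there:
\[
V=\tfrac16\,c\,h_1h_2\sin\theta\le\tfrac{2}{3}\,\frac{\Delta_{abc}\Delta_{cdf}}{c}.
\]
This gives \(9V^2/(4abfd)\le g(c):=\Delta_{abc}^2\Delta_{cdf}^2/(abfd\,c^2)\), which is the chain \eqref{eq:tetrahedral-identity}. Step 2 shows that equality holds at \(c_*\), so the faces of our tetrahedron are orthogonal. It remains to prove \(g(c_*)=\max_c g(c)\). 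I would set \(s=c^2\) and use Heron's formula, \(16\Delta_{abc}^2=4a^2b^2-(a^2+b^2-s)^2\), which is concave in \(s\). Then \(\log g=\log(16\Delta_{abc}^2)+\log(16\Delta_{cdf}^2)-\log s+\mathrm{const}\) is strictly concave on the admissible interval of \(s\), so any critical point is the unique maximizer. The work lies in verifying \(\frac{d}{ds}\log g=0\) at \(s=c_*^2\) by substituting the explicit \(a^2,b^2,d^2,f^2,c_*^2\) from Step 1. For example, \(a^2+b^2-c_*^2=-2\langle A,B\rangle\), so the first derivative term becomes ratios of the raw variances. This algebra is where I expect the difficulty. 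As a fallback, I would instead check the Lagrange conditions for maximizing \(\det(A,B,D_v)\) subject to fixed \(|A|,|B|,|D_v|,|A+B+D_v|\) directly from the explicit Gram matrix.
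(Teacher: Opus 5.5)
Your argument is correct, but it finishes by a different route from the paper. The paper computes the conditional variances and the two face areas at $c_*$. It then checks that $\frac{d}{dy}\log\bigl(\Delta_{ab\sqrt y}^2\Delta_{\sqrt y df}^2/y\bigr)$ vanishes at $y=c_*^2$, asserts that the maximizer is unique, and quotes \eqref{eq:tetrahedral-identity} to turn $\max_c g(c)$ into $\frac{9}{4abfd}\diamondplus_{abfd}^2$.

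You instead realize the conditioned vector as an actual closed skew quadrilateral and compute its volume by the matrix determinant lemma. You prove the inequality half of \eqref{eq:tetrahedral-identity} from $V=\frac23\Delta_{abc}\Delta_{cdf}\sin\theta/c$ and close with a sandwich. The one ingredient both proofs genuinely need is that $c_*$ maximizes $g$.

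Your version buys several things:
\begin{itemize}
\item it is self-contained;
\item it exhibits the extremal tetrahedron, with faces orthogonal along $c_*$ and $\diamondplus_{abfd}^2=a_{\mathrm{raw}}^2b_{\mathrm{raw}}^2d_{\mathrm{raw}}^2f_{\mathrm{raw}}^2/(36t)$;
\item it confirms that the constant $9/4$ requires $\Delta$ to be the genuine area, as in your Heron normalization.
\end{itemize}
The paper's proof writes $\Delta_{xyz}^2=x^2y^2-(x^2+y^2-z^2)^2/4$, which is four times the squared area. This is harmless there, because those values only enter logarithmic derivatives.

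Three remarks on Step 3.

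First, the step you flag as the main obstacle is three lines, and it is exactly the paper's computation. Using your Step 2 values, $\frac{d}{ds}\log(16\Delta_{abc}^2)=2(a^2+b^2-s)/(16\Delta_{abc}^2)$ equals $1/(d_{\mathrm{raw}}^2+f_{\mathrm{raw}}^2)$ at $s=c_*^2$. Symmetrically, the other face gives $1/(a_{\mathrm{raw}}^2+b_{\mathrm{raw}}^2)$. Finally $1/c_*^2=t/\bigl((a_{\mathrm{raw}}^2+b_{\mathrm{raw}}^2)(d_{\mathrm{raw}}^2+f_{\mathrm{raw}}^2)\bigr)$, so the sum vanishes because $t=(a_{\mathrm{raw}}^2+b_{\mathrm{raw}}^2)+(d_{\mathrm{raw}}^2+f_{\mathrm{raw}}^2)$.

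Second, your concavity justification does not work as written. The term $-\log s$ is convex, so concavity of Heron's quadratic is not enough. Instead factor $16\Delta_{abc}^2=(s-(a-b)^2)((a+b)^2-s)$ and use
$\frac{d^2}{ds^2}\bigl[\log(s-(a-b)^2)-\log s\bigr]=s^{-2}-(s-(a-b)^2)^{-2}\le0$,
as the paper does in the proof of Lemma~\ref{lem:tet}.

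Third, your fallback is actually the cleanest route if you work with the Gram matrix $G$ of $(A,B,D_v)$ rather than the vectors.
\begin{itemize}
\item The edge constraints $G_{11}=a^2$, $G_{22}=b^2$, $G_{33}=d^2$, $\mathbf 1^{T}G\mathbf 1=f^2$ are linear in $G$.
\item $\log\det G$ is strictly concave.
\item Sherman--Morrison gives $G^{-1}=\diag(a_{\mathrm{raw}}^{-2},b_{\mathrm{raw}}^{-2},d_{\mathrm{raw}}^{-2})+f_{\mathrm{raw}}^{-2}\mathbf 1\mathbf 1^{T}$, which is precisely the Lagrange form.
\end{itemize}
Hence $\log\det G'\le\log\det G+\operatorname{tr}\bigl(G^{-1}(G'-G)\bigr)=\log\det G$ for every feasible $G'$. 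So your tetrahedron has maximal volume, and together with Step 2 the lemma follows without any one-variable analysis in $c$. In vector coordinates, by contrast, Lagrange conditions alone would not certify a global maximum.
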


\par\smallskip
\noindent\hyperref[proof:lem:doublehive]{Proof of Lemma~\ref*{lem:doublehive}.}
\par\smallskip

\begin{lemma}[Maximum entropy quadruply augmented double hive]\lab{lem:double-maxent}
Let \(\bar a,\bar b,\bar d>0\).  Assume either
\[
\bar f \geq 0,
\]
or
\[
\bar f^2<0,
\qquad
T:=\bar a^2+\bar b^2+\bar d^2+\bar f^2<0.
\]
Let \(\mathbb D^4\) be the cone of double hives
augmented on the four exterior boundary components:
\[
\mathbb D^4
=
\bigcup_{\alpha,\beta,\eta,\phi,\nu}
GT(\alpha)\times GT(\beta)\times GT(\eta)\times GT(\phi)
\times H_n(\alpha,\beta;\nu)
\times H_n(\eta,\phi^\vee;\nu^\vee),
\]
where
\[
\alpha,\beta,\eta,\phi\in \Spec_n+\mathbb R\one,
\qquad
\sum_i\alpha_i+\sum_i\beta_i+\sum_i\eta_i-\sum_i\phi_i=0,
\]
and
\[
\nu^\vee=(-\nu_n,\ldots,-\nu_1),\qquad
\phi^\vee=(-\phi_n,\ldots,-\phi_1).
\]
Thus the fiber volume over
\((\alpha,\beta,\eta,\phi,\nu)\) contains the exterior factor
\[
\frac{V_n(\alpha)V_n(\beta)V_n(\eta)V_n(\phi)}
     {V_n(\tau_n)^4}
|H_n(\alpha,\beta;\nu)|\,|H_n(\eta,\phi^\vee;\nu^\vee)|
\]

Define a density \(p_{\mathrm{dbl}}\) on \(\mathbb D^4\), with respect to
Lebesgue measure on this cone, by
\[
p_{\mathrm{dbl}}(x)
=
Z_{\mathrm{dbl}}^{-1}
\exp\left[
-\frac12
\left(
\frac{|\alpha(x)|^2}{\bar a^2}
+
\frac{|\beta(x)|^2}{\bar b^2}
+
\frac{|\eta(x)|^2}{\bar d^2}
+
\frac{|\phi(x)|^2}{\bar f^2}
\right)
\right].
\]
Then \(p_{\mathrm{dbl}}\) is the unique probability density on
\(\mathbb D^4\) maximizing differential entropy among all densities \(q\)
such that
\[
\mathbb E_q|\alpha|^2=\mathbb E_{p_{\mathrm{dbl}}}|\alpha|^2,\qquad
\mathbb E_q|\beta|^2=\mathbb E_{p_{\mathrm{dbl}}}|\beta|^2,
\]
\[
\mathbb E_q|\eta|^2=\mathbb E_{p_{\mathrm{dbl}}}|\eta|^2,\qquad
\mathbb E_q|\phi|^2=\mathbb E_{p_{\mathrm{dbl}}}|\phi|^2.
\]
\end{lemma}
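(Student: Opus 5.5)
We will use the exponential-family (KL-duality) argument from Remark~\ref{rem:maxent-kl}, which does not need log-concavity and so covers the case \(\bar f^2<0\). Two things have to be shown: that \(p_{\mathrm{dbl}}\) is a normalizable probability density on \(\mathbb D^4\), and that for every admissible \(q\) with the same four quadratic moments one has \(D_{\mathrm{KL}}(q\|p_{\mathrm{dbl}})=\ent(p_{\mathrm{dbl}})-\ent(q)\). The first requires most of the work; the second is formal once the first holds.

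\textbf{Normalizability.} We integrate out all fibre coordinates and reduce to a finite-dimensional Gaussian-type integral over the boundary spectra.

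1. Fix \((\alpha,\beta,\eta,\phi,\nu)\). The fibre of \(\mathbb D^4\) is a product of four Gelfand--Tsetlin polytopes and two hive polytopes. By Proposition~\ref{gt-rem}(ii), its volume is the exterior factor displayed in the statement.

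2. We then integrate over \(\nu\in\Spec_n+\mathbb R\one\), holding the trace constraint \(\sum\nu_i=\sum\alpha_i+\sum\beta_i\) fixed. By Theorem~\ref{thm:1} (Coquereaux--Zuber), applied to each hive factor, we have
\[
\frac{V_n(\nu)}{V_n(\alpha)V_n(\beta)}V_n(\tau_n)|H_n(\alpha,\beta;\nu)|=\rho_n[\spec(A+B)=\nu],
\]
and similarly for \(H_n(\eta,\phi^\vee;\nu^\vee)\), which is the density of \(\spec(C-\Phi)\) at \(\nu\) up to the reflection \(\nu\mapsto\nu^\vee\).

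3. The \(\nu\)-integral of the product of the two hive volumes therefore becomes
\[
\frac{V_n(\alpha)V_n(\beta)V_n(\eta)V_n(\phi)}{V_n(\tau_n)^2}\int \frac{\rho_{A+B}(\nu)\,\rho_{-(C-\Phi)}(\nu)}{V_n(\nu)^2}\,d\nu .
\]
This integral equals the Haar-averaged density of the event \(A+B+C-\Phi=0\) in the trace-zero part of \(\Herm_n\), after the usual Weyl-formula conversion. Multiplying by the remaining \(V_n\) factors turns the \(\alpha,\beta,\eta,\phi\) integration into an integral over Hermitian matrices.

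4. The total mass is then \(Z_{\mathrm{dbl}}\), proportional to the density at \(0\) of \(\bar aG_1+\bar bG_2+\bar dG_3-\bar fG_4\) for independent GUEs. For \(\bar f^2<0\) we use the formal \(GUE_-\) measure from \cite[Lemma~1]{GangNar}. Completing the square gives a finite value proportional to \(T^{-n^2/2}\) (with \(|T|\) when \(T<0\)). The condition \(T<0\) is precisely what makes the quadratic form restricted to the constraint hyperplane definite. This mirrors the proof of Theorem~\ref{thm:gaussian}, and the argument in that theorem can be reused directly by viewing the double hive as two triples glued along \(\nu\).

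\textbf{Main obstacle.} The hardest step is justifying the negative-variance case. The integrand is not absolutely integrable term by term in the matrix picture, since \(\Phi\) carries a growing Gaussian. The plan is to perform the integrals in a definite order: first integrate out \(\alpha,\beta,\eta\) with \(\phi\) fixed, which gives an honest Gaussian convolution of variance \(\bar a^2+\bar b^2+\bar d^2\) evaluated at \(\Phi\). Only then do we integrate against \(e^{|\phi|^2/(2|\bar f|^2)}V_n(\phi)^2\). The resulting exponent is \(-\tfrac12|\phi|^2\left(\frac{1}{\bar a^2+\bar b^2+\bar d^2}+\frac1{\bar f^2}\right)\), which is negative definite exactly when \(T<0\). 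Tonelli's theorem applies because every integrand is nonnegative, so this ordering is legitimate.

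\textbf{KL identity and uniqueness.} With normalizability in hand, \(-\log p_{\mathrm{dbl}}\) is \(\log Z_{\mathrm{dbl}}\) plus a linear combination of \(|\alpha|^2,|\beta|^2,|\eta|^2,|\phi|^2\). Hence \(-\mathbb E_q\log p_{\mathrm{dbl}}=-\mathbb E_{p_{\mathrm{dbl}}}\log p_{\mathrm{dbl}}=\ent(p_{\mathrm{dbl}})\) whenever the four moments agree. It follows that
\[
0\le D_{\mathrm{KL}}(q\|p_{\mathrm{dbl}})=\ent(p_{\mathrm{dbl}})-\ent(q).
\]
Equality forces \(q=p_{\mathrm{dbl}}\) almost everywhere, which gives both maximality and uniqueness. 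The moments of \(p_{\mathrm{dbl}}\) are finite because they are logarithmic derivatives of the explicit \(Z_{\mathrm{dbl}}\).
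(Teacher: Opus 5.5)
Your proposal is correct and takes essentially the paper's route. Maximality and uniqueness come from exponential-family/KL duality, and normalizability in the case \(\bar f^2<0\) comes from completing the square in the Hermitian-matrix picture, where \(T<0\) is exactly the definiteness condition. The only difference is how finiteness is checked: the paper applies Cauchy--Schwarz on the slice \(X_\phi=X_\alpha+X_\beta+X_\eta\), while you integrate out \(\alpha,\beta,\eta\) first (justified by Tonelli) and read off the net coefficient \(\frac{1}{\bar a^2+\bar b^2+\bar d^2}+\frac{1}{\bar f^2}>0\), which is equally valid and makes explicit the Coquereaux--Zuber/Weyl reduction that the paper leaves implicit.
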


\par\smallskip
\noindent\hyperref[proof:lem:double-maxent]{Proof of Lemma~\ref*{lem:double-maxent}.}
\par\smallskip

Recall that \(\diamondplus_{abdf}\) denotes the maximum volume of a
Euclidean tetrahedron with four prescribed cyclic edge lengths
\(a,b,d,f\), as in Figure~\ref{fig:example}.  Equivalently,
\[
\max_x
\frac{\Delta_{abx}^2}{abx}
\frac{\Delta_{xdf}^2}{xdf}
=
\frac{9\diamondplus_{abdf}^2}{4abdf}.
\]

\begin{lemma}[Tetrahedral analogue of Lemma 5 in \cite{GangNar}]\lab{lem:tet-analogue of GangNar}
Let \(\bar a,\bar b,\bar d>0\), and assume either \(\bar f>0\), or \(\bar f^2<0\) and
\[
T:=\bar a^2+\bar b^2+\bar d^2+\bar f^2<0.
\]
Define
\[
a^2=\frac{\bar a^2(\bar b^2+\bar d^2+\bar f^2)}{T},\qquad
b^2=\frac{\bar b^2(\bar a^2+\bar d^2+\bar f^2)}{T},
\]
\[
d^2=\frac{\bar d^2(\bar a^2+\bar b^2+\bar f^2)}{T},\qquad
f^2=\frac{\bar f^2(\bar a^2+\bar b^2+\bar d^2)}{T}.
\]
Then
\[
\frac{\bar a^2\bar b^2\bar d^2\bar f^2}{\bar a^2+\bar b^2+\bar d^2+\bar f^2}
=
\frac{9}{4}\diamondplus_{abdf}^2.
\]

\end{lemma}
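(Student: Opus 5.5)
The plan is to realize the extremal tetrahedron directly as a Gram configuration of the conditioned Gaussian variables from Lemma~\ref{lem:doublehive}, with raw variances \(a_{\mathrm{raw}}^2=\bar a^2\), \(b_{\mathrm{raw}}^2=\bar b^2\), \(d_{\mathrm{raw}}^2=\bar d^2\), \(f_{\mathrm{raw}}^2=\bar f^2\), so that \(t=T\).

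\textbf{Step 1: the conditional variances are the given ones.} For independent centred Gaussians and the linear constraint \(S:=\alpha+\beta+\eta-\phi=0\), conditioning gives
\[
\operatorname{Cov}(\cdot\mid S=0)=\Sigma-\frac{\Sigma v v^{\top}\Sigma}{v^{\top}\Sigma v},
\qquad \Sigma=\diag(\bar a^2,\bar b^2,\bar d^2,\bar f^2),\quad v=(1,1,1,-1).
\]
Hence \(\operatorname{Var}(\alpha\mid S=0)=\bar a^2-\bar a^4/T=\bar a^2(\bar b^2+\bar d^2+\bar f^2)/T\). This is exactly \(a^2\) in the statement, and likewise for \(b^2,d^2,f^2\).

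\textbf{Step 2: a tetrahedron with these edges.} Consider the four vectors \(\alpha,\beta,\eta,-\phi\) in \(L^2\) of the conditioned law. They sum to zero, so they form a closed quadrilateral \(L\to B\to R\to T\to L\) whose consecutive sides have lengths \(a,b,d,f\), matching the cyclic labelling of Figure~\ref{fig:example}. Its volume is \(\tfrac16\sqrt{\det\operatorname{Gram}(\alpha,\beta,\eta)}\). By the Schur-complement formula for the constrained covariance,
\[
\det\operatorname{Cov}(\alpha,\beta,\eta\mid S=0)=\frac{\bar a^2\bar b^2\bar d^2\bar f^2}{T}.
\]
Therefore this tetrahedron has \(36\,\mathrm{vol}^2=\bar a^2\bar b^2\bar d^2\bar f^2/T\). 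I will reconcile this with the constant \(\tfrac94\) in the statement by matching the paper's normalization of \(\Delta\) and \(\diamondplus\). As a cross-check I will use the equivalent form
\[
\max_x\frac{\Delta_{abx}^2\Delta_{xdf}^2}{abdf\,x^2}.
\]
Here \(4\cdot\mathrm{area}^2=\det\operatorname{Cov}(\alpha,\beta)\) for the triangle with third side \(c_*^2=\operatorname{Var}(\alpha+\beta)\), and one computes
\[
\det\operatorname{Cov}(\alpha,\beta)=\frac{\bar a^2\bar b^2(\bar d^2+\bar f^2)}{T},\quad
\det\operatorname{Cov}(\eta,\phi)=\frac{\bar d^2\bar f^2(\bar a^2+\bar b^2)}{T},\quad
c_*^2=\frac{(\bar a^2+\bar b^2)(\bar d^2+\bar f^2)}{T},
\]
so the product again collapses to a constant multiple of \(\bar a^2\bar b^2\bar d^2\bar f^2/(T\,abdf)\).

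\textbf{Step 3: maximality (the main obstacle).} Among all Gram matrices \(G\) of vectors \(x_1,\dots,x_4\) with \(\sum_i x_i=0\) and prescribed norms \(a,b,d,f\), I must show the conditioned-Gaussian covariance maximizes \(\log\det\) on the three-dimensional quotient. Since \(\log\det\) is concave on positive definite matrices and the constraints are affine in \(G\), it suffices to verify the first-order (Lagrange) condition. That condition says \(G^{-1}\), viewed as a quadratic form on \(\{x:\sum_i x_i=0\}\), is the restriction of a diagonal form \(\sum_i\lambda_i x_i^2\). This is precisely the structure of a Gaussian vector with independent coordinates of variances \(1/\lambda_i\) conditioned on the sum constraint. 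Taking \(\lambda=(\bar a^{-2},\bar b^{-2},\bar d^{-2},\bar f^{-2})\), Step 1 shows this critical point has the required edge lengths, so it is the global maximizer and its volume is \(\diamondplus_{abdf}\).

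\textbf{The case \(\bar f^2<0\), \(T<0\).} Here I would run the same algebra formally, with \(\Sigma\) indefinite. I must check that the conditioned matrix is still positive definite; its \(3\times3\) determinant is \(\bar a^2\bar b^2\bar d^2\bar f^2/T>0\), and the leading \(2\times2\) minors can be checked similarly. Given positive definiteness, the concavity argument goes through unchanged. Verifying this positivity, and the concavity argument of Step 3 (rather than the determinant computations), is where I expect the real work to lie.
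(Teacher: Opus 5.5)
\textbf{The gap.} The step you defer cannot be carried out. You propose to ``reconcile'' your identity $36\,\mathrm{vol}^2=\bar a^2\bar b^2\bar d^2\bar f^2/T$ with the constant $\tfrac94$, but your computation is correct and the statement is not. With $|\mathtt{tet}|_{abdf}$ the maximal Euclidean volume, as the paper defines it, the true identity is
\[
\frac{\bar a^2\bar b^2\bar d^2\bar f^2}{\bar a^2+\bar b^2+\bar d^2+\bar f^2}=36\,|\mathtt{tet}|_{abdf}^2 ,
\]
which is off from the stated one by a factor $16$.

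A concrete check: take $\bar a=\bar b=\bar d=\bar f=1$. Then $T=4$ and $a^2=b^2=d^2=f^2=\tfrac34$. The maximal tetrahedron is your Gram configuration $I-\tfrac14 J$ for $x_1,\dots,x_4$: four edges $\sqrt3/2$, both diagonals $1$, volume $\tfrac1{12}$. The left side is $\tfrac14=36\cdot\tfrac1{144}$, whereas $\tfrac94\cdot\tfrac1{144}=\tfrac1{64}$.

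The paper's proof reaches $\tfrac94$ by mixing two normalizations. It evaluates $\Delta_{xyz}^2=x^2y^2-\tfrac14(x^2+y^2-z^2)^2$, which is $(2\cdot\mathrm{area})^2$. It then invokes \eqref{eq:tetrahedral-identity}, whose steps $\Delta_{abc}^2/c^2=|\perp_{K,c}|^2/4$ and $\mathrm{vol}=\tfrac13\,\Delta_{cdf}\cdot\mathrm{height}$ require $\Delta=\mathrm{area}$. Each $\Delta^2$ is thus inflated by $4$; either convention used consistently gives $36$. Your own cross-check says the same thing: with $\Delta=\mathrm{area}$, the product at $c_*$ is $\bar a^2\bar b^2\bar d^2\bar f^2/(16\,T\,abdf)$.

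So instead of promising a reconciliation, state that your argument proves the identity with $36$, and flag $\tfrac94$ as an error. The second expression for $Z_{\mathrm{dbl}}$ in Lemma~\ref{lem:double-partition} inherits the same error. The downstream entropy computations use only $ABDF/T$, and the main theorem uses only the maximizer $c_*$, so they are unaffected.

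\textbf{The rest of your route.} Apart from the constant, your argument is sound and genuinely different from the paper's. The paper does three things:
\begin{itemize}
\item it checks the quadrilateral inequalities by hand;
\item it computes the two face areas at $c_*$;
\item it reduces to the one-parameter maximization over the glued diagonal in \eqref{eq:tetrahedral-identity}, identifying $c_*$ as the unique critical point of $y\mapsto\Delta_{ab\sqrt y}^2\Delta_{\sqrt y\,df}^2/y$.
\end{itemize}
Your Step~3 replaces all of this with one convexity argument. The function $\log\det$ of the $3\times3$ Gram matrix is concave and the four edge constraints are affine. The stationarity condition $G^{-1}=\operatorname{diag}(\mu_1,\mu_2,\mu_3)+\mu_4\mathbf 1\mathbf 1^{\top}$ is met by the conditioned covariance, with $\mu$ the reciprocal raw variances. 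Since the constraints are equalities, $\mu_4=1/\bar f^2<0$ is harmless. This gives feasibility (so no separate quadrilateral check), global optimality, and the extremal volume at once.

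For $\bar f^2<0$, two adjustments. First, phrase Step~1 as the Sherman--Morrison computation for this explicit matrix, since Gaussian conditioning is only formal there. Second, obtain positive definiteness from Cauchy--Schwarz, as in the proof of Lemma~\ref{lem:double-maxent}: with $R=-\bar f^2>\bar a^2+\bar b^2+\bar d^2$,
\[
\frac{y_1^2}{\bar a^2}+\frac{y_2^2}{\bar b^2}+\frac{y_3^2}{\bar d^2}-\frac{(y_1+y_2+y_3)^2}{R}>0\qquad(y\neq0).
\]
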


\par\smallskip
\noindent\hyperref[proof:lem:tet-analogue of GangNar]{Proof of Lemma~\ref*{lem:tet-analogue of GangNar}.}
\par\smallskip

\begin{lemma}[Quadruply augmented double-hive partition function]\lab{lem:double-partition}
Let \(\bar a^2,\bar b^2,\bar d^2>0\), put
\[
\bar T:=\bar a^2+\bar b^2+\bar d^2+\bar f^2,
\]
and assume either \(\bar f^2>0\), or
\[
\bar f^2<0,
\qquad \bar T<0.
\]
Set
\[
A=n\bar a^2,\qquad B=n\bar b^2,\qquad
D=n\bar d^2,\qquad F=n\bar f^2,
\]
and let \(p_{\mathrm{dbl}}\) be the maximum-entropy density on
\(\mathbb D^4\) from Lemma~\ref{lem:double-maxent}, with raw parameters
\(A,B,D,F\).  Let \(a,b,d,f\) be the corresponding geometric side lengths,
as in Lemma~\ref{lem:tet-analogue of GangNar}.  Then
the normalizing constant 
\[Z_{\mathrm{dbl}}
=
\frac{(2\pi)^{3n/2}}{V_n(\tau_n)^3}
\left(
\frac{ABDF}{A+B+D+F}
\right)^{n^2/2} = \frac{(2\pi)^{3n/2}}{V_n(\tau_n)^3}\left(\frac{
9n^3|\mathtt{tet}|_{abfd}^2
}{
4
}\right)^{n^2/2}
\]

Consequently,
\[
\ent(p_{\mathrm{dbl}})
=
\frac{n^2}{2}
\log\left(
\frac{\bar a^2\bar b^2\bar d^2\bar f^2}{\bar T}
\right)
+\frac{15}{4}n^2
+O(n\log n).
\]
\end{lemma}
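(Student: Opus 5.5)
We compute $Z_{\mathrm{dbl}}$ by integrating out the fibers in stages, reducing everything to the triangle partition function of Theorem~\ref{thm:gaussian}. We then read off the entropy from the exponential-family identity of Remark~\ref{rem:maxent-kl}.

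\emph{Step 1: integrate the Gelfand--Tsetlin factors.} For fixed boundary data $(\alpha,\beta,\eta,\phi,\nu)$, Proposition~\ref{gt-rem}(ii) gives the volume of the four Gelfand--Tsetlin factors, namely $V_n(\alpha)V_n(\beta)V_n(\eta)V_n(\phi)/V_n(\tau_n)^4$. This produces the exterior factor displayed in Lemma~\ref{lem:double-maxent}.

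\emph{Step 2: insert the middle spectrum and split into two triangles.} We multiply and divide by $V_n(\nu)^2$ together with the Gaussian weight $\exp(\mp|\nu|^2/2X)$, with the auxiliary parameter $X$ chosen below. Since $V_n(\nu^\vee)=V_n(\nu)$ and $|\nu^\vee|=|\nu|$, the integrand becomes
\[
V_n(\tau_n)^{-2}\,F_{A,B,X}(\alpha,\beta,\nu)\,F_{D,F',-X}(\eta,\phi^\vee,\nu^\vee).
\]
Each $F$ is of the form in Theorem~\ref{thm:gaussian}. In the second triangle the middle variance enters with the opposite sign, because $\nu^\vee$ is integrated against the $GUE_-$ continuation.

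The parameter $X$ is fixed by completion of squares. Integrating the first triangle over $(\alpha,\beta)$ with $\nu$ held fixed must leave a Gaussian in $\nu$ of variance $A+B$; this is the usual GUE convolution, available in hive form from Theorem~\ref{thm:gaussian}. Symmetrically, integrating the second triangle over $(\eta,\phi)$ must leave variance $D+F$ with the appropriate sign. The product of the two marginal $\nu$-densities is then $V_n(\nu)^2\exp(-|\nu|^2/2(A+B))\exp(-|\nu|^2/2(D+F))$ divided by the inserted factor. This is one more Gaussian integral over $\nu\in\Spec_n+\R\one$, which is evaluated by the GUE spectral density normalization from the preliminaries, or by its $GUE_-$ continuation.

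Collecting the constants, the variances combine as
\[
\frac{AB}{A+B}\cdot\frac{DF}{D+F}\cdot\frac{(A+B)(D+F)}{A+B+D+F}=\frac{ABDF}{A+B+D+F},
\]
each raised to the power $n^2/2$. The powers of $2\pi$ come out as $(2\pi)^{3n/2}$, after accounting for the linear constraint $\sum\alpha+\sum\beta+\sum\eta-\sum\phi=0$, which removes one Gaussian degree of freedom. This gives the first expression for $Z_{\mathrm{dbl}}$.

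\emph{Step 3: rewrite and compute the entropy.} Lemma~\ref{lem:tet-analogue of GangNar}, together with the scaling $A=n\bar a^2$ and so on, gives
\[
\frac{ABDF}{A+B+D+F}=n^3\,\frac{\bar a^2\bar b^2\bar d^2\bar f^2}{\bar T}=\frac{9n^3}{4}\diamondplus_{abfd}^2,
\]
which is the second expression. For the entropy, Remark~\ref{rem:maxent-kl} gives
\[
\ent(p_{\mathrm{dbl}})=\log Z_{\mathrm{dbl}}+\tfrac12\,\mathbb E_p\sum_{\text{sides}}|\cdot|^2/(\text{raw parameter}).
\]
Each expected quadratic term is of order $n^2$, and the expectations are computed by differentiating $\log Z$ in the raw parameters. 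Logarithmic homogeneity of $\log Z$ in $(A,B,D,F)$, of degree $n^2/2$ up to the $\nu$-normalization, gives a total contribution of $2n^2$ from these terms. Stirling's formula gives $\log V_n(\tau_n)=\sum_{k<n}\log k!=\tfrac{n^2}{2}\log n-\tfrac34 n^2+O(n\log n)$. Hence
\[
-3\log V_n(\tau_n)+\tfrac{n^2}{2}\log n^3=\tfrac94 n^2+O(n\log n).
\]
Adding the quadratic-term contribution $2n^2$ and the remaining constants should give the stated $\tfrac{15}{4}n^2$.

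\emph{Main obstacle.} The hardest part is the sign bookkeeping in the imaginary case $\bar f^2<0$ and $\bar T<0$. There the intermediate Gaussians in $\nu$ may have negative variance, so the splitting must be justified as an identity of convergent integrals rather than formal ones. I would first verify convergence of the full integral on $\mathbb D^4$. After that, Theorem~\ref{thm:gaussian}, which already covers the imaginary regime, is applied to the triangle in which the negative parameter sits, choosing the order of integration so that every intermediate integral is absolutely convergent. A secondary point of care is the exact $n^2$ constant in the entropy, where the homogeneity count must be checked against the Stirling terms.
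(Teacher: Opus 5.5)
Your Step~2 is, in substance, the paper's argument. The auxiliary weights \(e^{\mp|\nu|^2/2X}\) cancel each other identically and can be dropped. What you actually use is the fixed-\(\nu\) triangle integral
\(K_{A,B}(\nu)=(2\pi)^{n/2}\bigl(\tfrac{AB}{A+B}\bigr)^{n^2/2}V_n(\nu)e^{-|\nu|^2/2(A+B)}\), its analogue \(K_{D,F}(\nu^\vee)\), and one final Mehta integral in \(\nu\) with variance \(S=(A+B)(D+F)/(A+B+D+F)\). This \(S\) is positive in both regimes, since \(\frac1{A+B}+\frac1{D+F}=\frac{A+B+D+F}{(A+B)(D+F)}>0\). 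The fixed-\(\nu\) integral over \((\eta,\phi)\) also converges when \(F<0\), because \(|F|>A+B+D>D\).

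Two corrections to your bookkeeping:
\begin{itemize}
\item The fixed-\(\nu\) profile is not supplied by Theorem~\ref{thm:gaussian}, which only gives the total mass of one triangle. It comes from Coquereaux--Zuber, plus Weyl integration, plus the fact that the sum of independent GUE matrices of variances \(A,B\) is GUE of variance \(A+B\). When \(F<0\) one uses the \(GUE_-\) completion of squares instead.
\item The prefactor \((2\pi)^{3n/2}V_n(\tau_n)^{-3}\) arises as one factor \((2\pi)^{n/2}\) from each \(K\), together with \((2\pi)^{n/2}V_n(\tau_n)\) from \(Z^{\mathrm{eig}}_S\). That last factor turns the \(V_n(\tau_n)^{-4}\) from the four Gelfand--Tsetlin volumes into \(V_n(\tau_n)^{-3}\).
\end{itemize}

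The genuine error is in Step~3. \(Z_{\mathrm{dbl}}\) is homogeneous of degree \(3n^2/2\) in \((A,B,D,F)\), not \(2n^2\). This is visible in your own formula \(\bigl(ABDF/(A+B+D+F)\bigr)^{n^2/2}\). It also agrees with the dimension count
\[
\dim\mathbb D^4=4\tbinom{n+1}{2}+2\tbinom{n-1}{2}+n-2=3n^2 .
\]
Equivalently, there are four Hermitian matrices subject to one matrix constraint, so the constraint removes \(n^2\) real dimensions, not one.

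Hence
\[
\tfrac12\mathbb E_pQ=\frac{d}{dr}\log Z_{\mathrm{dbl}}(rA,rB,rD,rF)\Big|_{r=1}=\tfrac32n^2 .
\]
Per side, the side with raw parameter \(R\in\{A,B,D,F\}\) contributes \(\tfrac{n^2}{2}\bigl(1-R/(A+B+D+F)\bigr)\), not \(\tfrac{n^2}{2}\), and these four terms sum to \(\tfrac32n^2\).

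With your value \(2n^2\) the computation gives \(2+\tfrac94=\tfrac{17}{4}\). No other order-\(n^2\) constants are available to absorb the discrepancy: the only remaining terms are \(\tfrac{3n}{2}\log(2\pi)=O(n)\) and the displayed logarithm. With the correct count you get \(\tfrac32+\tfrac94=\tfrac{15}{4}\), as claimed.
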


\par\smallskip
\noindent\hyperref[proof:lem:double-partition]{Proof of Lemma~\ref*{lem:double-partition}.}
\par\smallskip

\begin{lemma}[One-hive marginal of the  quadruply augmented double hive law]\lab{lem:double-one-hive}
Assume \(\bar a,\bar b,\bar d>0\), and assume either \(\bar f>0\), or
\[
\bar f^2<0,
\qquad
\bar a^2+\bar b^2+\bar d^2+\bar f^2<0.
\]
Let \(p_{\mathrm{dbl}}\) be the density on \(\mathbb D^4\) from
Lemma~\ref{lem:double-maxent}.  Put
\[
T=\bar a^2+\bar b^2+\bar d^2+\bar f^2.
\]
Define the geometric side lengths
\[
a^2=\frac{\bar a^2(T-\bar a^2)}{T},\qquad
b^2=\frac{\bar b^2(T-\bar b^2)}{T},
\]
\[
d^2=\frac{\bar d^2(T-\bar d^2)}{T},\qquad
f^2=\frac{\bar f^2(T-\bar f^2)}{T},
\]
and
\[
c_*^2=
\frac{(\bar a^2+\bar b^2)(\bar d^2+\bar f^2)}{T}.
\]
In the second case these quantities are still positive, since
\(\bar d^2+\bar f^2<-(\bar a^2+\bar b^2)\).
Let \(h_1\) denote the first hive, so that
\[
h_1\in H_n(\alpha,\beta;\nu).
\]
Then the marginal law of \(h_1\) is
\[
\mathcal H_n(a,b,c_*).
\]
Similarly, the marginal law of the second hive is
\(\mathcal H_n(d,f,c_*)\); in the case \(\bar f^2<0\), this is understood
using the symmetry of the three boundary components to place the imaginary
raw parameter in the third slot of Theorem~\ref{thm:gaussian1}.  Moreover,
\(c_*\) is the value of the glued side selected by Lemma~\ref{lem:doublehive}:
it maximizes
\[
x\mapsto
\frac{\Delta_{abx}^2}{abx}\frac{\Delta_{xdf}^2}{xdf}.
\]
\end{lemma}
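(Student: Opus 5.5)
The plan is to integrate out the second hive together with the augmenting Gelfand--Tsetlin patterns, reduce the marginal of \(h_1\) to an explicit Gaussian-type weight on \((\alpha,\beta,\nu)\), and match that weight to the density of Theorem~\ref{thm:gaussian1}.

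\emph{Step 1: the second factor.} First fix \(h_1\in H_n(\alpha,\beta;\nu)\). The \(GT(\alpha)\) and \(GT(\beta)\) factors contribute \(V_n(\alpha)V_n(\beta)/V_n(\tau_n)^2\), by Proposition~\ref{gt-rem}(ii). For the second hive, integrate over \(\eta,\phi\) and over \(GT(\eta)\times GT(\phi)\times H_n(\eta,\phi^\vee;\nu^\vee)\), with \(\nu\) held fixed. This integral is exactly the integral in Theorem~\ref{thm:gaussian} with \(\nu\) frozen. By Coquereaux--Zuber (Theorem~\ref{thm:1}), it is a GUE convolution density. Concretely, it is the density at \(\nu^\vee\) (equivalently at \(-\nu\)) of \(\spec(\sqrt{\bar d^2}\,G-\sqrt{\bar f^2}\,G')\), reweighted. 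By the completion-of-squares identity, it equals a constant times
\[
V_n(\nu)\exp\bigl(-|\nu|^2/(2(\bar d^2+\bar f^2))\bigr)/V_n(\tau_n).
\]
When \(\bar f^2<0\) this is the formal continuation via \(GUE_-\) of \cite[Lemma~1]{GangNar}. It is legitimate because \(\bar d^2+\bar f^2<0\) and the total is still normalizable: since \(T<0\), the combined quadratic form is positive after integrating out \(\nu\).

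\emph{Step 2: identify \(\bar c\).} The marginal weight of \(h_1\) is therefore proportional to
\[
\frac{V_n(\alpha)V_n(\beta)V_n(\nu)}{V_n(\tau_n)}\exp\Bigl[-\tfrac12\bigl(|\alpha|^2/\bar a^2+|\beta|^2/\bar b^2+|\nu|^2/\bar c^2\bigr)\Bigr],
\qquad \bar c^2:=\bar d^2+\bar f^2 .
\]
This is exactly the triply augmented density of Theorem~\ref{thm:gaussian1}, restricted to the hive \(h_1\) with its \(GT(\nu)\) factor replaced by \(V_n(\nu)/V_n(\tau_n)\), i.e.\ after integrating out \(GT(\nu)\). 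Uniqueness of normalization then gives that the law of \(h_1\) is \(\mathcal H_n\) with raw parameters \((\bar a,\bar b,\bar c)\).

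We must also check the regime. If \(\bar f>0\), all three raw parameters are positive. If \(\bar f^2<0\), then \(\bar c^2<0\) and \(-\bar c^2>\bar a^2+\bar b^2\) is exactly \(T<0\).

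\emph{Step 3: geometric side lengths.} Converting to geometric lengths uses \(\bar a^2+\bar b^2+\bar c^2=T\). This gives
\[
a^2=\bar a^2(T-\bar a^2)/T,\qquad b^2=\bar b^2(T-\bar b^2)/T,\qquad c^2=(\bar d^2+\bar f^2)(\bar a^2+\bar b^2)/T=c_*^2,
\]
matching the statement. The second hive follows symmetrically, with raw parameters \((\bar d,\bar f,\sqrt{\bar a^2+\bar b^2})\). Here the orientation reversal \(\nu\mapsto\nu^\vee\) and \(\phi\mapsto\phi^\vee\) preserves \(|\cdot|^2\) and \(V_n\). The imaginary parameter is moved to the third slot using the \(S_3\) symmetry of hives, as the statement indicates. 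This gives \(\mathcal H_n(d,f,c_*)\). Positivity of \(d^2,f^2,c_*^2\) in the second case is a sign check: for instance \(T-\bar d^2<0\) and \(T<0\).

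\emph{Step 4: the maximization claim.} The last claim, that \(c_*\) maximizes \(x\mapsto \Delta_{abx}^2\Delta_{xdf}^2/(abx\cdot xdf)\), I would deduce from Lemma~\ref{lem:doublehive} together with Lemma~\ref{lem:tet-analogue of GangNar}. Take \(a_{\mathrm{raw}}^2=\bar a^2\) and so on; Gaussian conditioning gives \(\operatorname{Var}(\alpha\mid\cdot)=\bar a^2-\bar a^4/T\), the same formulas as above, and \(\operatorname{Var}(\alpha+\beta\mid\cdot)=c_*^2\). Lemma~\ref{lem:doublehive} then says the product at \(x=c_*\) equals \(9\diamondplus^2/(4abdf)\), which is the maximum by the displayed tetrahedral identity \eqref{eq:tetrahedral-identity}. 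When \(\bar f^2<0\), Lemma~\ref{lem:doublehive} needs an analytic continuation in \(f_{\mathrm{raw}}^2\). Alternatively, one can verify directly that \(\Delta_{abc_*}^2\Delta_{c_*df}^2/(abdf\,c_*^2)=\bar a^2\bar b^2\bar d^2\bar f^2/T\) and invoke Lemma~\ref{lem:tet-analogue of GangNar}.

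The main obstacle I expect is Step 1 in the imaginary case. There one must justify integrating out the second hive with a non-integrable \(GUE_-\) factor. I would handle it by doing the \(\nu\)-dependent Gaussian computation at the level of the combined quadratic form, which is integrable since \(T<0\), mirroring the proof of Theorem~\ref{thm:gaussian} in \cite{GangNar}.
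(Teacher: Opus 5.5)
Your proposal is correct and follows the paper's proof essentially step for step. You integrate out the $\alpha,\beta$ decorations and the whole second half via Coquereaux--Zuber and the ($GUE_-$) completion of squares to get the weight $V_n(\nu)\exp[-|\nu|^2/(2(\bar d^2+\bar f^2))]$. You then read off $\bar c^2=\bar d^2+\bar f^2$, convert to $(a,b,c_*)$, and obtain the maximization from Lemma~\ref{lem:doublehive}. The only differences are minor. In the imaginary case you cite Lemma~\ref{lem:tet-analogue of GangNar}, whose own proof rests on the same critical-point computation that the paper redoes here with signed $\bar f^2$. Also, your displayed identity has a slip: it should read $\Delta_{abc_*}^2\Delta_{c_*df}^2/c_*^2=\bar a^2\bar b^2\bar d^2\bar f^2/T$, without the factor $abdf$ on the left.
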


\par\smallskip
\noindent\hyperref[proof:lem:double-one-hive]{Proof of Lemma~\ref*{lem:double-one-hive}.}
\par\smallskip

\begin{lemma}[Weyl integration formula for Hermitian matrices; see {\cite[Sec.~3.1]{Mehta}}]\lab{lem:Weyl}
Let $ \Herm_n$ denote the real vector space of $n\times n$
complex Hermitian matrices, equipped with Lebesgue measure
\[
  dH
  = \prod_{i=1}^n dH_{ii}
    \prod_{1\leq i<j\leq n}
    d(\operatorname{Re}H_{ij})\,d(\operatorname{Im}H_{ij}).
\]
Let $dU$ be normalized Haar probability measure on $\mathrm U(n)$.
We equip \(\Spec_n+\mathbb R\one\) with the induced \(n\)-dimensional
Lebesgue measure \(d\lambda\).
Then, for every nonnegative measurable function
$F\colon\mathcal H_n\to[0,\infty]$ (and hence for every integrable
function $F$),
\[
	  \int_{\Herm_n} F(H)\,dH
	  =
	  \frac{\pi^{n(n-1)/2}}{V_n(\tau_n)}
	  \int_{\Spec_n+\mathbb R\one}\int_{\mathrm U(n)}
	  F\!\left(U\operatorname{diag}(\lambda_1,\ldots,\lambda_n)U^*\right)
	  V_n(\lambda)^2\,dU\,d\lambda,
	\]
	where \(V_n\) is the sorted Vandermonde defined above.  Since
	\(\lambda\in\Spec_n+\mathbb R\one\), this is simply
	\(\prod_{1\leq i<j\leq n}(\lambda_i-\lambda_j)\).

In particular, if $F$ is invariant under unitary conjugation, then
\[
	  \int_{\Herm_n} F(H)\,dH
	  =
	  \frac{\pi^{n(n-1)/2}}{V_n(\tau_n)}
	  \int_{\Spec_n+\mathbb R\one}
	  F\!\left(\operatorname{diag}(\lambda_1,\ldots,\lambda_n)\right)
	  V_n(\lambda)^2\,d\lambda.
	\]
\end{lemma}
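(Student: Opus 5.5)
The plan is to prove the formula first with an undetermined constant $C_n$, using the spectral decomposition map. The value of $C_n$ will then be fixed by testing against the Gaussian, whose normalization is already recorded in the Preliminaries. This avoids computing the Riemannian volume of $\mathrm U(n)/\mathrm U(1)^n$ directly.

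\textbf{Step 1: reduce to regular matrices.} Let $\Herm_n^{\mathrm{reg}}$ be the set of Hermitian matrices with $n$ distinct eigenvalues. Its complement is the zero set of the discriminant of the characteristic polynomial. That discriminant is a nonzero polynomial in the real coordinates of $H$, so the complement is Lebesgue-null. Likewise, the non-strict boundary of the chamber $\Spec_n+\mathbb R\one$ is $d\lambda$-null. Hence both sides of the formula may be restricted to regular matrices and to the open chamber.

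\textbf{Step 2: set up the covering map.} Let $\mathbb T\subset\mathrm U(n)$ be the diagonal torus. Consider
\[
\Phi:\mathrm U(n)\times\{\lambda_1>\cdots>\lambda_n\}\to\Herm_n^{\mathrm{reg}},\qquad \Phi(U,\lambda)=U\diag(\lambda)U^*.
\]
This map is surjective. Its fibres are exactly the cosets $U\mathbb T$, because the ordering of $\lambda$ removes the Weyl group ambiguity. So $\Phi$ descends to a diffeomorphism $\mathrm U(n)/\mathbb T\times\{\text{open chamber}\}\to\Herm_n^{\mathrm{reg}}$.

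\textbf{Step 3: compute the Jacobian.} Differentiate at $(U,\lambda)$ with $U^*dU=X$ skew-Hermitian, and write $\Lambda=\diag(\lambda)$:
\[
U^*\,d(U\Lambda U^*)\,U=d\Lambda+[X,\Lambda],\qquad [X,\Lambda]_{ij}=X_{ij}(\lambda_j-\lambda_i).
\]
Conjugation by $U$ preserves $dH$. The diagonal coordinates therefore contribute $d\lambda$. Each pair $i<j$ contributes the real and imaginary parts of $X_{ij}$, each scaled by $(\lambda_i-\lambda_j)$, which gives a factor $(\lambda_i-\lambda_j)^2$. The remaining form in the $X_{ij}$, $i<j$, is the left-invariant volume on $\mathrm U(n)/\mathbb T$. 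That volume is a constant multiple of the pushforward of normalized Haar measure $dU$. The constant does not depend on $\lambda$ or $F$. Since $\mathbb T$ acts trivially on $\diag(\lambda)$, integrating over $\mathrm U(n)$ or over the quotient gives the same result up to that constant. We obtain
\[
\int_{\Herm_n}F\,dH=C_n\int_{\Spec_n+\mathbb R\one}\int_{\mathrm U(n)}F(U\diag(\lambda)U^*)\,V_n(\lambda)^2\,dU\,d\lambda
\]
for all nonnegative measurable $F$, by the change-of-variables theorem for nonnegative functions.

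\textbf{Step 4: identify $C_n$.} Take $F(H)=\exp(-\tfrac12\Tr H^2)$. The left side equals $(2\pi)^{n/2}\pi^{n(n-1)/2}$, by the GUE normalization stated in the Preliminaries. On the right side, $F$ depends only on $\lambda$, so we need the Mehta integral
\[
\int_{\mathbb R^n}\prod_{i<j}(x_i-x_j)^2e^{-|x|^2/2}\,dx=(2\pi)^{n/2}\prod_{k=1}^n k!.
\]
Dividing by $n!$ to restrict to the ordered chamber gives $(2\pi)^{n/2}\prod_{k=1}^{n-1}k!=(2\pi)^{n/2}V_n(\tau_n)$. Therefore $C_n=\pi^{n(n-1)/2}/V_n(\tau_n)$, as claimed. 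This is also consistent with the spectral GUE density quoted earlier, which could be used instead of Mehta's integral.

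\textbf{Step 5: finish.} The extension from nonnegative $F$ to integrable $F$ follows by splitting $F$ into positive and negative parts (and real and imaginary parts if complex-valued). If $F$ is invariant under unitary conjugation, then $F(U\diag(\lambda)U^*)=F(\diag(\lambda))$, and the $dU$ integral equals $1$ because $dU$ is a probability measure. This gives the second display.

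The main obstacle is bookkeeping the constant in Step 3: the Haar normalization and the $\mathbb T$-quotient. Step 4 sidesteps this by calibrating with a single Gaussian test function.
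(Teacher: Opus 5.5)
Your proof is correct and follows the standard argument the paper relies on. The paper gives no proof of its own and cites Mehta, whose derivation is exactly your computation $U^*dH\,U=d\Lambda+[U^*dU,\Lambda]$, producing the factor $V_n(\lambda)^2$ times an angular measure, with the constant then fixed by a Gaussian normalization. Your calibration with $\exp(-\tfrac12\operatorname{Tr}H^2)$ and the ordered Mehta integral $(2\pi)^{n/2}V_n(\tau_n)$ correctly yields $\pi^{n(n-1)/2}/V_n(\tau_n)$ (e.g.\ $\pi$ for $n=2$), consistent with the GUE normalizations used elsewhere in the paper.
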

\begin{proposition}[Middle side of a double hive]\lab{prop:middle}
Let
\[
        \alpha=\spec(A),\qquad
        \beta=\spec(B),\qquad
        \eta=\spec(E),\qquad
        \phi=\spec(F)
\]
belong to \(\Spec_n^\circ\), and assume the trace compatibility condition
\[
        \sum_i\alpha_i+\sum_i\beta_i+\sum_i\eta_i - \sum_i\phi_i=0 .
\]
Let \(U_1,U_2,U_3,U_4\) be independent Haar-distributed unitary matrices and set
\[
        S=U_1AU_1^*+U_2BU_2^*,
        \qquad
        T=U_3EU_3^*-U_4FU_4^* .
\]
For \(\nu\in\Spec_n\), write
\[
        \nu^\vee=(-\nu_n,\ldots,-\nu_1),
\]
so that \(\nu^\vee=\spec(-X)\) whenever \(\nu=\spec(X)\).
Suppose that the double hive polytope
\[
        \mathcal D(\alpha,\beta,\eta,\phi^\vee)
        :=
        \bigcup_{\nu\in\Spec_n}
        H_n(\alpha,\beta;\nu)
        \times
        H_n(\eta,\phi^\vee;\nu^\vee)
\]
is full dimensional, where the two hives are glued along the common side with values
\(\nu\) on the first hive and \(\nu^\vee\) on the second. 
 If a double hive is
sampled from normalized Lebesgue measure on
\(\mathcal D(\alpha,\beta,\eta,\phi^\vee)\), then its middle side has the same
distribution as the regular conditional distribution of
\[
        \spec(S)
        =
        \spec(U_1AU_1^*+U_2BU_2^*)
\]
given the matrix equation
\[
        S+T=0 .
\]
Equivalently,
\[
        \spec(\text{middle side})
        \stackrel{d}{=}
        \spec(U_1AU_1^*+U_2BU_2^*)
        \,\big|\,
        U_1AU_1^*+U_2BU_2^*
        +U_3EU_3^*-U_4FU_4^*=0 .
\]
\end{proposition}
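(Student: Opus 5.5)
We compare two densities for the middle spectrum \(\nu\): one coming from the double hive polytope and one coming from the conditioned matrix sum. The plan is to compute both with the Coquereaux--Zuber formula (Theorem~\ref{thm:1}) and the Weyl integration formula (Lemma~\ref{lem:Weyl}), and to check that they agree up to a factor independent of \(\nu\).

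\emph{Polytope side.} Since \(\mathcal D(\alpha,\beta,\eta,\phi^\vee)\) is a union of products fibered over \(\nu\), Fubini shows that under normalized Lebesgue measure the middle side has density proportional to
\[
|H_n(\alpha,\beta;\nu)|\,|H_n(\eta,\phi^\vee;\nu^\vee)|.
\]
Full dimensionality is what makes this a genuine density on the trace-constrained slice of \(\Spec_n+\R\one\).

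\emph{Matrix side.} Put \(S=U_1AU_1^*+U_2BU_2^*\) and \(T'=-T=U_4FU_4^*-U_3EU_3^*\). These are independent and unitarily invariant, and the event \(S+T=0\) is the event \(S=T'\).
\begin{itemize}
\item Condition on \(S\) and \(T'\) lying in the trace hyperplane and form the joint density \(p_S(M)\,p_{T'}(M)\) at the matrix level.
\item Because both densities are conjugation invariant, \(p_S(M)=g_S(\spec M)\) and \(p_{T'}(M)=g_{T'}(\spec M)\).
\item By Lemma~\ref{lem:Weyl}, pushing forward to \(\nu=\spec M\) gives a conditional density proportional to \(g_S(\nu)\,g_{T'}(\nu)\,V_n(\nu)^2\).
\item The spectral density of \(S\) is \(\rho_S(\nu)\propto g_S(\nu)V_n(\nu)^2\), and Theorem~\ref{thm:1} gives \(\rho_S(\nu)\propto V_n(\nu)\,|H_n(\alpha,\beta;\nu)|\). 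Hence \(g_S(\nu)\propto |H_n(\alpha,\beta;\nu)|/V_n(\nu)\).
\item For \(T'\): \(\spec(-T')=\spec(U_3EU_3^*+U_4(-F)U_4^*)\) and \(\spec(-F)=\phi^\vee\), so Theorem~\ref{thm:1} gives
\[
\rho_{-T'}(\nu^\vee)\propto V_n(\nu^\vee)\,|H_n(\eta,\phi^\vee;\nu^\vee)|.
\]
Since \(V_n(\nu^\vee)=V_n(\nu)\), this yields \(g_{T'}(\nu)\propto |H_n(\eta,\phi^\vee;\nu^\vee)|/V_n(\nu)\).
\item Multiplying, the factors \(V_n(\nu)^{-1}V_n(\nu)^{-1}V_n(\nu)^2\) cancel. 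The conditional density of \(\spec(S)\) given \(S+T=0\) is therefore proportional to \(|H_n(\alpha,\beta;\nu)|\,|H_n(\eta,\phi^\vee;\nu^\vee)|\).
\end{itemize}
This matches the polytope side, and both are probability densities, so the two laws are equal.

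\emph{Main obstacle.} The delicate step is making "conditioning on the matrix equation \(S+T=0\)" rigorous, since it is a measure-zero event. I would define the regular conditional distribution as the disintegration of the law of \((S,T)\) along the linear map \((S,T)\mapsto S+T\). Because that map is a submersion with Lebesgue fibers, the disintegration density at \(S+T=0\) is \(\int p_S(M)\,p_{-T}(M)\,dM\) restricted to the trace hyperplane, where the conditional law is taken. Checking that this is the correct version requires continuity of \(p_S\) and \(p_{T'}\) on the interior. This continuity holds because the hive volumes are continuous on the full-dimensional region, using \(\alpha,\beta,\eta,\phi\in\Spec_n^\circ\).

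A minor bookkeeping point is the trace constraint. Each Coquereaux--Zuber density lives on the hyperplane \(\sum\nu=\sum\alpha+\sum\beta\), and the trace compatibility assumption makes the two hyperplanes coincide. The Weyl formula restricted to trace-fixed Hermitian matrices then carries the same \(V_n^2\) factor up to a constant.
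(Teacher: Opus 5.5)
Your proposal is correct and follows essentially the same route as the paper. You use Fubini on the polytope side, then Coquereaux--Zuber plus Weyl integration to write the conjugation-invariant matrix densities of $S$ and $-T$ as $|H_n(\alpha,\beta;\nu)|/V_n(\nu)$ and $|H_n(\eta,\phi^\vee;\nu^\vee)|/V_n(\nu)$ up to constants, so the $V_n(\nu)^2$ Weyl Jacobian cancels. The differences are minor: the paper tracks every constant and evaluates the normalizer $C_0$ explicitly in terms of $Z_{\mathcal D}$, while you normalize at the end (enough, since both sides are probability densities), and you are more careful than the paper about the trace hyperplane and the choice of version of the conditional law on the null event $S+T=0$.
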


\par\smallskip
\noindent\hyperref[proof:prop:middle]{Proof of Proposition~\ref*{prop:middle}.}
\par\smallskip

\section{Hives from deformed GUE minor processes}\lab{sec:deformed-gue-hives}
\begin{notation} We say that a sequence of events $(E_n)_{n \geq 1}$ occurs with overwhelming probability as $n \ra \infty$ if for every positive constant $C$,  the probability of $E_n$ is  $ 1 - O(n^{-C})$.
\end{notation}
The following lemma appeared as Lemma 6 in \cite{NarSheffTao}.
\begin{lemma}[Eigenvalue rigidity]\lab{lem:rigidity}
Let \(A\) be a matrix such that \(A/\sqrt n\) has the distribution of GUE.
Let
\[
\lambda_1 \leq \lambda_2 \leq \cdots \leq \lambda_n
\]
denote its eigenvalues, and let \(\gamma_i\) be the corresponding classical
location for the semicircle law, defined by
\[
\int_{-\infty}^{\gamma_i}
\frac{1}{2\pi}\sqrt{(4-x^2)_+}\,dx
=
\frac{i}{n}.
\]
Then for every \(1\leq i\leq n\),
\[
\mathbb P\left(
n^{-1/3}\min(i,n-i+1)^{1/3}
\left|\lambda_i-\sqrt n\,\gamma_i\right|
\geq T
\right)
\ll
n^{O(1)}\exp(-cT^c)
\]
for all \(T>0\), where \(c>0\) is an absolute constant.

In particular, with overwhelming probability,
\[
\lambda_i
=
\sqrt n\,\gamma_i
+
O\!\left(
n^{1/3}\min(i,n-i+1)^{-1/3}\log^{O(1)} n
\right),
\]
and the same estimate holds with \(\lambda_i\) replaced by
\(\mathbb E\lambda_i\).
\end{lemma}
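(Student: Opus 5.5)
The plan is to reduce the statement to the optimal rigidity theorem for Wigner matrices, rather than reprove it from scratch. That theorem is due to Erd\H{o}s--Yau--Yin, with the GUE case also covered by Gustavsson's Gaussian fluctuation analysis and by Tao--Vu. This is how it is used in \cite{NarSheffTao}.

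\textbf{Normalization.} First I will fix the scaling. Let \(W\) be the normalized matrix whose empirical spectral measure converges to the semicircle law on \([-2,2]\), so that the eigenvalues of \(A\) are \(\sqrt n\) times those of \(W\). The classical locations \(\gamma_i\) in the statement are the quantiles of that semicircle law. I will check that every bound below, proved at the scale of \(W\) and then multiplied by \(\sqrt n\), is at least as strong as the displayed error \(n^{1/3}\min(i,n-i+1)^{-1/3}\log^{O(1)}n\). This holds because optimal rigidity gives an error of order \(n^{-2/3}\min(i,n-i+1)^{-1/3}\) for \(W\), which is much smaller. So the stated estimate follows even with room to spare in the exponent.

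\textbf{Key steps.}
\begin{enumerate}
\item Invoke the local semicircle law for the Stieltjes transform \(s_W(z)\). It holds with overwhelming probability uniformly for \(\operatorname{Im} z\ge n^{-1+\epsilon}\) and \(\operatorname{Re} z\) in a neighbourhood of \([-2,2]\), with error \(\log^{O(1)}n/(n\operatorname{Im}z)\) and the improved bound near the spectral edges.
\item Use the Helffer--Sj\"ostrand formula to convert this into a counting estimate,
\[
\Bigl|\#\{j:\lambda_j(W)\le E\}-n\int_{-\infty}^E\rho_{sc}\Bigr|\le \log^{O(1)}n ,
\]
uniformly in \(E\), with overwhelming probability.
\item Invert this counting estimate at the quantiles. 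Near \(\gamma_i\) the semicircle density satisfies \(\rho_{sc}(\gamma_i)\asymp(\min(i,n-i+1)/n)^{1/3}\). An error of \(\log^{O(1)}n\) in the count therefore moves \(\lambda_i(W)\) by at most \(n^{-2/3}\min(i,n-i+1)^{-1/3}\log^{O(1)}n\).
\item To get the stretched-exponential tail form \(n^{O(1)}\exp(-cT^c)\), rather than only ``overwhelming probability'', I will use the version of rigidity with subexponential tails, as in Erd\H{o}s--Yau--Yin. Alternatively, I will track the high-moment bounds in the local law with \(\log^{O(1)}n\) replaced by \(T\).
\end{enumerate}

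\textbf{The expectation statement.} I will integrate the tail bound. The contribution of the exceptional event is controlled by Cauchy--Schwarz together with the Gaussian tail of the operator norm, \(\mathbb P(\|W\|\ge 3+s)\le e^{-cns^2}\), which bounds \(\mathbb E|\lambda_i|^2\) polynomially. The tail bound then gives
\[
\bigl|\mathbb E\lambda_i-\sqrt n\,\gamma_i\bigr|\lesssim n^{1/3}\min(i,n-i+1)^{-1/3}\log^{O(1)}n ,
\]
and the polynomial factor \(n^{O(1)}\) is absorbed by choosing \(T=\log^{C}n\).

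\textbf{Main obstacle.} I expect the hardest step to be the edge regime, i.e.\ \(i\) or \(n-i\) of order \(1\). There the square-root vanishing of \(\rho_{sc}\) requires the sharpened local law outside the spectrum to exclude eigenvalues beyond \(2+n^{-2/3+\epsilon}\). In the GUE case I would handle this directly via the Tracy--Widom tail estimates, or via the determinantal structure as in Gustavsson \cite{Gustavsson}.
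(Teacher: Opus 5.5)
Your plan is the right kind of argument: deduce the estimate from optimal eigenvalue rigidity, treat the edge separately, and get the $\mathbb E\lambda_i$ statement by integrating the tail. It is also all the paper relies on, since the paper gives no proof and imports the lemma from \cite{NarSheffTao}. The problem is your normalization paragraph. In this paper GUE has density proportional to $\exp(-\frac12\operatorname{Tr}H^2)$, so $\mathbb E\operatorname{Tr}(G^2)=n^2$ and the spectrum of $G$ fills $[-2\sqrt n,2\sqrt n]$. Hence if $A/\sqrt n=G$, then $A=\sqrt n\,G=nW$ with $W$ semicircle-normalized, not $A=\sqrt n\,W$. The eigenvalues of $A$ lie near $n\gamma_i$, so the displayed centering $\sqrt n\,\gamma_i$ must be read as $n\gamma_i$. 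Taken literally with the paper's convention the estimate is false: for $i=n$ one has $\lambda_n\approx 2n$ while $\sqrt n\,\gamma_n=2\sqrt n$. The paper's own uses confirm the $n\gamma_i$ reading. In Lemma~\ref{lem:KLhive} the classical spectrum of $\sqrt n\,G_n$ is $\lambda^{\mathrm{cl}}_i=n\gamma_i$. Lemma~\ref{prop:3} and Proposition~\ref{prop:large-side-moments} need $\sum_i\operatorname{Var}(d_i)=O(\log^{O(1)}n)$ for the eigenvalues $d_i$ of an unscaled GUE, i.e.\ fluctuations of size $n^{-1/6}\min(i,n-i+1)^{-1/3}\log^{O(1)}n$. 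Under your reading the lemma only yields $\sum_i\operatorname{Var}(d_i)=O(n\log^{O(1)}n)$.

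So the ``room to spare'' is an artifact of the rescaling. Under your reading the threshold for $W$ is at least $Tn^{-1/2}$, which is already the Gaussian-concentration scale of the $n^{-1/2}$-Lipschitz function $\lambda_i(W)$. With the correct scaling, $n\cdot n^{-2/3}\min(i,n-i+1)^{-1/3}=n^{1/3}\min(i,n-i+1)^{-1/3}$ is exactly the optimal rigidity scale. The lemma is then the full-strength statement that $Y_i=n^{2/3}\min(i,n-i+1)^{1/3}|\lambda_i(W)-\gamma_i|$ satisfies $\mathbb P(Y_i\geq T)\ll n^{O(1)}e^{-cT^c}$ for every $T>0$, with only polylogarithmic losses. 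Your Step~4 and the edge step therefore carry the whole proof, and the inputs you name do not suffice as stated. Excluding eigenvalues beyond $2+n^{-2/3+\epsilon}$ costs a factor $n^{\epsilon}$, and the original Erd\H{o}s--Yau--Yin rigidity theorem has $(\log n)^{C\log\log n}$ losses, so neither gives the $\log^{O(1)}n$ form. For GUE the cleanest complete route is the determinantal one you mention at the end. For fixed $E$, the counting function $N(E)=\#\{j:\lambda_j(W)\leq E\}$ is a sum of independent Bernoulli variables with variance $O(\log n)$. Bernstein's inequality then gives $\mathbb P(|N(E)-\mathbb EN(E)|\geq t)\leq 2e^{-ct}$ for $t\geq C\log n$. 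Combine this with $|\mathbb EN(E)-n\int_{-\infty}^E\rho_{sc}|=O(\log n)$ at the single point $E=\gamma_i\pm cTn^{-2/3}\min(i,n-i+1)^{-1/3}$; no uniformity in $E$ is needed. When that point lies beyond the spectral edge, use the GUE edge bound $\mathbb P(\lambda_{\max}(W)\geq 2+tn^{-2/3})\leq Ce^{-ct^{3/2}}$. Your derivation of the estimate for $\mathbb E\lambda_i$ is fine and carries over verbatim once $\sqrt n\,\gamma_i$ is replaced by $n\gamma_i$.
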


\subsection{Deformed GUE inputs}\lab{ssec:4.1}
At the base scale, consider matrices
\[
        X=wG+ u D,\qquad Y=w'G'+ u'D',
\]
where \(G,G'\) are independent GUE matrices and \(D,D'\) are diagonal matrices with non-increasing eigenvalues $\la_{11} \geq \dots \geq \la_{nn}$, with spectrum that respectively equal to the spectrum of two GUE matrices $(\tilde{G}, \tilde{G}')$  such that $(G, G', \tilde{G}, \tilde{G}')$ are all independent. 
In the finite-\(n\) hive construction below, the actual inputs are the
scaled matrices \(\sqrt n\,X\) and \(\sqrt n\,Y\).

We initialize the upper double hive using the two
deformed minor processes.  We then apply the octahedron recurrence (see \cite{NarSheffTao}) to obtain an augmented hive $(h_n, g_n)$, where $h_n$ is a hive and $g_n$ is a Gelfand-Tsetlin pattern.   

\begin{lemma}[Fixed spectrum and fixed diagonal version]\lab{lem:fixed-spectrum-fixed-diagonal}
Let \(w>0\) and \(u\geq 0\), and let
\[
        X=wG+uD,
\]
where \(G\) is a GUE matrix and \(D=\operatorname{diag}(d_1,\ldots,d_n)\) is
fixed.  Let
\[
        \lambda=\spec(X),
        \qquad
        a=\diag(X).
\]
For \(1\leq k\leq n\), let \(\lambda^{(k)}\) be the ordered eigenvalue vector
of the \(k\times k\) northwest principal minor of \(X\), and write
\[
        \Gamma=(\lambda^{(1)},\ldots,\lambda^{(n)}).
\]
Then, conditional on both
\[
        \spec(X)=\lambda
        \qquad\text{and}\qquad
        \diag(X)=a,
\]
the minor process \(\Gamma\) is uniformly distributed, with respect to
Lebesgue measure, on the Gelfand-Tsetlin fiber
\[
        \GT_{\diag(\lambda)\rel a}.
\]
Equivalently, its conditional density is
\[
        \frac{1}{|\GT_{\diag(\lambda)\rel a}|}
        \mathbf 1_{\{\Gamma\in \GT_{\diag(\lambda)\rel a}\}} .
\]
\end{lemma}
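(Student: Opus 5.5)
The plan is to reduce to Proposition~\ref{gt-rem}(iii) via unitary invariance, conditioning on the diagonal as well as the spectrum. The density of \(X=wG+uD\) on \(\Herm_n\) is
\[
        p_X(H)\propto \exp\!\left(-\frac{1}{2w^2}\operatorname{Tr}\bigl((H-uD)^2\bigr)\right)
        =\exp\!\left(-\frac{1}{2w^2}\Bigl(\operatorname{Tr}(H^2)-2u\sum_i d_iH_{ii}+u^2\sum_i d_i^2\Bigr)\right),
\]
since \(\operatorname{Tr}(HD)=\sum_i d_iH_{ii}\) when \(D\) is diagonal. The key observation is that \(p_X(H)\) depends on \(H\) only through \(\spec(H)\) (via \(\operatorname{Tr}H^2=|\lambda|^2\)) and \(\diag(H)\). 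Hence, conditional on \(\spec(X)=\lambda\) and \(\diag(X)=a\), the density is constant. The conditional law of \(X\) is therefore the restriction of the reference measure on the orbit \(\{U\diag(\lambda)U^*\}\) to the slice \(\{\diag=a\}\), disintegrated with respect to the diagonal map.

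To make this precise, I would apply Lemma~\ref{lem:Weyl} to write the law of \(X\) as
\[
        \int f\bigl(\spec(H),\diag(H)\bigr)\,V_n(\lambda)^2\,dU\,d\lambda,
        \qquad H=U\diag(\lambda)U^*,
\]
with \(f\) the function above. For fixed \(\lambda\), \(H\) is Haar-uniform on the orbit. By Proposition~\ref{gt-rem}(iii), the minor process \(\Gamma\) of a Haar-uniform orbit element is uniform on \(\GT_{\diag(\lambda)\rel *}\), and its boundary vector is exactly \(\diag(H)\). The boundary vector is a linear function of \(\Gamma\): the diagonal sums \(\sum_{i\le k}\lambda^{(k)}_i=\sum_{i\le k}a_i\) determine \(a\) through successive differences. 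So the joint law of \((\Gamma,\lambda)\) has density proportional to
\[
        f\bigl(\lambda,a(\Gamma)\bigr)\,V_n(\lambda)^2\cdot\frac{V_n(\tau_n)}{V_n(\lambda)}\,\mathbf 1_{\GT_{\diag(\lambda)\rel *}}(\Gamma),
\]
taken with respect to Lebesgue measure in \(\Gamma\) for fixed \(\lambda\). Conditioning additionally on \(a(\Gamma)=a\) amounts to disintegrating Lebesgue measure on \(\GT_{\diag(\lambda)\rel *}\) along the linear map \(\Gamma\mapsto a\). Its fibers are the polytopes \(\GT_{\diag(\lambda)\rel a}\), and the disintegration is a constant Jacobian times Lebesgue measure on each fiber. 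Since the weight is constant on each fiber, the conditional density is the uniform density \(|\GT_{\diag(\lambda)\rel a}|^{-1}\mathbf 1\), which is the claim.

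The main obstacle is a measure-theoretic one. Conditioning on \((\spec,\diag)\) is conditioning on a null event, so it must be interpreted as a regular conditional distribution. The Jacobian of the coarea decomposition must be shown to be independent of \(\Gamma\), which holds because the map \(\Gamma\mapsto a\) is linear and surjective, not merely piecewise linear. Finally, the top row and trace must be handled consistently: \(a\) lives on the hyperplane \(\sum a_i=\sum\lambda_i\) once \(\lambda\) is fixed. I would treat all of this via the explicit linear change of variables, with \(a\) given by the diagonal partial sums, to justify the disintegration.
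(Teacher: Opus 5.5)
Your proposal is correct and follows the same route as the paper. The paper expands the square, observes that the Gaussian density is constant on the slice $\{\spec=\lambda,\ \diag=a\}$, identifies the conditional law with Haar measure on the orbit conditioned on the diagonal, and then invokes Proposition~\ref{gt-rem}(iii) to obtain the uniform law on $\GT_{\diag(\lambda)\rel a}$. Your Weyl-integration and linear-disintegration step is a more careful version of that final pushforward, which the paper states in one line.
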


\par\smallskip
\noindent\hyperref[proof:lem:fixed-spectrum-fixed-diagonal]{Proof of Lemma~\ref*{lem:fixed-spectrum-fixed-diagonal}.}
\par\smallskip

\begin{lemma}\lab{prop:3}
Let
\[
X_n=\sqrt n\,(wG_n+uD_n),
\qquad
D_n=\operatorname{diag}(d_1,\ldots,d_n),
\]
where \(G_n\) is GUE, \(D_n\) is independent of \(G_n\), and
\((d_1,\ldots,d_n)\) is the ordered spectrum of an independent GUE
matrix. Let \(\Gamma_n\) be the complete eigenvalue minor process of
\(X_n\). If \(w>0\) and \(u\geq 0\) are fixed, then
\[
I(D_n;\Gamma_n)=O(n \log \log n).
\]
\end{lemma}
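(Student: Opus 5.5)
The plan is to bypass the Gelfand--Tsetlin structure and reduce everything to a Gaussian channel acting on the diagonal. The minor process \(\Gamma_n\) is a measurable function of \(X_n\). The data-processing inequality (Lemma~\ref{lem:data-processing}) then gives
\[
I(D_n;\Gamma_n)\leq I(D_n;X_n)=I\bigl(D_n;wG_n+uD_n\bigr),
\]
since multiplying by \(\sqrt n\) is a bijection and does not change mutual information.

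\emph{Step 1: only the diagonal carries information.} The off-diagonal entries of \(wG_n+uD_n\) are those of \(wG_n\). They are independent of \((D_n,\diag(G_n))\), so they carry no information about \(D_n\). Writing \(g_i=(G_n)_{ii}\), which are i.i.d.\ \(N(0,1)\), and \(x_i=wg_i+ud_i\), we get
\[
I(D_n;X_n)=I\bigl(D_n;(x_1,\dots,x_n)\bigr).
\]

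\emph{Step 2: memoryless channel bound.} Given \(D_n\), the noise coordinates \(wg_i\) are independent. Hence \(\ent(x\mid D_n)=\sum_i\ent(wg_i)\), while subadditivity gives \(\ent(x)\leq\sum_i\ent(x_i)\). Therefore
\[
I(D_n;x)\leq\sum_{i=1}^n I(d_i;x_i).
\]
For each \(i\), Gaussian maximum entropy at fixed variance bounds \(\ent(x_i)\) by \(\tfrac12\log\bigl(2\pi e(w^2+u^2\Var(d_i))\bigr)\). This yields
\[
I(d_i;x_i)\leq \tfrac12\log\Bigl(1+\tfrac{u^2}{w^2}\Var(d_i)\Bigr).
\]
Note that \(d_i\) itself is of order \(\sqrt n\). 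So we must use the variance of \(d_i\), not its second moment; otherwise each term would only be \(O(\log n)\) and the total \(O(n\log n)\).

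\emph{Step 3: variance of the GUE eigenvalues (main step).} Apply Lemma~\ref{lem:rigidity} to \(A=\sqrt n\,\widetilde G\), whose eigenvalues are \(\sqrt n\,d_i\). Dividing by \(\sqrt n\), with overwhelming probability
\[
|d_i-\gamma_i\sqrt n|\leq n^{-1/6}\min(i,n-i+1)^{-1/3}\log^{O(1)}n\leq \log^{O(1)}n .
\]
The same estimate holds for \(\E d_i\), so \(|d_i-\E d_i|\leq\log^{O(1)}n\) off an event \(E^c\) of probability \(O(n^{-C})\). On \(E^c\), I would use \(|d_i|\leq\|\widetilde G\|_{\mathrm{op}}\) together with Cauchy--Schwarz and a polynomial moment bound on \(\|\widetilde G\|_{\mathrm{op}}\), say \(\E\|\widetilde G\|^4\leq n^{O(1)}\). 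This makes the contribution of \(E^c\) to \(\Var(d_i)\) at most \(O(n^{-C'})\). Hence \(\Var(d_i)\leq\log^{O(1)}n\) uniformly in \(i\).

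Substituting into Step 2 gives
\[
I(D_n;\Gamma_n)\leq\sum_{i=1}^n\tfrac12\log\bigl(1+\tfrac{u^2}{w^2}\log^{O(1)}n\bigr)=O(n\log\log n).
\]
If \(u=0\), the bound is trivially \(0\).

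The only delicate point is Step 3, namely turning the tail bound of Lemma~\ref{lem:rigidity} into a uniform variance bound, including near the edges. There the crude bound \(n^{-1/6}\log^{O(1)}n\) already suffices. I would handle it exactly as above, splitting into the rigidity event and its complement. As a sanity check, Lemma~\ref{lem:fixed-spectrum-fixed-diagonal} shows that \(D_n\to(\spec X_n,\diag X_n)\to\Gamma_n\) is Markov, which is consistent with the reduction to the diagonal, though the argument does not need it.
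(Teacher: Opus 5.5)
Your proof is correct and follows essentially the paper's route: data processing, reduction to the diagonal Gaussian channel $uD_n+wg$, the Gaussian maximum-entropy bound, and a rigidity-based variance estimate for the ordered GUE eigenvalues. The only real difference is in the last two steps. You split the channel coordinatewise by subadditivity and use a uniform bound $\operatorname{var}(d_i)\leq\log^{O(1)}n$, whereas the paper keeps the joint covariance $K_D$, applies AM--GM to $\det\bigl(I_n+\frac{u^2}{w^2}K_D\bigr)$ and bounds $\operatorname{Tr}K_D=\sum_i\operatorname{var}(d_i)$; your treatment of the exceptional event in Step~3 is more explicit than the paper's.
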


\par\smallskip
\noindent\hyperref[proof:prop:3]{Proof of Lemma~\ref*{prop:3}.}
\par\smallskip

\begin{lemma}[Expected logarithmic Vandermonde of the deformed GUE]\lab{lem:12}
Let \(G_n\) and \(\widetilde G_n\) be independent \(n\times n\)
GUE matrices, normalized so that
\[
\mathbb E\operatorname{Tr}(G_n^2)=n^2.
\]
Let
\[
D_n=\operatorname{diag}
\bigl(\operatorname{spec}(\widetilde G_n)\bigr),
\qquad
Z_n=wG_n+uD_n,
\]
where \(w\in\mathbb R\) and \(u\geq 0\) are fixed, and
\[
a=\sqrt{w^2+u^2}>0.
\]
If
\[
V_n(\lambda)
=
\prod_{1\leq i<j\leq n}|\lambda_i-\lambda_j|,
\]
then
\[
\mathbb E\log V_n(\operatorname{spec}Z_n)
=
\frac{n^2}{4}\log n
+\frac{n^2}{2}\log a
-\frac{n^2}{8}
+O(n\log n).
\]

The implicit constant may depend on \(w\) and \(u\), but not on \(n\).
\end{lemma}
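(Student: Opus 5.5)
The plan is to avoid any free-probability or rigidity argument and instead reduce to an exact GUE computation. The key observation is that the spectrum of \(Z_n=wG_n+uD_n\) has exactly the law of the spectrum of \(a\) times a GUE matrix. To see this, let \(U\) be Haar-distributed on \(\mathrm U(n)\), independent of \((G_n,\widetilde G_n)\). Then
\[
\spec(Z_n)=\spec(UZ_nU^*)=\spec\bigl(w\,UG_nU^*+u\,UD_nU^*\bigr).
\]
Conditionally on \(D_n\), the matrix \(UG_nU^*\) is again GUE, by unitary invariance of the Gaussian density \(\propto\exp(-\tfrac12\Tr H^2)\). Moreover, \(UG_nU^*\) is independent of \(U\) and hence of \(UD_nU^*\). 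The matrix \(UD_nU^*=U\diag(\spec\widetilde G_n)U^*\) has the law of \(\widetilde G_n\), since a GUE matrix is a Haar conjugate of its diagonalized spectrum (Lemma~\ref{lem:Weyl}). Hence \(\spec(Z_n)\stackrel{d}{=}\spec(wG_n+u\widetilde G_n)\). Since \(wG_n+u\widetilde G_n\) is a centered Gaussian Hermitian matrix with the GUE covariance multiplied by \(w^2+u^2\), it has the law of \(aG_n\). The sign of \(w\) is irrelevant here, because \(-G_n\stackrel{d}{=}G_n\).

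By homogeneity of the Vandermonde this gives
\[
\mathbb E\log V_n(\spec Z_n)=\binom n2\log a+\mathbb E\log V_n(\spec G_n).
\]
The first term is \(\frac{n^2}{2}\log a+O(n)\). For the second term I will use the Mehta--Selberg integral
\[
M_n(\gamma):=\int_{\mathbb R^n}|V_n(x)|^{2\gamma}e^{-|x|^2/2}\,dx=(2\pi)^{n/2}\prod_{j=1}^n\frac{\Gamma(1+j\gamma)}{\Gamma(1+\gamma)}.
\]
At \(\gamma=1\), the normalized integrand \(|V_n|^2e^{-|x|^2/2}/M_n(1)\) is the unordered GUE eigenvalue density. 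Differentiating under the integral sign, which is justified by dominated convergence near \(\gamma=1\), gives
\[
\mathbb E\log V_n(\spec G_n)=\frac12\frac{d}{d\gamma}\log M_n(\gamma)\Big|_{\gamma=1}=\frac12\sum_{j=1}^n\bigl(j\,\psi(1+j)-\psi(2)\bigr).
\]
Here \(\psi\) is the digamma function, appearing only inside the proof.

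The asymptotics follow from \(\psi(1+j)=\log j+O(1/j)\). This gives \(\sum_{j\leq n}j\psi(1+j)=\sum_j j\log j+O(n)\), and Euler--Maclaurin gives \(\sum_{j\leq n}j\log j=\frac{n^2}{2}\log n-\frac{n^2}{4}+O(n\log n)\). Halving, \(\mathbb E\log V_n(\spec G_n)=\frac{n^2}{4}\log n-\frac{n^2}{8}+O(n\log n)\). Adding \(\frac{n^2}{2}\log a+O(n)\) yields the claimed formula, with an implicit constant that depends on \(a\) only through the \(O(n)\) term \(-\tfrac n2\log a\). As a consistency check, this agrees with the heuristic \(\frac{n^2}{2}\bigl(\log\sqrt n+\log a-\tfrac14\bigr)\), which comes from the log-energy of the semicircle law of variance \(a^2\).

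The main point requiring care is the distributional identity in the first step. It needs the independence of \(U\) from everything else and the fact that conjugation leaves \(G_n\)'s law invariant jointly with \(UD_nU^*\); I would spell this out by conditioning on \((D_n,U)\). The Selberg/Mehta formula is classical and will be cited rather than reproved. The differentiation under the integral and the Stirling/digamma bookkeeping are routine.
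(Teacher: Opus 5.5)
Your proposal is correct and follows the paper's route: you reduce to $\operatorname{spec}(aG_n)$ and then differentiate the Mehta integral in the Vandermonde exponent, which gives the same digamma sum $\frac12\sum_{j=1}^n\bigl(j\psi(j+1)-\psi(2)\bigr)$ plus $\binom n2\log a$. The differences are that you supply the Haar-conjugation argument for $\operatorname{spec}(wG_n+uD_n)\stackrel{d}{=}\operatorname{spec}(wG_n+u\widetilde G_n)$, which the paper asserts without justification, and that you get the asymptotics from $\psi(1+j)=\log j+O(1/j)$ and Euler--Maclaurin rather than the paper's exact harmonic-number evaluation of $\sum_j j\psi(j+1)$.
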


\par\smallskip
\noindent\hyperref[proof:lem:12]{Proof of Lemma~\ref*{lem:12}.}
\par\smallskip

\subsection{Entropy of a deformed GUE minor process.}\lab{ssec:4.2}

\begin{proposition}\lab{prop:4}
Let \(w>0\) and \(u\geq 0\), and let
\[
X_n=\sqrt{n}\,(wG_n+uD_n),
\qquad
a=\sqrt{w^2+u^2},
\]
where \(G_n\) is a GUE matrix and \(D_n\) is diagonal with spectrum
distributed as that of an independent GUE matrix, sorted in non-increasing order. Let \(\Gamma_n\)
denote the complete eigenvalue minor process of \(X_n\), regarded as a
random point in its
\[
m_n=\frac{n(n+1)}{2}
\]
Gelfand-Tsetlin coordinates. Then the asymptotic differential
entropy is
\[
\mathrm{ent}(\Gamma_n)
=
\frac{n^2}{2}
\left(
\frac54+
\log\left(\frac{w^2}{\sqrt{w^2+u^2}}\right)
\right)
+O(n\log n).
\]
\end{proposition}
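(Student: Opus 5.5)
The plan is to compute \(\ent(\Gamma_n)\) by conditioning on the deformation \(D_n\) and using the chain rule
\[
\ent(\Gamma_n)=\ent(\Gamma_n\mid D_n)+I(D_n;\Gamma_n).
\]
By Lemma~\ref{prop:3}, the mutual information term is \(O(n\log\log n)\), which is absorbed in the \(O(n\log n)\) error. So everything reduces to computing \(\ent(\Gamma_n\mid D_n=D)\) for a fixed diagonal \(D\), and then averaging over \(D\).

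For fixed \(D\), I would pass from the matrix \(X_n\) to its minor process through Gelfand--Tsetlin (action--angle) coordinates on \(\Herm_n\). On the open dense set where all interlacing inequalities are strict, \(X\mapsto(\Gamma,\theta)\) is a bijection onto \(\GT\times\T^{n(n-1)/2}\). Here \(\Gamma\) is the minor process and \(\theta\) is a torus of phases. The key structural fact I would use is that Lebesgue measure \(dH\) pushes forward to \(c_n\,d\Gamma\,d\theta\) with a constant Jacobian \(c_n\). This is consistent with Lemma~\ref{lem:Weyl} together with Proposition~\ref{gt-rem}(ii): integrating \(dU\) over the torus must reproduce the factor \(V_n(\lambda)^2\), namely one factor of \(V_n(\lambda)\) from the Weyl formula and one from \(|\GT_{\diag(\lambda)\rel *}|=V_n(\lambda)/V_n(\tau_n)\). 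Conditional on \(D\), the law of \(X_n=\sqrt n(wG_n+uD)\) is invariant under conjugation by diagonal unitaries, which commute with \(D\) and preserve GUE. These conjugations act transitively on the angle coordinates, so conditional on \((\Gamma,D)\) the phases \(\theta\) are uniform. Lemma~\ref{lem:fixed-spectrum-fixed-diagonal} gives the complementary uniformity on GT fibers. Hence
\[
\ent(\Gamma_n\mid D)=\ent(X_n\mid D)-\log c_n-\log\operatorname{vol}(\T^{n(n-1)/2}).
\]
Moreover, \(\ent(X_n\mid D)\) is the entropy of a Gaussian with covariance \(nw^2\) times the GUE covariance, shifted by the mean \(\sqrt n\,uD\). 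This entropy is explicit and does not depend on \(D\).

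To fix the constants \(c_n\) and the torus volume without computing them directly, I would calibrate against the undeformed case \(u=0\). There \(\Gamma_n\) is uniform on \(\GT_{\diag(\lambda)\rel *}\) given \(\lambda\), so
\[
\ent(\Gamma_n)=\ent(\lambda)+\E\log\frac{V_n(\lambda)}{V_n(\tau_n)}.
\]
The entropy \(\ent(\lambda)\) comes from the explicit spectral density of \(\spec(sG)\) in the preliminaries. The term \(\E\log V_n\) is supplied by Lemma~\ref{lem:12}, and \(\log V_n(\tau_n)=\sum_{k<n}\log k!\) is handled by Stirling's formula. Comparing the two expressions for \(u=0\) pins down the constants at order \(n^2\). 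Reinserting general \(w,u\), and checking the overall \((n^2/4)\log n\)-type normalization of the Gelfand--Tsetlin coordinates against the stated formula, would then produce the claimed \(\tfrac{n^2}{2}\bigl(\tfrac54+\log(w^2/a)\bigr)\) up to \(O(n\log n)\).

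The main obstacle I expect is the \(a\)-dependence. The argument above makes \(\ent(\Gamma_n\mid D)\) depend only on \(w\), so the factor \(a^{-1}=(w^2+u^2)^{-1/2}\) must enter through the calibration or normalization step. Concretely, it should come from the \(\E\log V_n(\spec Z_n)\) term of Lemma~\ref{lem:12} at scale \(a\), through a decomposition \(\ent(\Gamma_n)=\ent(\lambda,\diag)+\E\log|\GT_{\diag(\lambda)\rel a}|\). I would carry out both routes, the conditioning on \(D\) and the spectrum/diagonal decomposition, and reconcile them. In doing so I would check carefully that no non-constant Jacobian is hidden in the angle coordinates. I would also make sure the boundary region where the interlacing inequalities fail to be strict contributes nothing, and that the Stirling bookkeeping for \(V_n(\tau_n)\) yields exactly the constant \(\tfrac54\).
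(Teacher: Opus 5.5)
There is a genuine gap, and it is the source of the $a$-dependence you flag as your ``main obstacle''. Your key structural claim is false. That claim says Lebesgue measure $dH$ on $\Herm_n$ pushes forward to $c_n\,d\Gamma\,d\theta$ with a \emph{constant} Jacobian. Action--angle coordinates do have constant Jacobian with respect to Liouville measure on a single orbit. But the transverse measure across orbits carries a Vandermonde. Combining Lemma~\ref{lem:Weyl} with Proposition~\ref{gt-rem}(ii),(iii) gives, for any $F$ depending only on the minor process,
\[
\int_{\Herm_n}F(\Gamma(H))\,dH=\pi^{n(n-1)/2}\int V_n(\lambda^{(n)})\,F(\Gamma)\,d\Gamma .
\]
For $n=2$ one checks directly that $dH=\tfrac12(\lambda_1-\lambda_2)\,d\lambda_1\,d\lambda_2\,dH_{11}\,d\theta$ with $\theta=\arg H_{12}$.

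Your consistency check has the bookkeeping backwards. Weyl's formula supplies the target density $V_n(\lambda)^2$. The GT fiber volume accounts for only one factor, so the other factor must sit in the Jacobian. Consequently your identity $\ent(\Gamma_n\mid D)=\ent(X_n\mid D)-\log c_n-\log\operatorname{vol}(\T^{n(n-1)/2})$ omits the term $-\E[\log V_n(\spec X_n)\mid D]$. Without that term, $\ent(\Gamma_n\mid D)$ varies like $n^2\log w$. The correct $u=0$ answer varies like $\tfrac{n^2}{2}\log w$, so a $(w,u)$-independent calibration constant cannot reconcile them, nor can it produce a $\log a$ term.

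Two smaller points. Conjugation by diagonal unitaries generates only an $(n-1)$-dimensional subtorus of the $\binom n2$-dimensional angle torus, so it does not act transitively; this becomes moot with the repair below. Your fallback $\ent(\lambda,\diag)+\E\log|\GT_{\diag(\lambda)\rel a}|$ is not workable either, since neither term is explicit.

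The repair is the paper's route. Conditional on $D$, the density of $X$ with respect to $dH$ is a function of $\Gamma$ alone. This holds because $\Tr X^2=|\lambda^{(n)}|^2$ and $\Tr(XD)=\sum_k d_kq_k$, where $q_k=|\lambda^{(k)}|-|\lambda^{(k-1)}|$. Multiplying by the Jacobian above gives the explicit conditional density, in the unscaled model $Z=wG+uD$:
\[
p_D(\Gamma)\propto V_n(\lambda^{(n)})\exp\Bigl\{-\frac{|\lambda^{(n)}|^2}{2w^2}+\frac{u}{w^2}\sum_kd_kq_k\Bigr\}.
\]
Hence $\ent(\Gamma\mid D)$ equals the $D$-independent Gaussian entropy $\ent(Z\mid D)$, minus $\binom n2\log\pi$, minus $\E_G\log V_n(\spec(wG+uD))$. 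The $a$-dependence enters only through this last term. After averaging over $D_n$, it is evaluated by Lemma~\ref{lem:12}, using $\spec(wG+uD_n)\stackrel{d}{=}\spec(aG)$ (conjugate by the Haar unitary diagonalizing $\widetilde G$). The $\sqrt n$ rescaling and Lemma~\ref{prop:3} then finish the proof exactly as you planned.
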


\par\smallskip
\noindent\hyperref[proof:prop:4]{Proof of Proposition~\ref*{prop:4}.}
\par\smallskip

\begin{lemma}[One-sided Vandermonde comparison]\lab{lem:one-sided-vandermonde}
For each $i < j$, let
\[
\widetilde\lambda_n(i) - \widetilde\lambda_n(j)
\geq
\left(1-\frac1n\right)(\lambda_n(i) - \lambda_n(j)),
\]
where both \(\lambda_n^{\mathrm{cl}}\) and \(\lambda_n\) are ordered
decreasingly. Then
\[
\log V_n(\widetilde\lambda_n)
\geq
\log V_n(\lambda_n) - O(n).
\]
Consequently,
\[
\mathbb E\log V_n(\widetilde\lambda_n)
\geq
\mathbb E\log V_n(\lambda_n) - O(n).
\]
\end{lemma}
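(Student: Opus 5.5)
The plan is to reduce the estimate to a sum of $\binom n2$ termwise logarithmic comparisons, each of which is controlled by the hypothesis. The statement has a notational slip ($\lambda_n^{\mathrm{cl}}$ should read $\widetilde\lambda_n$), and we read the hypothesis as holding for every pair $i<j$ after ordering both vectors decreasingly.

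Since both vectors are sorted, every gap $\lambda_n(i)-\lambda_n(j)$ with $i<j$ is nonnegative. By the definition \eqref{eq:sorted-vandermonde},
\[
\log V_n(\widetilde\lambda_n)-\log V_n(\lambda_n)
=
\sum_{1\leq i<j\leq n}
\log\frac{\widetilde\lambda_n(i)-\widetilde\lambda_n(j)}{\lambda_n(i)-\lambda_n(j)} .
\]

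If some gap $\lambda_n(i)-\lambda_n(j)$ vanishes, then $V_n(\lambda_n)=0$. In that case $\log V_n(\lambda_n)=-\infty$ and the inequality is trivial. So we may assume all gaps of $\lambda_n$ are positive. The hypothesis then gives the corresponding gaps of $\widetilde\lambda_n$ as positive too. Each summand is therefore at least $\log(1-1/n)$.

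Summing over the $\binom n2$ pairs gives
\[
\log V_n(\widetilde\lambda_n)-\log V_n(\lambda_n)\;\geq\;\binom n2\log\Bigl(1-\frac1n\Bigr).
\]
For $n\geq 2$ we have $\log(1-1/n)\geq -\frac1n-\frac1{n^2}\cdot C$ for an absolute constant $C$; for example, $-\log(1-x)\leq 2x$ on $[0,1/2]$. Hence the right-hand side is at least $-\frac{n(n-1)}{2}\cdot\frac{2}{n}=-(n-1)$, which is $-O(n)$. This is the pointwise bound.

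For the expectation statement, we take expectations of the pointwise inequality, which holds almost surely when the hypothesis holds almost surely. The only point needing care is integrability, since $\mathbb E\log V_n$ could be $\pm\infty$. If $\mathbb E\log V_n(\lambda_n)=-\infty$, the claim is vacuous. Otherwise, the negative part of $\log V_n(\widetilde\lambda_n)$ is dominated by that of $\log V_n(\lambda_n)$ plus $n$. So $\mathbb E\log V_n(\widetilde\lambda_n)$ is well defined in $(-\infty,\infty]$, and monotonicity of the expectation gives the inequality.

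I expect no real obstacle here. The only subtle points are the sign and zero-gap bookkeeping, and checking that the $\binom n2$ factor times $\log(1-1/n)$ yields $O(n)$ rather than $O(n^2)$. This is exactly why the hypothesis uses the factor $1-1/n$.
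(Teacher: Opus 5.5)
Your proof is correct and is the paper's argument: the paper also multiplies the $\binom n2$ gap inequalities to get $V_n(\widetilde\lambda_n)\geq\left(1-\frac1n\right)^{\binom n2}V_n(\lambda_n)$, takes logarithms, and notes that $\binom n2\log\left(1-\frac1n\right)=O(n)$. Your extra handling of vanishing gaps and of integrability for the expectation is sound; the paper simply omits these points.
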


\par\smallskip
\noindent\hyperref[proof:lem:one-sided-vandermonde]{Proof of Lemma~\ref*{lem:one-sided-vandermonde}.}
\par\smallskip

Let \(G_n,\widetilde G_n, G'_n,\widetilde G'_n,  \check{G}_n\) be independent GUE matrices and let
\[
D_n=\operatorname{diag}(\operatorname{spec}(\widetilde G_n)),
\qquad
D'_n=\operatorname{diag}(\operatorname{spec}(\widetilde G'_n)).
\]
Fix \(w,w'>0\) and \(u,u'\geq 0\), set \(a=\sqrt{w^2+u^2}\), and set
\[
X_n=\sqrt n\,(wG_n+uD_n), \qquad
Y_n=\sqrt n\,(w'G'_n+u'D'_n),
\qquad
\check{\lambda}_n=\operatorname{spec}(\sqrt n\,\check{G}_n).
\]
Set \(\lambda_n = \operatorname{spec}(X_n)\) and
\(\lambda'_n = \operatorname{spec}(Y_n)\).

Let \(\lambda_n^{\mathrm{cl}}\) be the deterministic classical spectrum
for \(\sqrt{n} G_n\), ordered decreasingly, and define
\beq\lab{eq:large-enough-gaps}
\widetilde\lambda_n
=
\frac{1}{n}\lambda_n^{\mathrm{cl}}
+
\left(1-\frac1n\right)\check{\lambda}_n.
\eeq
The two terms in \eqref{eq:large-enough-gaps} have separate roles.
The deterministic classical spectrum \(\lambda_n^{\mathrm{cl}}\) gives a
deterministic lower bound on the gaps of \(\widetilde\lambda_n\), while the
independent GUE spectrum \(\check\lambda_n\) gives a clean entropy and
Vandermonde lower bound for the regularized side.  The exceptional-event
correction below makes the side large enough for both Gelfand--Tsetlin
patterns to embed into the relevant hive polytopes on every outcome.
Indeed, since both summands in \(\widetilde\lambda_n\) are ordered
decreasingly, for every \(i<j\),
\[
\widetilde\lambda_n(i)-\widetilde\lambda_n(j)
\geq
\left(1-\frac1n\right)
\bigl(\check\lambda_n(i)-\check\lambda_n(j)\bigr),
\]
and hence
\[
V_n(\widetilde\lambda_n)
\geq
\left(1-\frac1n\right)^{\binom n2}
V_n(\check\lambda_n).
\]
Thus the Vandermonde contribution of the artificial large side \(L_n\) can
be bounded below using the standard GUE Vandermonde asymptotics.

Set
\[
L_n^{(0)}=n^{10} \widetilde\lambda_n,
\qquad
g_n=n^9
\min_{1\leq i<n}
\bigl(\lambda_n^{\mathrm{cl}}(i)-\lambda_n^{\mathrm{cl}}(i+1)\bigr),
\]
and put
\[
\mathcal R_n
=
\max\bigl\{
\lambda_n(1)-\lambda_n(n),
\lambda'_n(1)-\lambda'_n(n)
\bigr\},
\qquad
A_n=(\mathcal R_n-g_n+1)_+.
\]
Recall that \(\tau_n\) has unit adjacent gaps, and define the corrected
large-gap side by
\begin{equation}\lab{eq:corrected-large-gap}
L_n=L_n^{(0)}+A_n\tau_n.
\end{equation}
Since every summand in \(\widetilde\lambda_n\) is decreasing,
\[
\min_{1\leq i<n}\bigl(L_n^{(0)}(i)-L_n^{(0)}(i+1)\bigr)
\geq g_n.
\]
If \(A_n=0\), then \(\mathcal R_n\leq g_n-1\), while if \(A_n>0\), then
\(g_n+A_n=\mathcal R_n+1\).  Thus, on every outcome,
\begin{equation}\lab{eq:large-gap-every-outcome}
\min_{1\leq i<n}\bigl(L_n(i)-L_n(i+1)\bigr)
\geq \mathcal R_n+1.
\end{equation}
In particular, Proposition~\ref{gt-rem}(iv) applies simultaneously to both
minor processes on the whole probability space.  Moreover, \(g_n\asymp n^9\),
whereas the two spectral ranges have Gaussian tails.  Indeed,
\(\operatorname{range}(H)\leq2\|H\|_{\mathrm{op}}\leq2\|H\|_{\mathrm F}\),
and therefore \(\mathcal R_n\) is bounded by \(C\sqrt n\) times the sum of
the Frobenius norms of the four independent GUE matrices defining \(X_n\)
and \(Y_n\).  These Frobenius norms are Euclidean norms of Gaussian vectors
of dimension \(n^2\).  Since
\[
A_n\leq(1+\mathcal R_n)
\mathbf 1_{\{\mathcal R_n>g_n-1\}},
\]
their standard Gaussian tail bound and integration of the tail give, for every fixed
\(k\geq1\), \(m\geq0\), and every \(K>0\),
\begin{equation}\lab{eq:exceptional-correction-tail}
\E\left[A_n^k(1+\mathcal R_n)^m\right]=O(n^{-K})
.
\end{equation}
Thus the correction is zero with overwhelming probability and contributes
less than any inverse power of \(n\) to all fixed polynomial moments used
below.

Define the two remaining hive boundaries by
\[
M_n=L_n,
\qquad
N_n=L_n+\operatorname{diag}(X_n).
\]
Then
\[
\sum_i\lambda_n(i)+\sum_i M_n(i)=\sum_iN_n(i).
\]

\begin{lemma}[Entropy transfer through the random large-gap side]
\lab{lem:random-side-entropy}
For \(n\geq2\), let \(\Gamma_n^X\) and \(\Gamma_n^Y\) be the complete eigenvalue minor
processes of \(X_n\) and \(Y_n\), respectively, and let
\(R_n=\check\lambda_n\).  With the coordinate Lebesgue measures in which
Proposition~\ref{gt-rem}(iv) is volume-preserving, the following assertions
hold.
\begin{enumerate}
\item The auxiliary spectrum satisfies
\[
\bigl|\ent(R_n)\bigr|=O(n\log n).
\]
\item For the one-hive construction,
\[
\bigl|\ent(L_n\mid\Gamma_n^X)\bigr|=O(n\log n).
\]
For the double-hive construction one has the exact identity
\[
\ent(L_n\mid\Gamma_n^X,\Gamma_n^Y)
=
\ent(R_n)
+n\log\left(n^{10}\left(1-\frac1n\right)\right).
\]
In particular, this conditional entropy is also \(O(n\log n)\) in
absolute value.
\item The large-gap bijections of Proposition~\ref{gt-rem}(iv), which apply
on every outcome by \eqref{eq:large-gap-every-outcome}, have extensions which
retain the random side \(L_n\) and have block-triangular Jacobian of absolute
determinant one.
Consequently,
\[
\ent(h_n)=\ent(\Gamma_n^X,L_n)
=\ent(\Gamma_n^X)+\ent(L_n\mid\Gamma_n^X)
\]
and
\begin{align*}
&\ent\!\left(h_n^{(1)},h_n^{(2)}
       \hbox{ pasted along }L_n\right)\\
&\qquad=
\ent(\Gamma_n^X,\Gamma_n^Y,L_n)\\
&\qquad=
\ent(\Gamma_n^X)+\ent(\Gamma_n^Y)
+\ent(L_n\mid\Gamma_n^X,\Gamma_n^Y).
\end{align*}
\end{enumerate}
\end{lemma}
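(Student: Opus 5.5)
We prove the three assertions in order, using only change-of-variables formulas for differential entropy, the independence structure of the construction, and standard GUE facts.

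\emph{Part 1.} Since \(R_n=\check\lambda_n=\spec(\sqrt n\,\check G_n)\), its density on \(\mathbb R^n_{\geq}\) is the explicit Weyl density with \(s=\sqrt n\):
\[
\frac{1}{(2\pi)^{n/2}n^{n^2/2}V_n(\tau_n)}V_n(\lambda)^2e^{-|\lambda|^2/(2n)} .
\]
Hence
\[
\ent(R_n)=\tfrac n2\log(2\pi)+\tfrac{n^2}{2}\log n+\log V_n(\tau_n)-2\,\E\log V_n(R_n)+\tfrac{1}{2n}\E|R_n|^2 .
\]
We then insert the following inputs:
\begin{itemize}
\item \(\E|R_n|^2=n\cdot\E\operatorname{Tr}(\check G_n^2)=n^3\);
\item \(\log V_n(\tau_n)=\sum_{k<n}\log k!=\tfrac{n^2}{2}\log n-\tfrac34 n^2+O(n\log n)\), by Stirling;
\item \(\E\log V_n(R_n)=\tfrac{n^2}{4}\log n+\tfrac{n^2}{2}\log n^{1/2}-\tfrac{n^2}{8}+O(n\log n)\), which is Lemma~\ref{lem:12} with \(w=1\), \(u=0\), after rescaling by \(\sqrt n\).
\end{itemize}
The \(n^2\log n\) terms cancel: \(\tfrac12+\tfrac12-2\cdot\tfrac12=0\). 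The \(n^2\) terms also cancel: \(-\tfrac34+\tfrac14+\tfrac12=0\). What remains is \(O(n\log n)\). This is the computation most likely to hide a sign or normalisation slip, so we check it against the known value of the GUE spectral entropy.

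\emph{Part 2.} The key structural fact is \(L_n=n^{10}(1-\tfrac1n)R_n+c_n+A_n\tau_n\), where \(c_n=n^9\lambda^{\mathrm{cl}}_n\) is deterministic.

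\emph{Double-hive case.} Here \(A_n\) depends on \((\lambda_n,\lambda'_n)\), and these are functions of the top rows of \(\Gamma_n^X,\Gamma_n^Y\). So given \((\Gamma_n^X,\Gamma_n^Y)\), the map \(R_n\mapsto L_n\) is an affine bijection with linear part \(n^{10}(1-\tfrac1n)I_n\). Since \(R_n\) is independent of \((X_n,Y_n)\), its conditional law equals its law. Affine change of variables then gives the exact identity
\[
\ent(L_n\mid\Gamma^X_n,\Gamma^Y_n)=\ent(R_n)+n\log\bigl(n^{10}(1-\tfrac1n)\bigr).
\]
Combined with Part 1, this conditional entropy is \(O(n\log n)\).

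\emph{One-hive case.} Here \(A_n\) involves \(\lambda'_n\), which is not determined by \(\Gamma^X_n\). The conditional law of \(L_n\) given \(\Gamma_n^X\) is therefore a mixture of translates of the law of \(R_n\), scaled by \(n^{10}(1-\tfrac1n)\), and the translates are indexed by \(\lambda_n'\).

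\begin{itemize}
\item Lower bound: conditioning reduces entropy, so
\[
\ent(L_n\mid\Gamma^X_n)\ge\ent(L_n\mid\Gamma^X_n,\lambda'_n)=\ent(R_n)+n\log\bigl(n^{10}(1-\tfrac1n)\bigr).
\]
\item Upper bound: \(\ent(L_n\mid\Gamma^X_n)\le \ent(L_n\mid\Gamma^X_n,\lambda'_n)+I(L_n;\lambda'_n\mid\Gamma^X_n)\), and the mutual information is at most \(H(\mathbf 1_{\{A_n>0\}})+\Pr(A_n>0)\cdot(\text{polynomial moment})\). By \eqref{eq:exceptional-correction-tail} this is \(o(1)\).
\end{itemize}

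If controlling the upper bound this way proves delicate, there is a fallback. Bound \(\ent(L_n\mid\Gamma^X_n)\) above by the maximal entropy given the second moment, \(\E|L_n|^2=O(n^{23})\), which gives \(O(n\log n)\) directly. Either route yields \(|\ent(L_n\mid\Gamma^X_n)|=O(n\log n)\).

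\emph{Part 3.} By \eqref{eq:large-gap-every-outcome}, Proposition~\ref{gt-rem}(iv) applies on every outcome. We define the extended map \((\Gamma,L)\mapsto h\), with \(h(i,j)=\sum_{k\le j}L_k+\sum_{k\le i}\lambda^{(j)}_k\). In coordinates it is the identity on \(\Gamma\), up to the hive's interior-versus-boundary coordinates, and it has triangular structure on \(L\). Its boundary values \(h(0,j)\) are partial sums of \(L\), which is a unimodular map.

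For fixed \(L\), the map on the \(\Gamma\)-fiber is the volume-preserving bijection of Proposition~\ref{gt-rem}(iv). Hence the full Jacobian is block triangular with unit-determinant diagonal blocks, and differential entropy is invariant. This gives \(\ent(h_n)=\ent(\Gamma_n^X,L_n)\), and the chain rule splits it as stated.

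For the pasted double hive, the same argument applies with both patterns sharing \(L_n\). The middle side is \(L_n\) itself, and the glued boundary is counted once. The chain rule gives
\[
\ent(\Gamma^X_n,\Gamma^Y_n)+\ent(L_n\mid\cdot),
\]
and \(\ent(\Gamma^X_n,\Gamma^Y_n)=\ent(\Gamma^X_n)+\ent(\Gamma^Y_n)\) by independence of \(X_n\) and \(Y_n\).

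The main obstacle is the coordinate bookkeeping: we must verify that the Lebesgue measures on hive space and on GT-plus-\(L\) space match with Jacobian exactly one, and not merely up to \(e^{O(n)}\) factors.
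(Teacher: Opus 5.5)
Your proposal is correct and follows the paper's proof essentially step by step. The shared steps are: the explicit Weyl-density entropy of $R_n$, with the cancellation of the $n^2\log n$ and $n^2$ terms; the exact affine change of variables once both minor processes fix $A_n$; the one-hive lower bound by conditioning away the correction, which is equivalent to the paper's $\ent(U+V)\geq\ent(V)$; the Gaussian second-moment maximum-entropy upper bound, where your ``fallback'' is exactly the paper's argument; and the block-triangular, unimodular partial-sum Jacobian for the extended large-gap map. Your primary mutual-information route for the one-hive upper bound also works and would even give $\ent(L_n\mid\Gamma_n^X)=\ent(R_n)+n\log\bigl(n^{10}(1-\frac1n)\bigr)+o(1)$, but the paper needs, and uses, only the cruder $O(n\log n)$ bound.
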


\par\smallskip
\noindent\hyperref[proof:lem:random-side-entropy]{Proof of
Lemma~\ref*{lem:random-side-entropy}.}
\par\smallskip

\begin{proposition}[Moments of the corrected large side]
\lab{prop:large-side-moments}
Let
\[
Q_n=\frac{\E|L_n|^2}{n^2}.
\]
Then
\begin{equation}\lab{eq:large-side-Q}
Q_n=n^{21}+O(n^{20}).
\end{equation}
Moreover,
\begin{align}
\frac{\E|\lambda_n|^2}{n^2}&=a^2n,&
\frac{\E|\lambda'_n|^2}{n^2}&=b^2n,\lab{eq:large-side-spectral-moments}\\
\frac{\E\langle L_n,\diag(X_n)\rangle}{n^2}
&=un^{11}+O(n^{10}),&
\frac{\E\langle L_n,\diag(Y_n)\rangle}{n^2}
&=u'n^{11}+O(n^{10}),\lab{eq:large-side-mixed-moments}\\
\frac{\E|\diag(X_n)|^2}{n^2}&=w^2+u^2n,&
\frac{\E|\diag(Y_n)|^2}{n^2}&=(w')^2+(u')^2n.\lab{eq:large-side-diagonal-moments}
\end{align}
Consequently, for
\[
N_n=L_n+\diag(X_n),\qquad P_n=L_n-\diag(Y_n),
\]
one has
\begin{equation}\lab{eq:large-side-exterior-moments}
\frac{\E|N_n|^2}{n^2}
=Q_n+2un^{11}+O(n^{10}),\qquad
\frac{\E|P_n|^2}{n^2}
=Q_n-2u'n^{11}+O(n^{10}).
\end{equation}
\end{proposition}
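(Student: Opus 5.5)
The plan is to expand everything in terms of the independent ingredients \(G_n,\widetilde G_n,G'_n,\widetilde G'_n,\check G_n\) and the deterministic vector \(\lambda_n^{\mathrm{cl}}\). The random correction \(A_n\tau_n\) is isolated throughout and disposed of by \eqref{eq:exceptional-correction-tail}. Write \(L_n=n^{10}\widetilde\lambda_n+A_n\tau_n\). For any fixed-degree polynomial quantity \(P\) whose moments grow polynomially in \(n\), Cauchy--Schwarz together with \eqref{eq:exceptional-correction-tail} gives \(\E[A_nP]=O(n^{-K})\) and \(\E[A_n^2|\tau_n|^2]=O(n^{-K})\). This uses \(|\tau_n|^2=O(n^3)\) and the crude bounds \(|\widetilde\lambda_n|,|\diag X_n|\leq C\sqrt n\,(1+\text{Frobenius norms})\). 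Hence in every identity below \(L_n\) may be replaced by \(n^{10}\widetilde\lambda_n\) at cost \(O(n^{-K})\).

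\emph{Step 1: the spectral and diagonal moments.} These are exact computations with the Gaussian entries. We have \(|\lambda_n|^2=\operatorname{Tr}(X_n^2)=n\operatorname{Tr}\bigl((wG_n+uD_n)^2\bigr)\). The cross term \(\E\operatorname{Tr}(G_nD_n)\) vanishes by independence and \(\E G_n=0\). Moreover \(\E\operatorname{Tr}G_n^2=n^2\) and \(\E\operatorname{Tr}D_n^2=\E\operatorname{Tr}\widetilde G_n^2=n^2\). This gives \(\E|\lambda_n|^2/n^2=(w^2+u^2)n=a^2n\), and the same argument gives the analogue for \(\lambda'_n\). Similarly, \(|\diag X_n|^2=n\sum_i(wG_{ii}+ud_i)^2\), with \(\E\sum_iG_{ii}^2=n\) and \(\E\sum_i d_i^2=n^2\). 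This gives \(\E|\diag X_n|^2/n^2=w^2+u^2n\), proving \eqref{eq:large-side-diagonal-moments}.

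\emph{Step 2: \(Q_n\).} Write \(n^{10}\widetilde\lambda_n=n^{9}\lambda_n^{\mathrm{cl}}+n^{10}(1-\tfrac1n)\check\lambda_n\). Then:
\begin{itemize}
\item \(\E|\check\lambda_n|^2=\E\operatorname{Tr}(n\check G_n^2)=n^3\), so the main term contributes \(n^{20}(1-\tfrac1n)^2n^3/n^2=n^{21}+O(n^{20})\).
\item \(|\lambda_n^{\mathrm{cl}}|^2=O(n^3)\), so the deterministic part contributes \(O(n^{18}\cdot n^3/n^2)=O(n^{19})\).
\item The cross term is bounded by Cauchy--Schwarz by \(O(n^{19}\cdot n^3/n^2)=O(n^{20})\).
\end{itemize}
Together these give \eqref{eq:large-side-Q}.

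\emph{Step 3: mixed moments.} This is the step needing the most care. Since \(\widetilde\lambda_n\) is independent of \((G_n,D_n)\), we have \(\E\langle n^{10}\widetilde\lambda_n,\sqrt n\,w\diag G_n\rangle=0\). The remaining term is \(n^{10}u\langle\E\widetilde\lambda_n,\E\sqrt n\,d\rangle\). Note that \(\sqrt n\,d\) has the same law as \(\check\lambda_n\). So this term equals
\[
n^{10}u\bigl[(1-\tfrac1n)|\E\check\lambda_n|^2+\tfrac1n\langle\lambda_n^{\mathrm{cl}},\E\check\lambda_n\rangle\bigr].
\]
The second bracket term is \(O(n^2)\). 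For the first, \(|\E\check\lambda_n|^2=\E|\check\lambda_n|^2-\sum_i\Var\check\lambda_n(i)=n^3-O(n^{2})\). The variance bound follows from Lemma~\ref{lem:rigidity}, which gives \(\Var\check\lambda_n(i)=O(n^{2/3}\log^{O(1)}n)\) (in fact any \(o(n^2)\) total suffices up to adjusting the error). Dividing by \(n^2\) gives \(un^{11}+O(n^{10})\), and the same argument applies to \(Y_n\). This is where I expect the only real obstacle: making sure the rigidity estimate controls the total variance at order \(O(n^2)\) rather than just in probability. If needed, I would instead use the exact identity \(\sum_i\Var\check\lambda_n(i)\leq\E\sum_i(\check\lambda_n(i)-\sqrt n\,n\gamma_i/\ldots)^2\) together with the tail bound of Lemma~\ref{lem:rigidity} integrated in \(T\).

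\emph{Step 4: exterior moments.} Finally, \eqref{eq:large-side-exterior-moments} follows by expanding
\[
|N_n|^2=|L_n|^2+2\langle L_n,\diag X_n\rangle+|\diag X_n|^2,
\]
and similarly for \(P_n\) with a minus sign. The diagonal terms are \(O(n)\) after division by \(n^2\), which is absorbed into \(O(n^{10})\).
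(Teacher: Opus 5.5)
Your proposal is correct and is essentially the paper's proof. As in the paper, you discard the correction \(A_n\tau_n\) via \eqref{eq:exceptional-correction-tail} and Cauchy--Schwarz, and you compute the spectral and diagonal moments exactly. You obtain \(Q_n\) from \(\E|\check\lambda_n|^2=n^3\), and the mixed moment from independence, \(\sqrt n\,\diag(D_n)\stackrel{d}{=}\check\lambda_n\), and \(|\E\check\lambda_n|^2=\E|\check\lambda_n|^2-\sum_i\Var\check\lambda_n(i)\).

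The differences are minor. You bound the cross term in \(Q_n\) by Cauchy--Schwarz rather than through the mean. You also settle for a total variance of \(O(n^2)\), where the paper asserts \(O(n\log^{O(1)}n)\); \(O(n^2)\) is exactly what the error \(O(n^{10})\) requires.

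Your worry about the variance step is unfounded. Integrating the tail bound of Lemma~\ref{lem:rigidity} in \(T\) works, as you suggest. More simply, each ordered eigenvalue is \(1\)-Lipschitz in the Hilbert--Schmidt norm, so the Gaussian Poincar\'e inequality gives \(\Var\check\lambda_n(i)\le n\), hence a total of at most \(n^2\).
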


\par\smallskip
\noindent\hyperref[proof:prop:large-side-moments]{Proof of
Proposition~\ref*{prop:large-side-moments}.}
\par\smallskip

\begin{lemma}[KL bound using regularized large gaps]\lab{lem:KLhive}
Let \(h_n\in H_n(\lambda_n,M_n;N_n)\) be the hive obtained, via
Proposition~\ref{gt-rem}(iv), from the large-gap construction applied to the
deformed GUE minor process for \(X_n\).
 Conditionally on \(h_n\), attach three
independent uniform Gelfand--Tsetlin patterns with top rows
\[
\lambda_n,\qquad M_n,\qquad N_n.
\]
Let \(q_n\) be the resulting density on the triply augmented hive space
\(\mathbb A^3\).

Let \(p_n\) be the maximum-entropy triply augmented hive density from
Theorem~\ref{thm:gaussian1}, with the same expected quadratic boundary
moments as \(q_n\):
\[
\mathbb E_{p_n}|\lambda|^2=\mathbb E_{q_n}|\lambda|^2,\qquad
\mathbb E_{p_n}|\mu|^2=\mathbb E_{q_n}|\mu|^2,\qquad
\mathbb E_{p_n}|\nu|^2=\mathbb E_{q_n}|\nu|^2.
\]
Then
\[
D_{\mathrm{KL}}(q_n\Vert p_n)=O(n\log n).
\]
\end{lemma}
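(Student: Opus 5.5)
We will use the KL-duality form of Remark~\ref{rem:maxent-kl}.  Since \(p_n\) is an exponential family in the three quadratic boundary moments, and \(q_n\) is matched to it on exactly those moments, we have
\[
D_{\mathrm{KL}}(q_n\Vert p_n)=\ent(p_n)-\ent(q_n).
\]
The task therefore reduces to two entropy estimates to precision \(O(n\log n)\): an upper estimate for \(\ent(p_n)\) and a matching lower estimate for \(\ent(q_n)\).

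For \(\ent(p_n)\), integrate over the fibers exactly as in Theorem~\ref{thm:gaussian}.  This gives
\[
\ent(p_n)=\log Z_n+\tfrac12\bigl(n^2\bar a^{-2}a^2+n^2\bar b^{-2}b^2+n^2\bar c^{-2}c^2\bigr)\cdot(\text{normalizations}).
\]
Here the raw parameters are determined from the three moments in Proposition~\ref{prop:large-side-moments}, namely \(a^2n\), \(Q_n\) and \(Q_n+2un^{11}+\dots\), through the inverse of the map \((\bar a,\bar b,\bar c)\mapsto(a,b,c)\).  The formula for \(Z\) then makes \(\ent(p_n)\) explicit.  It equals \(\tfrac{n^2}{2}\log\frac{\bar a^2\bar b^2\bar c^2}{\bar a^2+\bar b^2+\bar c^2}\) plus a multiple of \(n^2\), minus \(2\log V_n(\tau_n)\), with \(O(n\log n)\) error.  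The key computation is the area \(\Delta_{abc}^2\) of the triangle with sides \(\sqrt{a^2n},\sqrt{Q_n},\sqrt{Q_n+2un^{11}}\).  Since the two long sides nearly coincide, this area is approximately
\[
\tfrac14\bigl(4a^2n\,Q_n-(2un^{11})^2\bigr)=n^{22}\bigl(a^2-u^2\bigr)+\dots=w^2n^{22},
\]
up to lower-order terms, so that \(w^2\), and not \(a^2\), appears.  I expect this near-degenerate cancellation to be delicate.  The \(O(n^{20})\) corrections must stay relatively \(O(1/n)\), which makes the contribution to \(n^2\log\) equal to \(O(n)\).  This has to be checked carefully against the error terms in Proposition~\ref{prop:large-side-moments}.

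For \(\ent(q_n)\), decompose by the chain rule.  The density \(q_n\) is the law of \(h_n\) times uniform GT fibers, so
\[
\ent(q_n)=\ent(h_n)+\E\log\frac{V_n(\lambda_n)V_n(M_n)V_n(N_n)}{V_n(\tau_n)^3},
\]
using Proposition~\ref{gt-rem}(ii).  By Lemma~\ref{lem:random-side-entropy}(2),(3), we get \(\ent(h_n)=\ent(\Gamma_n^X)+O(n\log n)\), and Proposition~\ref{prop:4} gives \(\ent(\Gamma_n^X)\).  For the Vandermonde terms, \(\E\log V_n(\lambda_n)\) comes from Lemma~\ref{lem:12}, after rescaling by \(\sqrt n\).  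For \(M_n=L_n\) and \(N_n\), Lemma~\ref{lem:one-sided-vandermonde} and the gap inequality give a lower bound of \(\binom n2\log n^{10}+\E\log V_n(\check\lambda_n)-O(n)\).  The perturbations \(\diag(X_n)\) and \(A_n\tau_n\) only increase gaps by at least \(-\mathcal R_n\ll g_n\), so they can be absorbed with overwhelming probability via \eqref{eq:exceptional-correction-tail}.  Only a lower bound on \(\ent(q_n)\) is needed, since KL\(\ge0\) supplies the other direction.

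Finally, compare the two sides.  The \(\log n^{10}\) factors and the \(\tfrac{n^2}{4}\log n\) terms must cancel against the corresponding terms in \(\log Z_n\).  The \(\log a\) from Lemma~\ref{lem:12} must combine with \(\log(w^2/a)\) from Proposition~\ref{prop:4} to give \(\log w^2\), which matches the area computation above.  The remaining constants, \(\tfrac54\), \(-\tfrac18\) and the constants in \(Z\), must agree.  Bookkeeping these constants is the main risk; if they fail to match, the discrepancy would indicate a normalization slip rather than a structural problem.  All residual terms are \(O(n\log n)\), which yields the stated bound.
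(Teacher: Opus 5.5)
Your proposal is correct and follows essentially the same route as the paper. You identify the KL divergence with the entropy deficit by moment matching. You read off \(\ent(p_n)\) from \(Z\), using \(\frac{\bar a^2\bar b^2\bar c^2}{\bar a^2+\bar b^2+\bar c^2}=a^2nQ_n-u^2n^{22}+O(n^{21})=w^2n^{22}+O(n^{21})\); by Heron this quantity is \(4\Delta_{abc}^2\), not \(\Delta_{abc}^2\) as you wrote, but it is exactly what enters \(\ent(p_n)\). You bound \(\ent(q_n)\) below by the chain rule, using Proposition~\ref{prop:4}, Lemma~\ref{lem:random-side-entropy}, Lemma~\ref{lem:12} and the one-sided large-gap Vandermonde comparison.

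The constant bookkeeping you left open does close: both sides equal \(10n^2\log n+n^2\log w+\frac52n^2+O(n\log n)\).
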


\par\smallskip
\noindent\hyperref[proof:lem:KLhive]{Proof of Lemma~\ref*{lem:KLhive}.}
\par\smallskip

\begin{lemma}[Finite-dimensional inverse for the negative raw branch]
\lab{lem:double-moment-inverse}
Let \(X,Y,Z,W>0\) satisfy
\begin{equation}\lab{eq:inverse-moment-region}
 X+Y+Z<W<(\sqrt X+\sqrt Y+\sqrt Z)^2.
\end{equation}
There are unique \(S,A,B,D>0\) such that
\begin{align}
 X&=A+\frac{A^2}{S},&
 Y&=B+\frac{B^2}{S},&
 Z&=D+\frac{D^2}{S},\lab{eq:inverse-three-moments}\\
 W&=A+B+D+\frac{(A+B+D)^2}{S}.\lab{eq:inverse-fourth-moment}
\end{align}
Consequently, \(T=-S\) and \(F=-(S+A+B+D)\) give the unique solution
of these moment equations in the raw-parameter branch \(T<0\), \(F<0\).
The scalar equation determining \(S\) has a nonvanishing derivative at its
root.

Moreover, let \(w,w'>0\), let \(u,u'>0\), and suppose
\[
 a^2=w^2+u^2,\qquad b^2=(w')^2+(u')^2,\qquad
 \theta=\frac{u}{w^2}=\frac{u'}{(w')^2}.
\]
If
\begin{align*}
 X_n&=a^2n,&Y_n&=b^2n,\\
 Q_n&=n^{21}+O(n^{20}),\\
 Z_n&=Q_n-2u'n^{11}+O(n^{10}),&
 W_n&=Q_n+2un^{11}+O(n^{10}),
\end{align*}
then \eqref{eq:inverse-moment-region} holds for all sufficiently large \(n\),
and the corresponding solution satisfies
\begin{equation}\lab{eq:inverse-raw-asymptotics}
 S_n=\theta^{-2}n+O(1),\qquad
 A_n=w^2n+O(1),\qquad
 B_n=(w')^2n+O(1).
\end{equation}
\end{lemma}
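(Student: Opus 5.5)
The plan is to reduce the four equations to a single scalar equation in \(S\), get existence from the intermediate value theorem and uniqueness from a sign argument for the derivative at any root, and then read off the asymptotics by rescaling \(S=\sigma n\).

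\textbf{Reduction to one scalar equation.} For fixed \(S>0\) and \(V>0\), the equation \(V=t+t^2/S\) has a unique positive root
\[
\varphi_V(S)=\frac{S}{2}\Bigl(-1+\sqrt{1+4V/S}\Bigr)=S\,\psi(V/S),\qquad \psi(x)=\tfrac12\bigl(\sqrt{1+4x}-1\bigr).
\]
Then \eqref{eq:inverse-three-moments} forces \(A=\varphi_X(S)\), \(B=\varphi_Y(S)\), \(D=\varphi_Z(S)\). Since \(A+B+D>0\), \eqref{eq:inverse-fourth-moment} is equivalent to \(A+B+D=\varphi_W(S)\). So the whole system is equivalent to
\[
\Psi(S):=\varphi_X(S)+\varphi_Y(S)+\varphi_Z(S)-\varphi_W(S)=0 .
\]

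\textbf{Existence.} Since \(\psi(x)=x+O(x^2)\) as \(x\to0\), we get \(\Psi(S)\to X+Y+Z-W<0\) as \(S\to\infty\). Since \(\varphi_V(S)=\sqrt{SV}+O(S)\) as \(S\to0\), we get \(\Psi(S)=\sqrt S(\sqrt X+\sqrt Y+\sqrt Z-\sqrt W)+O(S)>0\) for small \(S\). Both limits use exactly the two inequalities of \eqref{eq:inverse-moment-region}, so a root exists by the intermediate value theorem.

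\textbf{Uniqueness and nonvanishing derivative.} Differentiating \(\varphi^2+S\varphi=SV\) implicitly gives
\[
\varphi_V'(S)=\frac{V-\varphi_V}{2\varphi_V+S}=\frac{k_S(\varphi_V)}{S},\qquad k_S(t)=\frac{t^2}{2t+S}.
\]
At a root we have \(\varphi_W=A+B+D=:C\). Therefore
\[
S\,\Psi'(S)=k_S(A)+k_S(B)+k_S(D)-k_S(C).
\]
The function \(k_S\) is strictly convex on \((0,\infty)\) with \(k_S(0)=0\), hence strictly superadditive, so \(\Psi'(S)<0\) at every root. A continuous function that crosses zero only downward has at most one zero, which gives uniqueness and the nonvanishing derivative. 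For the raw branch, set \(T=-S\) and \(F=-(S+A+B+D)\), so that \(T=A+B+D+F\). Then \(A(T-A)/T=A+A^2/S\), and the analogous identity holds for \(B\) and \(D\). For \(F\) one has \(F(T-F)/T=C+C^2/S\), with \(C=A+B+D\). These are the moment formulas of Lemma~\ref{lem:double-one-hive} with \(T<0\), \(F<0\), so uniqueness transfers to that branch.

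\textbf{Region check for the asymptotic data.} We have
\[
W_n-Z_n=2(u+u')n^{11}+O(n^{10}),
\]
which is much larger than \(X_n+Y_n=O(n)\), so the lower inequality in \eqref{eq:inverse-moment-region} holds. Also
\[
(\sqrt{X_n}+\sqrt{Y_n}+\sqrt{Z_n})^2-Z_n\geq 2\sqrt{Z_n}\,(a+b)\sqrt n\asymp n^{11.5},
\]
which exceeds \(W_n-Z_n\asymp n^{11}\), so the upper inequality holds for large \(n\).

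\textbf{Asymptotics.} I would write \(S=\sigma n\). The small-variable terms become \(\varphi_{X_n}(S)=n\,\sigma\psi(a^2/\sigma)\) and \(\varphi_{Y_n}(S)=n\,\sigma\psi(b^2/\sigma)\). For the large variables, Taylor expansion gives
\[
\varphi_V(S)=\sqrt{SV}-\tfrac S2+O\bigl(S^{3/2}V^{-1/2}\bigr).
\]
Hence
\[
\varphi_{W_n}-\varphi_{Z_n}=\sqrt S\,(\sqrt{W_n}-\sqrt{Z_n})+O(n^{-9}).
\]
Using \(\sqrt{W_n}-\sqrt{Z_n}=(W_n-Z_n)/(\sqrt{W_n}+\sqrt{Z_n})\), this equals \(\sqrt{\sigma}\,(u+u')\,n+O(1)\). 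Dividing by \(n\), the equation becomes
\[
\Phi(\sigma):=\sigma\psi(a^2/\sigma)+\sigma\psi(b^2/\sigma)-\sqrt\sigma\,(u+u')=O(1/n).
\]

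\textbf{The limit equation at \(\sigma=\theta^{-2}\).} Using \(u=\theta w^2\), we have \(a^2\theta^2=w^2\theta^2+(w^2\theta^2)^2\), so \(\psi(a^2\theta^2)=w^2\theta^2\). Likewise \(\psi(b^2\theta^2)=(w')^2\theta^2\). Thus
\[
\Phi(\theta^{-2})=w^2+(w')^2-\theta^{-1}(u+u')=0
\]
by the matching condition.

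\textbf{Main obstacle.} The main obstacle is showing \(\Phi'(\theta^{-2})\neq0\) and making the \(O(1)\) error uniform. For the derivative, I would first try to rerun the superadditivity computation in the limiting form: the \(\sqrt\sigma\) term is the limit of \(\varphi_W-\varphi_Z\), and its derivative is bounded away from the sum of the small-term derivatives. Failing that, I would compute \(\Phi'(\theta^{-2})\) explicitly in terms of \(w,w',\theta\). Given a nonzero derivative, the implicit function theorem, together with the \(O(1/n)\) perturbation and the already proven uniqueness, gives \(\sigma_n=\theta^{-2}+O(1/n)\). This means \(S_n=\theta^{-2}n+O(1)\). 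Then
\[
A_n=n\,\sigma_n\psi(a^2/\sigma_n)=w^2n+O(1),
\]
and similarly \(B_n=(w')^2n+O(1)\).
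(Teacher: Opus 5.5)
Your reduction to the scalar equation $\Psi(S)=0$, the existence argument by the intermediate value theorem, and the uniqueness argument via strict convexity of $k_S(t)=t^2/(2t+S)$ (so that $\Psi'<0$ at every root) are correct. This is a genuinely different route from the paper's. The paper rewrites the fourth equation as $W-X-Y-Z=H(S):=\frac{2}{S}(AB+AD+BD)$. It then shows that each product term $AB/S$, $AD/S$, $BD/S$ is strictly decreasing in $S$, so $H$ decreases globally from $2(\sqrt{XY}+\sqrt{XZ}+\sqrt{YZ})$ to $0$. That gives existence, uniqueness, and the exact meaning of the region condition in one stroke. Your argument needs no global monotonicity and only controls the sign of $\Psi'$ at zeros.

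There are two problems in the second half.

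\textbf{The region check is wrong as written.} The quantity $2\sqrt{Z_n}\,(a+b)\sqrt n$ is of order $n^{10.5}\cdot n^{1/2}=n^{11}$, not $n^{11.5}$. So there is no scale separation from $W_n-Z_n=2(u+u')n^{11}+O(n^{10})$. The upper inequality is really a comparison of leading constants, $2(u+u')$ versus $2(a+b)$. It holds for large $n$ only because $u<\sqrt{w^2+u^2}=a$ and $u'<b$, and this is exactly where $w,w'>0$ is used. You had the scale right later, when you found $\sqrt S(\sqrt{W_n}-\sqrt{Z_n})\asymp n$.

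\textbf{You leave $\Phi'(\theta^{-2})\neq0$ unproved, but it is immediate.} Note that
\[
\sigma\psi(a^2/\sigma)=\tfrac{\sqrt\sigma}{2}\bigl(\sqrt{\sigma+4a^2}-\sqrt\sigma\bigr),
\qquad\text{so}\qquad
\Phi(\sigma)=\tfrac{\sqrt\sigma}{2}\bigl(h(\sigma)-2(u+u')\bigr),
\]
where $h(\sigma)=\sqrt{\sigma+4a^2}+\sqrt{\sigma+4b^2}-2\sqrt\sigma$ is precisely the paper's limiting function. Since $h'<0$ everywhere, $\Phi'(\theta^{-2})=\tfrac{\theta^{-1}}{2}h'(\theta^{-2})<0$. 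Explicitly,
\[
\Phi'(\theta^{-2})=-\frac{x}{2(1+2x)}-\frac{x'}{2(1+2x')},\qquad x=\theta u,\quad x'=\theta u'.
\]
With this in hand, your localization goes through: $\Phi$ changes sign on a small interval around $\theta^{-2}$, the $O(1/n)$ error is uniform there, the intermediate value theorem gives a root there, and finite-$n$ uniqueness identifies it. This yields $\sigma_n=\theta^{-2}+O(1/n)$, and the conclusions $A_n=w^2n+O(1)$, $B_n=(w')^2n+O(1)$ follow as you wrote.
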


\par\smallskip
\noindent\hyperref[proof:lem:double-moment-inverse]{Proof of
Lemma~\ref*{lem:double-moment-inverse}.}
\par\smallskip

\begin{lemma}[Double-hive KL bound using a shared large-gap side]\lab{lem:doubleKLhive}
Let \(w,w'>0\) and \(u,u'\geq 0\). Let \(G_n,\widetilde G_n,G'_n,\widetilde G'_n\) be independent GUE
matrices, and set
\[
D_n=\operatorname{diag}(\operatorname{spec}(\widetilde G_n)),
\qquad
D'_n=\operatorname{diag}(\operatorname{spec}(\widetilde G'_n)).
\]
Let
\[
X_n=\sqrt n\,(wG_n+uD_n),
\qquad
Y_n=\sqrt n\,(w'G'_n+u'D'_n),
\]
and put
\[
a^2=w^2+u^2,\qquad b^2=(w')^2+(u')^2.
\]
Assume
\[
\frac{u}{w^2}=\frac{u'}{(w')^2}.
\]
Let
\[
\lambda_n=\operatorname{spec}(X_n),
\qquad
\mu_n=\operatorname{spec}(Y_n).
\]
Let \(L_n\) be the corrected regularized large-gap side from
\eqref{eq:corrected-large-gap}, and set
\[
N_n=L_n+\operatorname{diag}(X_n),
\qquad
P_n=L_n - \operatorname{diag}(Y_n).
\]
Let
\[
h_n^{(1)}\in H_n(\lambda_n,L_n;N_n),
\qquad
h_n^{(2)}\in H_n(\mu_n,L_n;P_n)
\]
be the two hives obtained, via Proposition~\ref{gt-rem}(iv), from the
large-gap construction applied to the two deformed minor processes arising from
$X_n$ and $Y_n$ respectively.  Paste them along the common side \(L_n\), with
the exterior sides in clockwise cyclic order
\[
\lambda_n,\qquad \mu_n,\qquad P_n,\qquad N_n.
\]
\begin{center}
\begin{tikzpicture}[scale=0.9, every node/.style={font=\small}]
  \coordinate (L) at (0,0);
  \coordinate (T) at (2,2);
  \coordinate (R) at (4,0);
  \coordinate (B) at (2,-2);
  \draw[thick] (L) -- (T) -- (R) -- (B) -- cycle;
  \draw[thick] (L) -- (R);
  \node[below left] at (1,-1) {\(\lambda_n\)};
  \node[above left] at (1,1) {\(\mu_n\)};
  \node[above right] at (3,1) {\(P_n\)};
  \node[below right] at (3,-1) {\(N_n\)};
  \node[above] at (2,0) {\(L_n\)};
\end{tikzpicture}
\end{center}
Decorate these four exterior sides, with
independent uniform Gelfand--Tsetlin patterns.  Let \(q_n^{\mathrm{dbl}}\)
be the resulting density on the four-boundary double-hive cone \(\mathbb D^4\).

Let \(p_n^{\mathrm{dbl}}\) be the maximum-entropy density on \(\mathbb D^4\)
whose four exterior sides, in clockwise cyclic order, have the same expected
quadratic moments as
\[
\lambda_n,\qquad \mu_n,\qquad P_n,\qquad N_n .
\]
Then
\[
D_{\mathrm{KL}}\!\left(q_n^{\mathrm{dbl}}\Vert p_n^{\mathrm{dbl}}\right)
=O(n\log n).
\]
\end{lemma}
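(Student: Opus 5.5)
We follow the route of Lemma~\ref{lem:KLhive}, replacing the triply augmented hive law by the double-hive law of Lemma~\ref{lem:double-maxent}. The guiding identity is the exponential-family form of Remark~\ref{rem:maxent-kl}. If \(p_n^{\mathrm{dbl}}\) is normalizable and has the same four quadratic exterior moments as \(q_n^{\mathrm{dbl}}\), then \(-\E_{q}\log p=-\E_p\log p\), so
\[
D_{\mathrm{KL}}\!\left(q_n^{\mathrm{dbl}}\Vert p_n^{\mathrm{dbl}}\right)=\ent(p_n^{\mathrm{dbl}})-\ent(q_n^{\mathrm{dbl}}).
\]
It therefore suffices to show that the two entropies agree up to \(O(n\log n)\). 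By the lower-boundedness of KL we only need an upper bound on \(\ent(p)\) and a matching lower bound on \(\ent(q)\).

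The first step is to identify the raw parameters of \(p_n^{\mathrm{dbl}}\). Proposition~\ref{prop:large-side-moments} supplies the four normalized exterior moments \(a^2n\), \(b^2n\), \(Q_n-2u'n^{11}+O(n^{10})\), and \(Q_n+2un^{11}+O(n^{10})\). We match them to the moment formulas of \(p_{\mathrm{dbl}}\); by Lemma~\ref{lem:tet-analogue of GangNar} these are \(\E|\alpha|^2/n^2=a^2\) and so on, expressed through raw parameters. Lemma~\ref{lem:double-moment-inverse} then gives a unique solution in the branch with \(\bar T<0\) and \(\bar f^2<0\), and the matching condition \(u/w^2=u'/(w')^2\) is used exactly here. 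The solution has the asymptotics \(A_n=w^2n+O(1)\), \(B_n=(w')^2n+O(1)\), \(S_n=\theta^{-2}n+O(1)\), and \(D_n\) is of order \(n^{21}\). With these in hand, Lemma~\ref{lem:double-partition} gives
\[
\ent(p_n^{\mathrm{dbl}})=\tfrac{n^2}{2}\log\frac{ABDF}{A+B+D+F}+\tfrac{15}{4}n^2+O(n\log n).
\]
The huge parameters \(D\) and \(F\) must be expanded carefully, so that \(DF/(A+B+D+F)\) produces \(D\) times a factor coming from \(S\), with errors of only \(O(1/n)\) relative size. These errors contribute \(O(n)\) to the entropy.

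The second step computes \(\ent(q_n^{\mathrm{dbl}})\). We decompose it as the entropy of the pasted double hive plus the conditional entropies of the four uniform Gelfand--Tsetlin decorations. The latter equal \(\E\log\) of the GT volumes, which are \(V_n(\cdot)/V_n(\tau_n)\) by Proposition~\ref{gt-rem}(ii). By Lemma~\ref{lem:random-side-entropy}(3), the double-hive entropy equals
\[
\ent(\Gamma_n^X)+\ent(\Gamma_n^Y)+\ent(R_n)+n\log\bigl(n^{10}(1-1/n)\bigr).
\]
Proposition~\ref{prop:4} evaluates the first two terms. The auxiliary terms are \(O(n\log n)\) by Lemma~\ref{lem:random-side-entropy}(1).

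For the Vandermonde terms we argue as follows. For \(\lambda_n\) and \(\mu_n\) we use Lemma~\ref{lem:12}. For \(L_n\), \(N_n\), and \(P_n\), all gaps are at least \((1-1/n)n^{10}\) times those of \(\check\lambda_n\), up to perturbations by \(\diag(X_n)\) and \(\diag(Y_n)\). These perturbations are negligible relative to gaps of size \(n^{9}\), so a multiplicative factor \(1+O(n^{-8})\) per pair costs only \(O(1)\). We then apply Lemma~\ref{lem:one-sided-vandermonde} together with GUE Vandermonde asymptotics, and the tail bound \eqref{eq:exceptional-correction-tail} handles the correction \(A_n\). Each of these Vandermonde terms thus becomes \(\tfrac{n^2}{4}\log(n^{21}\cdot n)\) plus explicit constants.

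Finally we compare the two entropies. The \(n^2\log n\) terms, the \(\log w\), \(\log w'\) and \(\log a\), \(\log b\) terms, and the numerical constants \(5/4\), \(-1/8\), \(15/4\) must all cancel. The terms \(\log(w^2/a)\) from Proposition~\ref{prop:4} should match \(\log A_n\) combined with \(\log(a^2n)\)-type contributions. The main obstacle is the bookkeeping of the \(n^{21}\)-scale terms. We must verify that the \(O(n^{11})\) mixed moments and \(S_n\asymp n\) produce exactly the \(\log(\theta^{-2})\)-type constant matching the Vandermonde contribution of the large sides at order \(n^2\), and that no stray \(n^2\cdot O(1)\) term survives. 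We would attempt this by expanding \(\log(DF/(A+B+D+F))\) to second order in \(n^{-10}\). Should a mismatch appear, it would indicate that \(\tfrac{15}{4}\) must be paired with the Vandermonde constants of the large sides rather than those of the GUE spectra.
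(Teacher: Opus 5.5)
Your framework is the paper's. You use the identity $D_{\mathrm{KL}}(q_n^{\mathrm{dbl}}\Vert p_n^{\mathrm{dbl}})=\ent(p_n^{\mathrm{dbl}})-\ent(q_n^{\mathrm{dbl}})$, take the raw parameters from Lemma~\ref{lem:double-moment-inverse}, and bound $\ent(q_n^{\mathrm{dbl}})$ from below via Lemma~\ref{lem:random-side-entropy}, Proposition~\ref{prop:4} and Vandermonde estimates. However, the step that actually proves the lemma is left undone: checking that the $n^2\log n$ and $n^2$ terms of the two entropies coincide. You call it ``the main obstacle'' and hedge on its outcome. In addition, two inputs you would feed into it are wrong.

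First, $D_n$ is not of order $n^{21}$. From $Z=D+D^2/S$ with $Z\asymp n^{21}$ and $S\asymp n$ one gets $D\asymp n^{11}$, and then $F=-(S+A+B+D)\asymp -n^{11}$.

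Second, the expansion $\frac{n^2}{2}\log(\cdot)+\frac{15}{4}n^2$ in Lemma~\ref{lem:double-partition} is stated in the barred parameters $\bar a^2=A/n$, etc. In the $n$-scaled $A,B,D,F$ that you are using, it reads
\[
\ent(p_n^{\mathrm{dbl}})=\frac{n^2}{2}\log\frac{ABDF}{A+B+D+F}-\frac32n^2\log n+\frac{15}{4}n^2+O(n\log n),
\]
where the extra term comes from $-3\log V_n(\tau_n)$. Since $ABDF/(A+B+D+F)\asymp n^{23}$, your version puts $\ent(p_n^{\mathrm{dbl}})$ at $\frac{23}{2}n^2\log n$. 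That exceeds the available lower bound $10n^2\log n$ for $\ent(q_n^{\mathrm{dbl}})$ by $\frac32 n^2\log n$, which is not $O(n\log n)$.

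The missing idea is a one-line identity that makes the $\theta$-dependent constant you anticipate disappear, so no second-order expansion in $n^{-10}$ is needed. Since $T=-S$,
\[
\frac{DF}{T}=\frac{D(S+A+B+D)}{S}=Z+\frac{D(A+B)}{S},
\]
where $Z=D+D^2/S$ is exactly the normalized moment of $P_n$. Hence $DF/T=Q_n+O(n^{11})=n^{21}(1+O(n^{-1}))$ and $ABDF/T=w^2(w')^2n^{23}(1+O(n^{-1}))$. This gives
\[
\ent(p_n^{\mathrm{dbl}})=10n^2\log n+n^2\log(ww')+\frac{15}{4}n^2+O(n\log n).
\]
The matching condition enters only through $A_n=w^2n+O(1)$ and $B_n=(w')^2n+O(1)$, because a single $S$ must solve both $X=A+A^2/S$ and $Y=B+B^2/S$.

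On the $q$ side the contributions are as follows.
\begin{itemize}
\item Proposition~\ref{prop:4} gives $\frac54n^2+n^2\log(ww')-\frac{n^2}{2}\log(ab)$.
\item The Vandermondes of $\lambda_n,\mu_n$ give $\frac{n^2}{2}\log(ab)+\frac54n^2$.
\item The lower bounds for the two decorated large sides $P_n,N_n$ give $20\binom n2\log n+\frac54n^2$. There is no $V_n(L_n)$ term, since the glued side is undecorated in $\mathbb D^4$.
\end{itemize}
The $\log a,\log b$ terms cancel, and the sum is the same $10n^2\log n+n^2\log(ww')+\frac{15}{4}n^2$. Until this bookkeeping is carried out with the corrected inputs, the proposal does not establish the bound.
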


\par\smallskip
\noindent\hyperref[proof:lem:doubleKLhive]{Proof of Lemma~\ref*{lem:doubleKLhive}.}
\par\smallskip

\begin{lemma}\lab{lem:tet}
 Let $w,w'>0$ and $u,u'\geq 0$, let $a^2 = w^2 + u^2$, $b^2 = (w')^2 + (u')^2$, and let $f - d = \delta = u + u'$. 
There is a unique positive solution in $x$ for the equation 
\begin{equation}\label{eq:tet-cstar-equation}
 \frac{\Delta_{abx}^2}{abx} \frac{\Delta_{xdf}^2}{xdf}
 =
 \left(\frac{9 \diamondplus_{abfd}^2}{4a b fd}\right).
\end{equation}
Denote it $c_*$. Then,
$$c_{**} := \lim_{\substack{f \rightarrow \infty\\ f - d = \mathbf{\delta}} } c_*$$ exists. 
Also,  for a target right-angled or obtuse triangle \((a,b,c_{**})\),  where $c_{**}^2 \geq a^2 + b^2$,     the required 
\(\delta^2\) is given by
\[
\delta^2
=
\frac{
2c_{**}^4(c_{**}^2-a^2-b^2)
}{
(c_{**}^2-a^2+b^2)(c_{**}^2+a^2-b^2)
}.
\]
Moreover, suppose
\[
a_n=a\sqrt n,\qquad b_n=b\sqrt n,
\]
and
\[
Q_n=n^{21}+O(n^{20}),\qquad
d_n^2=Q_n-2u'n^{11}+O(n^{10}),
\qquad
f_n^2=Q_n+2u n^{11}+O(n^{10}).
\]
If \(c_n\) is the maximizer of
\[
x\longmapsto
\frac{\Delta_{a_nb_nx}^2}{a_nb_nx}
\frac{\Delta_{xd_nf_n}^2}{xd_nf_n},
\]
then
\[
c_n=c_{**}\sqrt n+O(n^{-1/2}).
\]

\end{lemma}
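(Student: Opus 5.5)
Throughout I work with the variable $s=x^2$ and use Heron's formula in the form
\[
16\Delta_{abx}^2=P(s):=-s^2+2s(a^2+b^2)-(a^2-b^2)^2,\qquad
16\Delta_{xdf}^2=R(s):=-s^2+2s(d^2+f^2)-(f^2-d^2)^2 .
\]
Up to the constant $256\,abdf$, the objective in \eqref{eq:tet-cstar-equation} is then $\Phi(s)=P(s)R(s)/s$. It lives on the interval $I$ where both $P$ and $R$ are positive. By the displayed identity preceding Lemma~\ref{lem:tet-analogue of GangNar}, the right-hand side of \eqref{eq:tet-cstar-equation} is exactly $\max_x$ of the left-hand side. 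So a positive solution $x$ is the same thing as a maximizer of $\Phi$ on $I$, and I must show this maximizer is unique.

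\textbf{Step 1: uniqueness of the maximizer.} $\Phi$ vanishes at both endpoints of $I$, so a maximizer exists. Critical points of $\Phi$ solve
\[
\frac{P'}{P}+\frac{R'}{R}=\frac1s .
\]
Clearing denominators gives the polynomial equation
\[
sP'R+sPR'-PR=0,
\]
which is of degree at most $4$ in $s$. I would show it has exactly one root in $I$ by a sign and Descartes-type count, using the fixed signs of $P,R$ on $I$ and the location of the roots of $P'$ and $R'$. This also shows the root is simple, so $\Phi''(c_*^2)\neq0$. This nondegeneracy is reused in Step 3.

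\textbf{Step 2: the limit $f\to\infty$ with $f-d=\delta$.} Divide $R$ by $2(d^2+f^2)$. Since $f^2-d^2=\delta(f+d)$, we have
\[
\frac{(f^2-d^2)^2}{2(d^2+f^2)}=\delta^2\Bigl(1-\frac{\delta^2}{2(f^2+d^2)}\Bigr),
\]
and therefore
\[
\frac{R(s)}{2(d^2+f^2)}=s-\delta^2+O\!\left(\frac{\delta^4+s^2}{f^2}\right),
\]
uniformly for $s$ in compacts. Hence, after normalization, $\Phi$ converges in $C^2$ on compacts to
\[
\Phi_\infty(s)=\frac{P(s)(s-\delta^2)}{s},\qquad s\in(\delta^2,(a+b)^2).
\]
The critical equation of $\Phi_\infty$ is
\[
\frac{P'}{P}=-\frac{\delta^2}{s(s-\delta^2)},
\]
which rearranges to
\[
\delta^2=\frac{s^2P'(s)}{sP'(s)-P(s)} .
\]
Here $P'(s)=2(a^2+b^2-s)$ and $sP'-P=(a^2-b^2)^2-s^2$. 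Substituting gives
\[
\delta^2=\frac{2s^2(s-a^2-b^2)}{(s-a^2+b^2)(s+a^2-b^2)},
\]
which is the claimed formula with $s=c_{**}^2$. For $s\ge a^2+b^2$ the numerator is nonnegative and both factors of the denominator are positive. I would check that the right-hand side is strictly increasing in $s$ on $[a^2+b^2,(a+b)^2)$, rising from $0$ to $+\infty$; this gives a unique root $c_{**}^2$. The Step 1 argument applied to $\Phi_\infty$ shows that this root is the unique maximizer and that it is nondegenerate. Since $\Phi\to\Phi_\infty$ in $C^2$ and the limiting maximizer is nondegenerate, $c_*\to c_{**}$, so $c_{**}$ exists.

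\textbf{Step 3: the scaled statement.} The objective is homogeneous, so I rescale all lengths by $\sqrt n$. This leaves $a,b$ fixed and gives $d_n/\sqrt n,\ f_n/\sqrt n\asymp n^{10}$. The effective difference is
\[
\delta_n:=\frac{f_n-d_n}{\sqrt n}=\frac{f_n^2-d_n^2}{\sqrt n\,(f_n+d_n)}
=\frac{2(u+u')n^{11}+O(n^{10})}{2n^{11}(1+O(n^{-1}))}=\delta+O(n^{-1}).
\]
The maximizer $c_n/\sqrt n$ is then the root of the rescaled critical equation. That equation differs from the $\Phi_\infty$ equation at parameter $\delta$ by two perturbations. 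The finite-$f$ correction is $O(f^{-2})=O(n^{-20})$ by Step 2. The change from $\delta$ to $\delta_n$ is $O(n^{-1})$. By nondegeneracy and the implicit function theorem, $c_n/\sqrt n=c_{**}+O(n^{-1})$, that is, $c_n=c_{**}\sqrt n+O(n^{-1/2})$.

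\textbf{Main obstacle.} I expect the hardest part to be the uniqueness and simple-root claim of Step 1 for finite $f$. The function $-\log s$ is convex, so log-concavity alone does not settle it. If the root count becomes messy, I would instead restrict to $f$ large, which is all that Step 3 needs. There, uniqueness follows from $C^2$ convergence to $\Phi_\infty$ together with the monotonicity of the $\delta^2(s)$ formula, and that monotonicity is a direct one-variable derivative check.
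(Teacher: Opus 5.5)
The gap is your Step~1. As written it is only a plan: a Descartes-type root count for the quartic $sP'R+sPR'-PR$ that you never carry out. Yet the first assertion of the lemma (a unique solution $c_*$ for every $f$ with $f-d=\delta$) rests on it, and so does the nondegeneracy you reuse in Steps~2 and~3.

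Your reason for expecting trouble is also mistaken: $\Phi$ is strictly log-concave on $I$. Factor $P(s)=(s-(a-b)^2)((a+b)^2-s)$ and $R(s)=(s-(d-f)^2)((d+f)^2-s)$, and pair the factor $1/s$ with $s-(a-b)^2$:
\[
\frac{d^2}{ds^2}\log\Phi(s)
=\Bigl(\frac{1}{s^2}-\frac{1}{(s-(a-b)^2)^2}\Bigr)
-\frac{1}{((a+b)^2-s)^2}
-\frac{1}{(s-(d-f)^2)^2}
-\frac{1}{((d+f)^2-s)^2}<0 .
\]
This holds because $0<s-(a-b)^2\leq s$ on $I$. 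In other words, $\log\bigl((s-(a-b)^2)/s\bigr)$ is concave, so the convexity of $-\log s$ is absorbed. Hence the log-derivative of $\Phi$ is strictly decreasing on $I$, and $\Phi$ has exactly one critical point there. That point is the maximizer (since $\Phi$ vanishes at the endpoints) and it is nondegenerate, for every $f$ and not just for large $f$.

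The same computation with $R$ replaced by $s-\delta^2$ handles $\Phi_\infty$. It gives the strictly negative derivative of the log-derivative at $c_{**}^2$ that your implicit-function step needs. This is exactly the paper's argument. Your fallback of restricting to large $f$ would not prove the first assertion as stated. Even for large $f$, it needs uniform control of the critical equation near the endpoints of $I$, which $C^2$ convergence on compact subsets does not give by itself.

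Once this is in place, your Steps~2 and~3 coincide with the paper's proof. That is: the normalization $R/(2(d^2+f^2))\to s-\delta^2$, solving the limiting critical equation for $\delta^2$, and perturbing the critical equation by $\delta_n-\delta=O(n^{-1})$ plus an $O(n^{-20})$ finite-$f$ error. Two small corrections are needed there.

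First, the right-hand side $g(s)$ of your $\delta^2$ formula increases on $[a^2+b^2,(a+b)^2)$ from $0$ to $(a+b)^2$, not to $+\infty$. A root exists because $\delta=u+u'<a+b$, which holds since $w,w'>0$.

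Second, the rearranged equation $\delta^2=g(s)$ can have further positive roots with $s<a^2+b^2$. These lie below $\delta^2$, outside the admissible interval. So monotonicity of $g$ on $[a^2+b^2,(a+b)^2)$ identifies $c_{**}^2$ only after an extra observation. For $\delta>0$, at a critical point of $\Phi_\infty$ one has $P'/P=-\delta^2/(s(s-\delta^2))<0$, which forces $s>a^2+b^2$. Alternatively, the log-concavity above settles it directly.
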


\par\smallskip
\noindent\hyperref[proof:lem:tet]{Proof of Lemma~\ref*{lem:tet}.}
\par\smallskip

\begin{theorem}\lab{thm:main}
Let \(w,w'>0\) and \(u,u'\geq 0\), and assume
\[
\frac{u}{w^2}=\frac{u'}{(w')^2}.
\]
Set
\[
a^2=w^2+u^2,\qquad b^2=(w')^2+(u')^2,
\qquad \delta=u+u'.
\]
Suppose 
\(c_{**}\) is the positive solution of
\[
\delta^2
=
\frac{
2c_{**}^4(c_{**}^2-a^2-b^2)
}{
(c_{**}^2-a^2+b^2)(c_{**}^2+a^2-b^2)
}.
\]

Let \(h_n\) be the hive obtained from the two deformed GUE minor processes by
applying the octahedron recurrence, and write
\[
q_n:=\operatorname{Density}(h_n).
\]
Then
\[
D_{\mathrm{KL}}\!\left(
q_n\,
\middle\|\,
\operatorname{Density}\bigl(\mathcal H_n(a\sqrt n,b\sqrt n,c_{**}\sqrt n)\bigr)
\right)
=
O(n\log n).
\]
\end{theorem}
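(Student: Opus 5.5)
The plan is to chain the double-hive KL bound through the octahedron recurrence and then pass to a one-hive marginal.

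First, I invoke Lemma~\ref{lem:doubleKLhive}. Under the matching condition, the quadruply augmented double hive law $q_n^{\mathrm{dbl}}$ built from the two deformed minor processes and the shared large side $L_n$ satisfies $D_{\mathrm{KL}}(q_n^{\mathrm{dbl}}\Vert p_n^{\mathrm{dbl}})=O(n\log n)$. Here $p_n^{\mathrm{dbl}}$ is the maximum-entropy law of Lemma~\ref{lem:double-maxent}, with the exterior moments given by Proposition~\ref{prop:large-side-moments}. By Lemma~\ref{lem:double-moment-inverse}, these moments are attained in the negative raw branch ($T<0$, $F<0$). That lemma also supplies the asymptotics $A_n=w^2n+O(1)$, $B_n=(w')^2n+O(1)$ and $S_n=\theta^{-2}n+O(1)$. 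From these, the geometric side lengths of $p_n^{\mathrm{dbl}}$ on the $\lambda$ and $\mu$ sides are $a\sqrt n+O(n^{-1/2})$ and $b\sqrt n+O(n^{-1/2})$. The other two sides, $d_n$ and $f_n$, have the form required in Lemma~\ref{lem:tet}.

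Second, I apply the octahedron map $\oct$ (Definition~\ref{def:oct}), which is a piecewise-linear volume-preserving bijection of $\mathbb D^4$. It sends a double hive with diagonal $L_n$ to a double hive glued along the other diagonal, whose sides $\lambda_n,\mu_n$ bound a hive $h_n\in H_n(\lambda_n,\mu_n;\nu)$. It also preserves the four exterior boundaries and their Gelfand--Tsetlin decorations. Hence $p_n^{\mathrm{dbl}}\circ\oct^{-1}$ is again the maximum-entropy law of Lemma~\ref{lem:double-maxent}, since its density depends only on the exterior boundary. Relative entropy is invariant under this bijection, so $D_{\mathrm{KL}}(\oct_\# q_n^{\mathrm{dbl}}\Vert \oct_\# p_n^{\mathrm{dbl}})=O(n\log n)$.

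I then project onto the hive $h_n$ with sides $\lambda_n,\mu_n$. By the data-processing inequality (Lemma~\ref{lem:data-processing}), the KL distance between the marginal $q_n$ and the marginal of $p_n^{\mathrm{dbl}}$ is $O(n\log n)$. Lemma~\ref{lem:double-one-hive}, applied to the rotated double hive, identifies the latter marginal as $\mathcal H_n(a_n,b_n,c_n)$. Here $c_n$ is the maximizer in Lemma~\ref{lem:tet} for the current parameters. That lemma gives $c_n=c_{**}\sqrt n+O(n^{-1/2})$, with $c_{**}$ the root of the displayed $\delta$-equation. The theorem then needs a perturbation estimate:
\[
D_{\mathrm{KL}}\bigl(\mathcal H_n(a_n,b_n,c_n)\Vert \mathcal H_n(a\sqrt n,b\sqrt n,c_{**}\sqrt n)\bigr)=O(n\log n).
\]
To bound the KL discrepancy, I would write both laws via the explicit density and partition function of Theorem~\ref{thm:gaussian}. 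With scales shifted by relative $O(1/n)$, the log density ratio is a difference of normalizing constants, $n^2\cdot O(1/n)=O(n)$, plus quadratic terms $|\lambda|^2(\bar a^{-2}-\bar a'^{-2})$ whose expectations are also $O(n^2\cdot n^{-1})=O(n)$. Finally, I use the triangle-type decomposition $D(q\Vert r)=D(q\Vert p)+\E_q\log(p/r)$. This has no KL triangle inequality issue because I only need $\E_q\log(p/r)=O(n)$, and that uses only second moments of $q_n$'s boundary. Those moments are controlled by Proposition~\ref{prop:large-side-moments}, together with a bound on $\E|\nu|^2$ obtained from the exterior data.

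I expect the main obstacle to be the identification step via $\oct$. One difficulty is checking that $\oct$ really preserves the exterior boundary data and Lebesgue measure on all of $\mathbb D^4$. Another is that the imaginary raw parameter lands correctly, so that the rotated marginal is the genuine $\mathcal H_n(a_n,b_n,c_n)$ in the obtuse branch, using Remark~\ref{rem:maxent-kl} rather than log-concavity. A further difficulty is that the middle side after rotation has second moment of order $n$ rather than $n^{21}$; this scale separation is what makes $c_n\asymp\sqrt n$. I would first verify that $\E_{p^{\mathrm{dbl}}}|\nu|^2=c_n^2n^2$ directly from the conditional-variance formula $c_*^2=(\bar a^2+\bar b^2)(\bar d^2+\bar f^2)/T$ of Lemma~\ref{lem:double-one-hive}, applied to the opposite pairing.
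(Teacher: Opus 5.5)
Your proposal follows the paper's proof essentially step for step. Both arguments use Lemma~\ref{lem:doubleKLhive}, then push the exterior-only Gibbs density forward under the measure-preserving octahedron map. Both then apply Lemma~\ref{lem:double-one-hive} in the opposite triangulation with data processing, and use Lemma~\ref{lem:tet} to get $c_n=c_{**}\sqrt n+O(n^{-1/2})$. Both finish with the identity $D_{\mathrm{KL}}(q_n\Vert p_n^{**})=D_{\mathrm{KL}}(q_n\Vert p_n^{*})+\mathbb E_{q_n}\log(p_n^*/p_n^{**})$ and $\mathbb E_{q_n}|\nu|^2=O(n^3)$. The paper gets that moment bound from $|\nu|\le|\lambda|+|\mu|$. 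It must come from these small sides: bounding $\nu$ through $N_n,P_n$ would only give $O(n^{23})$.

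What you leave out is the case $u=u'=0$. The theorem allows it, and the matching condition forces it whenever $\delta=0$.
\begin{itemize}
\item Lemma~\ref{lem:double-moment-inverse} is not available there, since it assumes $u,u'>0$. Your claim that the moments land in the negative raw branch is therefore false; the double-hive law is in the positive branch.
\item The target $c_{**}^2=a^2+b^2$ is right-angled, so its raw parameter $\bar c^{**}$ is infinite. Your phrase ``raw scales shifted by relative $O(1/n)$'' then has no meaning.
\end{itemize}

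More generally, that shift claim is exactly where strict obtuseness is needed, and you did not justify it. The inverse map is $(a^2,b^2,c^2)\mapsto\bigl(\tfrac{rs+rt+st}{2t},\tfrac{rs+rt+st}{2s},\tfrac{rs+rt+st}{2r}\bigr)$, with $r=a^2+b^2-c^2$, $s=a^2+c^2-b^2$, $t=b^2+c^2-a^2$. It is smooth at the limiting triple only when $r\neq0$. The $O(n^{-1})$ gradient bound for $\log\frac{ABC}{A+B+C}$ also needs $|A+B+C|\geq\varepsilon n$ uniformly along the interpolating segment.

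The paper therefore treats $u=u'=0$ separately. There the inputs are plain GUE minor processes, the octahedron output is the GUE hive with $c^2=a^2+b^2$, and the bound is immediate. Alternatively, you could run your perturbation in the natural parameter $\bar c^{-2}$, which in that case compares an $O(n^{-21})$ value with $0$. With that case added, and the smoothness of the inverse map stated explicitly for $\delta>0$, your argument matches the paper's.
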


\begin{figure}[h!]
\centering
\resizebox{\textwidth}{!}{%
\begin{tikzpicture}[
  dep/.style={-{Latex[length=2mm]}, thick},
  result/.style={draw, rounded corners, align=center, inner sep=3pt,
    text width=3.0cm},
  input/.style={draw, rounded corners, align=center, inner sep=3pt,
    text width=3.0cm, fill=gray!8},
  every node/.style={font=\small}
]
\node[result] (p3) at (0,4.0) {Fixed spectrum/diagonal\\ Lemma~\ref{prop:3}};
\node[result] (p4) at (4.1,4.0) {Minor-process entropy\\ Proposition~\ref{prop:4}};
\node[result] (dkl) at (12.3,4.0) {Double-hive KL\\ Lemma~\ref{lem:doubleKLhive}};
\node[result] (main) at (16.4,4.0) {Main theorem\\ Theorem~\ref{thm:main}};

\node[result] (tet) at (16.4,5.9) {Tetrahedral limit\\ Lemma~\ref{lem:tet}};

\node[result] (dmax) at (4.1,-2.4) {Double-hive max entropy\\ Lemma~\ref{lem:double-maxent}};
\node[result] (dpart) at (8.2,-2.4) {Double-hive partition\\ Lemma~\ref{lem:double-partition}};
\node[result] (done) at (12.3,-2.4) {One-hive marginal\\ Lemma~\ref{lem:double-one-hive}};

\draw[dep] (p3) -- (p4);
\draw[dep] (p4) -- (dkl);
\draw[dep] (dmax.east) -- (6.1,-2.4) -- (6.1,3.0) -- (dkl.south);
\draw[dep] (dpart.north) -| (dkl.south);
\draw[dep] (dkl) -- (main);
\draw[dep] (tet) -- (main);
\draw[dep] (done.east) -- (14.6,-2.4) -- (14.6,4.0) -- (main.west);
\end{tikzpicture}%
}
\caption{Proof-dependency diagram for the main probabilistic construction.
An arrow \(A\to B\) means that result \(A\) is used in the proof of result
\(B\), including conceptual inputs to the construction.  Bibliographic inputs,
auxiliary comparisons, and purely local algebraic computations are not shown.}
\label{fig:proof-dependencies}
\end{figure}

\subsection{Proof of Theorem~\ref{thm:main}.}

\begin{proof}[Proof of Theorem~\ref{thm:main}]
We first identify the limiting value of the glued side selected by the
finite-\(n\) tetrahedral maximization.  The two large right-hand sides of the double
hive differ by order \(\sqrt n\), and Lemma~\ref{lem:tet} converts this
asymptotic difference into the limiting parameter \(c_{**}\).  We then use
Lemma~\ref{lem:doubleKLhive} and the data-processing inequality,
Lemma~\ref{lem:data-processing}, for the
octahedron recurrence to compare the resulting hive law with the GUE hive
law at the intermediate parameter \(c_n\).  Finally, replacing
\(c_n\) by \(c_{**}\sqrt n\) changes the relative entropy only by an
\(O(n)\) error, which is negligible compared with the desired
\(O(n\log n)\) bound.

Let the two left-hand side lengths be
\[
a_n=a\sqrt n,\qquad b_n=b\sqrt n.
\]
In the double-hive construction, the right-hand exterior sides are
\(P_n=L_n-\operatorname{diag}(Y_n)\) and
\(N_n=L_n+\operatorname{diag}(X_n)\).  With the convention of
Figure~\ref{fig:example}, these correspond respectively to \(d_n\) and
\(f_n\).  Thus, by the moment estimates in the construction,
\[
Q_n=n^{21}+O(n^{20}),\qquad
d_n^2=Q_n-2u'n^{11}+O(n^{10}),
\qquad
f_n^2=Q_n+2u n^{11}+O(n^{10}).
\]
In particular,
\[
\sqrt{Q_n}=n^{21/2}(1+O(n^{-1})),
\qquad
d_n+f_n=2\sqrt{Q_n}+O(\sqrt n),
\]
and therefore
\[
f_n-d_n
=\frac{f_n^2-d_n^2}{f_n+d_n}
=(u+u')\sqrt n+O(n^{-1/2}).
\]
Set
\[
\delta:=u+u'.
\]

Let \(c_n\) denote the maximizing glued side in the finite-\(n\) tetrahedral
problem.  Thus \(c_n\) maximizes
\[
x\longmapsto
\frac{\Delta_{a_n b_n x}^2}{a_n b_n x}
\frac{\Delta_{x d_n f_n}^2}{x d_n f_n}.
\]
By Lemma~\ref{lem:tet},
\beq \lab{eq:comp}
c_n=c_{**}\sqrt n+O(n^{-1/2}).
\eeq

Let \(q_n^{\mathrm{dbl}}\) be the density of the double hive constructed above, and
let \(p_n^{\mathrm{dbl},*}\) denote the corresponding double-hive maximum
entropy density.  By Lemma~\ref{lem:doubleKLhive},
\[
D_{\mathrm{KL}}(q_n^{\mathrm{dbl}}\|p_n^{\mathrm{dbl},*})=O(n\log n).
\]
The octahedron recurrence induces a piecewise-linear map from the double-hive
configuration space to a product of two hive spaces.  Let
\[
(\widetilde h_n,\widetilde h'_n)
\]
be the image of a \(q_n^{\mathrm{dbl}}\)-distributed double hive under this map,
and define
\[
q_n:=\operatorname{Density}(\widetilde h_n).
\]
This is the density of the hive \(h_n\) appearing in the statement of the
theorem.
Since the octahedron recurrence is a Lebesgue-measure-preserving
piecewise-linear bijection, preserves the four exterior boundary components,
and the density of \(p_n^{\mathrm{dbl},*}\) depends only on those components,
its pushforward is the same quadruply augmented double-hive Gibbs law expressed
in the opposite triangulation.  Therefore, applying
Lemma~\ref{lem:double-one-hive} in that triangulation, the first hive marginal
of the pushforward is
\[
p_n^*
=
\operatorname{Density}\bigl(\mathcal H_n(a_n,b_n,c_n)\bigr).
\]
Therefore, by the data-processing inequality, Lemma~\ref{lem:data-processing},
\[
D_{\mathrm{KL}}(q_n\|p_n^*)=O(n\log n).
\]

It remains to replace the intermediate parameter \(c_n\) by
\(c_{**}\sqrt n\).  Let
\[
p_n^{**}=\operatorname{Density}(\mathcal H_n(a_n, b_n, c_{**}\sqrt n)).
\]
By the chain rule,
\[
D_{\mathrm{KL}}(q_n\|p_n^{**})
=
D_{\mathrm{KL}}(q_n\|p_n^*)
+
\mathbb E_{q_n}\log\frac{p_n^*}{p_n^{**}}.
\]
Thus it suffices to prove that
\[
\mathbb E_{q_n}\log\frac{p_n^*}{p_n^{**}}=O(n).
\]

If \(u+u'=0\), then the matching condition
\[
\frac{u}{w^2}=\frac{u'}{(w')^2}
\]
forces \(u=u'=0\).  In this case the two inputs are ordinary GUE minor
processes with scales \(a=w\) and \(b=w'\), and
\[
c_{**}^2=a^2+b^2.
\]

By Proposition~\ref{gt-rem}(iv) and the large-gap independence described
below, the auxiliary side \(L_n\) does not alter the hive component
produced by the octahedron recurrence.
Hence the desired relative entropy
bound is immediate in this case.  We may therefore assume
\[
u+u'\neq0.
\]
Then \(c_{**}^2>a^2+b^2\), so the target triple is strictly obtuse and the
raw-parameter inverse map below is used away from its degeneracy locus.

Let
\[
(\bar a_n^*,\bar b_n^*,\bar c_n^*)
\]
be the raw Gaussian parameters corresponding to the geometric side lengths
\((a_n,b_n,c_n)\), and let
\[
(\bar a_n^{**},\bar b_n^{**},\bar c_n^{**})
\]
be the raw Gaussian parameters corresponding to the geometric side lengths
\((a_n,b_n,c_{**}\sqrt n)\).  Thus \(p_n^*\) is written with barred
parameters \((\bar a_n^*,\bar b_n^*,\bar c_n^*)\), while \(p_n^{**}\) is
written with barred parameters
\((\bar a_n^{**},\bar b_n^{**},\bar c_n^{**})\).

We record the inverse map from geometric squared side lengths to raw squared
parameters.  Put
\[
x=a^2,\qquad y=b^2,\qquad z=c^2,
\]
and
\[
A=\bar a^2,\qquad B=\bar b^2,\qquad C=\bar c^2.
\]
The forward relation is
\[
x=\frac{A(B+C)}{A+B+C},\qquad
y=\frac{B(C+A)}{A+B+C},\qquad
z=\frac{C(A+B)}{A+B+C}.
\]
If
\[
r=x+y-z,\qquad s=x+z-y,\qquad t=y+z-x,
\]
then
\[
r=\frac{2AB}{A+B+C},\qquad
s=\frac{2AC}{A+B+C},\qquad
t=\frac{2BC}{A+B+C}.
\]
Therefore
\[
\frac{ABC}{A+B+C}=\frac{rs+rt+st}{4},
\]
and the inverse map is
\[
A=\frac{rs+rt+st}{2t},\qquad
B=\frac{rs+rt+st}{2s},\qquad
C=\frac{rs+rt+st}{2r}.
\]
In the obtuse case \(r<0\), and the same formula gives the corresponding
negative raw parameter \(C=\bar c^2<0\).

Since, by (\ref{eq:comp}),
\[
c_n^2=c_{**}^2n+O(1),
\]
while \(a_n^2=a^2n\) and \(b_n^2=b^2n\) are unchanged, the two geometric
squared triples differ by \(O(1)\).  Since we are now in the strict obtuse
case, the inverse formulas above are rational functions, homogeneous of
degree one, and smooth at the limiting triple. Hence the corresponding raw
squared triples differ by
\[
(\bar a_n^*)^2-(\bar a_n^{**})^2=O(1),\qquad
(\bar b_n^*)^2-(\bar b_n^{**})^2=O(1),
\]
and
\[
(\bar c_n^*)^2-(\bar c_n^{**})^2=O(1).
\]
Moreover all these raw squared parameters have order \(n\), in the signed
sense in the obtuse case.  Consequently
\[
\frac1{(\bar a_n^*)^2}-\frac1{(\bar a_n^{**})^2}=O(n^{-2}),
\]
\[
\frac1{(\bar b_n^*)^2}-\frac1{(\bar b_n^{**})^2}=O(n^{-2}),
\]
and
\[
\frac1{(\bar c_n^*)^2}-\frac1{(\bar c_n^{**})^2}=O(n^{-2}).
\]

Write
\[
\bar s_\lambda^*=\bar a_n^*,\qquad
\bar s_\mu^*=\bar b_n^*,\qquad
\bar s_\nu^*=\bar c_n^*,
\]
and similarly for \(\bar s_\lambda^{**},\bar s_\mu^{**},\bar s_\nu^{**}\).
Then
\[
\log\frac{p_n^*}{p_n^{**}}
=
\log
\frac{
Z_n(\bar a_n^{**},\bar b_n^{**},\bar c_n^{**})
}{
Z_n(\bar a_n^*,\bar b_n^*,\bar c_n^*)
}
-\frac12
\sum_{\xi\in\{\lambda,\mu,\nu\}}
\left(
\frac1{(\bar s_\xi^*)^2}
-
\frac1{(\bar s_\xi^{**})^2}
\right)|\xi|^2,
\]
where \(Z_n(\bar a,\bar b,\bar c)\) denotes the raw-parameter normalizing
constant from Theorem~\ref{thm:gaussian1}.

For every hive with exterior sides \(\lambda,\mu,\nu\), the Horn inequalities
are equivalent to the existence of Hermitian matrices \(A\) and \(B\) such that
the spectra of \(A\), \(B\), and \(A+B\) are \(\lambda\), \(\mu\), and
\(\nu\), respectively.
Thus the Frobenius triangle inequality gives
\[
|\nu|\leq |\lambda|+|\mu|.
\]
Applying this to the image hive and using the quadratic moment estimates for
its first two exterior sides gives
\[
\mathbb E_{q_n}|\nu|^2=O(n^3).
\]
The first two boundary sides themselves satisfy
\[
\mathbb E_{q_n}|\lambda|^2=O(n^3),
\qquad
\mathbb E_{q_n}|\mu|^2=O(n^3).
\]
Consequently, the quadratic-weight part of the Radon--Nikodym derivative
contributes only
\[
\left|
\frac12
\sum_{\xi\in\{\lambda,\mu,\nu\}}
\left(
\frac1{(\bar s_\xi^*)^2}
-
\frac1{(\bar s_\xi^{**})^2}
\right)
\mathbb E_{q_n}|\xi|^2
\right|
=
O(n).
\]

It remains to control the ratio of normalizing constants.  By
Theorem~\ref{thm:gaussian1},
\[
Z_n(\bar a,\bar b,\bar c)
=
(2\pi)^nV_n(\tau_n)^{-2}
\left(
\frac{\bar a^2\bar b^2\bar c^2}
{\bar a^2+\bar b^2+\bar c^2}
\right)^{n^2/2}.
\]
For \(0\leq t\leq 1\), set
\[
(A_n(t),B_n(t),C_n(t))
=
(1-t)\bigl((\bar a_n^*)^2,(\bar b_n^*)^2,(\bar c_n^*)^2\bigr)
+t\bigl((\bar a_n^{**})^2,(\bar b_n^{**})^2,(\bar c_n^{**})^2\bigr).
\]
The estimates above give
\[
(A_n(t),B_n(t),C_n(t))
=
n\bigl((\bar a^{**})^2,(\bar b^{**})^2,(\bar c^{**})^2\bigr)+O(1)
\]
uniformly in \(t\).  In particular, all three coordinates are comparable to
\(n\).  More explicitly, since the limiting raw squared triple is
nondegenerate, there is an \(\varepsilon>0\), independent of \(n\) and \(t\),
such that for all sufficiently large \(n\),
\[
|A_n(t)|,\ |B_n(t)|,\ |C_n(t)|,\ |A_n(t)+B_n(t)+C_n(t)|
\geq \varepsilon n .
\]
For
\[
F(A,B,C)=\log\left(\frac{ABC}{A+B+C}\right)
\]
we have
\[
\partial_A F=\frac1A-\frac1{A+B+C},
\qquad
\partial_B F=\frac1B-\frac1{A+B+C},
\qquad
\partial_C F=\frac1C-\frac1{A+B+C}.
\]
The preceding lower bounds therefore give, uniformly in \(t\),
\[
\nabla_{A,B,C}
\log\left(\frac{ABC}{A+B+C}\right)=O(n^{-1}).
\]
Since the two raw squared triples differ by \(O(1)\), it follows that
\[
\log
\frac{
Z_n(\bar a_n^{**},\bar b_n^{**},\bar c_n^{**})
}{
Z_n(\bar a_n^*,\bar b_n^*,\bar c_n^*)
}
=O(n).
\]
Combining the two estimates,
\[
\mathbb E_{q_n}\log\frac{p_n^*}{p_n^{**}}=O(n).
\]
Hence
\[
D_{\mathrm{KL}}(q_n\|p_n^{**})
=
D_{\mathrm{KL}}(q_n\|p_n^*)+O(n)
=
O(n\log n).
\]

Finally, the hive part of the image of the octahedron recurrence can be
defined, via the interpretation of Speyer's theorem in \cite{NarSheffTao},
using only the two deformed minor processes.  By
Proposition~\ref{gt-rem}(iv), the auxiliary large-gap side \(L_n\) is only a
device for realizing the relevant Gelfand--Tsetlin patterns as hives.  Once
the gaps are large enough, as ensured on every outcome by
\eqref{eq:large-gap-every-outcome},
applying the octahedron recurrence to the resulting double hive gives a hive
component which is independent of the specific value of \(L_n\).
This completes the proof of Theorem~\ref{thm:main}.
\end{proof}

\section{Proofs of lemmas and propositions from Section~\ref{sec:double-hives}, Subsection~\ref{ssec:4.1} and Subsection~\ref{ssec:4.2}}

\phantomsection\label{proof:lem:doublehive}
\begin{proof}[Proof of Lemma~\ref{lem:doublehive}]
Let
\[
L=\alpha+\beta+\eta-\phi.
\]
By the Gaussian conditioning formula,
\[
\operatorname{Var}(X\mid L=0)
=
\operatorname{Var}(X)
-
\frac{\operatorname{Cov}(X,L)^2}{\operatorname{Var}(L)}.
\]
Since \(\operatorname{Var}(L)=t\), we get
\[
a^2
=
a_{\mathrm{raw}}^2-\frac{a_{\mathrm{raw}}^4}{t}
=
\frac{a_{\mathrm{raw}}^2(t-a_{\mathrm{raw}}^2)}{t},
\]
\[
b^2
=
\frac{b_{\mathrm{raw}}^2(t-b_{\mathrm{raw}}^2)}{t},
\qquad
d^2
=
\frac{d_{\mathrm{raw}}^2(t-d_{\mathrm{raw}}^2)}{t},
\qquad
f^2
=
\frac{f_{\mathrm{raw}}^2(t-f_{\mathrm{raw}}^2)}{t}.
\]
Similarly,
\[
c_*^2
=
\operatorname{Var}(\alpha+\beta\mid L=0)
=
\frac{(a_{\mathrm{raw}}^2+b_{\mathrm{raw}}^2)
(d_{\mathrm{raw}}^2+f_{\mathrm{raw}}^2)}{t}.
\]

We now show that \(c_*\) is the maximizer in the tetrahedral variational
problem. First,
\[
a^2+b^2-c_*^2
=
\frac{2a_{\mathrm{raw}}^2b_{\mathrm{raw}}^2}{t}.
\]
Using
\[
\Delta_{xyz}^2
=
x^2y^2-\frac{(x^2+y^2-z^2)^2}{4},
\]
this gives
\[
\Delta_{abc_*}^2
=
\frac{
a_{\mathrm{raw}}^2b_{\mathrm{raw}}^2
(d_{\mathrm{raw}}^2+f_{\mathrm{raw}}^2)
}{t}.
\]
Similarly,
\[
d^2+f^2-c_*^2
=
\frac{2d_{\mathrm{raw}}^2f_{\mathrm{raw}}^2}{t},
\]
and hence
\[
\Delta_{c_*df}^2
=
\frac{
d_{\mathrm{raw}}^2f_{\mathrm{raw}}^2
(a_{\mathrm{raw}}^2+b_{\mathrm{raw}}^2)
}{t}.
\]

Let \(y=c^2\). Up to the fixed factor \((abdf)^{-1}\), the objective is
\[
\Psi(y)
=
\frac{\Delta_{abc}^2\Delta_{cdf}^2}{c^2}.
\]
At an interior critical point,
\[
\frac{d}{dy}\log \Psi(y)=0.
\]
Evaluating at \(y=c_*^2\), we obtain
\[
\frac{1}{d_{\mathrm{raw}}^2+f_{\mathrm{raw}}^2}
+
\frac{1}{a_{\mathrm{raw}}^2+b_{\mathrm{raw}}^2}
-
\frac{t}{
(a_{\mathrm{raw}}^2+b_{\mathrm{raw}}^2)
(d_{\mathrm{raw}}^2+f_{\mathrm{raw}}^2)
}
=0.
\]
Thus \(c_*\) is the critical point of the area-product objective. Since the
objective vanishes at the boundary of the admissible interval and the
maximizer is unique, \(c_*\) is the maximizer.

By the tetrahedral identity \eqref{eq:tetrahedral-identity},
\[
\max_c
\frac{\Delta_{abc}^2}{abc}
\frac{\Delta_{cdf}^2}{cdf}
=
\frac{
9|\mathtt{tet}|_{abfd}^2
}{
4abfd
}.
\]
Since the maximum is attained at \(c=c_*\), the claimed identity follows.
\end{proof}

\phantomsection\label{proof:lem:double-maxent}
\begin{proof}[Proof of Lemma~\ref{lem:double-maxent}]
This is the same exponential-family maximum-entropy calculation as in
Theorem~\ref{thm:gaussian1} and Remark~\ref{rem:maxent-kl}, but applied to
the four-boundary double-hive cone \(\mathbb D^4\).  The only point to note
is that the reference measure is
Lebesgue measure on \(\mathbb D^4\), whose fibers contain the four exterior
Gelfand--Tsetlin volume factors and no Gelfand--Tsetlin volume factor on the
glued boundary.
When \(\bar f^2<0\), the normalizing integral is understood in the same sense
as the imaginary-side case in Theorem~\ref{thm:gaussian1}: the \(\phi\)-side
is represented by the \(GUE_-\) measure, and completing the square gives a
finite integral under the hypothesis \(T<0\).
Indeed, write
\[
A=\bar a^2,\qquad B=\bar b^2,\qquad D=\bar d^2,\qquad R=-\bar f^2.
\]
Then \(T<0\) is equivalent to \(R>A+B+D\).  On the matrix slice
\[
X_\alpha+X_\beta+X_\eta-X_\phi=0,
\]
we have \(X_\phi=X_\alpha+X_\beta+X_\eta\), and the exponent is
\[
-\frac12
\left(
\frac{\operatorname{Tr}X_\alpha^2}{A}
+
\frac{\operatorname{Tr}X_\beta^2}{B}
+
\frac{\operatorname{Tr}X_\eta^2}{D}
-
\frac{\operatorname{Tr}(X_\alpha+X_\beta+X_\eta)^2}{R}
\right).
\]
By Cauchy--Schwarz in the Hilbert--Schmidt norm,
\[
\operatorname{Tr}(X_\alpha+X_\beta+X_\eta)^2
\leq
(A+B+D)
\left(
\frac{\operatorname{Tr}X_\alpha^2}{A}
+
\frac{\operatorname{Tr}X_\beta^2}{B}
+
\frac{\operatorname{Tr}X_\eta^2}{D}
\right).
\]
Thus the quadratic form inside the parentheses is bounded below by
\[
\left(1-\frac{A+B+D}{R}\right)
\left(
\frac{\operatorname{Tr}X_\alpha^2}{A}
+
\frac{\operatorname{Tr}X_\beta^2}{B}
+
\frac{\operatorname{Tr}X_\eta^2}{D}
\right),
\]
which is positive definite.  Hence the completed-square integral is finite.

Let \(q\) be another density on \(\mathbb D^4\) satisfying the same four
quadratic moment constraints as \(p_{\mathrm{dbl}}\).  Since
\[
-\log p_{\mathrm{dbl}}(x)
=
\log Z_{\mathrm{dbl}}
+
\frac12
\left(
\frac{|\alpha(x)|^2}{\bar a^2}
+
\frac{|\beta(x)|^2}{\bar b^2}
+
\frac{|\eta(x)|^2}{\bar d^2}
+
\frac{|\phi(x)|^2}{\bar f^2}
\right),
\]
the moment constraints imply
\[
-\mathbb E_q\log p_{\mathrm{dbl}}
=
-\mathbb E_{p_{\mathrm{dbl}}}\log p_{\mathrm{dbl}}
=
\mathrm{ent}(p_{\mathrm{dbl}}).
\]
Therefore
\[
D_{\mathrm{KL}}(q\Vert p_{\mathrm{dbl}})
=
-\mathrm{ent}(q)-\mathbb E_q\log p_{\mathrm{dbl}}
=
\mathrm{ent}(p_{\mathrm{dbl}})-\mathrm{ent}(q).
\]
The nonnegativity of relative entropy gives
\(\mathrm{ent}(q)\leq\mathrm{ent}(p_{\mathrm{dbl}})\), with equality only if
\(q=p_{\mathrm{dbl}}\) almost everywhere.
\end{proof}

\phantomsection\label{proof:lem:tet-analogue of GangNar}
\begin{proof}[Proof of Lemma~\ref{lem:tet-analogue of GangNar}]
We first claim the following.

\begin{claim}
Whether \(\bar f>0\), or \(\bar f^2<0\) and
\[
T:=\bar a^2+\bar b^2+\bar d^2+\bar f^2<0,
\]
the numbers \(a,b,d,f\) can be the side lengths of a quadrilateral in
\(\mathbb R^3\).
\end{claim}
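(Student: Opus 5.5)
The plan is to realize \(a,b,d,f\) as the lengths of four vectors in \(\mathbb R^3\) that sum to zero. Such vectors, placed head to tail, form a closed (possibly skew) quadrilateral. The natural source of these vectors is the Gaussian conditioning picture of Lemma~\ref{lem:doublehive}, extended formally to the case \(\bar f^2<0\).

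Set \(v=(v_1,v_2,v_3,v_4)=(\bar a^2,\bar b^2,\bar d^2,\bar f^2)\), so \(T=\sum_i v_i\). Define the symmetric \(4\times4\) matrix
\[
M=\operatorname{diag}(v)-\frac{vv^{\mathsf T}}{T}.
\]
In the case \(\bar f>0\), \(M\) is exactly the covariance matrix of \((\alpha,\beta,\eta,-\phi)\) conditioned on \(\alpha+\beta+\eta-\phi=0\), up to the sign convention on the fourth coordinate. Its diagonal entries are
\[
M_{ii}=\frac{v_i(T-v_i)}{T},
\]
which are precisely \(a^2,b^2,d^2,f^2\) as defined in the lemma. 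The vector \(\mathbf 1=(1,1,1,1)\) lies in the kernel, since \(M\mathbf 1=v-vT/T=0\).

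The key step is to show that \(M\) is positive semidefinite of rank \(3\) in both regimes. The associated quadratic form is
\[
x^{\mathsf T}Mx=\sum_i v_ix_i^2-\frac{(\sum_i v_ix_i)^2}{T}.
\]
Because \(\mathbf 1\) spans the kernel direction, it suffices to check positivity on the complementary subspace \(\{x_4=0\}\).

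In the case \(\bar f>0\), all \(v_i>0\). Cauchy--Schwarz gives
\[
\Bigl(\sum_{i\le3}v_ix_i\Bigr)^2\le\Bigl(\sum_{i\le3}v_i\Bigr)\sum_{i\le3}v_ix_i^2<T\sum_{i\le3}v_ix_i^2 .
\]
Hence the form is strictly positive for \(x\neq0\) with \(x_4=0\).

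In the case \(\bar f^2<0\) and \(T<0\), we have \(-1/T>0\) on the same subspace. The form becomes
\[
\sum_{i\le3}v_ix_i^2+\frac{(\sum_{i\le3}v_ix_i)^2}{|T|},
\]
and this is strictly positive because \(v_1,v_2,v_3>0\).

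In both cases \(M\) is positive semidefinite with kernel exactly \(\mathbb R\mathbf 1\). Factor it as a Gram matrix \(M=(\langle x_i,x_j\rangle)\) with \(x_1,\dots,x_4\in\mathbb R^3\). Since \(\mathbf 1\in\ker M\), we get \(|\sum_i x_i|^2=\mathbf 1^{\mathsf T}M\mathbf 1=0\), so \(\sum_i x_i=0\). The lengths satisfy \(|x_i|^2=M_{ii}\), so they equal \(a,b,d,f\). Placing \(x_1,x_2,x_3,x_4\) head to tail yields a closed quadrilateral in \(\mathbb R^3\) with the required side lengths. Since \(M\) has rank \(3\), the quadrilateral is genuinely non-planar. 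This also recovers the positivity of \(a^2,b^2,d^2,f^2\) noted in Lemma~\ref{lem:double-one-hive}.

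The only delicate point is the negative branch, where the probabilistic interpretation fails. There the argument has to run purely through the explicit quadratic form; this is where the hypothesis \(T<0\) is used, and it does all the work.
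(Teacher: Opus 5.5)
Your argument is correct, and it takes a genuinely different route from the paper's.

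The paper proves the claim by checking the polygon inequalities (each of $a,b,d,f$ is at most the sum of the other three), using two unrelated scalar computations. For $\bar f>0$ it uses $b\geq\sqrt{\bar a^2\bar b^2/T}$ and its analogues, together with $\sqrt{\bar b^2}+\sqrt{\bar d^2}+\sqrt{\bar f^2}\geq\sqrt{\bar b^2+\bar d^2+\bar f^2}$. For $\bar f^2<0$, $T<0$ it sets $S=-T$ and writes $a=h(\bar a^2)$, $b=h(\bar b^2)$, $d=h(\bar d^2)$, $f=h(\bar a^2+\bar b^2+\bar d^2)$ with $h(x)=\sqrt{x+x^2/S}$. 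It then shows that $h$ is increasing and subadditive.

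You handle both regimes with the single matrix $M=\operatorname{diag}(v)-vv^{\mathsf T}/T$. The hypothesis $T<0$ enters only through the sign of $-1/T$ in the quadratic form.

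Your route gives more directly. The paper's non-strict inequalities, as stated, only guarantee a closed and possibly degenerate polygon. Rank $3$ immediately gives a skew quadrilateral, i.e.\ a tetrahedron of positive volume with cyclic edges $a,b,d,f$. Moreover $|x_1+x_2|^2=M_{11}+M_{22}+2M_{12}=(\bar a^2+\bar b^2)(\bar d^2+\bar f^2)/T=c_*^2$. By the matrix determinant lemma, the principal $3\times3$ minor of $M$ is $\bar a^2\bar b^2\bar d^2\bar f^2/T$. So your realization is an explicit tetrahedron with diagonal $c_*$ and squared volume $\bar a^2\bar b^2\bar d^2\bar f^2/(36T)$. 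This ties the claim directly to the tetrahedral identity treated in the rest of Lemma~\ref{lem:tet-analogue of GangNar}, and gives a concrete check on the constant there.

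What the paper's negative-case computation buys instead is the parametrization $x\mapsto x+x^2/S$, together with the fact that $f$ is the longest side. This is exactly the structure reused in Lemma~\ref{lem:double-moment-inverse}.

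One wording point: the reduction to $\{x_4=0\}$ uses only $M\mathbf 1=0$, so that the form is invariant under $x\mapsto x+t\mathbf 1$. That $\mathbf 1$ spans the kernel is what you conclude, not something you may assume at that step.
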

\begin{proof}
We first prove the quadrilateral inequalities in the positive raw case.
Write
\[
A=\bar a^2,\qquad B=\bar b^2,\qquad D=\bar d^2,\qquad F=\bar f^2,
\qquad T=A+B+D+F.
\]
Then
\[
a^2=\frac{A(T-A)}{T},\qquad
b^2=\frac{B(T-B)}{T},\qquad
d^2=\frac{D(T-D)}{T},\qquad
f^2=\frac{F(T-F)}{T}.
\]
It is enough to prove each side is at most the sum of the other three.
For instance,
\[
b
=
\sqrt{\frac{B(A+D+F)}{T}}
\geq
\sqrt{\frac{AB}{T}},
\]
and similarly
\[
d\geq \sqrt{\frac{AD}{T}},
\qquad
f\geq \sqrt{\frac{AF}{T}}.
\]
Therefore
\[
b+d+f
\geq
\sqrt{\frac{A}{T}}\bigl(\sqrt B+\sqrt D+\sqrt F\bigr)
\geq
\sqrt{\frac{A(B+D+F)}{T}}
=a.
\]
The same argument, after permuting \(A,B,D,F\), gives
\[
a\leq b+d+f,\qquad
b\leq a+d+f,\qquad
d\leq a+b+f,\qquad
f\leq a+b+d.
\]
Thus \(a,b,d,f\) are the side lengths of a quadrilateral in the positive
raw case.

It remains to prove the quadrilateral inequalities in the case
\(\bar f^2<0\) and
\[
T:=\bar a^2+\bar b^2+\bar d^2+\bar f^2<0,
\]
Write
\[
A=\bar a^2,\qquad B=\bar b^2,\qquad D=\bar d^2,\qquad R=-\bar f^2.
\]
Then \(A,B,D,R>0\), and the assumption \(T<0\) says
\[
R>A+B+D.
\]
Set
\[
S=R-A-B-D>0.
\]
Since \(T=-S\), the side-length formulas become
\[
a^2
=
\frac{\bar a^2(\bar b^2+\bar d^2+\bar f^2)}{T}
=
\frac{A(B+D-R)}{-S}
=
\frac{A(R-B-D)}{S}.
\]
Using \(R=A+B+D+S\), this is
\[
a^2
=
\frac{A(A+S)}{S}
=
A+\frac{A^2}{S}.
\]
Similarly,
\[
b^2=B+\frac{B^2}{S},
\qquad
d^2=D+\frac{D^2}{S}.
\]
Also
\[
f^2
=
\frac{\bar f^2(\bar a^2+\bar b^2+\bar d^2)}{T}
=
\frac{(-R)(A+B+D)}{-S}
=
\frac{R(A+B+D)}{S}.
\]
Since \(R=A+B+D+S\), this gives
\[
f^2
=
(A+B+D)+\frac{(A+B+D)^2}{S}.
\]

Define
\[
h(x)=\sqrt{x+\frac{x^2}{S}},
\qquad x\geq 0.
\]
Then
\[
a=h(A),\qquad b=h(B),\qquad d=h(D),\qquad f=h(A+B+D).
\]
We claim that \(h\) is subadditive. Indeed, for \(x,y\geq 0\),
\[
h(x+y)\leq h(x)+h(y)
\]
is equivalent, after squaring, to
\[
x+y+\frac{(x+y)^2}{S}
\leq
x+\frac{x^2}{S}
+
y+\frac{y^2}{S}
+
2\sqrt{\left(x+\frac{x^2}{S}\right)
\left(y+\frac{y^2}{S}\right)}.
\]
After cancellation, this becomes
\[
\frac{xy}{S}
\leq
\sqrt{\left(x+\frac{x^2}{S}\right)
\left(y+\frac{y^2}{S}\right)}.
\]
Squaring both sides, this is
\[
\frac{x^2y^2}{S^2}
\leq
xy\left(1+\frac{x}{S}\right)\left(1+\frac{y}{S}\right),
\]
which is immediate.

Therefore
\[
f
=
h(A+B+D)
\leq
h(A)+h(B)+h(D)
=
a+b+d.
\]
Moreover \(h\) is increasing, so
\[
f=h(A+B+D)\geq h(A)=a,
\qquad
f\geq b,
\qquad
f\geq d.
\]
Thus the only possibly nontrivial quadrilateral inequality is
\[
f\leq a+b+d,
\]
which we have proved. Hence
\[
a,b,d,f
\]
satisfy the quadrilateral inequalities, and therefore can be realized as the
side lengths of a quadrilateral in \(\mathbb R^3\).
\end{proof}
As in Lemma~\ref{lem:doublehive}, define
\[
c_*^2=\frac{(\bar a^2+\bar b^2)(\bar d^2+\bar f^2)}{T}.
\]
Then a direct calculation gives
\[
a^2+b^2-c_*^2
=
\frac{2\bar a^2\bar b^2}{T},
\qquad
d^2+f^2-c_*^2
=
\frac{2\bar d^2\bar f^2}{T}.
\]
Using
\[
\Delta_{xyz}^2
=
x^2y^2-\frac{(x^2+y^2-z^2)^2}{4},
\]
we obtain
\[
\Delta_{abc_*}^2
=
\frac{\bar a^2\bar b^2(\bar d^2+\bar f^2)}{T},
\qquad
\Delta_{c_*df}^2
=
\frac{\bar d^2\bar f^2(\bar a^2+\bar b^2)}{T}.
\]
Therefore
\[
\frac{\Delta_{abc_*}^2}{abc_*}
\frac{\Delta_{c_*df}^2}{c_*df}
=
\frac{\bar a^2\bar b^2\bar d^2\bar f^2}{T\,abdf}.
\]
By the tetrahedral identity \eqref{eq:tetrahedral-identity},
\[
\max_x
\frac{\Delta_{abx}^2}{abx}
\frac{\Delta_{xdf}^2}{xdf}
=
\frac{9\diamondplus_{abdf}^2}{4abdf}.
\]
The same critical-point calculation as in Lemma~\ref{lem:doublehive} shows that
the maximum is attained at \(x=c_*\). Hence
\[
\frac{\bar a^2\bar b^2\bar d^2\bar f^2}{T\,abdf}
=
\frac{9\diamondplus_{abdf}^2}{4abdf}.
\]
Multiplying by \(abdf\) gives
\[
\frac{\bar a^2\bar b^2\bar d^2\bar f^2}
{\bar a^2+\bar b^2+\bar d^2+\bar f^2}
=
\frac{9}{4}\diamondplus_{abdf}^2.
\]

\end{proof}

\begin{claim}\lab{cl:vtau}
\[
\log V_n(\tau_n)
=
\frac{n^2}{2}\log n
-
\frac34 n^2
+
O(n\log n).
\]
\end{claim}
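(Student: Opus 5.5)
The plan is to use the product formula $V_n(\tau_n)=\prod_{k=1}^{n-1}k!$, recorded in the preliminaries, and reduce the claim to a Stirling-type sum. Taking logarithms,
\[
\log V_n(\tau_n)=\sum_{k=1}^{n-1}\log k! .
\]
By Stirling's formula, $\log k! = k\log k - k + O(\log k)$ uniformly for $k\geq 1$, where the $O(\log k)$ term is read as $O(1)$ at $k=1$. Summing the error terms over $k\leq n-1$ contributes $O(n\log n)$, which already has the size of the allowed error. It therefore remains to evaluate $\sum_{k=1}^{n-1}k\log k$ and $\sum_{k=1}^{n-1}k$ to precision $O(n\log n)$.

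The second sum is exact: $\sum_{k=1}^{n-1}k=\frac{n^2}{2}+O(n)$. For the first sum, I would compare with the integral $\int_1^{n}x\log x\,dx=\frac{n^2}{2}\log n-\frac{n^2}{4}+\frac14$. The function $x\log x$ is increasing and its increment over a unit interval near $x$ is $O(\log x+1)$, so the sum and the integral differ by $O(n\log n)$. (The Euler--Maclaurin formula would give $O(n\log n)$ directly, but monotone comparison already suffices.) This yields
\[
\sum_{k=1}^{n-1}k\log k=\frac{n^2}{2}\log n-\frac{n^2}{4}+O(n\log n).
\]

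Combining the two sums,
\[
\log V_n(\tau_n)=\frac{n^2}{2}\log n-\frac{n^2}{4}-\frac{n^2}{2}+O(n\log n)=\frac{n^2}{2}\log n-\frac34n^2+O(n\log n),
\]
which is the claim. None of these steps is a real obstacle. The only point needing care is the bookkeeping: the Stirling errors must be summed to $O(n\log n)$ rather than something larger, and the sum-to-integral comparison for $k\log k$ must not lose more than $O(n\log n)$. Both hold because all the errors are uniformly $O(\log n)$ per term over $n$ terms.
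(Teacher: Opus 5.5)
Your proposal is correct and is essentially the paper's argument. The paper regroups $V_n(\tau_n)=\prod_{k=1}^{n-1}k^{\,n-k}$, writes $\log V_n(\tau_n)=n\sum_{k<n}\log k-\sum_{k<n}k\log k$, applies Stirling once and uses Euler--Maclaurin for $\sum k\log k$; you instead apply Stirling termwise to each $\log k!$ and compare $\sum k\log k$ with $\int_1^n x\log x\,dx$ by monotonicity, which uses the same two ingredients with the same $O(n\log n)$ bookkeeping (and the paper itself phrases it your way, ``Stirling's formula, summed over $\prod_{k=1}^{n-1}k!$'', in the proof of Lemma~\ref{lem:random-side-entropy}).
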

\begin{proof}
Since
\[
\tau_n(i)=\frac{n+1}{2}-i,
\]
we have \(\tau_n(i)-\tau_n(j)=j-i\) for \(i<j\). Hence
\[
V_n(\tau_n)
=
\prod_{1\leq i<j\leq n}(j-i)
=
\prod_{k=1}^{n-1} k^{\,n-k}.
\]
Therefore
\[
\log V_n(\tau_n)
=
\sum_{k=1}^{n-1}(n-k)\log k
=
n\sum_{k=1}^{n-1}\log k
-
\sum_{k=1}^{n-1}k\log k .
\]
By Stirling's formula \cite[Sec.~5.11]{NISTDLMF},
\[
\sum_{k=1}^{n-1}\log k
=
n\log n-n+O(\log n),
\]
and by the Euler--Maclaurin formula \cite[Sec.~2.10]{NISTDLMF},
\[
\sum_{k=1}^{n-1}k\log k
=
\frac{n^2}{2}\log n-\frac{n^2}{4}+O(n\log n).
\]
Substituting gives
\[
\log V_n(\tau_n)
=
\frac{n^2}{2}\log n-\frac34 n^2+O(n\log n),
\]
as claimed.
\end{proof}

\phantomsection\label{proof:lem:double-partition}
\begin{proof}[Proof of Lemma~\ref{lem:double-partition}]
For fixed \(\nu\), define
\[
K_{A,B}(\nu)
=
\int
V_n(\alpha)V_n(\beta)
|H_n(\alpha,\beta;\nu)|
\exp\left[
-\frac12
\left(
\frac{|\alpha|^2}{A}
+
\frac{|\beta|^2}{B}
\right)
\right]
d\alpha\,d\beta .
\]
Here and below, integrals involving a hive volume are taken over the
trace-compatible affine slice where $\a, \b \in \Spec_n + \R \one$, with the induced Lebesgue measure; for example,
in this display \(\sum_i\alpha_i+\sum_i\beta_i=\sum_i\nu_i\).
We determine \(K_{A,B}\) by the same matrix Gaussian convolution that
underlies Theorem~\ref{thm:gaussian1}.  Let \(X\) and \(Y\) be independent
GUE matrices with variance parameters \(A\) and \(B\).  Put
\[
Z_S^{\mathrm{eig}}
=
\int_{\Spec_n+\mathbb R\one}
V_n(\xi)^2
\exp\left(-\frac{|\xi|^2}{2S}\right)d\xi .
\]
The ordered Gaussian Mehta integral (see \cite[Ch.~17]{Mehta}) gives
\[
Z_1^{\mathrm{eig}}
=
(2\pi)^{n/2}V_n(\tau_n),
\]
and hence, for every \(S>0\),
\[
Z_S^{\mathrm{eig}}
=
S^{n^2/2}(2\pi)^{n/2}V_n(\tau_n).
\]
The ordered eigenvalue densities of \(X\) and \(Y\) are
\[
\frac{1}{Z_A^{\mathrm{eig}}}
V_n(\alpha)^2
\exp\left(-\frac{|\alpha|^2}{2A}\right)d\alpha
\quad\text{and}\quad
\frac{1}{Z_B^{\mathrm{eig}}}
V_n(\beta)^2
\exp\left(-\frac{|\beta|^2}{2B}\right)d\beta .
\]
For fixed spectra \(\alpha,\beta\), Weyl integration
(Lemma~\ref{lem:Weyl}) together with the Coquereaux--Zuber formula,
\eqref{eq:2.4new}, gives the conditional density of
\(\nu=\spec(X+Y)\) as
\[
\frac{V_n(\nu)V_n(\tau_n)}
     {V_n(\alpha)V_n(\beta)}
|H_n(\alpha,\beta;\nu)|\,d\nu .
\]
Thus the marginal density of \(\nu\) is
\[
\frac{V_n(\nu)V_n(\tau_n)}
     {Z_A^{\mathrm{eig}}Z_B^{\mathrm{eig}}}
K_{A,B}(\nu)\,d\nu .
\]
On the other hand, \(X+Y\) is itself a GUE matrix with variance parameter
\(A+B\), so the same marginal density is
\[
\frac{1}{Z_{A+B}^{\mathrm{eig}}}
V_n(\nu)^2
\exp\left[-\frac{|\nu|^2}{2(A+B)}\right]d\nu .
\]
Therefore
\[
K_{A,B}(\nu)
=
\frac{Z_A^{\mathrm{eig}}Z_B^{\mathrm{eig}}}
     {V_n(\tau_n)Z_{A+B}^{\mathrm{eig}}}
V_n(\nu)
\exp\left[-\frac{|\nu|^2}{2(A+B)}\right]
\]
Using the evaluation of \(Z_S^{\mathrm{eig}}\),
\[
\frac{Z_A^{\mathrm{eig}}Z_B^{\mathrm{eig}}}
     {V_n(\tau_n)Z_{A+B}^{\mathrm{eig}}}
=
(2\pi)^{n/2}
\left(\frac{AB}{A+B}\right)^{n^2/2}.
\]
Thus
\[
K_{A,B}(\nu)
=
(2\pi)^{n/2}
\left(\frac{AB}{A+B}\right)^{n^2/2}
V_n(\nu)
\exp\left[-\frac{|\nu|^2}{2(A+B)}\right].
\]
The same formula applies to the second hive, with \(A,B\) replaced by
\(D,F\) and \(\nu\) replaced by \(\nu^\vee\), in the positive raw case.  In
the case \(F<0\) and \(A+B+D+F<0\), the \(\phi\)-side is represented by
\(GUE_-\).  Completing the square in the Hermitian-matrix convolution gives
the same expression for \(K_{D,F}\), now with \(D+F<0\):
\[
K_{D,F}(\nu^\vee)
=
(2\pi)^{n/2}
\left(\frac{DF}{D+F}\right)^{n^2/2}
V_n(\nu^\vee)
\exp\left[-\frac{|\nu^\vee|^2}{2(D+F)}\right].
\]
Here \(DF/(D+F)>0\).  Although the last exponential grows with $|\nu^\vee|$ when
\(D+F<0\), the final \(\nu\)-integral below is still finite.  Indeed,
\[
D+F<-(A+B)
\]
and hence
\[
\frac1{A+B}+\frac1{D+F}>0.
\]
Thus the product \(K_{A,B}(\nu)K_{D,F}(\nu^\vee)\) has a decaying Gaussian
factor in \(\nu\).  We also use
\[
V_n(\nu^\vee)=V_n(\nu),
\qquad
|\nu^\vee|=|\nu|.
\]

By the definition of \(\mathbb D^4\), the four exterior
Gelfand--Tsetlin decorations contribute
\[
\frac{V_n(\alpha)V_n(\beta)V_n(\eta)V_n(\phi)}
     {V_n(\tau_n)^4}.
\]
Therefore
\[
\begin{aligned}
Z_{\mathrm{dbl}}
&=
V_n(\tau_n)^{-4}
\int K_{A,B}(\nu)K_{D,F}(\nu^\vee)\,d\nu  \\
&=
\frac{(2\pi)^n}{V_n(\tau_n)^4}
\left(\frac{ABDF}{(A+B)(D+F)}\right)^{n^2/2}
\int_{\Spec_n+\mathbb R\one}
V_n(\nu)^2
\exp\left[
-\frac{|\nu|^2}{2}
\left(\frac{1}{A+B}+\frac{1}{D+F}\right)
\right]d\nu  \\
&=
\frac{(2\pi)^{3n/2}}{V_n(\tau_n)^3}
\left(\frac{ABDF}{A+B+D+F}\right)^{n^2/2}.
\end{aligned}
\]

Since
\[
p_{\mathrm{dbl}}(x)
=
Z_{\mathrm{dbl}}^{-1}
\exp\left[-\frac12 Q(x)\right],
\]
we have
\[
\ent(p_{\mathrm{dbl}})
=
\log Z_{\mathrm{dbl}}+\frac12\E_{p_{\mathrm{dbl}}}Q.
\]
Under common scaling \(A,B,D,F\mapsto rA,rB,rD,rF\), the partition
function scales as \(r^{3n^2/2}\).  Differentiating at \(r=1\) gives
\[
\frac12\E_{p_{\mathrm{dbl}}}Q=\frac{3n^2}{2}.
\]
Thus
\[
\ent(p_{\mathrm{dbl}})
=
\frac{3n^2}{2}
+\frac{3n}{2}\log(2\pi)
-3\log V_n(\tau_n)
+
\frac{n^2}{2}
\log\left(\frac{ABDF}{A+B+D+F}\right).
\]
Substituting \(A=n\bar a^2\), \(B=n\bar b^2\),
\(D=n\bar d^2\), and \(F=n\bar f^2\), we get
\[
\frac{ABDF}{A+B+D+F}
=
n^3
\frac{\bar a^2\bar b^2\bar d^2\bar f^2}{\bar T}.
\]
Using (from Claim~\ref{cl:vtau}) 
\[
\log V_n(\tau_n)
=
\frac{n^2}{2}\log n-\frac34n^2+O(n\log n)
\]
the \(n^2\log n\) terms cancel and all remaining terms of order at most
\(n\log n\) are absorbed in the error.
This gives
\[
\ent(p_{\mathrm{dbl}})
=
\frac{n^2}{2}
\log\left(
\frac{\bar a^2\bar b^2\bar d^2\bar f^2}{\bar T}
\right)
+\frac{15}{4}n^2
+O(n\log n),
\]
as claimed.
\end{proof}

\phantomsection\label{proof:lem:double-one-hive}
\begin{proof}[Proof of Lemma~\ref{lem:double-one-hive}]
Fix the first hive boundary \((\alpha,\beta;\nu)\).  Since the double-hive
cone \(\mathbb D^4\) has Gelfand--Tsetlin decorations only on the four
exterior boundaries, integrating out the exterior decorations on
\(\alpha\) and \(\beta\) contributes the factor
\[
\frac{V_n(\alpha)V_n(\beta)}{V_n(\tau_n)^2}.
\]
It remains to integrate out the second hive, its two exterior
Gelfand--Tsetlin decorations, and the exterior boundaries \(\eta,\phi\), with
the glued boundary fixed at \(\nu^\vee\).  This contribution is
proportional to
\[
\int
\exp\left[
-\frac12\left(\frac{|\eta|^2}{\bar d^2}+\frac{|\phi|^2}{\bar f^2}\right)
\right]
\frac{V_n(\eta)V_n(\phi)}{V_n(\tau_n)^2}
|H_n(\eta,\phi^\vee;\nu^\vee)|\,d\eta\,d\phi .
\]
By the Coquereaux--Zuber formula, this is the same as the spectral density
at \(\nu^\vee\) of the sum of two independent GUE matrices with variances
\(\bar d^2\) and \(\bar f^2\), divided by one Vandermonde factor
\(V_n(\nu^\vee)\), when \(\bar f>0\).  When \(\bar f^2<0\), the same identity
is obtained by the argument used in the imaginary-side case of
Theorem~\ref{thm:gaussian1}: replace the \(\phi\)-side GUE density by the
\(GUE_-\) measure with parameter \(\bar f^2\), and complete the square in the
Hermitian-matrix convolution.  This gives the \(GUE_-\) measure with parameter
\(\bar d^2+\bar f^2\).
Equivalently, it is proportional to
\[
V_n(\nu)\,
\exp\left[-\frac{|\nu|^2}{2(\bar d^2+\bar f^2)}\right],
\]
where we use \(V_n(\nu^\vee)=V_n(\nu)\).

Therefore the marginal density of the first hive boundary, multiplied by
the uniform Lebesgue measure on \(H_n(\alpha,\beta;\nu)\), is proportional to
\[
V_n(\alpha)V_n(\beta)V_n(\nu)
|H_n(\alpha,\beta;\nu)|
\exp\left[
-\frac12
\left(
\frac{|\alpha|^2}{\bar a^2}
+
\frac{|\beta|^2}{\bar b^2}
+
\frac{|\nu|^2}{\bar d^2+\bar f^2}
\right)
\right].
\]
This is exactly the boundary density defining
\(\mathcal H_n(a,b,c_*)\), with single-hive raw parameters
\[
\bar A=\bar a,\qquad
\bar B=\bar b,\qquad
\bar C^2=\bar d^2+\bar f^2.
\]
Indeed, applying the conversion in the definition of \(\mathcal H_n\) gives
\[
a^2=
\frac{\bar a^2(\bar b^2+\bar d^2+\bar f^2)}{T},
\qquad
b^2=
\frac{\bar b^2(\bar a^2+\bar d^2+\bar f^2)}{T},
\]
and
\[
c_*^2=
\frac{(\bar d^2+\bar f^2)(\bar a^2+\bar b^2)}{T}.
\]
The proof for the second hive is the same, with the two halves interchanged.
Its single-hive raw middle parameter is \(\bar a^2+\bar b^2\).  When
\(\bar f>0\), this directly gives \(\mathcal H_n(d,f,c_*)\).  When
\(\bar f^2<0\), one first applies the same \(GUE_-\) calculation and then
uses the symmetry of the single-hive density in its three boundary components to
rewrite the resulting density in the convention of
Theorem~\ref{thm:gaussian1}, where the imaginary raw parameter is the third
one; the geometric side lengths are still \(d,f,c_*\).

Finally, when \(\bar f>0\), the displayed formulas for \(a,b,d,f,c_*\) are
exactly the conditional-variance formulas in Lemma~\ref{lem:doublehive}, with
\((a_{\mathrm{raw}},b_{\mathrm{raw}},d_{\mathrm{raw}},f_{\mathrm{raw}})
=(\bar a,\bar b,\bar d,\bar f)\).  Hence Lemma~\ref{lem:doublehive}
identifies this same \(c_*\) as the tetrahedral maximizer of
\[
x\mapsto
\frac{\Delta_{abx}^2}{abx}\frac{\Delta_{xdf}^2}{xdf}.
\]
When \(\bar f^2<0\), the same displayed formulas are obtained from the
completion-of-squares calculation with \(GUE_-\), in the convention fixed after
Theorem~\ref{thm:gaussian1}.  More explicitly, the role of
\(f_{\mathrm{raw}}^2\) in the calculation of Lemma~\ref{lem:doublehive} is
played by the signed quantity \(\bar f^2\), while the geometric side length
\(f\) is obtained from the resulting positive variance formula.  The
completion-of-squares step only uses the raw parameters through the algebraic
combinations
\[
\bar a^2+\bar b^2+\bar d^2+\bar f^2,\qquad
\bar a^2+\bar b^2,\qquad
\bar d^2+\bar f^2,
\]
and hence the same formulas for \(a,b,d,f\) and \(c_*\) continue to hold with
\(\bar f^2\) signed.  Substituting these formulas into
\[
\Delta_{xyz}^2
=
x^2y^2-\frac{(x^2+y^2-z^2)^2}{4}
\]
gives the same two area identities as in Lemma~\ref{lem:doublehive}, and
differentiating
\[
\Psi(y)
=
\frac{\Delta_{ab\sqrt y}^2\Delta_{\sqrt y df}^2}{y}
\]
again gives the same critical-point equation at \(y=c_*^2\).  Since the
area-product objective is positive in the interior of the admissible interval,
vanishes at the endpoints, and has a unique critical point, this critical point
is the maximizer.  Thus \(c_*\) is again the tetrahedral maximizer of
\[
x\mapsto
\frac{\Delta_{abx}^2}{abx}\frac{\Delta_{xdf}^2}{xdf}.
\]
\end{proof}

\phantomsection\label{proof:prop:middle}
\begin{proof}[Proof of Proposition~\ref{prop:middle}]
First compute the distribution of the middle side under Lebesgue measure on
the double hive polytope.  For fixed \(\nu\), the fiber over the middle side
\(\nu\) is
\[
        H_n(\alpha,\beta;\nu)
        \times
        H_n(\eta,\phi^\vee;\nu^\vee).
\]
Therefore, by Fubini, the pushforward of Lebesgue measure on
\(\mathcal D(\alpha,\beta,\eta,\phi^\vee)\) to the middle side has density
\[
        |H_n(\alpha,\beta;\nu)|\,
        |H_n(\eta,\phi^\vee;\nu^\vee)|
\]
with respect to Lebesgue measure on the trace hyperplane.  Thus, for
normalized Lebesgue measure on the double hive polytope, the middle-side
density is
\[
        \frac{
        |H_n(\alpha,\beta;\nu)|\,
        |H_n(\eta,\phi^\vee;\nu^\vee)|
        }{Z_{\mathcal D}},
        \qquad
        Z_{\mathcal D}
        =
        \int
        |H_n(\alpha,\beta;\xi)|\,
        |H_n(\eta,\phi^\vee;\xi^\vee)|\,d\xi .
\]

We now compute the same density from the random matrix model.  Let
\(p_{\alpha,\beta}(\nu)\) denote the density of \(\spec(S)\), and let
\(p_{\eta,\phi^\vee}(\xi)\) denote the density of \(\spec(T)\).  By the
Coquereaux--Zuber formula,
\[
        p_{\alpha,\beta}(\nu)
        =
        \frac{V_n(\nu)V_n(\tau_n)}{V_n(\alpha)V_n(\beta)}
        |H_n(\alpha,\beta;\nu)|,
\]
and
\[
        p_{\eta,\phi^\vee}(\xi)
        =
        \frac{V_n(\xi)V_n(\tau_n)}{V_n(\eta)V_n(\phi^\vee)}
        |H_n(\eta,\phi^\vee;\xi)| .
\]

Let \(q_{\alpha,\beta}(X)\) and \(q_{\eta,\phi^\vee}(X)\) be the corresponding
Lebesgue densities on the real vector space \(\Herm_n\) of Hermitian matrices.
These densities are conjugation-invariant.  Put
\[
        N=\frac{n(n-1)}{2}.
\]
Weyl's integration formula gives the exact relation
\[
        p_{\alpha,\beta}(\nu)
        =
        \frac{\pi^N}{V_n(\tau_n)}
        q_{\alpha,\beta}(\operatorname{diag}\nu)\,V_n(\nu)^2 .
\]
Consequently,
\[
        q_{\alpha,\beta}(\operatorname{diag}\nu)
        =
        \frac{V_n(\tau_n)^2}
             {\pi^N V_n(\alpha)V_n(\beta)}
        \frac{|H_n(\alpha,\beta;\nu)|}{V_n(\nu)} .
\]
Similarly, since \(V_n(\nu^\vee)=V_n(\nu)\) and
\(V_n(\phi^\vee)=V_n(\phi)\),
\[
        q_{\eta,\phi^\vee}(-\operatorname{diag}\nu)
        =
        q_{\eta,\phi^\vee}(\operatorname{diag}\nu^\vee)
        =
        \frac{V_n(\tau_n)^2}
             {\pi^N V_n(\eta)V_n(\phi)}
        \frac{|H_n(\eta,\phi^\vee;\nu^\vee)|}{V_n(\nu)} .
\]

Because \(S\) and \(T\) are independent, the regular conditional distribution
of \(S\) given \(S+T=0\) is the probability measure on \(\Herm_n\) whose
density is
\[
        \frac{
        q_{\alpha,\beta}(X)\,q_{\eta,\phi^\vee}(-X)
        }{C_0},
        \qquad
        C_0=
        \int_{\Herm_n}
        q_{\alpha,\beta}(Y)\,q_{\eta,\phi^\vee}(-Y)\,dY .
\]
Pushing this conditional measure forward by the eigenvalue map and applying
Weyl's integration formula again, the resulting density of
\(\nu=\spec(S)\) is
\[
        \frac{\pi^N}{V_n(\tau_n)C_0}
        q_{\alpha,\beta}(\operatorname{diag}\nu)\,
        q_{\eta,\phi^\vee}(-\operatorname{diag}\nu)\,
        V_n(\nu)^2 .
\]
Substituting the preceding expressions gives
\[
        \frac{V_n(\tau_n)^3}
             {\pi^N C_0
              V_n(\alpha)V_n(\beta)V_n(\eta)V_n(\phi)}
        |H_n(\alpha,\beta;\nu)|\,
        |H_n(\eta,\phi^\vee;\nu^\vee)| .
\]
Finally, we compute \(C_0\) by Weyl integration.  Since
\(q_{\alpha,\beta}(Y)q_{\eta,\phi^\vee}(-Y)\) is conjugation-invariant,
\[
\begin{aligned}
C_0
&=
\frac{\pi^N}{V_n(\tau_n)}
\int
q_{\alpha,\beta}(\operatorname{diag}\xi)\,
q_{\eta,\phi^\vee}(-\operatorname{diag}\xi)\,
V_n(\xi)^2\,d\xi  \\
&=
\frac{\pi^N}{V_n(\tau_n)}
\frac{V_n(\tau_n)^4}
     {\pi^{2N}V_n(\alpha)V_n(\beta)V_n(\eta)V_n(\phi)}
\int
|H_n(\alpha,\beta;\xi)|\,
|H_n(\eta,\phi^\vee;\xi^\vee)|\,d\xi  \\
&=
\frac{V_n(\tau_n)^3}
     {\pi^N V_n(\alpha)V_n(\beta)V_n(\eta)V_n(\phi)}
Z_{\mathcal D}.
\end{aligned}
\]
Hence the conditional spectral density is exactly
\[
        \frac{
        |H_n(\alpha,\beta;\nu)|\,
        |H_n(\eta,\phi^\vee;\nu^\vee)|
        }{Z_{\mathcal D}}.
\]
This agrees exactly with the middle-side density obtained from normalized
Lebesgue measure on the double hive polytope.

Thus the two distributions coincide.

\end{proof}

\phantomsection\label{proof:lem:fixed-spectrum-fixed-diagonal}
\begin{proof}[Proof of Lemma~\ref{lem:fixed-spectrum-fixed-diagonal}]
Condition first on the fixed source \(D\).  The density of \(X\) is
proportional to
\[
        \exp\left(
        -\frac{1}{2w^2}\operatorname{Tr}(X-uD)^2
        \right).
\]
Expanding the square gives
\[
        -\frac{1}{2w^2}\operatorname{Tr}(X-uD)^2
        =
        -\frac{1}{2w^2}\operatorname{Tr}(X^2)
        +
        \frac{u}{w^2}\operatorname{Tr}(XD)
        -
        \frac{u^2}{2w^2}\operatorname{Tr}(D^2).
\]
On the event \(\spec(X)=\lambda\), the term
\(\operatorname{Tr}(X^2)\) is fixed.  On the event \(\diag(X)=a\), the term
\[
        \operatorname{Tr}(XD)
        =
        \sum_{i=1}^n d_i X_{ii}
        =
        \sum_{i=1}^n d_i a_i
\]
is also fixed.  Hence, conditional on \(\spec(X)=\lambda\) and
\(\diag(X)=a\), the deformed Gaussian density is constant on the corresponding
slice of the unitary orbit.

Therefore the conditional law is the same as Haar measure on the unitary orbit
with spectrum \(\lambda\), further conditioned to have diagonal \(a\).  By the
Gelfand-Tsetlin description of the orbital minor process, the pushforward of
this conditional Haar measure under the principal-minor eigenvalue map is
Lebesgue measure on
\[
        \GT_{\diag(\lambda)\rel a}.
\]
After normalization, this gives the uniform distribution on that fiber.
\end{proof}

\phantomsection\label{proof:prop:3}
\begin{proof}[Proof of Lemma~\ref{prop:3}]
The minor process \(\Gamma_n\) is a measurable function of \(X_n\).
Hence, by the data-processing inequality (see Lemma~\ref{lem:data-processing}),
\[
I(D_n;\Gamma_n)\leq I(D_n;X_n).
\]
Multiplication by the nonzero scalar \(\sqrt n\) is invertible, so
\[
I(D_n;X_n)
=
I(D_n;wG_n+uD_n).
\]

Write
\[
g=\operatorname{diag}(G_n)\sim N(0,I_n).
\]
The off-diagonal entries of \(G_n\) are independent of both \(D_n\)
and \(g\), and contain no information about \(D_n\). Therefore
\[
I(D_n;wG_n+uD_n)
=
I(D_n;wg+uD_n).
\]
Thus it remains to estimate the mutual information of the Gaussian
channel
\[
Y=uD_n+wg.
\]

Let
\[
K_D=\operatorname{Cov}(D_n).
\]
Since \(g\) is independent of \(D_n\), the covariance of the channel output is
\[
\operatorname{Cov}(Y)
=
u^2K_D+w^2I_n.
\]
Since Gaussian distributions maximize differential entropy among
random vectors with prescribed covariance,
\[
\mathrm{ent}(Y)
\leq
\frac12\log\left(
(2\pi e)^n
\det(u^2K_D+w^2I_n)
\right).
\]
On the other hand,
conditioning on \(D_n\) leaves only the Gaussian noise \(wg\): for each value
of \(D_n\), the conditional law of \(Y\) is a translate of \(wg\).  Translation
does not change differential entropy, and hence
\[
\mathrm{ent}(Y\mid D_n)=\mathrm{ent}(wg)
=
\frac12\log\left((2\pi e)^nw^{2n}\right).
\]
Therefore, using \(I(D_n;Y)=\mathrm{ent}(Y)-\mathrm{ent}(Y\mid D_n)\),
\[
I(D_n;Y)
\leq
\frac12
\log
\frac{\det(u^2K_D+w^2I_n)}{w^{2n}}.
\]
Factoring \(w^2\) out of the determinant gives
\[
\det(u^2K_D+w^2I_n)
=
w^{2n}
\det\left(
I_n+\frac{u^2}{w^2}K_D
\right),
\]
and consequently
\[
I(D_n;Y)
\leq
\frac12\log\det\left(
I_n+\frac{u^2}{w^2}K_D
\right).
\]
Using the arithmetic--geometric mean inequality for the eigenvalues
of a positive semidefinite matrix,
\[
\det(I_n+tK_D)
\leq
\left(
1+\frac{t}{n}\operatorname{Tr}K_D
\right)^n,
\qquad t\geq 0.
\]
It follows that
\[
I(D_n;Y)
\leq
\frac n2
\log\left(
1+\frac{u^2}{nw^2}\operatorname{Tr}K_D
\right).
\]

Let \(\widetilde G_n\) be the independent GUE matrix whose ordered
spectrum is \(D_n\). Then
\[
\operatorname{Tr}K_D
=
\mathbb E\|D_n-\mathbb ED_n\|_2^2
\leq \sum_i \var \,d_i.
\]
Note that the distribution of eigenvalues of a GUE is log-concave, and since marginals of log-concave measures are log-concave, it follows that each $d_i$ has a log-concave distribution, which hence is sub-exponential.
Together with  Lemma~\ref{lem:rigidity},  this gives us
\[
\sum_i \var \, d_i = O(\log^{O(1)} n).
\]
Therefore
\[
I(D_n;\Gamma_n)
\leq
I(D_n;Y)
\leq
\frac n2
\log\left(1+\frac{u^2\log^{O(1)} n}{w^2 }\right).
\]
For fixed \(u\) and \(w>0\), the right-hand side is
\(O(n \log\log n)\), proving the claim.
\end{proof}

\phantomsection\label{proof:lem:12}
\begin{proof}[Proof of Lemma~\ref{lem:12}]
We first identify the distribution of the spectrum of \(Z_n\).
A linear combination of two independent GUE matrices is again GUE,
with variance equal to the sum of the variances. Hence
\[
wG_n+u\widetilde G_n
\stackrel{d}{=}
aG_n,
\qquad
a=\sqrt{w^2+u^2}.
\]
Consequently,
\[
\operatorname{spec}(Z_n)
\stackrel{d}{=}
\operatorname{spec}(aG_n).
\]

We now compute the expected logarithmic Vandermonde for a GUE
matrix. Introduce the Gaussian Mehta integral
\[
I_n(\beta,a)
=
\int_{\mathbb R^n}
\exp\left(
-\frac{1}{2a^2}\sum_{i=1}^n x_i^2
\right)
V_n(x)^\beta\,dx,
\]
where \(V_n\) is the sorted Vandermonde from \eqref{eq:sorted-vandermonde},
namely
\[
V_n(x)=\prod_{1\leq i<j\leq n}(x^\downarrow_i-x^\downarrow_j).
\]
The Mehta integral formula \cite{Mehta} gives
\[
I_n(\beta,a)
=
(2\pi)^{n/2}
a^{\,n+\beta n(n-1)/2}
\prod_{j=1}^n
\frac{\Gamma(1+j\beta/2)}
     {\Gamma(1+\beta/2)}.
\]
At \(\beta=2\), the normalized integrand is the unordered
eigenvalue density of \(aG_n\). Differentiating under the integral
sign therefore yields
\[
\mathbb E\log V_n(\operatorname{spec}(aG_n))
=
\left.
\frac{\partial}{\partial\beta}
\log I_n(\beta,a)
\right|_{\beta=2}.
\]
Writing \(\psi=\Gamma'/\Gamma\), we obtain
\[
\mathbb E\log V_n(\operatorname{spec}(aG_n))
=
\frac{n(n-1)}{2}\log a
+\frac12\sum_{j=1}^n j\psi(j+1)
-\frac n2\psi(2).
\]

Using
\[
\psi(j+1)=H_j-\gamma,
\qquad
\psi(2)=1-\gamma,
\]
together with
\[
\sum_{j=1}^n jH_j
=
\frac{n(n+1)}{2}H_n-\frac{n(n-1)}{4},
\]
gives the exact formula
\[
\begin{aligned}
\mathbb E\log V_n(\operatorname{spec}(aG_n))
={}&
\frac{n(n-1)}{2}\log a
+\frac{n(n+1)}{4}H_n  \\
&-\frac{n(n-1)}{8}
-\frac{\gamma n(n-1)}{4}
-\frac n2.
\end{aligned}
\]
Finally,
\[
H_n=\log n+\gamma+O(n^{-1}).
\]

Combining these estimates proves
\[
\mathbb E\log V_n(\operatorname{spec}Z_n)
=
\frac{n^2}{4}\log n
+\frac{n^2}{2}\log a
-\frac{n^2}{8}
+O(n\log n).
\]
\end{proof}

\phantomsection\label{proof:prop:4}
\begin{proof}[Proof of Proposition~\ref{prop:4}]
First condition on
\[
D=\operatorname{diag}(d_1,\ldots,d_n)
\]
and temporarily remove the outer factor \(\sqrt n\), writing
\[
Z=wG+uD.
\]
For a Gelfand-Tsetlin pattern
\[
\Gamma
=
\bigl(\lambda^{(1)},\ldots,\lambda^{(n)}\bigr),
\]
define
\[
q_k
=
\left|\lambda^{(k)}\right|
-
\left|\lambda^{(k-1)}\right|,
\qquad
\left|\lambda^{(0)}\right|:=0.
\]
The vector \(q=(q_1,\ldots,q_n)\) is the diagonal of \(Z\).

By the fixed-spectrum and fixed-diagonal description of the deformed
minor process, its density conditional on \(D\), with respect to
Lebesgue measure on the Gelfand-Tsetlin cone, is
\[
p_D(\Gamma)
=
(2\pi)^{-n/2}w^{-n^2}
V_n\bigl(\lambda^{(n)}\bigr)
\exp\left\{
-\frac{\left|\lambda^{(n)}\right|^2}{2w^2}
+\frac{u}{w^2}\sum_{k=1}^n d_kq_k
-\frac{u^2}{2w^2}|d|^2
\right\}.
\]
Consequently,
\begin{align*}
\mathrm{ent}(\Gamma\mid D)
={}&
\frac n2\log(2\pi)+n^2\log w
+\frac{1}{2w^2}\,
  \mathbb{E}\operatorname{Tr}(Z^2) \\
&-
\frac{u}{w^2}\,
  \mathbb{E}\operatorname{Tr}(ZD)
+\frac{u^2}{2w^2}\operatorname{Tr}(D^2)
-\mathbb{E}\log V_n(\operatorname{spec}Z).
\end{align*}
Since
\[
\mathbb{E}\operatorname{Tr}(Z^2)
=
w^2n^2+u^2\operatorname{Tr}(D^2)
\]
and
\[
\mathbb{E}\operatorname{Tr}(ZD)
=
u\operatorname{Tr}(D^2),
\]
all explicit terms involving the external source cancel. Thus
\[
\mathrm{ent}(\Gamma\mid D)
=
\frac n2\log(2\pi)
+n^2\log w
+\frac{n^2}{2}
-\mathbb{E}\log V_n(\operatorname{spec}Z).
\]


For a fixed realization \(D=\operatorname{diag}(d_1,\ldots,d_n)\), let
\(\mathbb E_G\) denote expectation over the GUE matrix \(G\) only. The
preceding computation gives
\[
\mathrm{ent}(\Gamma\mid D)
=
\frac n2\log(2\pi)
+n^2\log w
+\frac{n^2}{2}
-
\mathbb E_G
\log V_n(\operatorname{spec}(wG+uD)).
\]
Taking expectation over \(D\), we obtain
\[
\mathrm{ent}(\Gamma\mid D_n)
=
\frac n2\log(2\pi)
+n^2\log w
+\frac{n^2}{2}
-
\mathbb E_{G,D_n}
\log V_n(\operatorname{spec}(wG+uD_n)).
\]

Applying Lemma~\ref{lem:12} to the last expectation, the unscaled
minor process \(\Gamma_n^Z\) of \(Z=wG+uD_n\) satisfies
\[
\mathrm{ent}(\Gamma_n^Z\mid D_n)
=
-\frac{n^2}{4}\log n
+n^2\left(\log w-\frac12\log a+\frac58\right)
+O(n\log n).
\]
Since \(X_n=\sqrt n Z\), its minor process satisfies
\(\Gamma_n=\sqrt n\,\Gamma_n^Z\) in \(m_n=n(n+1)/2\) coordinates. Hence
\[
\mathrm{ent}(\Gamma_n\mid D_n)
=
\mathrm{ent}(\Gamma_n^Z\mid D_n)+\frac{m_n}{2}\log n
=
\frac{n^2}{2}
\left(
\frac54+\log\frac{w^2}{a}
\right)
+O(n\log n).
\]
Finally, by the chain rule for mutual information and
Lemma~\ref{prop:3},
\[
\mathrm{ent}(\Gamma_n)
=
\mathrm{ent}(\Gamma_n\mid D_n)+I(D_n;\Gamma_n)
=
\mathrm{ent}(\Gamma_n\mid D_n)+O(n\log\log n).
\]
Thus
\[
\mathrm{ent}(\Gamma_n)
=
\frac{n^2}{2}
\left(
\frac54+\log\frac{w^2}{\sqrt{w^2+u^2}}
\right)
+O(n\log n).
\]
\end{proof}

\phantomsection\label{proof:lem:one-sided-vandermonde}
\begin{proof}[Proof of Lemma~\ref{lem:one-sided-vandermonde}]
For \(i<j\),
\[
\widetilde\lambda_i-\widetilde\lambda_j
\geq
\left(1-\frac1n\right)(\lambda_i-\lambda_j).
\]
Therefore
\[
V_n(\widetilde\lambda_n)
\geq
\left(1-\frac1n\right)^{\binom n2}
V_n(\lambda_n).
\]
Taking logarithms gives
\[
\log V_n(\widetilde\lambda_n)
\geq
\log V_n(\lambda_n)
+
\binom n2\log\left(1-\frac1n\right).
\]
Since
\[
\binom n2\log\left(1-\frac1n\right)=O(n),
\]
the result follows.
\end{proof}

\phantomsection\label{proof:lem:random-side-entropy}
\begin{proof}[Proof of Lemma~\ref{lem:random-side-entropy}]
The density of \(R_n=\operatorname{spec}(\sqrt n\,\check G_n)\) on the
ordered Weyl chamber is
\[
\frac{1}{(2\pi)^{n/2}n^{n^2/2}V_n(\tau_n)}
V_n(r)^2\exp\left(-\frac{|r|^2}{2n}\right).
\]
Therefore
\begin{align*}
\ent(R_n)
={}&\frac n2\log(2\pi)+\frac{n^2}{2}\log n
 +\log V_n(\tau_n)\\
&-2\E\log V_n(R_n)+\frac1{2n}\E|R_n|^2.
\end{align*}
The exact GUE logarithmic Vandermonde formula in the proof of
Lemma~\ref{lem:12}, with scale \(\sqrt n\), and the expansion
\(H_n=\log n+\gamma+(2n)^{-1}+O(n^{-2})\) give
\[
\E\log V_n(R_n)
=\frac{n^2}{2}\log n-\frac{n^2}{8}+O(n\log n).
\]
Also, Stirling's formula, summed over
\(V_n(\tau_n)=\prod_{k=1}^{n-1}k!\), gives
\[
\log V_n(\tau_n)
=\frac{n^2}{2}\log n-\frac{3n^2}{4}+O(n\log n).
\]
Together with \(\E|R_n|^2=n^3\), these two expansions show explicitly that
the terms of order \(n^2\log n\) and \(n^2\) in the entropy cancel.  Hence
\[
\bigl|\ent(R_n)\bigr|=O(n\log n).
\]

The top row of a complete minor process is its matrix spectrum.  The
classical term \(\lambda_n^{\rm cl}/n\) is deterministic, and, conditionally
on \(\Gamma_n^X\), the correction \(A_n\) is a function of the fixed top
spectrum \(\lambda_n\) and the independent top spectrum \(\lambda'_n\).
In particular, \(A_n\) and \(R_n\) are conditionally independent.
Translation invariance and the scaling rule for differential entropy give
\begin{align*}
\ent(L_n\mid\Gamma_n^X)
={}&10n\log n\\
&+\ent\left(
\left(1-\frac1n\right)R_n
+n^{-10}A_n\tau_n
\,\middle|\,\Gamma_n^X
\right).
\end{align*}
For independent random vectors \(U,V\) with well-defined differential
entropies,
\[
\ent(U+V)\geq \ent(V),
\]
because conditioning on \(U\) gives
\(\ent(U+V\mid U)=\ent(V)\).  Taking
\(V=(1-1/n)R_n\), which is independent of the other summands conditionally
on \(\Gamma_n^X\), supplies an \(-O(n\log n)\) lower bound for the last
display.  For the upper bound, the conditional Gaussian maximum-entropy
inequality, the arithmetic--geometric mean inequality, and Jensen's
inequality give the corresponding estimate in terms of the expected
conditional covariance.  The unconditional form, for a random vector \(Z\)
with finite covariance, is
\[
\ent(Z)
\leq
\frac n2\log\left(
2\pi e\,\frac{\operatorname{Tr}\operatorname{Cov}(Z)}{n}
\right).
\]
The elementary estimates
\(\E|R_n|^2=n^3\), \(|\tau_n|^2=O(n^3)\), and
\eqref{eq:exceptional-correction-tail} show that the averaged conditional
upper bound is \(O(n\log n)\).  This proves
\[
\bigl|\ent(L_n\mid\Gamma_n^X)\bigr|=O(n\log n).
\]

Conditioning instead on both complete minor processes fixes
\(\lambda_n\), \(\lambda'_n\), and hence \(A_n\).  Since \(R_n\) is
independent of them, the correction is then only a translation, and
translation invariance and scaling give the exact identity
\[
\ent(L_n\mid\Gamma_n^X,\Gamma_n^Y)
=\ent(R_n)
+n\log\left(n^{10}\left(1-\frac1n\right)\right).
\]

It remains to record the measure-preserving assertion.  Use the boundary
increments of \(L_n\), followed by the standard free Gelfand--Tsetlin
coordinates, on the source.  On each fixed-\(L_n\) fiber,
Proposition~\ref{gt-rem}(iv) has Jacobian of absolute determinant one.
The displayed formula in that proposition shows that the extension in the
\(L_n\) coordinates is triangular and has identity diagonal block.  Its
full Jacobian therefore also has absolute determinant one.  For a pasted
double hive the same argument applies to the product of the two fixed-side
maps, with the common \(L_n\) block retained only once.  The entropy
identities now follow from the change-of-variables formula and the chain
rule.  Finally, \(\Gamma_n^X\) and \(\Gamma_n^Y\) are independent because
their defining random matrices are independent.
\end{proof}

\phantomsection\label{proof:prop:large-side-moments}
\begin{proof}[Proof of Proposition~\ref{prop:large-side-moments}]
Let \(r_n=\spec(G_n)\) and \(m_n=\E r_n\).  The GUE normalization gives
\[
\E|r_n|^2=\E\Tr(G_n^2)=n^2.
\]
As in the proof of Proposition~\ref{prop:4}, eigenvalue rigidity and the
subexponential tails of the ordered eigenvalues give
\[
\sum_{i=1}^n\Var(r_n(i))=O(\log^{O(1)}n).
\]
Consequently,
\begin{equation}\lab{eq:mean-spectrum-norm}
|m_n|^2
=\E|r_n|^2-\sum_{i=1}^n\Var(r_n(i))
=n^2+O(\log^{O(1)}n).
\end{equation}

Put
\[
\rho_n=\frac{\lambda_n^{\rm cl}}{n},
\qquad c_n=1-\frac1n.
\]
Thus \(L_n^{(0)}=n^{10}(\rho_n+c_nR_n)\).  The semicircle
quantile definition of the classical spectrum gives
\[
|\rho_n|=O(\sqrt n),
\qquad
|\E R_n|=|\sqrt n\,m_n|=O(n^{3/2}),
\]
while \(\E|R_n|^2=n^3\).  Hence
\[
\E|\rho_n+c_nR_n|^2
=c_n^2n^3+2c_n\langle\rho_n,\sqrt n\,m_n\rangle+|\rho_n|^2
=n^3+O(n^2).
\]
It follows that
\[
\frac{\E|L_n^{(0)}|^2}{n^2}=n^{21}+O(n^{20}).
\]
By \eqref{eq:exceptional-correction-tail}, Cauchy--Schwarz, and the
Gaussian moment bounds for \(L_n^{(0)}\), replacing \(L_n^{(0)}\) by
\(L_n=L_n^{(0)}+A_n\tau_n\) changes this normalized quadratic moment, and
each normalized mixed moment below, by less than any inverse power of
\(n\).  This proves \eqref{eq:large-side-Q}.

Let \(\widetilde r_n=\spec(\widetilde G_n)\) and let
\(\check r_n=\spec(\check G_n)\), so that \(R_n=\sqrt n\,\check r_n\).
Writing \(g_n^{\rm diag}\) for the vector of diagonal entries of \(G_n\),
we have
\[
\diag(X_n)=\sqrt n\,(w g_n^{\rm diag}+u\widetilde r_n).
\]
The three vectors \(\check r_n,g_n^{\rm diag},\widetilde r_n\) are
independent, \(\E g_n^{\rm diag}=0\), and both ordered spectra have mean
\(m_n\).  Therefore
\begin{equation}\lab{eq:auxiliary-mixed-exact}
\E\langle R_n,\diag(X_n)\rangle
=nu|m_n|^2
=un^3+O\!\left(n\log^{O(1)}n\right).
\end{equation}
On the other hand,
\[
\left|\E\langle\rho_n,\diag(X_n)\rangle\right|
=\sqrt n\,u\,|\langle\rho_n,m_n\rangle|
=O(n^2).
\]
Multiplying these two estimates by the coefficients in \(L_n^{(0)}\),
dividing by \(n^2\), and using \(c_n=1-n^{-1}\), proves the first formula
in \eqref{eq:large-side-mixed-moments}.  The second follows identically
from the independent primed matrices.

Finally, direct calculation and independence give the exact identities
\[
\frac{\E|\diag(X_n)|^2}{n^2}
=\frac{n\{w^2\E|g_n^{\rm diag}|^2
             +u^2\E|\widetilde r_n|^2\}}{n^2}
=w^2+u^2n
\]
and its primed analogue.  Likewise,
\[
\frac{\E|\lambda_n|^2}{n^2}
=\frac{\E\Tr(X_n^2)}{n^2}
=(w^2+u^2)n=a^2n,
\]
with the analogous identity for \(\lambda'_n\).  Expanding
\(|L_n+\diag(X_n)|^2\) and
\(|L_n-\diag(Y_n)|^2\) now proves
\eqref{eq:large-side-exterior-moments}.
\end{proof}

\phantomsection\label{proof:lem:KLhive}
\begin{proof}[Proof of Lemma~\ref{lem:KLhive}]
By the definition of triply augmented hives,
\[
\mathbb A^3(\lambda,M;N)
=
GT(\lambda)\times GT(M)\times GT(N)\times H_n(\lambda,M;N).
\]
Since the attached GT patterns are conditionally independent and uniform,
\[
\mathrm{ent}(q_n)
=
\mathrm{ent}(h_n)
+
\mathbb E\log\frac{V_n(\lambda_n)}{V_n(\tau_n)}
+
\mathbb E\log\frac{V_n(M_n)}{V_n(\tau_n)}
+
\mathbb E\log\frac{V_n(N_n)}{V_n(\tau_n)}.
\]
Thus
\[
\mathrm{ent}(q_n)
=
\mathrm{ent}(h_n)
+
\mathbb E\log V_n(\lambda_n)
+
\mathbb E\log V_n(M_n)
+
\mathbb E\log V_n(N_n)
-
3\log V_n(\tau_n).
\]

By Lemma~\ref{lem:random-side-entropy} and Proposition~\ref{prop:4},
\[
\mathrm{ent}(h_n)
=
\frac{n^2}{2}
\left(
\frac54+
\log\frac{w^2}{\sqrt{w^2+u^2}}
\right)
+
O(n\log n).
\]

We now estimate the two large-gap Vandermonde terms. Since
\[
M_n=L_n,
\qquad
N_n=L_n+\operatorname{diag}(X_n)
=L_n+\operatorname{diag}\bigl(\sqrt n\,(wG_n+uD_n)\bigr),
\]
and \(L_n^{(0)}=n^{10}\widetilde\lambda_n\), the correction vanishes with
overwhelming probability by \eqref{eq:exceptional-correction-tail}.  On that
event the perturbation taking \(L_n=L_n^{(0)}\) to \(N_n\) has size
\(O(n^{1+\varepsilon})\) with overwhelming probability.  On the other hand,
the definition of the uncorrected side gives, for
\(i<j\),
\[
\begin{aligned}
\widetilde\lambda_i-\widetilde\lambda_j
={}&
\frac{1}{n}(\lambda_i^{\mathrm{cl}}-\lambda_j^{\mathrm{cl}})
+
\left(1-\frac1n\right)(\check\lambda_i-\check\lambda_j).
\end{aligned}
\]
Both spectra are ordered decreasingly, and hence
\[
\widetilde\lambda_i-\widetilde\lambda_j
\geq
\frac{1}{n}(\lambda_i^{\mathrm{cl}}-\lambda_j^{\mathrm{cl}}).
\]
The deterministic classical gaps for the spectrum of \(\sqrt n\,G_n\)
satisfy
\[
\lambda_i^{\mathrm{cl}}-\lambda_{i+1}^{\mathrm{cl}}
\geq
c
\]
uniformly in \(i\).  Indeed, if
\(\lambda_i^{\mathrm{cl}}=n\gamma_i\), where \(\gamma_i\) is the
semicircle quantile, then the quantile gaps are smallest in the bulk, where
they are of order \(1/n\).  For non-adjacent pairs the gaps are larger
after changing the constant \(c\).  Hence
\[
L_i^{(0)}-L_j^{(0)}
=
n^{10}(\widetilde\lambda_i-\widetilde\lambda_j)
\geq
c n^{10}\cdot n^{-1}
=
c n^9.
\]
Therefore the perturbation from \(L_n\) to \(N_n\) changes each logarithmic
gap by \(O(n^{-8+\varepsilon})\) on an overwhelming-probability event.
Summing over \(O(n^2)\) pairs gives \(O(n^{-6+\varepsilon})\) on this event,
which is in particular \(O(n\log n)\).  The complement of the
overwhelming-probability event, including the event on which \(A_n>0\), is
absorbed by Gaussian tails, rigidity from Lemma~\ref{lem:rigidity}, and
\eqref{eq:exceptional-correction-tail}.  Thus
\[
\mathbb E\log V_n(M_n)
=
\mathbb E\log V_n(L_n),
\]
and
\[
\mathbb E\log V_n(N_n)
=
\mathbb E\log V_n(L_n)
+
O(n\log n).
\]

Next, adding \(A_n\tau_n\), whose adjacent gaps are nonnegative, can only
increase every ordered pairwise gap.  Therefore
\[
V_n(L_n)
\geq
V_n(L_n^{(0)})
=
n^{10\binom n2}V_n(\widetilde\lambda_n).
\]
The same definition of \(\widetilde\lambda_n\) also gives the deterministic
one-sided comparison
\[
\widetilde\lambda_i-\widetilde\lambda_j
\geq
\left(1-\frac1n\right)(\check\lambda_i-\check\lambda_j).
\]
Thus
\[
V_n(\widetilde\lambda_n)
\geq
\left(1-\frac1n\right)^{\binom n2}
V_n(\check\lambda_n),
\]
and hence
\[
\mathbb E\log V_n(\widetilde\lambda_n)
\geq
\mathbb E\log V_n(\check\lambda_n)+O(n).
\]
Consequently,
\[
\mathbb E\log V_n(L_n)
\geq
10\binom n2\log n
+
\mathbb E\log V_n(\check\lambda_n)
+
O(n).
\]

Combining these estimates, we obtain the lower bound
\[
\mathrm{ent}(q_n)
\geq
\mathrm{ent}(h_n)
+
\mathbb E\log V_n(\lambda_n) + 2 \mathbb E\log V_n(\check\lambda_n)
+
20\binom n2\log n
-
3\log V_n(\tau_n)
+
O(n\log n).
\]
We use the GUE log-Vandermonde asymptotic, obtained from the ordered Gaussian
Mehta integral \cite[Ch.~17]{Mehta} together with Claim~\ref{cl:vtau}, for
\[
\lambda_n\stackrel{d}{=}
\operatorname{spec}\bigl(\sqrt{nw^2+ nu^2}\,G_n\bigr),
\]
namely
\[
\mathbb E\log \frac{V_n(\lambda_n)}{V_n(\tau_n)}
=
\frac{n^2}{2}\log\sqrt{w^2+u^2}
+
\frac{5 n^2}{8}
+
O(n\log n).
\]
Since \(\check\lambda_n=\operatorname{spec}(\sqrt n\,\check G_n)\), the same
asymptotic with scale parameter \(1\) gives
\[
\mathbb E\log \frac{V_n(\check\lambda_n)}{V_n(\tau_n)}
=
\frac{5n^2}{8}
+
O(n\log n).
\]
\begin{claim} $q_n$ has the  same entropy as the
maximum-entropy density \(p_n\), up to an \(O(n\log n)\) error:
\[
\mathrm{ent}(q_n)\geq \mathrm{ent}(p_n)-O(n\log n).
\]
\end{claim}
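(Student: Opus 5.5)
The plan is to compute $\ent(p_n)$ in closed form up to $O(n\log n)$ and check that it matches, term by term, the lower bound for $\ent(q_n)$ just obtained. By Remark~\ref{rem:maxent-kl} this is all that is needed, since then $D_{\mathrm{KL}}(q_n\Vert p_n)=\ent(p_n)-\ent(q_n)$.

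\emph{Step 1: entropy of the exponential family.} Since $p_n=Z^{-1}e^{-Q/2}$, we have $\ent(p_n)=\log Z+\tfrac12\E_{p_n}Q$. Under the common scaling $(\bar a^2,\bar b^2,\bar c^2)\mapsto r(\bar a^2,\bar b^2,\bar c^2)$ the normalizing constant of Theorem~\ref{thm:gaussian1} scales like $r^{n^2}$. Differentiating at $r=1$, exactly as in the proof of Lemma~\ref{lem:double-partition}, gives $\tfrac12\E_{p_n}Q=n^2$. Hence
\[
\ent(p_n)=n\log(2\pi)-2\log V_n(\tau_n)+\frac{n^2}{2}\log\frac{ABC}{A+B+C}+n^2 ,
\]
where $A,B,C$ are the raw squared parameters. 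By Claim~\ref{cl:vtau} this equals $-n^2\log n+\tfrac52 n^2+\tfrac{n^2}{2}\log\frac{ABC}{A+B+C}+O(n\log n)$.

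\emph{Step 2: the raw product from the moments.} Matching quadratic moments means the geometric squared sides are $x=\E|\lambda_n|^2/n^2$, $y=\E|M_n|^2/n^2$ and $z=\E|N_n|^2/n^2$. By Proposition~\ref{prop:large-side-moments},
\[
x=a^2n,\qquad y=Q_n,\qquad z=Q_n+2un^{11}+O(n^{10}).
\]
Using the inverse-map identity $\frac{ABC}{A+B+C}=\frac{rs+rt+st}{4}$ recorded in the proof of Theorem~\ref{thm:main}, with $r=x+y-z$, $s=x+z-y$, $t=y+z-x$, I will use $r+s=2x$ and $rs=x^2-(z-y)^2$. Then
\[
rs+rt+st=2xt+x^2-(z-y)^2=4a^2n^{22}-4u^2n^{22}+O(n^{21})=4w^2n^{22}\bigl(1+O(n^{-1})\bigr).
\]
Here $r<0$, so we are in the imaginary-$\bar c$ branch, which Remark~\ref{rem:maxent-kl} allows. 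Therefore $\tfrac{n^2}{2}\log\frac{ABC}{A+B+C}=n^2\log w+11n^2\log n+O(n)$, and
\[
\ent(p_n)=10n^2\log n+n^2\log w+\tfrac52n^2+O(n\log n).
\]

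\emph{Step 3: evaluate the lower bound.} In the displayed lower bound for $\ent(q_n)$ I substitute the following:
\begin{itemize}
\item $\ent(h_n)=\tfrac{n^2}{2}\bigl(\tfrac54+\log\tfrac{w^2}{a}\bigr)+O(n\log n)$;
\item $\E\log V_n(\lambda_n)=\log V_n(\tau_n)+\tfrac{n^2}{2}\log a+\tfrac58n^2+O(n\log n)$;
\item $\E\log V_n(\check\lambda_n)=\log V_n(\tau_n)+\tfrac58n^2+O(n\log n)$;
\item $20\binom n2\log n=10n^2\log n+O(n\log n)$.
\end{itemize}
The three copies of $\log V_n(\tau_n)$ cancel against $-3\log V_n(\tau_n)$, and the $\log a$ terms cancel. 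What remains is
\[
n^2\log w+\tfrac58n^2+\tfrac{15}{8}n^2+10n^2\log n=10n^2\log n+n^2\log w+\tfrac52n^2 ,
\]
which agrees with Step~2 up to $O(n\log n)$. This proves the claim.

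The main obstacle is Step~2. The three geometric sides differ from each other by amounts much smaller than their size, and the leading $n^{22}$ term of $rs+rt+st$ is a cancellation between $4a^2n^{22}$ and $4u^2n^{22}$ that leaves $4w^2n^{22}$. I must verify that the $O(n^{10})$ errors in $z$, including the contribution $u^2n$ from $\E|\diag(X_n)|^2/n^2$, enter only at relative order $O(n^{-1})$. For example, $(z-y)^2$ has error $O(n^{11}\cdot n^{10})=O(n^{21})$. Once that is checked, the logarithm costs only $O(n)$ after multiplying by $n^2/2$.
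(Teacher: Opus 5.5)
Your proposal is correct and follows the paper's proof of this claim essentially line by line. Both use the same closed form $n^2+n\log(2\pi)-2\log V_n(\tau_n)+\frac{n^2}{2}\log\frac{ABC}{A+B+C}$ for the entropy of $p_n$ and the same identity $\frac{ABC}{A+B+C}=\frac{rs+rt+st}{4}$, where the cancellation $a^2-u^2=w^2$ gives $w^2n^{22}(1+O(n^{-1}))$, and the same term-by-term evaluation of the lower bound, so both sides equal $10n^2\log n+n^2\log w+\frac52n^2+O(n\log n)$. One cosmetic point: $r<0$ holds only when $u>0$; for $u=0$ you are in the real branch, where the same formula for $Z$ and the same asymptotics apply.
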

\begin{proof}
Let \(a=\sqrt{w^2+u^2}\).  From the preceding estimates,
\[
\ent(q_n)\geq \ent(h_n)
+\E\log\frac{V_n(\lambda_n)}{V_n(\tau_n)}
+2\E\log\frac{V_n(L_n)}{V_n(\tau_n)}
+O(n\log n).
\]
By Proposition~\ref{prop:4},
\[
\ent(h_n)=\frac{n^2}{2}\left(\frac54+\log\frac{w^2}{a}\right)+O(n\log n).
\]
Also
\[
\E\log\frac{V_n(\lambda_n)}{V_n(\tau_n)}
=
\frac{n^2}{2}\log a+\frac58n^2+O(n\log n),
\]
and the preceding comparison gives
\[
\E\log\frac{V_n(L_n)}{V_n(\tau_n)}
\geq
10\binom n2\log n
+\frac58n^2
+O(n\log n).
\]

The comparison of \(N_n\) with \(L_n\) above gives the same bound for the
third large side:
\[
\E\log\frac{V_n(N_n)}{V_n(\tau_n)}
\geq
10\binom n2\log n
+\frac58n^2
+O(n\log n).
\]

Therefore
\[
\ent(q_n)\geq
10n^2\log n
+n^2\log w
+\frac52n^2
+O(n\log n).
\]

On the other hand, Theorem~\ref{thm:gaussian1} gives
\beq\lab{eq:thelastdisplay}
\ent(p_n)
=
n^2+n\log(2\pi)
+\frac{n^2}{2}
\log\left(
\frac{\bar a^2\bar b^2\bar c^2}
{\bar a^2+\bar b^2+\bar c^2}
\right)
-2\log V_n(\tau_n).
\eeq
Let
\[
X=\frac{\E_{q_n}|\lambda_n|^2}{n^2},\qquad
E=\frac{\E_{q_n}|M_n|^2}{n^2},\qquad
W=\frac{\E_{q_n}|N_n|^2}{n^2},
\]
and write \(A=\bar a^2\), \(B=\bar b^2\), \(C=\bar c^2\).  The moment
equations give
\[
X=\frac{A(B+C)}{A+B+C},\quad
E=\frac{B(C+A)}{A+B+C},\quad
W=\frac{C(A+B)}{A+B+C}.
\]
Proposition~\ref{prop:large-side-moments} gives the corresponding sizes in
the present scaling.  With \(a^2=w^2+u^2\) and
\(E=Q_n=\E|L_n|^2/n^2\), they are
\[
X=a^2n,\qquad
E=n^{21}+O(n^{20}),\qquad
W=E+2u\,n^{11}+O(n^{10}).
\]
If
\[
R=X+E-W,\qquad S=X+W-E,\qquad T=E+W-X,
\]
then
\[
\frac{ABC}{A+B+C}=\frac{RS+RT+ST}{4}.
\]
Let \(\delta=W-E\).  Then \(R=X-\delta\), \(S=X+\delta\), and
\(T=2E+\delta-X\).  Hence
\[
\frac{RS+RT+ST}{4}
=
\frac{X^2-\delta^2+2X(2E+\delta-X)}{4}.
\]
Using
\[
X=a^2n+O(1),\qquad
E=n^{21}+O(n^{20}),\qquad
\delta=2u\,n^{11}+O(n^{10}),
\]
we get
\[
\frac{RS+RT+ST}{4}
=
XE-\frac{\delta^2}{4}+O(n^{12})
=
(a^2-u^2)n^{22}+O(n^{21})
=
w^2n^{22}+O(n^{21}).
\]
Therefore
\[
\frac{\bar a^2\bar b^2\bar c^2}
{\bar a^2+\bar b^2+\bar c^2}
=
w^2n^{22}+O(n^{21}).
\]
Thus (\ref{eq:thelastdisplay}) equals
\[
10n^2\log n
+n^2\log w
+\frac52n^2
+O(n\log n).
\]
Hence
\[
\ent(q_n)\geq \ent(p_n)-O(n\log n),
\]
as claimed.
\end{proof}

Finally, Theorem~\ref{thm:gaussian1} says that \(p_n\) is the unique
maximum-entropy density on \(\mathbb A^3\) among all densities whose boundary
quadratic moments agree with those of \(p_n\).  By construction these
moments agree with those of \(q_n\):
\[
\E_{p_n}|\lambda|^2=\E_{q_n}|\lambda|^2,\qquad
\E_{p_n}|\mu|^2=\E_{q_n}|\mu|^2,\qquad
\E_{p_n}|\nu|^2=\E_{q_n}|\nu|^2.
\]
Thus \(q_n\) is one of the admissible densities in the variational problem
solved by \(p_n\), and hence
\[
\ent(q_n)\leq \ent(p_n).
\]

It remains to identify this entropy deficit with the KL divergence.
Since \(p_n\) has density
\[
p_n(x)
=
Z_n^{-1}
\exp\left[
-\frac12
\left(
\frac{|\lambda(x)|^2}{\bar a^2}
+
\frac{|\mu(x)|^2}{\bar b^2}
+
\frac{|\nu(x)|^2}{\bar c^2}
\right)
\right],
\]
we have
\[
-\log p_n(x)
=
\log Z_n
+
\frac12
\left(
\frac{|\lambda(x)|^2}{\bar a^2}
+
\frac{|\mu(x)|^2}{\bar b^2}
+
\frac{|\nu(x)|^2}{\bar c^2}
\right).
\]
Therefore
\[
-\E_{q_n}\log p_n
=
\log Z_n
+
\frac12
\left(
\frac{\E_{q_n}|\lambda|^2}{\bar a^2}
+
\frac{\E_{q_n}|\mu|^2}{\bar b^2}
+
\frac{\E_{q_n}|\nu|^2}{\bar c^2}
\right).
\]
Using the matching of quadratic moments, this equals
\[
\log Z_n
+
\frac12
\left(
\frac{\E_{p_n}|\lambda|^2}{\bar a^2}
+
\frac{\E_{p_n}|\mu|^2}{\bar b^2}
+
\frac{\E_{p_n}|\nu|^2}{\bar c^2}
\right)
=
-\E_{p_n}\log p_n
=
\ent(p_n).
\]
Hence
\[
D_{\mathrm{KL}}(q_n\Vert p_n)
=
\E_{q_n}\log\frac{q_n}{p_n}
=
-\ent(q_n)-\E_{q_n}\log p_n
=
\ent(p_n)-\ent(q_n).
\]
Combining this identity with the claim gives
\[
0\leq D_{\mathrm{KL}}(q_n\Vert p_n)
=
\ent(p_n)-\ent(q_n)
=
O(n\log n),
\]
as required.

\end{proof}

\phantomsection\label{proof:lem:double-moment-inverse}
\begin{proof}[Proof of Lemma~\ref{lem:double-moment-inverse}]
For \(S,t>0\), set
\[
 \psi_S(t)=\frac{\sqrt{S^2+4St}-S}{2}.
\]
This is the unique positive solution \(q\) of \(t=q+q^2/S\).
Thus, once \(S\) is fixed, the first three equations force
\[
 A=\psi_S(X),\qquad B=\psi_S(Y),\qquad D=\psi_S(Z).
\]
Put 
\[
 \Delta=W-X-Y-Z
 \quad\hbox{and}\quad
 H(S)=\frac{2}{S}(AB+AD+BD).
\]
The fourth equation is equivalent to \(H(S)=\Delta\).  For \(x,y>0\),
\begin{equation}\lab{eq:inverse-pair-function}
 \frac{\psi_S(x)\psi_S(y)}{S}
 =\frac{4xy}
 {(\sqrt{S+4x}+\sqrt S)(\sqrt{S+4y}+\sqrt S)},
\end{equation}
which is strictly decreasing in \(S\).  It follows that \(H\) is strictly
decreasing, with
\[
 \lim_{S\downarrow0}H(S)
 =2(\sqrt{XY}+\sqrt{XZ}+\sqrt{YZ}),
 \qquad
 \lim_{S\to\infty}H(S)=0.
\]
Condition \eqref{eq:inverse-moment-region} says precisely that \(\Delta\)
lies strictly between these two limits.  Hence \(H(S)=\Delta\) has one and
only one root, proving existence and uniqueness.

For completeness, this root is also a simple root of the direct scalar
inverse equation.  Define
\[
 \mathcal F(S)=\psi_S(X)+\psi_S(Y)+\psi_S(Z)-\psi_S(W)
\]
and write \(U=A+B+D\), \(V=\psi_S(W)\).  Since
\(\Phi_S(q)=q+q^2/S\), direct expansion gives
\[
 \Phi_S(U)-\Phi_S(V)
 =\mathcal F(S)\left(1+\frac{U+V}{S}\right)
 =H(S)-\Delta.
\]
At the root \(U=V\), and therefore
\begin{equation}\lab{eq:inverse-simple-root}
 \mathcal F'(S)=\frac{H'(S)}{1+2U/S}<0.
\end{equation}
This proves the asserted nonvanishing derivative.

We now prove the scaled conclusion.  Here
\[
 \Delta_n=2(u+u')n^{11}+O(n^{10}),
\]
whereas the upper endpoint of the range of \(H\) is
\[
 2(\sqrt{X_nY_n}+\sqrt{X_nZ_n}+\sqrt{Y_nZ_n})
 =2(a+b)n^{11}+o(n^{11}).
\]
Since \(u<a\) and \(u'<b\), condition
\eqref{eq:inverse-moment-region} holds for all sufficiently large \(n\).
Set \(S=ns\).  Uniformly for \(s\) in compact subsets of \((0,\infty)\),
equation \eqref{eq:inverse-pair-function} gives
\begin{equation}\lab{eq:inverse-scaled-equation}
 n^{-11}H_n(ns)=h(s)+O(n^{-1}),
 \qquad
 h(s)=\sqrt{s+4a^2}+\sqrt{s+4b^2}-2\sqrt s,
\end{equation}
and \(n^{-11}\Delta_n=2(u+u')+O(n^{-1})\).
For \(s_0=\theta^{-2}\),
\[
 \sqrt{s_0+4a^2}-\sqrt{s_0}=2u,
 \qquad
 \sqrt{s_0+4b^2}-\sqrt{s_0}=2u',
\]
so \(h(s_0)=2(u+u')\).  Moreover,
\[
 h'(s)=\frac{1}{2\sqrt{s+4a^2}}
       +\frac{1}{2\sqrt{s+4b^2}}
       -\frac{1}{\sqrt s}<0.
\]
The monotonicity already proved localizes the root near \(s_0\), and the
one-dimensional implicit-function estimate applied to
\eqref{eq:inverse-scaled-equation} gives
\(S_n/n=s_0+O(n^{-1})\).  Thus \(S_n=\theta^{-2}n+O(1)\).
Finally, the exact identity
\[
 \frac{A_n}{n}
 =\frac{\sqrt{(S_n/n)^2+4(S_n/n)a^2}-S_n/n}{2}
\]
and its analogue for \(B_n\) show, using \(s_0=\theta^{-2}\), that
\(A_n=w^2n+O(1)\) and \(B_n=(w')^2n+O(1)\).
\end{proof}

\phantomsection\label{proof:lem:doubleKLhive}
\begin{proof}[Proof of Lemma~\ref{lem:doubleKLhive}]
  The first hive has exterior sides \(\lambda_n,N_n\), the
second hive has exterior sides \(\mu_n,P_n\), and the common side is
\(L_n\).  Reversing the orientation of a boundary, when required to paste
the two hives, does not change its Euclidean norm or its Vandermonde factor.

Since the four exterior Gelfand--Tsetlin patterns are conditionally
independent and uniform given the pasted double hive,
\begin{align*}
\ent(q_n^{\mathrm{dbl}})
={}&
\ent\!\left(h_n^{(1)},h_n^{(2)}\hbox{ pasted along }L_n\right)\\
&+
\E\log\frac{V_n(\lambda_n)}{V_n(\tau_n)}
+
\E\log\frac{V_n(\mu_n)}{V_n(\tau_n)}
+
\E\log\frac{V_n(P_n)}{V_n(\tau_n)}
+
\E\log\frac{V_n(N_n)}{V_n(\tau_n)} .
\end{align*}
There is no \(V_n(L_n)\) term, because the glued side is not decorated in
\(\mathbb D^4\).

By Lemma~\ref{lem:random-side-entropy}, Proposition~\ref{prop:4}, and the
independence of the two deformed minor processes,
\[
\ent\!\left(h_n^{(1)},h_n^{(2)}\hbox{ pasted along }L_n\right)
=
\frac{n^2}{2}\left(\frac54+\log\frac{w^2}{a}\right)
+
\frac{n^2}{2}\left(\frac54+\log\frac{(w')^2}{b}\right)
+
O(n\log n).
\]
The deformed GUE Vandermonde asymptotic gives
\[
\E\log\frac{V_n(\lambda_n)}{V_n(\tau_n)}
=
\frac{n^2}{2}\log a+\frac58n^2+O(n\log n),
\]
and
\[
\E\log\frac{V_n(\mu_n)}{V_n(\tau_n)}
=
\frac{n^2}{2}\log b+\frac58n^2+O(n\log n).
\]
The same large-gap Vandermonde comparison as in Lemma~\ref{lem:KLhive}
applied to \(N_n\) and \(P_n\) gives
\[
\E\log\frac{V_n(N_n)}{V_n(\tau_n)}
\geq
10\binom n2\log n+\frac58n^2+O(n\log n),
\]
and
\[
\E\log\frac{V_n(P_n)}{V_n(\tau_n)}
\geq
10\binom n2\log n+\frac58n^2+O(n\log n).
\]
Combining these estimates,
\beq \lab{eq:qn}
\ent(q_n^{\mathrm{dbl}})
\geq
10n^2\log n+n^2\log(ww')+\frac{15}{4}n^2+O(n\log n).
\eeq

Let \(A,B,D,F\) be the raw parameters of \(p_n^{\mathrm{dbl}}\) attached to
the exterior sides in clockwise cyclic order
\[
\lambda_n,\qquad \mu_n,\qquad P_n,\qquad N_n,
\]
respectively, and set \(T=A+B+D+F\).  Put
\[
X=\frac{\E|\lambda_n|^2}{n^2},\qquad
Y=\frac{\E|\mu_n|^2}{n^2},\qquad
Z=\frac{\E|P_n|^2}{n^2},\qquad
W=\frac{\E|N_n|^2}{n^2}.
\]
For the four exterior sides in the present construction,
Proposition~\ref{prop:large-side-moments} gives
\begin{equation}\lab{eq:doubleKL-small-moments}
X=a^2n,\qquad
Y=b^2n.
\end{equation}
Writing \(Q_n=\E|L_n|^2/n^2=n^{21}+O(n^{20})\), the same proposition gives
\begin{equation}\lab{eq:doubleKL-large-moments}
Z=Q_n-2u'n^{11}+O(n^{10}),\qquad
W=Q_n+2un^{11}+O(n^{10}).
\end{equation}
Set
\[
\theta=\frac{u}{w^2}=\frac{u'}{(w')^2}.
\]
When \(\theta=0\), the argument can be handled similarly; we omit the
details.  We therefore assume \(\theta>0\).  Lemma~\ref{lem:double-moment-inverse},
applied to \eqref{eq:doubleKL-small-moments} and
\eqref{eq:doubleKL-large-moments}, gives unique \(S,A,B,D>0\) and hence
\(T=-S<0\), \(F=-(S+A+B+D)<0\).  Their moment equations are
\begin{equation}\lab{eq:doubleKL-negative-moments}
X=A+\frac{A^2}{S},\qquad
Y=B+\frac{B^2}{S},\qquad
Z=D+\frac{D^2}{S},
\end{equation}
and
\begin{equation}\lab{eq:doubleKL-negative-W}
W=A+B+D+\frac{(A+B+D)^2}{S}.
\end{equation}
The scaled simple-root conclusion \eqref{eq:inverse-raw-asymptotics} gives
\[
A=w^2n+O(1),\qquad
B=(w')^2n+O(1),\qquad
S=\theta^{-2}n+O(1),
\]

Lemma~\ref{lem:double-partition}, in the \(F<0\), \(T<0\) interpretation
of Lemma~\ref{lem:double-maxent}, gives
\[
\ent(p_n^{\mathrm{dbl}})
=
\frac{3n^2}{2}
+\frac{3n}{2}\log(2\pi)
-3\log V_n(\tau_n)
+
\frac{n^2}{2}
\log\left(\frac{ABDF}{T}\right).
\]
It remains to evaluate the last logarithm.  Since
\[
\frac{ABDF}{T}
=
AB\,D\,\frac{S+A+B+D}{S}
=
AB\left(D+\frac{D(A+B)}{S}+\frac{D^2}{S}\right),
\]
and
\[
\frac{D^2}{S}=Q_n+O(n^{11})=n^{21}+O(n^{20}),\qquad
D=O(n^{11}),\qquad
\frac{D(A+B)}{S}=O(n^{11}),
\]
we get
\[
\frac{ABDF}{T}
=
w^2(w')^2n^{23}+O(n^{22}).
\]
Therefore, using Claim~\ref{cl:vtau},
\[
\log V_n(\tau_n)
=
\frac{n^2}{2}\log n-\frac34n^2+O(n\log n),
\]
we obtain
\beq \lab{eq:pn}
\ent(p_n^{\mathrm{dbl}})
=
10n^2\log n+n^2\log(ww')+\frac{15}{4}n^2+O(n\log n).
\eeq
Hence by (\ref{eq:qn}) and (\ref{eq:pn}), 
\beqs
\ent(q_n^{\mathrm{dbl}})
\geq
\ent(p_n^{\mathrm{dbl}})-O(n\log n).
\eeqs
On the other hand, \(p_n^{\mathrm{dbl}}\) is the maximum-entropy density on
\(\mathbb D^4\) with the same four exterior quadratic moments, so
\[
\ent(q_n^{\mathrm{dbl}})\leq \ent(p_n^{\mathrm{dbl}}).
\]

Finally, because \(p_n^{\mathrm{dbl}}\) has exponential-quadratic density
and the four exterior quadratic moments match, in a manner analogous to the proof of Lemma~\ref{lem:KLhive},
\[
D_{\mathrm{KL}}\!\left(q_n^{\mathrm{dbl}}\Vert p_n^{\mathrm{dbl}}\right)
=
\ent(p_n^{\mathrm{dbl}})-\ent(q_n^{\mathrm{dbl}}).
\]
If \(u=0\), then the condition
\[
\frac{u}{w^2}=\frac{u'}{(w')^2}
\]
also gives \(u'=0\).  In this degenerate case the preceding negative-raw
argument is replaced by the positive raw regime: the two large raw
parameters corresponding to the large-gap sides, namely \(D\) and \(F\), are
both positive and of order \(n^{21}\).  The same exterior moment computation
then gives
\[
\frac{ABDF}{A+B+D+F}
=
w^2(w')^2n^{23}+O(n^{22}),
\]
and hence the same entropy estimate for \(p_n^{\mathrm{dbl}}\).  Therefore
the KL bound below remains unchanged.

Combining the preceding two entropy bounds gives
\[
D_{\mathrm{KL}}\!\left(q_n^{\mathrm{dbl}}\Vert p_n^{\mathrm{dbl}}\right)
=O(n\log n),
\]
as claimed.
\end{proof}

\phantomsection\label{proof:lem:tet}
\begin{proof}[Proof of Lemma~\ref{lem:tet}]
Put \(y=x^2\).  Up to the fixed factor \((abdf)^{-1}\), the left hand side is
\[
\Psi_{d,f}(y):=
\frac{\Delta_{ab\sqrt y}^2\Delta_{\sqrt y\,d\,f}^2}{y}.
\]
On the admissible interval
\[
I_{d,f}=
\left(\max\{(a-b)^2,(d-f)^2\},\,
\min\{(a+b)^2,(d+f)^2\}\right)
\]
we have
\[
\Psi_{d,f}(y)=
\frac{
(y-(a-b)^2)((a+b)^2-y)
(y-(d-f)^2)((d+f)^2-y)
}{16y}.
\]
Thus
\[
\frac{d}{dy}\log\Psi_{d,f}(y)
=
\frac1{y-(a-b)^2}
-\frac1{(a+b)^2-y}
+
\frac1{y-(d-f)^2}
-\frac1{(d+f)^2-y}
-\frac1y .
\]
Its derivative is strictly negative on \(I_{d,f}\), since
\((a-b)^2,(d-f)^2\geq0\).  Hence \(\Psi_{d,f}\) has at most one critical
point.  Since it vanishes at the endpoints and is positive inside
\(I_{d,f}\), it has a unique maximizer.  By the tetrahedral identity
\eqref{eq:tetrahedral-identity} used in the statement of Lemma~\ref{lem:tet},
this maximizer is exactly the unique
positive solution of \eqref{eq:tet-cstar-equation}; call it \(c_*\).

Now impose \(f-d=\delta\) and let \(f\to\infty\).  Locally uniformly in
\(y\),
\[
\frac{\Delta_{\sqrt y\,d\,f}^2}{df}
\longrightarrow \frac{y-\delta^2}{4}.
\]
Therefore the objectives converge locally uniformly, up to a positive
constant independent of \(y\), to
\[
\Psi_\infty(y)=
\frac{\Delta_{ab\sqrt y}^2(y-\delta^2)}{y},
\]
on
\[
\left(\max\{(a-b)^2,\delta^2\},(a+b)^2\right).
\]
The limiting logarithmic derivative is
\[
\frac{d}{dy}\log\Psi_\infty(y)
=
\frac1{y-(a-b)^2}
-\frac1{(a+b)^2-y}
+
\frac1{y-\delta^2}
-\frac1y .
\]
Its derivative is strictly negative on the limiting admissible interval, so
\(\Psi_\infty\) has at most one critical point.  Since \(\Psi_\infty\) is
positive in the interior and vanishes at the endpoints, it has a unique
maximizer; denote it by \(c_{**}^2\).  Moreover, the maximizers \(c_*^2\)
remain in the compact closure of the limiting admissible interval.  Any
subsequential limit \(y_0\) cannot lie at an endpoint, because
\(\Psi_\infty\) vanishes there while it is positive at its unique maximizer.
By local uniform convergence, such a limit \(y_0\) must maximize
\(\Psi_\infty\).  Hence \(y_0=c_{**}^2\) for every subsequential limit, and
therefore
\[
c_{**}=\lim_{\substack{f\to\infty\\ f-d=\delta}}c_* .
\]

It remains to compute \(\delta\) from \(c_{**}\).  Let
\[
y=c_{**}^2,\qquad A=a^2,\qquad B=b^2,\qquad K=A+B.
\]
In the limiting critical equation, the first triangle contributes
\(\partial_y\log\Delta_{ab\sqrt y}^2\).  Since
\[
\Delta_{ab\sqrt y}^2
=
AB-\frac{(K-y)^2}{4},
\]
we have
\[
\partial_y\log\Delta_{ab\sqrt y}^2
=
\frac{K-y}{2\Delta_{ab\sqrt y}^2}.
\]
The large-side triangle contributes, up to factors independent of \(y\),
\[
\frac{y-\delta^2}{y},
\]
and hence contributes
\[
\partial_y\log\left(\frac{y-\delta^2}{y}\right)
=
\frac1{y-\delta^2}-\frac1y.
\]
Thus the limiting critical equation at \(y=c_{**}^2\) is
\[
\frac{K-y}{2\Delta_{ab\sqrt y}^2}
+
\frac1{y-\delta^2}
-\frac1y=0.
\]
Solving for \(\delta^2\) gives
\[
\delta^2
=
\frac{(y-K)y^2}{2\Delta_{ab\sqrt y}^2+y(y-K)}.
\]
Indeed, after clearing denominators one obtains
\[
(K-y)y(y-\delta^2)+2\Delta_{ab\sqrt y}^2\delta^2=0.
\]
The remaining simplification is purely algebraic:
\[
\begin{aligned}
2\Delta_{ab\sqrt y}^2+y(y-K)
&=
2AB-\frac{(K-y)^2}{2}+y(y-K)\\
&=
\frac{y^2-(A-B)^2}{2}.
\end{aligned}
\]
Therefore
\[
\delta^2
=
\frac{2y^2(y-K)}{(y-A+B)(y+A-B)}.
\]
Substituting \(y=c_{**}^2\), \(A=a^2\), and \(B=b^2\), we obtain
\[
\delta^2
=
\frac{
2c_{**}^4(c_{**}^2-a^2-b^2)
}{
(c_{**}^2-a^2+b^2)(c_{**}^2+a^2-b^2)
}.
\]
For a right-angled or obtuse target triangle this quantity is nonnegative,
since \(c_{**}^2\geq a^2+b^2\), and this is the claimed value of
\(\delta^2\).

We now prove the quantitative finite-\(n\) estimate.  Let
\[
c_n^2=ny_n.
\]
The finite-\(n\) critical equation, multiplied by \(n\), can be written as
\[
F_n(y_n)=0,
\]
where
\[
F_n(y)
=
\frac{a^2+b^2-y}{2\Delta_{ab\sqrt y}^2}
+
\frac{n}{ny-(d_n-f_n)^2}
-
\frac{n}{(d_n+f_n)^2-ny}
-
\frac1y .
\]
The assumed expansions and \(Q_n=n^{21}(1+O(n^{-1}))\) imply
\[
d_n-f_n
=\frac{d_n^2-f_n^2}{d_n+f_n}
=-(u+u')\sqrt n+O(n^{-1/2})
=-\delta\sqrt n+O(n^{-1/2})
\]
and
\[
d_n+f_n=2\sqrt{Q_n}+O(\sqrt n).
\]
Hence, uniformly for \(y\) in any compact subinterval of the limiting
admissible interval,
\[
F_n(y)=F_\infty(y)+O(n^{-1}),
\]
where
\[
F_\infty(y)
=
\frac{a^2+b^2-y}{2\Delta_{ab\sqrt y}^2}
+
\frac1{y-\delta^2}
-
\frac1y .
\]
After the change of variables \(x^2=ny\), and after removing positive
multiplicative factors independent of \(y\), the finite-\(n\) objectives
converge locally uniformly on compact subintervals of the limiting
admissible interval to \(\Psi_\infty(y)\).
The preceding local-uniform convergence of the objectives and uniqueness of
the limiting maximizer imply \(y_n\to c_{**}^2\).  The function
\(F_\infty\) is the logarithmic derivative of \(\Psi_\infty\), and its
derivative at the unique zero \(y=c_{**}^2\) is strictly negative.  Hence
there is a neighborhood \(U\) of \(c_{**}^2\) and a constant \(\eta>0\) such
that
\[
|F_\infty'(y)|\geq \eta,\qquad y\in U.
\]
For all sufficiently large \(n\), \(y_n\in U\), and
\[
0=F_n(y_n)=F_\infty(y_n)+O(n^{-1}).
\]
By the mean-value theorem applied to \(F_\infty\) on \(U\),
\[
y_n=c_{**}^2+O(n^{-1}).
\]
Since \(c_{**}>0\), this gives
\[
c_n=\sqrt{ny_n}
=
c_{**}\sqrt n+O(n^{-1/2}),
\]
as claimed.
\end{proof}

\newpage

\section{Acknowledgements}

This work was supported by the Department of Atomic Energy, Government of
India [project number RTI4014]; by the Infosys-Chandrasekharan virtual center
for Random Geometry at the Tata Institute of Fundamental Research (TIFR).
I used large language models as aids for writing code, checking calculations, proving subsidiary lemmas 
and improving exposition.  In particular, OpenAI's Codex was used in developing
the heuristic computation leading to the Barvinok--Hartigan comparison formula
in Appendix~\ref{sec:BH}.  All mathematical statements, proofs, numerical
claims, and references were checked by the author, who takes full responsibility
for the contents of the paper.

\appendix

\section{Barvinok-Hartigan  approximation $\sigma_{BH}$ of the hive surface tension function}\lab{sec:BH}

Let \(\T_n=(\mathbb Z/n\mathbb Z)^2\), and let \(V(\T_n)\) be its vertex
set.  For a function \(g:V(\T_n)\to\mathbb R\), let \(D_0g,D_1g,D_2g\)
denote the three periodic rhombus Hessians corresponding to the three
families of rhombi.  Following \cite{NarayananEquilateral}, for
\(s=(s_0,s_1,s_2)\in\mathbb R_+^3\), define
\[
P_n(s)
:=
\left\{
g:V(\T_n)\to\mathbb R:
\sum_{v\in V(\T_n)}g(v)=0,\quad
D_i g(v)\leq s_i\ \text{for all }v\in V(\T_n),\ i=0,1,2
\right\}.
\]
The surface tension is the large-\(n\) normalized logarithmic volume of this
periodic hive polytope.  


The basis of the Barvinok-Hartigan approximation to polytope volume is the following fact from Fourier analysis, that appears in Section 6.1 of \cite{BH}.

Let $x_1,\ldots,x_n$ be independent exponential random variables with
\[
\mathbb{E}x_j=\zeta_j,\qquad j=1,\ldots,n.
\]
Let $a_1,\ldots,a_n\in\mathbb{R}^d$ span $\mathbb{R}^d$, and define
\[
Y=\sum_{j=1}^n x_j a_j.
\]
Then, for $b\in\mathbb{R}^d$, the density of $Y$ at $b$ is given by the inverse
Fourier transform
\[
p_Y(b)
=
\frac{1}{(2\pi)^d}
\int_{\mathbb{R}^d}
e^{-i\langle b,t\rangle}
\prod_{j=1}^n
\frac{1}{1-i\zeta_j\langle a_j,t\rangle}
\,dt.
\]

From this, defining
\[
P=\{x\in \mathbb{R}^n_{\ge 0}: Ax=b\}, \] \[
f(x) = n+
\sum_{j=1}^n
\ln x_j , \] and taking   $z = (\zeta_1,\dots , \zeta_n)$ to be the point maximizing $f$ on $P$,
Barvinok and Hartigan show in \cite{BH} that,   \[
\operatorname{vol} P
=
e^{f(z)}\left(\det AA^{T}\right)^{1/2}
\frac{1}{(2\pi)^d}
\int_{\mathbb{R}^d}
e^{-i\langle b,t\rangle}
\prod_{j=1}^{n}
\frac{1}{1-i\zeta_j\langle a_j,t\rangle}
\,dt .
\]

The results of \cite{BH} give sufficient conditions on $A$ and $b$, under which  \[
\operatorname{vol} P\] is multiplicatively approximated to within \(1 \pm \epsilon\)
by \[  \volBH P  :=
\frac{1}{(2\pi)^{d/2}}
\left(
\frac{\det(AA^{T})}{\det(BB^{T})}
\right)^{1/2}
e^{f(z)} .
\]
Here \(A\) is the \(d\times n\) matrix whose \(j\)-th column is \(a_j\), and
\[
        B=A\operatorname{diag}(\zeta_1,\ldots,\zeta_n),
        \qquad\text{equivalently}\qquad
        BB^T=\sum_{j=1}^n \zeta_j^2 a_ja_j^T .
\]
Thus \(BB^T\) is the covariance matrix of \(Y=\sum_j x_ja_j\), since an
exponential random variable with mean \(\zeta_j\) has variance \(\zeta_j^2\).

Unfortunately, these conditions are not satisfied for the polytope $P_n(s)$ whose normalized log volume we wish to asymptotically evaluate. Nonetheless, there is the possibility that it offers a useful approximation, and in fact Barvinok and Rudelson \cite{BR}, more recently provided   upper and lower bounds relating $\operatorname{vol} P
$ and  $\volBH P$.

As we shall see, $\volBH$ does suggest in an  explicit closed-form, though it turns out to not be the correct surface tension.  


In what follows, we outline the heuristics that lead to the expression we call $\sigma_{BH}$.
Let \(V_n=(\mathbb Z/n\mathbb Z)^2\), and let \(G_n\) be the \((n^2-1)\)-dimensional space of mean-zero functions \(g:V_n\to\mathbb R\). Write the three discrete rhombus Hessians as

\(D_i g(v)= \nabla_i^2 g(v),\qquad i=0,1,2.\)
For \(s=(s_0,s_1,s_2)\), introduce slack variables

\[
x_i(v)=s_i-D_i g(v)\ge 0.
\]
Thus \(P_n(s)\) is affinely identified with

\[
\{x\in\mathbb R_{\ge 0}^{3n^2}: x=s-Dg,\ g\in G_n\}.
\]
Equivalently it is an affine section of the orthant.

Let

\[
D:G_n\to \mathbb R^{3n^2},\qquad Dg=(D_0g,D_1g,D_2g).
\]
The volume in slack coordinates differs from the volume in \(g\)-coordinates by

\[
d\operatorname{vol}_{x}
=
\sqrt{\det{}'(D^*D)}\,d\operatorname{vol}_{g}.
\]
By translation symmetry and strict concavity, the maximizer in the Barvinok-Hartigan formula is

\[
z_i(v)=s_i.
\]
Consequently, in the present application the Barvinok--Hartigan matrix \(B\)
is the matrix obtained from the affine-constraint matrix \(A\) in slack
coordinates by multiplying every column corresponding to a type \(i\) rhombus
slack by \(s_i\). Equivalently,
\[
        B=A\,\operatorname{diag}\bigl(s_0 I_{n^2},s_1 I_{n^2},s_2 I_{n^2}\bigr),
        \qquad
        BB^T=A\,\operatorname{diag}\bigl(s_0^2 I_{n^2},s_1^2 I_{n^2},s_2^2 I_{n^2}\bigr)A^T .
\]

\begin{claim}\lab{cl:BH}
Let \(A:\mathbb R^N\to\mathbb R^d\) have full row rank, let \(R\) be a
positive diagonal \(N\times N\) matrix, and let
\[
D:G\to\mathbb R^N
\]
be a linear map whose image is \(\ker A\).  Interpreting \(\det'\) as the
product of the non-zero eigenvalues,
\beq \lab{eq:BH}
\det(R)^2\det(AA^T)\det'(D^TR^{-2}D)
=
\det(AR^2A^T)\det'(D^TD).
\eeq
\end{claim}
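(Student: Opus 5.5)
The plan is to reduce \eqref{eq:BH} to Jacobi's complementary-minor identity for the positive definite matrix \(R^2\), written in an orthonormal basis adapted to the splitting \(\mathbb R^N=\ker A\oplus(\ker A)^\perp\).

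\emph{Step 1: compute both \(\det'\) factors.} Let \(k=\dim\ker A=N-d\). Take a thin singular value decomposition \(D=U\Sigma V^T\). Here \(U\) is \(N\times k\) with orthonormal columns spanning \(\operatorname{im}D=\ker A\), \(\Sigma\) is \(k\times k\) and invertible, and \(V\) has orthonormal columns spanning \((\ker D)^\perp\subseteq G\). For any symmetric positive definite \(M\) on \(\mathbb R^N\),
\[
D^TMD=V\,\Sigma\,U^TMU\,\Sigma\,V^T .
\]
This operator vanishes on \(\ker D\). On \(\operatorname{im}V\) it is represented by the invertible matrix \(\Sigma U^TMU\Sigma\). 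Hence its nonzero eigenvalues are exactly the eigenvalues of that \(k\times k\) matrix, and
\[
\det{}'(D^TMD)=\det(\Sigma)^2\det(U^TMU).
\]
Applying this with \(M=I\) and with \(M=R^{-2}\) gives
\[
\frac{\det'(D^TR^{-2}D)}{\det'(D^TD)}=\det(U^TR^{-2}U).
\]

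\emph{Step 2: reduce the \(A\)-factors.} Let \(W\) be an \(N\times d\) matrix with orthonormal columns spanning \((\ker A)^\perp=\operatorname{im}A^T\). Then \(A=CW^T\) with \(C=AW\), and \(C\) is invertible because \(A\) has full row rank. Consequently
\[
\det(AA^T)=\det(C)^2,\qquad \det(AR^2A^T)=\det(C)^2\det(W^TR^2W).
\]
After dividing by \(\det(C)^2\det'(D^TD)\), the claim \eqref{eq:BH} is equivalent to
\[
\det(R)^2\det(U^TR^{-2}U)=\det(W^TR^2W).
\]

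\emph{Step 3: apply Jacobi's identity.} Form the orthogonal matrix \(Q=[\,W\;U\,]\) and set \(M=Q^TR^2Q\). Then \(\det M=\det(R)^2\), and \(M^{-1}=Q^TR^{-2}Q\). The \(UU\)-block of \(M^{-1}\) is \(U^TR^{-2}U\), and the \(WW\)-block of \(M\) is \(W^TR^2W\). Jacobi's complementary-minor formula (equivalently, the Schur complement formula) states that the determinant of the \(UU\)-block of \(M^{-1}\) equals \(\det(M_{WW})/\det M\). Substituting these identifications gives exactly the identity required in Step 2.

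The degenerate cases \(k=0\) and \(d=0\) are immediate, using the convention that the determinant of an empty matrix is \(1\). The only point needing care is Step 1. One must check that \(\ker(D^TR^{-2}D)=\ker(D^TD)=\ker D\), which holds by positive definiteness of \(R^{-2}\). This ensures that both \(\det'\) factors are products over the same number \(k\) of eigenvalues, so the two pseudo-determinants can be compared as genuine determinants on \((\ker D)^\perp\). Everything else is linear algebra.
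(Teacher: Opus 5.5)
Your proof is correct and follows essentially the same route as the paper. Both arguments split $\mathbb R^N=(\ker A)^\perp\oplus\ker A$ using orthonormal bases, reduce the $A$-factors to $\det(W^TR^2W)$ and the $\det'$-factors to $\det(U^TR^{-2}U)$, and conclude by Jacobi's complementary-minor identity for $R^2$. The only difference is cosmetic: you factor $D$ through a thin SVD, whereas the paper writes $D=Q_2C$ with $C$ invertible on $(\ker D)^\perp$, and your version makes the pseudo-determinant bookkeeping slightly more explicit.
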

\begin{proof}
Choose an orthogonal decomposition
\[
\mathbb R^N=(\ker A)^\perp\oplus \ker A.
\]
Let \(Q_1\) and \(Q_2\) be matrices whose columns are orthonormal bases of
\((\ker A)^\perp\) and \(\ker A\), respectively. Since
\(\ker A=\operatorname{im}D\), after replacing the domain of \(D\) by
\((\ker D)^\perp\) we may write
\[
D=Q_2C
\]
for some invertible matrix \(C\).

Because \(AQ_2=0\), we have
\[
AA^T=(AQ_1)(AQ_1)^T,
\]
and
\[
AR^2A^T=(AQ_1)(Q_1^TR^2Q_1)(AQ_1)^T.
\]
Hence
\[
\frac{\det(AR^2A^T)}{\det(AA^T)}
=
\det(Q_1^TR^2Q_1).
\]
By Jacobi's complementary minor identity applied to \(R^2\),
\[
\det(Q_1^TR^2Q_1)
=
\det(R)^2\det(Q_2^TR^{-2}Q_2).
\]
On the other hand,
\[
D^TD=C^TC,
\qquad
D^TR^{-2}D=C^TQ_2^TR^{-2}Q_2C,
\]
so
\[
\frac{\det'(D^TR^{-2}D)}{\det'(D^TD)}
=
\det(Q_2^TR^{-2}Q_2).
\]
Combining the last three displays gives
\[
\det(R)^2\det(AA^T)\det'(D^TR^{-2}D)
=
\det(AR^2A^T)\det'(D^TD),
\]
as desired.
\end{proof}

We now apply the determinant identity to the slack-coordinate description of
\(P_n(s)\).  Here \(N=3n^2\), \(d=2n^2+1\), and
\(\dim G_n=n^2-1\).  The slack-constraint matrix displayed above may have
redundant rows.  For the application of Claim~\ref{cl:BH}, replace it by any
full-row-rank matrix \(\widetilde A:\mathbb R^N\to\mathbb R^d\) with the
same kernel, obtained by choosing a basis of its row space.  This replacement
does not change the affine hull in slack space, and
\(\ker \widetilde A=\operatorname{im}D\).  The slack-to-height Jacobian contributes
\(\sqrt{\det{}'(D^*D)}\), while the Barvinok--Hartigan determinant ratio
contributes the reciprocal square root of the corresponding weighted
operator.  Using Claim~\ref{cl:BH} with \(A=\widetilde A\) and
\[
R=\operatorname{diag}\bigl(s_0 I_{n^2},s_1 I_{n^2},s_2 I_{n^2}\bigr)
\]
therefore motivates the approximation
\[
\volBH P_n(s)
=
\frac{e^{3n^2}}{(2\pi)^{(2n^2+1)/2}}
\frac{1}{
\sqrt{\det{}'\!\left(D^*
\operatorname{diag}(s_0^{-2},s_1^{-2},s_2^{-2})D\right)}
}.
\]
Note that 
\[
L_s:=D^*\operatorname{diag}(s_0^{-2},s_1^{-2},s_2^{-2})D.
\]
Fourier modes on \(V_n\) are

\[
\phi_{\theta}(a,b)=e^{i(a\theta_1+b\theta_2)},
\qquad
\theta_1=\frac{2\pi k}{n},\quad
\theta_2=\frac{2\pi \ell}{n}.
\]
Let

\[
\theta_0=-\theta_1-\theta_2.
\]
As operators, the three rhombus Hessians respectively have eigenvalues (up to sign),

\[
 (1-e^{i\theta_1})(1-e^{i\theta_2}),\qquad
 (1-e^{i\theta_2})(1-e^{i\theta_0}),\qquad
 (1-e^{i\theta_0})(1-e^{i\theta_1}).
\]
Therefore their squared moduli are respectively

\(
16\sin^2(\theta_1/2)\sin^2(\theta_2/2),
\)
\(
16\sin^2(\theta_2/2)\sin^2(\theta_0/2),
\) and 
\(
16\sin^2(\theta_0/2)\sin^2(\theta_1/2).
\)
Hence the nonzero Fourier eigenvalues of \(L_s\) are

\[
\lambda_s(\theta_1,\theta_2)
=
16\left(
\frac{\sin^2(\theta_1/2)\sin^2(\theta_2/2)}{s_0^2}
+
\frac{\sin^2(\theta_2/2)\sin^2(\theta_0/2)}{s_1^2}
+
\frac{\sin^2(\theta_0/2)\sin^2(\theta_1/2)}{s_2^2}
\right),
\]
excluding only the zero mode \((k,\ell)=(0,0)\), corresponding to constants.
Therefore, this motivates the large-\(n\) approximation 

\[ - \sigma_{BH} :=
\lim_{n\to\infty}
\frac{1}{n^2-1}\log \volBH P_n(s)
=
3-\log(2\pi)
-\frac12
\int_{[0,2\pi]^2}
\log \lambda_s(\theta_1,\theta_2)
\frac{d\theta_1d\theta_2}{(2\pi)^2}
.
\]

\begin{proposition}
For all sufficiently small   $\de > 0$, there exists $\eps > 0$ such that $$\exp(-\sigma_{\mathrm{BH}}(\eps, 1, 1)) >  \left(\frac{e^2}{2\pi} - \de\right)\exp(-\sigma(\eps, 1, 1)) > 1.176 \exp(-\sigma(\eps, 1, 1)).$$
\end{proposition}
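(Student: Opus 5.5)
The plan is to compute the asymptotics of both sides as \(\eps\downarrow0\). The Barvinok--Hartigan side is explicit. For the true surface tension we need only an \emph{upper} bound on the volume, since the statement asks for a lower bound on \(\exp(-\sigma_{\mathrm{BH}})/\exp(-\sigma)\). The constant \(e^2/(2\pi)\) should appear as the ratio \(\bigl(\eps e^3/(2\pi)\bigr)/(\eps e)\) of the two leading terms.

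\emph{Step 1 (the explicit side).} For \(s=(\eps,1,1)\), write
\[
\log\lambda_s=\log\frac{16\sin^2(\theta_1/2)\sin^2(\theta_2/2)}{\eps^2}+\log\bigl(1+\eps^2 R(\theta)\bigr),
\qquad
R(\theta)=\frac{\sin^2(\theta_0/2)\bigl(\sin^2(\theta_1/2)+\sin^2(\theta_2/2)\bigr)}{\sin^2(\theta_1/2)\sin^2(\theta_2/2)} .
\]
Two ingredients handle the terms:
\begin{itemize}
\item the classical identity \(\int_0^{2\pi}\log\bigl(4\sin^2(\theta/2)\bigr)\,d\theta/(2\pi)=0\) kills the main logarithm apart from \(-2\log\eps\);
\item dominated convergence, using \(\log(1+\eps^2R)\le \log(1+R)\in L^1\) because the singularities of \(R\) are only logarithmically integrable, shows the correction term tends to \(0\).
\end{itemize}
This gives
\[
-\sigma_{\mathrm{BH}}(\eps,1,1)=3-\log(2\pi)+\log\eps+o(1),
\qquad\text{that is,}\qquad
\exp(-\sigma_{\mathrm{BH}}(\eps,1,1))=\frac{e^3\eps}{2\pi}(1+o(1)).
\]

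\emph{Step 2 (upper bound for the true volume).} I would show
\[
\exp(-\sigma(\eps,1,1))\le e\,\eps\,(1+o(1)),
\]
using a maximum-entropy argument in the type-\(0\) slack variables only. On the torus, \(\sum_v D_0g(v)=0\). Hence the slacks \(x_0(v)=\eps-D_0g(v)\ge0\) have average exactly \(\eps\), for every \(g\in P_n(\eps,1,1)\).

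Project \(P_n(s)\) by \(g\mapsto D_0g\). This map is injective on mean-zero functions: it is a Fourier multiplier with symbol \((1-e^{i\theta_1})(1-e^{i\theta_2})\), vanishing only on the lines \(k=0\) or \(\ell=0\). The subspace spanned by those modes (\(2n-1\) dimensions) must be controlled separately. On it, the constraints \(D_1g,D_2g\le1\) bound \(g\) in a box of volume \(e^{O(n\log n)}\), which is negligible at scale \(n^2\).

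On the complementary modes:
\begin{itemize}
\item the image of \(D_0\) lies in the simplex-like set \(\{x_0\ge0,\ \sum x_0=n^2\eps\}\) intersected with a subspace;
\item the uniform density on the image has entropy at most that of i.i.d.\ exponentials of mean \(\eps\), namely \(n^2\log(e\eps)\);
\item the Jacobian contributes \(\exp\bigl(-\tfrac12\sum_\theta\log|{\rm symbol}|^2\bigr)=e^{o(n^2)}\), again by the vanishing integral of \(\log(4\sin^2)\).
\end{itemize}
Together these give \(\log|P_n(\eps,1,1)|\le n^2(\log\eps+1)+o(n^2)\), and hence the claimed bound on \(\exp(-\sigma)\).

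\emph{Step 3 (conclusion).} Combining the two steps,
\[
\frac{\exp(-\sigma_{\mathrm{BH}}(\eps,1,1))}{\exp(-\sigma(\eps,1,1))}\ge\frac{e^2}{2\pi}(1+o(1)),
\]
which exceeds \(e^2/(2\pi)-\de\) for small \(\eps\). The final numerical inequality follows from \(e^2/(2\pi)\approx1.17601\), so it holds for \(\de\) small.

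\emph{Main obstacle.} The hardest part is Step 2, specifically the degenerate Fourier modes (\(k=0\) or \(\ell=0\)) where \(D_0\) loses rank. One must check that the entropy bound on the slacks is applied to a genuinely full-dimensional image, with the Jacobian normalized so that only \(e^{o(n^2)}\) is lost. I would first try to split \(g\) orthogonally into the kernel-of-symbol modes and the rest. The kernel part would be bounded crudely using \(D_1g,D_2g\le1\) together with \(D_0g\ge -C\), which follows from the torus sum constraint plus \(D_0g\le\eps\). I would then apply the Gaussian/exponential entropy comparison to the remaining part.
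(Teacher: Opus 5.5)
\paragraph{Comparison with the paper.} Your argument is correct in outline, but Step 2 follows a different route from the paper. Steps 1 and 3 match the paper. Your dominated-convergence bound through $\log(1+R)\in L^1$ is a rigorous version of the paper's ``the first term dominates.''

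For the true surface tension, the paper never estimates $P_n(\eps,1,1)$ directly. It uses the inclusion $P_n(\eps,1,1)\subseteq P_n(\eps,1,R)$ for $R\geq 1$, lets $R\to\infty$, and invokes the cited two-coordinate degeneration $\exp(-\sigma(\eps,1,\infty))=\frac{e}{\pi}(1+\eps)\sin\frac{\pi\eps}{1+\eps}$. This is exactly the active term of $\exp(-\tilde\sigma(\eps,1,1))\sim e\eps$, so $e^2/(2\pi)$ arises as the limit of $\exp(-\sigma_{\mathrm{BH}})/\exp(-\tilde\sigma)$.
\begin{itemize}
\item The paper's route is short and gives the marginally sharper bound $T_{01}\leq e\eps$. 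However, it depends on an external exact formula, and on identifying $\lim_{R\to\infty}\sigma(\eps,1,R)$ with the degenerate value, which the paper leaves implicit.
\item Your route uses only the type-$0$ constraints and nothing from the literature. It proves the uniform bound $\exp(-\sigma(\eps,s_1,s_2))\leq e\eps$ for all $s_1,s_2$. This is the $s_1\to\infty$ limit of $T_{01}$, and it is all the comparison needs.
\end{itemize}

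\paragraph{The step you flagged.} As written, this step does not work: the slack image of the non-kernel modes has dimension $(n-1)^2$, not $n^2$. So the $n^2$-dimensional i.i.d.-exponential entropy bound does not apply to it. It can be closed as follows.
\begin{itemize}
\item In coordinates where the symbol of $D_0$ is $(1-e^{i\theta_1})(1-e^{i\theta_2})$, the range of $D_0$ on those modes is exactly the space of arrays with all row and column sums zero. Hence the slack image is precisely $n\eps B_n$, the dilated Birkhoff polytope.
\item Projection onto the $(n-1)\times(n-1)$ block is injective on its affine hull. Its Jacobian is $\sqrt{\det\bigl((I+E)\otimes(I+E)\bigr)}=n^{n-1}$, with $E$ the all-ones $(n-1)\times(n-1)$ matrix.
\item The projected coordinates are nonnegative with total at most $n(n-1)\eps$. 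The exponential maximum-entropy (or simplex-volume) bound in $(n-1)^2$ dimensions then gives $\log\operatorname{vol}(n\eps B_n)\leq n^2\log(e\eps)+O_\eps(n\log n)$.
\item For the Jacobian of $D_0$ itself, use the exact identity $\prod_{k=1}^{n-1}4\sin^2(\pi k/n)=n^2$, giving the factor $n^{-2(n-1)}$. Do not use the continuum integral, since the Riemann sum sits on a logarithmic singularity.
\item The torus-sum argument gives only $D_ig\geq-(n^2-1)s_i$, not a constant lower bound. Combined with the injectivity of $(D_0,D_1,D_2)$ on mean-zero functions (smallest singular value $4\sin^2(\pi/n)$), this puts $P_n$ in a ball of polynomial radius. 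The kernel part therefore contributes $e^{O(n\log n)}$ in its $2n-2$ dimensions.
\end{itemize}
With these repairs your conclusion $\log|P_n(\eps,1,1)|\leq n^2\log(e\eps)+o(n^2)$ stands.
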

\begin{proof} 
For \(s=(\epsilon,1,1)\), the first term inside \(\lambda_s\) dominates away from a set of small measure:
\[
\lambda_{\epsilon,1,1}(\theta_1,\theta_2)
\sim
\frac{16}{\epsilon^2}
\sin^2\left(\frac{\theta_1}{2}\right)
\sin^2\left(\frac{\theta_2}{2}\right).
\]
Therefore
\[
\int_{[0,2\pi]^2}
\log \lambda_{\epsilon,1,1}(\theta_1,\theta_2)
\frac{d\theta_1d\theta_2}{(2\pi)^2}
=
-2\log\epsilon+o_\eps(1),
\]
because
\[
\int_0^{2\pi}
\log\sin^2\left(\frac{\theta}{2}\right)
\frac{d\theta}{2\pi}
=
-2\log 2,
\]
and the factor \(16\) cancels the two \(-2\log 2\) contributions.

Thus
\[
-\sigma_{\mathrm{BH}}(\epsilon,1,1)
=
3-\log(2\pi)+\log\epsilon+o_\eps(1),
\]
so
\[
\exp\bigl(-\sigma_{\mathrm{BH}}(\epsilon,1,1)\bigr)
=
\frac{e^3}{2\pi}\,\epsilon\,(1+o_\eps(1)).
\]
Equivalently,
\[
\sigma_{\mathrm{BH}}(\epsilon,1,1)
=
-\log\epsilon-3+\log(2\pi)+o_\eps(1).
\]

Comparing with the $\tilde{\sigma}$ asymptotic,
\[
\exp\bigl(-\tilde{\sigma}(\epsilon,1,1)\bigr)
\sim e\epsilon,
\]
whereas
\[
\exp\bigl(-\sigma_{\mathrm{BH}}(\epsilon,1,1)\bigr)
\sim
\frac{e^3}{2\pi}\,\epsilon.
\]
Their ratio is
\[
\frac{
\exp\bigl(-\sigma_{\mathrm{BH}}(\epsilon,1,1)\bigr)
}{
\exp\bigl(-\tilde{\sigma}(\epsilon,1,1)\bigr)
}
\longrightarrow
\frac{e^2}{2\pi}
\approx 1.1760048.
\]
For \(R\geq 1\), the constraints defining \(P_n(\epsilon,1,R)\) are weaker
than those defining \(P_n(\epsilon,1,1)\), and hence
\[
P_n(\epsilon,1,R)\supseteq P_n(\epsilon,1,1).
\]
Letting \(R\to\infty\), and using the two-coordinate degeneration, gives
\[
\exp(-\tilde\sigma(\epsilon,1,1))
=
\exp(-\sigma(\epsilon,1,\infty))
\geq
\exp(-\sigma(\epsilon,1,1)).
\]
This proves the proposition.

\end{proof}

\section{A new approximation $\tilde{\sigma}$ of the hive surface tension function}\lab{sec:tilde-sigma}

We now introduce a second explicit approximation to the hive surface tension.
Unlike the Barvinok--Hartigan expression of Section~\ref{sec:BH}, this
formula is not obtained from a general Gaussian approximation theorem.
Instead it is designed to take into account two pieces of information that
are available for the true surface tension: the exact two-dimensional
degeneration and the integrated GUE entropy identity.

For \(s=(s_0,s_1,s_2)\in \mathbb R_{>0}^3\), set
\[
T_{ij}(s)
  =
  \frac{e}{\pi}(s_i+s_j)
      \sin\left(\frac{\pi s_i}{s_i+s_j}\right),
      \qquad 0\leq i<j\leq 2,
\]
and
\[
F(s)
  =
  \frac{4e}{\pi}
      \frac{(s_0+s_1+s_2)s_0s_1s_2}
           {(s_0+s_1)(s_1+s_2)(s_2+s_0)} .
\]
We define
\[
\tilde{\sigma}(s_0,s_1,s_2)
   =
   \max\{-\log T_{01}(s),-\log T_{02}(s),-\log T_{12}(s),-\log F(s)\}.
\]
Equivalently,
\[
\exp(-\tilde\sigma(s))
=
\min\{T_{01}(s),T_{02}(s),T_{12}(s),F(s)\}.
\]
Thus \(\tilde\sigma\) is symmetric in the three slack variables and is
logarithmically homogeneous:
\[
\tilde\sigma(ts)=\tilde\sigma(s)-\log t,
\qquad t>0.
\]
This is the same homogeneity possessed by the true surface tension, since
scaling all three rhombus slacks scales the local polytope volume by the
corresponding factor per vertex.

The terms \(T_{ij}\) encode the known two-coordinate degeneration
\cite{SchTao,JohnstonProchno}.  When the
third slack is sent to infinity, the local hive constraints reduce to a
one-dimensional interlacing problem, and the surface tension satisfies
\[
\exp(-\sigma(s_i,s_j,\infty))
=\frac{e}{\pi}(s_i+s_j)
\sin\left(\frac{\pi s_i}{s_i+s_j}\right).
\]
Thus each \(T_{ij}\) is forced by a boundary degeneration of the theory.  The
minimum over the three pairwise terms enforces these three degenerations
simultaneously.

The remaining term \(F\) is the symmetric three-variable correction suggested
by the GUE boundary case.  It is the simplest homogeneous symmetric expression
with the same rational dependence on the three slacks as the GUE integrated
entropy formula suggests after normalizing by the pairwise sums.  In the
balanced region of the simplex it is often the active term, while near the
edges one of the pairwise degeneration terms takes over.  The active region
for \(F\) on the normalized simplex is shown in Figure~\ref{fig:F}.

\begin{figure}[h!]
  \centering
  \includegraphics[width=0.6\textwidth]{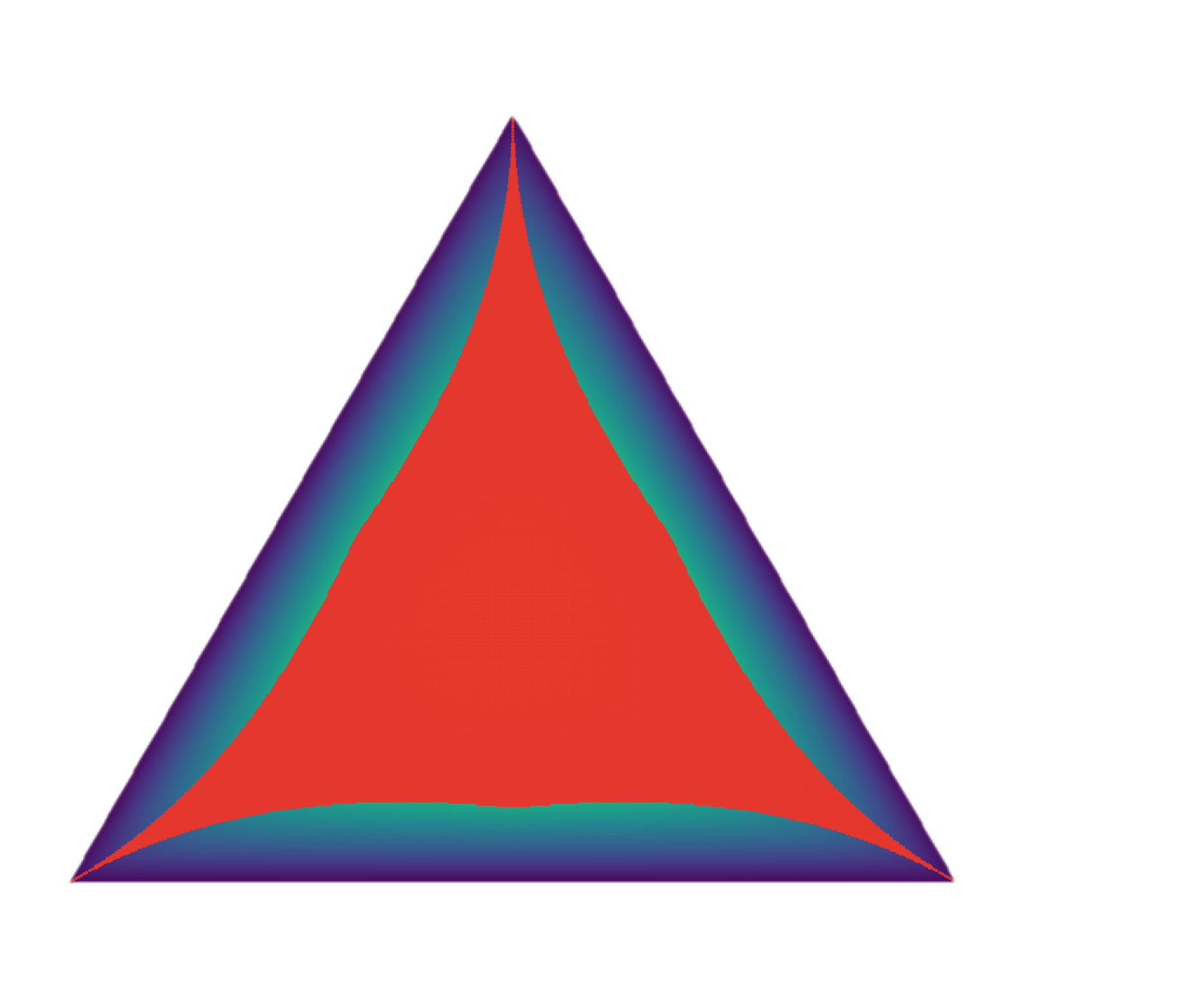} 
  \caption{The active region in red for $F$ on the triangle $\{(s_0, s_1, s_2) \in \mathbb{R}_+^3: s_0 + s_1 + s_2 = 1\}$.}
  \label{fig:F}
\end{figure}

This approximation should be regarded as a constrained ansatz rather than a
formula derived from first principles.  Its appeal is that it is explicit,
symmetric, has the correct logarithmic homogeneity, agrees with the known
pairwise degenerations, and incorporates the symmetric GUE entropy heuristic
through the term \(F\).  The numerical evidence in Section~\ref{sec:Num}
suggests that these constraints capture a substantial part of the true
surface tension in the tested regimes.

\section{Numerical experiments}\lab{sec:Num}

We compare empirical average surface tensions of sampled GUE hives with the
continuum surface tension integral, for which we know a closed form from
\cite{GangNar}.  In the first family, for each \(b\), we set
\[
        a=1, \qquad c=\sqrt{1+b^{2}},
\]
generate \(M=100\) hives of size \(N=100\), average the sampled hives to
produce an approximation of the center of mass, and evaluate the pointwise
approximations \(\sigma_{\mathrm{BH}}\) and \(\tilde\sigma\) on the hive
slacks of this approximate center of mass.

Combining the surface-tension variational principle of \cite{NarSheff}
with the GUE boundary entropy identity \cite[Theorem~9]{GangNar}, the
theoretical limiting average surface tension is
\[
        -\frac54
        - \log\left(\frac{4\Delta_{abc}^2}{abc}\right),
\]
where \(\Delta_{abc}\) denotes the area of the Euclidean triangle with side
lengths \(a,b,c\).  In the right-triangle family with side lengths
\((1,b,\sqrt{1+b^2})\), this becomes
\[
        -\frac54
        - \log\left(\frac{b}{\sqrt{1+b^2}}\right).
\]
We also perform analogous experiments in the deformed regime with \(a=b\)
and \(c\geq \sqrt{a^2+b^2}\).  Since the boundary data come from random GUE
spectra, the empirical standard deviations of the measured boundary sides
have visible finite-size fluctuations; the tables below therefore use the
measured boundary side lengths throughout.

The scripts and averaged hive data used to reproduce the numerical tables in
this section are available at
\url{https://github.com/harius80/general-gue-hive-v2-tables}, with the version
used for this draft tagged as \texttt{v1-paper-artifact}.

\begin{table}[ht]
\centering
\begin{tabular}{r r r r r r}
\hline
$a$ & $b$ & $c$ & $\int \sigma(-(\hess h)_{ac})$ & $\int \sigma_{\mathrm{BH}}-\int\sigma$ & $\int \tilde{\sigma}-\int\sigma$ \\
\hline
1.005215961 & 1.003797387 & 1.428392913 & -0.902321189 & 0.058309405 & 0.007799368 \\
1.005323323 & 2.006380661 & 2.252487872 & -1.139520066 & 0.042063570 & 0.002504651 \\
1.004951168 & 3.018099799 & 3.187762839 & -1.200196813 & 0.024917878 & -0.000240172 \\
1.004228197 & 4.019762283 & 4.150095911 & -1.222261919 & 0.012638397 & -0.000322546 \\
1.003941708 & 5.021774471 & 5.127528634 & -1.233051405 & 0.004108598 & 0.000400882 \\
1.004334630 & 6.027406821 & 6.116465724 & -1.239621513 & -0.002046527 & 0.001054968 \\
1.004358622 & 7.038781415 & 7.113872383 & -1.243722890 & -0.006795411 & 0.001650896 \\
1.004541034 & 8.040359123 & 8.108783142 & -1.246021456 & -0.010654372 & 0.001990415 \\
1.004396958 & 9.036942007 & 9.096209106 & -1.247837245 & -0.013255740 & 0.002691587 \\
1.005509340 & 10.039370774 & 10.093210570 & -1.250132650 & -0.015836561 & 0.002749842 \\
1.004604737 & 11.046523764 & 11.100398808 & -1.249660236 & -0.017679698 & 0.003136968 \\
1.004765504 & 12.058124508 & 12.102640026 & -1.251061825 & -0.019301336 & 0.003447622 \\
1.004827178 & 13.069043992 & 13.111393263 & -1.251566156 & -0.020902437 & 0.003392579 \\
1.005471791 & 14.064885007 & 14.102854533 & -1.252756631 & -0.021981079 & 0.003719346 \\
1.005685105 & 15.058945161 & 15.096485956 & -1.253163317 & -0.023101185 & 0.003781554 \\
1.005802918 & 16.081590007 & 16.115802693 & -1.253653235 & -0.024218808 & 0.003714284 \\
1.004617872 & 17.100303274 & 17.135687259 & -1.252505562 & -0.024930960 & 0.003960104 \\
1.004888003 & 18.092896045 & 18.123631626 & -1.253170696 & -0.025614417 & 0.004069573 \\
1.004650923 & 19.063136451 & 19.094649761 & -1.252962974 & -0.026437356 & 0.004056875 \\
1.005115100 & 20.091564769 & 20.120663171 & -1.253639155 & -0.027167716 & 0.003900652 \\
\hline
\end{tabular}
\caption{Table 1 uses measured boundary side lengths throughout. The columns $a$, $b$, and $c$ are measured from the boundary of the averaged hive $h$; $\int \sigma(-(\hess h)_{ac})$ signifies $2\int \sigma(-(\hess h)_{ac})\Leb_2(dx)$ and both discrepancy columns are computed from the same measured triple. The fifth column recomputes $\sigma_{\mathrm{BH}}$; both empirical discrepancy columns use the same interior hive sites.}
\label{tab:sample-out-100-right-triangle-measured-abc-direct-bh}
\end{table}

\begin{table}[ht]
\centering
\begin{tabular}{r r r r r r r}
\hline
$w=w'$ & $a$ & $b$ & $c$ & $\int \sigma(-(\hess h)_{ac})$ & $\int \sigma_{\mathrm{BH}}-\int\sigma$ & $\int \tilde{\sigma}-\int\sigma$ \\
\hline
0.2 & 1.024163337 & 1.024057507 & 2.028499487 & 1.997406985 & -0.114639236 & 0.018600656 \\
0.4 & 1.082625417 & 1.083241122 & 2.090582657 & 0.696252187 & -0.081107852 & 0.007200553 \\
0.6 & 1.172129704 & 1.170225987 & 2.183572357 & 0.001744161 & -0.047986656 & 0.000358700 \\
0.8 & 1.288184320 & 1.287005094 & 2.312655159 & -0.445927766 & -0.021503564 & -0.002923652 \\
1 & 1.420512193 & 1.421098363 & 2.464195978 & -0.757514359 & -0.002107933 & -0.003906648 \\
2 & 2.248710857 & 2.248429915 & 3.494543001 & -1.575314860 & 0.038714111 & 0.000784702 \\
3 & 3.175466064 & 3.177554497 & 4.729654244 & -1.995876285 & 0.049061791 & 0.003898951 \\
4 & 4.140141436 & 4.141463309 & 6.046879383 & -2.287836586 & 0.053195010 & 0.005681186 \\
5 & 5.128391729 & 5.118838448 & 7.414775633 & -2.512018475 & 0.054839514 & 0.006285524 \\
6 & 6.113612401 & 6.117119471 & 8.809497982 & -2.694364384 & 0.055740764 & 0.006667216 \\
7 & 7.098142988 & 7.092169066 & 10.179556037 & -2.847589479 & 0.056331831 & 0.006874918 \\
8 & 8.096376438 & 8.086282625 & 11.581413788 & -2.981590522 & 0.056741710 & 0.007101435 \\
9 & 9.099281548 & 9.093649260 & 13.004965108 & -3.099957230 & 0.057375301 & 0.007325534 \\
10 & 10.095622676 & 10.101947128 & 14.413914587 & -3.206290775 & 0.057596483 & 0.007733413 \\
\hline
\end{tabular}
\caption{Table 2 uses measured boundary side lengths throughout. Here $u = u' = 1$. The columns $a$, $b$, and $c$ are measured from the  boundary of the averaged hive $h$; $\int \sigma(-(\hess h)_{ac})$ signifies $2\int \sigma(-(\hess h)_{ac})\Leb_2(dx)$ and both discrepancy columns are computed from the same measured triple $(a,b,c)$. The sixth column evaluates $\sigma_{\mathrm{BH}}$ directly, by numerical quadrature, at the slack ratios measured in the simulation. This avoids the small boundary error introduced by the simplex lookup table, which clamps near-boundary slack ratios to the edge of its grid; both empirical discrepancy columns use the same interior hive sites.}
\label{tab:sample-out-100-deformed-v2-redo-measured-a-b-c-direct-bh}
\end{table}

The values in the last column suggest that $\tilde{\sigma}$ is a very good approximation of $\sigma$.

\begin{figure}[h!]
  \centering
  \includegraphics[width=\textwidth]{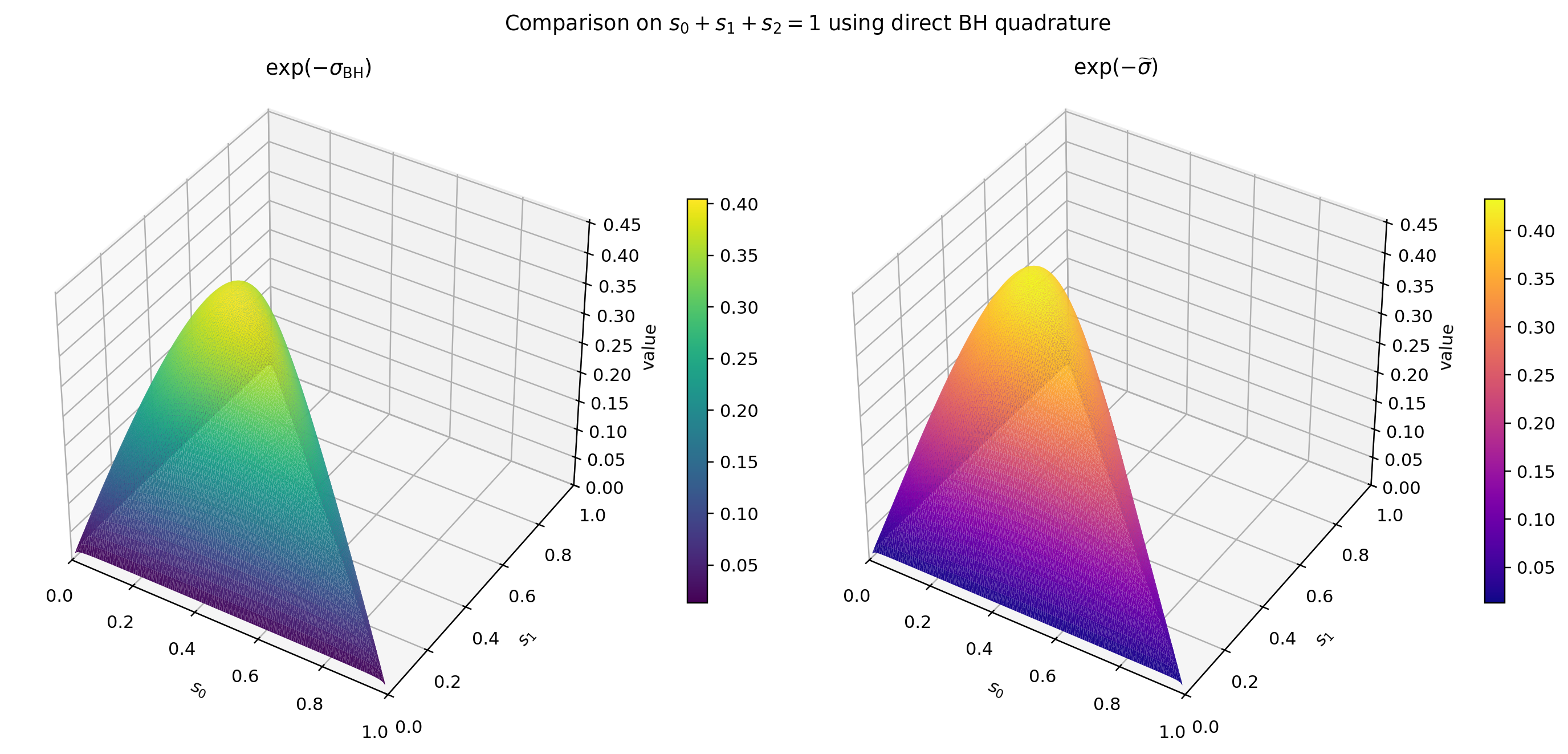}
  \caption{On the left is the plot of $\exp(-\sigma_{\mathrm{BH}})$. On the right is the plot of $\exp(-\tilde{\sigma})$, both on the simplex $s_0+s_1+s_2=1$, using direct quadrature for $\sigma_{\mathrm{BH}}$.}
  \label{fig:bh-tilde-direct}
\end{figure}

\end{document}